%% file: random-graph-coloring-CI.tex
\documentclass[letterpaper, 11pt]{amsart}
\usepackage[foot]{amsaddr}
\usepackage{macros}
\usepackage{fullpage}
\usepackage{notations}

\newboolean{doubleblind}
\setboolean{doubleblind}{False}

\title{Rapid Mixing for Sampling Colorings of Random Graphs up to Tree Uniqueness Threshold}
\ifdoubleblind
    \author{Author(s)}
\else
    \author{Zhidan Li, Kuan Yang}
    \address[Zhidan Li, Kuan Yang]{School of Computer Science, Shanghai Jiao Tong University, Shanghai, China. \textnormal{Email: \url{yueEnTeRnAL@sjtu.edu.cn}, \url{kuan.yang@sjtu.edu.cn}}.}
\fi

\begin{document}

\begin{abstract}
    We study the mixing time of the single-site Glauber dynamics for sampling proper $q$-colorings on sparse Erd\H{o}s-R\'{e}nyi random graphs $\mathcal{G}(n, d/n)$.
    We prove that, whenever $q \ge (1 + o_d(1))d$, Glauber dynamics mixes in polynomial time with high probability over $\mathcal{G}(n, d/n)$ for all sufficiently large integers $n$.
    This improves the previous bound $q = (\alpha^* + o_d(1))d$ where $\alpha^* = 1.7632...$ is the unique solution of $x = \exp(1/x)$ on $[1, \infty)$ by Efthymiou, Hayes, \v{S}tefankovi\v{c}, and Vigoda (SODA, 2018) and asymptotically matches the uniqueness threshold on infinite regular trees.

    Our proof has three main ingredients. First we establish logarithmic bounds on the spectral and coupling independence constants of the Gibbs distribution by extending the tree analysis of Chen, Liu, Mani, and Moitra (J. ACM, 2026).
    Second, we extend the entropy-factorization framework for two-spin systems on random graphs, developed by Bez\'{a}kov\'{a}, Galanis, Goldberg, and \v{S}tefankovi\v{c} (TALG, 2024), to proper colorings by introducing a buffer around high-degree vertices. Finally, we compare the resulting heat-bath block dynamics with the single-site Glauber dynamics.
\end{abstract}

\maketitle

% \tableofcontents
\newpage

\input{section/introduction.tex}
\input{section/notation.tex}
\input{section/random-graph-property.tex}
\input{section/coupling-independence.tex}
\input{section/rapid-mixing.tex}

\bibliographystyle{alpha}
\bibliography{refs}

\appendix
\input{appendix/CLMM-clarification.tex}
\input{appendix/Poisson-estimation.tex}

\end{document}

%% file: section/introduction.tex
\section{Introduction} \label{sec:introduction}

The model of proper colorings, which is often viewed as the anti-ferromagnetic Potts model at zero temperature, plays a fundamental role in combinatorics, probability theory, statistical physics and theoretical computer science.
Given a graph $G = (V, E)$ and a color number $q \ge 2$,  a \emph{proper $q$-coloring} is an assignment of colors from $[q]\defeq\set{1,\ldots,q}$ to the vertices of $G$ such that adjacent vertices receive different colors.
Let $\Omega_{G,q}$ denote the set of proper $q$-colorings of $G$, and let $\mu_{G,q}$ be the uniform distribution on $\Omega_{G,q}$.
Efficient sampling from $\mu_{G, q}$ is a basic and important algorithmic problem, closely related to approximate counting and the study of Gibbs measures. 
For every even $q \ge 4$ and $\Delta > q$, Galanis, \v{S}tefankovi\v{c}, and Vigoda \cite{GSV15} showed that, unless $\NP = \RP$, there is no $\FPRAS$ for counting proper $q$-colorings even on triangle-free $\Delta$-regular graphs.

In this paper, we study sampling proper colorings of \emph{sparse \Erdos--\Renyi random graphs}.
For a fixed real number $d>1$, the random graph $\G(n,d/n)$ has vertex set $[n]$ and includes each pair in $\binom{[n]}{2}$ independently with probability $d/n$.
Its average degree converges to $d$ with high probability, whereas its maximum degree is $\Theta(\ln n/\ln\ln n)$ with high probability.
Thus, the maximum degree and the typical local branching rate are on very different scales.
This distinction makes it natural to seek sampling algorithms whose color requirement depends on the average degree $d$ rather than on the maximum degree.

The statistical physics of random graph colorings provides several benchmarks for this problem.
For large $q$, the cavity method predicts a \emph{clustering} transition at average degree of order $q\ln q$, followed by condensation and, eventually, a loss of colorability near $2q\ln q$ \cite{ZK07,BS20}.
These transitions describe the global geometry of the space of proper colorings.
By contrast, \emph{Gibbs uniqueness} concerns the sensitivity of local marginals to distant boundary conditions and provides a natural benchmark for the behavior of local Markov chains.
For the infinite $\Delta$-regular tree, the Gibbs uniqueness threshold for proper colorings is $q\ge\Delta+1$ for every integer $\Delta\ge2$ \cite{Jonasson02}.
Its branching factor is $\Delta-1$, which corresponds to the local branching rate $d$ of $\G(n,d/n)$.
Related local-stability calculations for sparse \Erdos--\Renyi random graphs also suggest the bound $q=d+1$ as a natural uniqueness threshold \cite{ZK07}.
These observations motivate the question of whether a local-update Markov chain, and in particular the natural \emph{single-site Glauber dynamics}, can mix rapidly when the number of colors and the average degree are in the tree-uniqueness regime.

Over the past several decades, substantial progress has been made on almost uniform sampling of proper colorings of sparse random graphs with a number of colors depending on the average degree rather than the maximum degree.
Mossel and Sly \cite{MS10} established rapid mixing of a Gibbs sampler when $q=q(d)$ is sufficiently large.
Efthymiou \cite{Efthymiou14} subsequently introduced a weighted block decomposition that reduced the color requirement to a quantity linear in $d$.
Combining block decomposition with local uniformity properties of the Gibbs distribution, Efthymiou, Hayes, \v{S}tefankovi\v{c}, and Vigoda \cite{EHSV18} proved that the single-site Glauber dynamics mixes in polynomial time with high probability whenever $q\ge (\alpha^*+o_d(1))d$, where $\alpha^*=1.7632\ldots$ is the unique solution of $x=\e^{1/x}$ on $[1,\infty)$.

However, a different line of work has reached a smaller color requirement if we relax the requirements of accuracy.
Efthymiou \cite{Efthymiou16} developed a polynomial-time non-MCMC algorithm which, for $q\ge (1+o_d(1))d$, produces a proper coloring whose distribution is within $n^{-\Omega(1)}$ total variation distance of $\mu_{G,q}$, with high probability over the random graph.
Related deterministic work \cite{Efthymiou20} provides a $\bigl(1\pm n^{-\Omega(1)}\bigr)$-approximation to the free energy of the coloring model in the same regime.
However, the sampling accuracy in \cite{Efthymiou16} (and the counting accuracy in  \cite{Efthymiou20}) is tied to the instance size, and these results do not establish polynomial-time mixing of the single-site Glauber dynamics or yield an $\FPAUS$ with an arbitrarily prescribed accuracy parameter.
Consequently, a fundamental question remained open:
\begin{center}
    \emph{Does there exist an $\FPAUS$ for proper $q$-colorings on $\G(n,d/n)$ when $q\ge (1+o_d(1))d$?\\
    In particular, does the single-site Glauber dynamics mix in polynomial time?}
\end{center}

We answer this question affirmatively, improving the previous rapid-mixing condition $q\ge (\alpha^*+o_d(1))d$ to $q\ge (1+o_d(1))d$ and thereby asymptotically reaching the uniqueness threshold of the corresponding regular tree.
\begin{theorem} \label{thm:rapid-mixing}
    Fix $\eta \in (0, 1)$.
    For sufficiently large constants $d > 1$ and every integer $q \ge (1 + \eta)d$, with probability $1 - o_n(1)$ for $G \sim \G(n, d/n)$, the single-site Glauber dynamics for proper $q$-colorings of $G$ has mixing time 
    \[
        \tau_{\mix}(\PGD,\eps) \le n^{C}\bigl(1+\ln(1/\eps)\bigr)     \qquad\text{for every }\eps\in(0,1),
    \]
    where $C = C(\eta, d, q) > 0$ is a constant independent of $n$ and $\eps$.
\end{theorem}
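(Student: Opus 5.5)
We follow the three-step outline in the abstract: a spectral/coupling independence bound on a well-behaved part of the graph, block entropy factorization, and a comparison back to single-site Glauber dynamics.

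\textbf{Step 1: structural properties of $G$.} We first fix properties of $G \sim \G(n,d/n)$ that hold with probability $1-o_n(1)$. Call a vertex \emph{high-degree} if its degree exceeds $(1+\eta/10)d$. The properties are the following.
\begin{itemize}
\item[(i)] Every connected set of $k \ge C_0\ln n$ vertices contains only a small fraction of high-degree vertices.
\item[(ii)] Every ball of radius $O(\ln n/\ln d)$ has tree excess $O(1)$.
\item[(iii)] Define the \emph{buffer} $B$ to be the set of high-degree vertices together with their neighbourhoods, enlarged by a constant radius. Then every connected component of the subgraph induced by $B$ has size $O(\ln n)$.
\end{itemize}
Each of these follows by first-moment path and subtree counting, with Poisson tail estimates for the degrees. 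The buffer is what allows coloring to avoid the frozen-boundary issues that do not arise in two-spin systems: a vertex outside $B$ sees at most $(1+\eta/10)d < q$ neighbours.

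\textbf{Step 2: spectral and coupling independence.} Next we show that, for every pinning on $B$ (and on subsets thereof), the conditional Gibbs distribution on $V\setminus B$ has spectral independence and coupling independence constants $O(\ln n)$. For this we extend the tree recursion of Chen, Liu, Mani, and Moitra to the self-avoiding-walk tree of $G$. The potential-function contraction holds at every vertex of degree at most $(1+\eta/10)d$ whenever $q\ge(1+\eta)d$ and $d$ is large. High-degree vertices and buffer vertices are pinned, so they do not contribute branching. The total influence from a vertex is then a sum over self-avoiding walks, each step contracting by a factor $1-\Omega(\eta)$, except along the $O(\ln n)$-sized buffer clusters and the $O(1)$ cycles per ball. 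Summing this gives the $O(\ln n)$ bound.

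\textbf{Step 3: from independence to mixing.} Spectral independence of order $O(\ln n)$, combined with the local-to-global theorem, yields approximate tensorization of entropy for a heat-bath block dynamics. Its blocks are the buffer components together with single vertices outside $B$, and the factorization constant is $n^{O(1)}$. Coupling independence upgrades the spectral bound to entropy factorization, following the framework of Bezáková, Galanis, Goldberg, and Štefankovič. Finally, we compare each block update with single-site Glauber dynamics restricted to that block. A block is a component of size $O(\ln n)$ with $O(1)$ excess and a buffer layer of low-degree vertices around it. On such a block, a canonical-path or recursive argument bounds the restricted Glauber relaxation/log-Sobolev constant by $q^{O(\ln n)} = n^{O(1)}$; the buffer guarantees that the restricted chain is ergodic. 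Combining the two bounds gives a modified log-Sobolev constant of $n^{-C}$, and hence $\tau_{\mix}\le n^C(1+\ln(1/\eps))$, since $\ln(1/\mu_{\min})=O(n\ln q)$ is absorbed into the polynomial.

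\textbf{Main obstacle.} We expect Step 2 to be the hardest. The CLMM analysis must be made robust to an irregular, unbounded-degree tree in which only most vertices are good, and to pinnings on the buffer that may remove colors from low-degree vertices. If this causes trouble, we would first handle it by using the buffer radius to ensure that every vertex outside $B$ keeps at least $q-(1+\eta/10)d$ available colors, which preserves the contraction.
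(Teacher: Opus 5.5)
\textbf{There is a genuine gap where your Step 2 feeds into Step 3.}

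In Step 2 you prove $O(\ln n)$ spectral/coupling independence only for the conditional laws $\mu(\cdot\mid\sigma_B)$ on $V\setminus B$. You rely on the buffer being pinned so that high-degree vertices ``do not contribute branching.'' Step 3, however, runs a local-to-global argument for a block dynamics whose blocks include the buffer components, i.e.\ for a law in which $B$ is free.

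Factorization of every $\mu(\cdot\mid\sigma_B)$ only bounds $\mathbb{E}_{\sigma_B}\,\mathrm{Ent}_{\mu(\cdot\mid\sigma_B)}(f)$. The other term in the chain rule is the entropy of $\mathbb{E}_\mu[f\mid X_B]$ under the marginal on $B$. That term couples all buffer components through $V\setminus B$, and nothing in your plan controls it.

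You also cannot simply treat each buffer component as one super-spin. The Chen--Liu--Vigoda local-to-global theorem needs two things for that system:
\begin{itemize}
\item spectral independence with the buffer unpinned, which Step 2 does not give;
\item marginal boundedness $b$, with a constant depending exponentially on $\gamma/b^2$ (in the form used here, $C_r\le(2N/r)^{\lceil 4\gamma/b^2\rceil}$).
\end{itemize}
A buffer component of logarithmic size has only $b=q^{-\Theta(\ln n)}$, so the resulting factor is superpolynomial, not $n^{O(1)}$. Coupling independence does not repair this. In the paper it is used only to get $\lambda_{\max}(\Psi_\mu^\tau)\le\|\Psi_\mu^\tau\|_\infty\le 2\,\CI(\mu)$.

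\textbf{How the paper handles it.} The paper marginalizes instead of conditioning. This forces it to face exactly the difficulty you sidestepped: it bounds $\W(\Phi,v)=O(\ln n)$ uniformly over all $\Phi\in\mathcal{L}(G,q)$, including instances in which high-degree vertices are free. The argument runs as follows.
\begin{itemize}
\item Nodes within distance $r$ of $V^{\HD}$ or of the unique local cycle are marked bad and use a sum rule.
\item Good nodes use the contracting max rule coming from the CLMM tree bounds and the one-disagreement coupling.
\item The messages are dominated by a marked Galton--Watson tree with finite exponential moments.
\item The self-bounding inequality $\W(G,q)\le C\ln n+\eps_n\W(G,q)$ closes the argument.
\end{itemize}

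Only with this bound does spectral independence pass, as a principal submatrix, to the marginal $\nu=\mu_U$ with $U=V\setminus N_G[V^{\HD}]$. The one-step buffer makes every coloring of $G[U]$ extend to $G$. Hence $\nu$ has constant marginal bound $q^{-(D+1)}$ and connected conditional supports.

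The uniform block factorization is then applied to $\nu$ with random blocks $T$ of linear size. A buffered percolation lemma shows that $G[S\cup T]$ has only $O(\ln n)$-size components, except with probability $n^{-a-2}$ over $T$. The $b^{-O(\ln n)}=n^{O(1)}$ loss is therefore paid only inside those components.

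The buffer is restored through the block dynamics with blocks $S\cup\{u\}$. Its gap is $\min\{|U|,\mathrm{gap}(\mathcal{L}^\nu)\}$ by an orthogonal decomposition of $L^2(\mu)$. This is followed by a $q^{O(\ln n)}$ canonical-path comparison of the kind you describe.

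\textbf{What you need to add.} You need both ingredients: independence bounds for a law in which the buffer is free, and the marginalize-then-percolate mechanism in place of super-spin blocks.
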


\Cref{thm:rapid-mixing} closes the asymptotic gap between the previously known rapid-mixing bound for Glauber dynamics on sparse random graphs and the tree uniqueness threshold.
In particular, it shows that the presence of vertices whose degrees grow with $n$ does not preclude polynomial-time mixing when the number of colors exceeds the average degree by an arbitrarily small fixed fraction and $d$ is sufficiently large.
%Since the Glauber dynamics has the uniform coloring measure as its stationary distribution, the theorem also yields an $\FPAUS$ with high probability in this regime.

\subsection{Proof overview}
The proof of \Cref{thm:rapid-mixing} has three main ingredients: a logarithmic bound on coupling and spectral independence, an entropy-factorization argument using a buffer around high-degree vertices, and a comparison between heat-bath block dynamics and the original single-site Glauber dynamics.

\subsubsection*{First step: spectral and coupling independence}
Our starting point is \emph{spectral independence}, a framework for translating bounds on conditional influences into rapid mixing.
\begin{definition}[Influence matrix and spectral independence \cite{CGSV21}]
    Given a distribution $\mu$ over $\Omega \subseteq [q]^V$, for every boundary $\Lambda \subseteq V$ and feasible boundary condition $\tau \in [q]^{\Lambda}$ (\IE, $\mu(\cdot \midl \tau)$ is well-defined), its influence matrix is indexed by pairs $u \gets i$ with $u \in V\setminus\Lambda$ and $\mu(u \gets i \midl \tau)>0$.
    For two such pairs $u \gets i$ and $v \gets j$ with $u\ne v$, define
    \begin{align*}
        \Psi_{\mu}^{\tau}(u \gets i, v \gets j) \defeq \mu(v \gets j \midl u \gets i, \tau) - \mu(v \gets j \midl \tau)
    \end{align*}
    with zero on all entries with $u = v$.\footnote{An alternative definition for $\Psi_{\mu}^{\tau}$ is introduced in \cite{FGYZ22}.}
    We say $\mu$ is \emph{$\gamma$-spectrally independent} if for every feasible $\Lambda \subseteq V$ and $\tau \in [q]^{\Lambda}$, $\lambda_{\max}(\Psi_{\mu}^{\tau}) \le \gamma$.
\end{definition}

Spectral independence was first introduced in \cite{ALOG24}, building on the framework of high-dimensional expanders \cite{AL20}, to establish rapid mixing for the hard-core model up to the uniqueness threshold.
It was subsequently developed for proper colorings on bounded-degree triangle-free graphs \cite{FGYZ22,CGSV21} and applied to a variety of spin systems, yielding rapid or optimal mixing bounds and further local-to-global principles \cite{CLV23,CLV26,JPV22,BCCPSV22,CFYZ22,CE22}.
For sparse random graphs, \cite{BGGS24} established spectral independence for certain two-spin systems, including the hard-core model, using a branching-value analysis, while \cite{EF23} sharpened the bounds and obtained rapid mixing of Glauber dynamics.
It still remains open how to establish the spectral independence for multi-spin systems on random graphs.

We address these challenges through \emph{coupling independence}, introduced in \cite{CZ23}.
\begin{definition}[Coupling independence \cite{CZ23}] \label{def:coupling-independence}
    A distribution $\mu$ over $\Omega \subseteq [q]^V$ is said to be \emph{$C$-coupling independent} if for every boundary $\Lambda \subseteq V$ and every feasible partial pinnings $\sigma, \tau$ on $\Lambda$ which differ only at exactly one coordinate, it holds that
    \begin{align*}
        \W_1(\mu(\cdot \midl \sigma), \mu(\cdot \midl \tau)) \le C
    \end{align*}
    where
    \begin{align*}
        \W_1(\mu, \nu) \defeq \inf_{\gamma\; \text{is a coupling of}\; (\mu, \nu)} \E[(X, Y) \sim \gamma]{\sum_{u \in V} \id{X(u) \neq Y(u)}}
    \end{align*}
    is the \emph{Wasserstein-$1$ distance} of $\mu$ and $\nu$.
    Denote by $\CI(\mu)$ the coupling independence of $\mu$, \IE, the minimal $C$ such that $\mu$ is $C$-coupling independent.
\end{definition}

Coupling independence is closely related to spectral independence and can be more convenient to establish by recursive coupling arguments.
Recent work has used this approach to study the Glauber dynamics for proper colorings on bounded-degree graphs and graphs with large girth \cite{CLMM26,CF25}.

To show \Cref{thm:rapid-mixing}, we establish the following technical theorem, which is the first result extending coupling independence of proper coloring models to sparse \Erdos-\Renyi random graphs.
\begin{theorem}[Coupling independence] \label{thm:coupling-independence}
    Fix $\eta \in (0, 1)$.
    For every sufficiently large positive real $d > 1$ and every integer $q \ge (1 + \eta)d$, with probability at least $1 - o_n(1)$ over the \Erdos-\Renyi random graph $G \sim \G(n, d/n)$ for all sufficiently large integers $n$, it holds that
    \begin{align*}
        \CI(\mu_{G, q}) \le C \ln n
    \end{align*}
    where $C = C(\eta, d) > 0$ is a constant depending only on $\eta$ and $d$.
\end{theorem}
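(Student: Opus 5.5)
The plan is to bound the Wasserstein-$1$ distance between $\mu_{G,q}(\cdot\mid\sigma)$ and $\mu_{G,q}(\cdot\mid\tau)$ by an explicit coupling. The pinnings $\sigma,\tau$ on $\Lambda$ differ only at a single vertex $v$. We grow a disagreement tree from $v$ and control its expected size by a weighted branching argument, following the recursive coupling analysis of \cite{CLMM26}.

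Concretely, we construct the coupling vertex by vertex. When the discrepancy at $v$ is revealed, its neighbours are coupled optimally: a neighbour $u$ disagrees only if one of the two colours involved at $v$ is the colour $u$ would otherwise take. On a tree with $q\ge(1+\eta)\Delta$, this gives each child a disagreement probability of roughly $1/(q-\deg(u))$. Because of the Kempe-type structure, only a two-colour disagreement propagates. Summing over children, the expected number of disagreements contributed by a vertex $w$ is about $\deg(w)/(q-O(1))$. The $O(1)$ correction is absorbed using marginal lower bounds (local uniformity) once $d$ is large.

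Three random-graph properties from the structural section would then be invoked. First, every ball of radius $\Theta(\ln n/\ln d)$ contains at most one cycle (tree-like neighbourhoods), so the tree recursion applies locally, with the single cycle handled by cutting it. Second, high-degree vertices (degree above $(1+\eta/2)d$) are sparse: every path of length $\ell\ge \ln n$ contains at most an $\eps$-fraction of such vertices, with $\eps=\eps(\eta)$ small. Third, the total weight of paths is controlled: summing $\prod_{w\in P}\deg(w)/q$ over paths $P$ from $v$ of length $\ell$ gives at most $n^{O(1/\ln d)}(1+\eta/3)^{-\ell}$, up to a factor absorbed into $C\ln n$.

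With these in hand, the expected number of disagreements is bounded by the sum over self-avoiding paths from $v$ of the product of per-step disagreement probabilities. Paths of length at most $L=C'\ln n$ contribute at most $\ln n$ times a constant, because high-degree vertices are too sparse along paths to amplify the weight by more than a bounded factor per length-$\ln n$ segment. Paths longer than $L$ contribute $o(1)$ by the geometric decay. Together these give $\W_1\le C\ln n$.

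The main obstacle is bounding the per-step disagreement probability at and near high-degree vertices. Their degree can be $\Theta(\ln n/\ln\ln n)\gg q$, so the naive bound $1/(q-\deg)$ becomes meaningless, and pinnings in $\Lambda$ can make marginals degenerate. My first attempt is to charge such a vertex probability $1$, and then use the path-sparsity property to show that each path passes through only $O(\ell/\ln n\cdot\text{const})$ of them, with each such vertex also costing a branching factor. This follows the "$\ln n$ per block" accounting of the two-spin random-graph analyses \cite{BGGS24,EF23}. It is exactly where the logarithmic, rather than constant, bound would come from.
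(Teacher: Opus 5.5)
There is a genuine gap at the core of your argument: the one-step disagreement estimate cannot carry you to $q=(1+\eta)d$. You first say a child $u$ of a disagreeing vertex disagrees with probability about $1/(q-\deg(u))$. You then say that summing over children gives about $\deg(w)/(q-O(1))$. These do not agree. With $\deg(w)\approx\deg(u)\approx d$, the first estimate gives branching $d/(q-d)\approx 1/\eta>1$, and the gap between $q-\deg(u)$ and $q-O(1)$ is $\Theta(d)$, not $O(1)$.

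Local uniformity does not rescue this. Coupling independence must hold under every feasible pinning, and pinning the children of $u$ to distinct colours concentrates $u$'s subtree marginal on about $q-\deg(u)$ colours. Even where local uniformity does hold, it only gives per-vertex branching about $(d/q)e^{d/q}$. That is below $1$ only when $q/d>\alpha^*\approx1.763$, which is precisely the barrier of \cite{EHSV18}. More generally, summing over self-avoiding paths of products of worst-case one-step influences (disagreement percolation) stalls near $2d$, or $\alpha^* d$ with local uniformity. Single-step influences are simply not contractive at $(1+\eta)d$.

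What is missing is the input from \cite{CLMM26} that you cite but do not use. It gives strong spatial mixing and total influence decay on trees at $q\ge(1+\eta)\Delta$, uniformly over pinnings, and the paper extends it to list colourings with degree slack. This yields $\sum_{u\in S(v,R)}d_{\mathrm{TV}}\le\mathrm{poly}(d)\,e^{-c_\eta R}$, but only after $R=\Theta_\eta(\ln d)$ levels. The paper applies it at vertices whose radius-$r$ ball is a tree with no high-degree vertex (\Cref{cor:local-TID-on-tree}). It then uses a one-disagreement coupling across the sphere $S(v,R)$ to get the block recursion $W(\Phi,v)\le (D^R+1)+e^{-\kappa R}H_{D^R}\max W(\Phi',u)$ of \Cref{lem:Wasserstein-distance-recursive-bound}. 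This contraction beats both the harmonic-number loss and the $R\rho^{-R}$ rerooting loss.

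The remaining parts also need more than you give.

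\begin{itemize}
\item \textbf{High-degree vertices.} Charging them fully is the right instinct; the paper's bad nodes use a sum rule. But you then need a tail bound on the resulting quantity that holds simultaneously for all $n$ roots and all pinnings. Your path-weight property is unproved. As written, $\sum_P\prod_{w\in P}\deg(w)/q$ double-counts branching and grows like $(d^2/q)^\ell$. The paper instead dominates the marked message by one on a Poisson$(d+1)$ Galton--Watson tree, proves a finite exponential moment, and union-bounds to get $O(\ln n)$ (\Cref{lem:message-estimation}). That union bound is where the logarithm comes from.
\item \textbf{The cycle.} It cannot simply be cut, since deleting an edge changes the measure. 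The paper contracts it into a bad node.
\item \textbf{Long paths.} Beyond radius $\Theta(\ln n/\ln d)$ the graph is no longer tree-like. So ``long paths contribute $o(1)$ by geometric decay'' has no per-step bound behind it. The paper closes this self-referentially: boundary vertices are charged $W(G,q)$, and every path to the boundary passes through $\Omega(L)$ good nodes (\Cref{lem:many-good-nodes-to-boundary}). This gives $W(G,q)\le C\ln n+\varepsilon_n W(G,q)$.
\end{itemize}
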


As a consequence, we obtain a spectral-independence bound of the same order.
\begin{corollary}[Spectral independence] \label{cor:spectral-independence}
    Fix $\eta \in (0, 1)$.
    For every sufficiently large positive real $d > 1$ and every integer $q \ge (1 + \eta)d$, with probability at least $1 - o_n(1)$ over the \Erdos-\Renyi random graph $G \sim \G(n, d/n)$ for all sufficiently large integers $n$, $\mu_{G, q}$ is $O_{\eta, d}(\ln n)$-spectrally independent.
\end{corollary}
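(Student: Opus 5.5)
The corollary follows from \Cref{thm:coupling-independence} through the standard implication from coupling independence to spectral independence. We bound the spectral radius of each influence matrix by an absolute row sum, and that row sum is controlled by a Wasserstein-$1$ distance between two conditional measures that differ at a single pinned vertex. Fix $G$ in the high-probability event of \Cref{thm:coupling-independence}, so that $\CI(\mu_{G,q}) \le C\ln n$. Fix a feasible boundary $\Lambda$ with pinning $\tau$, and a row index $u \gets i$.

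First, since $\lambda_{\max}(\Psi_\mu^\tau) \le \norm{\Psi_\mu^\tau}_{\infty}$, it suffices to bound
\[
\sum_{(v,j):\, v\neq u}\abs{\mu(v\gets j\mid u\gets i,\tau)-\mu(v\gets j\mid\tau)} \;\le\; 2\sum_{v\ne u} d_{\mathrm{TV}}\bigl(\mu_v(\cdot\mid u\gets i,\tau),\,\mu_v(\cdot\mid\tau)\bigr).
\]
Second, we write $\mu(\cdot\mid\tau)$ as the mixture $\sum_k \mu(u\gets k\mid\tau)\,\mu(\cdot\mid u\gets k,\tau)$. Joint convexity of total variation then bounds each summand by $\sum_k \mu(u\gets k\mid\tau)\, d_{\mathrm{TV}}(\mu_v(\cdot\mid u\gets i,\tau),\mu_v(\cdot\mid u\gets k,\tau))$. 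Third, the pinnings $\tau\cup\{u\gets i\}$ and $\tau\cup\{u\gets k\}$ live on the same boundary $\Lambda\cup\{u\}$ and differ at exactly one coordinate, and both are feasible whenever $\mu(u\gets k\mid\tau)>0$. For any coupling of the two conditional measures, summing over $v$ the per-vertex disagreement probabilities gives at most the expected Hamming distance, which in turn upper bounds $\sum_v d_{\mathrm{TV}}$. Hence
\[
\sum_{v\ne u} d_{\mathrm{TV}}\bigl(\mu_v(\cdot\mid u\gets i,\tau),\mu_v(\cdot\mid u\gets k,\tau)\bigr)\le \W_1\bigl(\mu(\cdot\mid u\gets i,\tau),\mu(\cdot\mid u\gets k,\tau)\bigr)\le C\ln n.
\]
Averaging over $k$ yields a row sum of at most $2C\ln n$, so $\lambda_{\max}(\Psi^\tau_\mu)\le 2C\ln n$ uniformly in $\Lambda$ and $\tau$. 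This gives $O_{\eta,d}(\ln n)$-spectral independence on the same event of probability $1-o_n(1)$.

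No step is a real obstacle. The only points needing care are that the pinnings compared by \Cref{thm:coupling-independence} must be feasible, which holds because we only compare against colors $k$ of positive conditional probability, and that they must share the same boundary and differ at exactly one site, which holds since both extend $\tau$ by a single vertex $u$. The factor $2$ from passing between $\ell_1$ distance and total variation is absorbed into the constant.
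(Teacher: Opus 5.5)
Your proposal is correct and follows the paper's own proof essentially step for step. Bound $\lambda_{\max}(\Psi_\mu^\tau)$ by the maximum absolute row sum, expand $\mu(\cdot\mid\tau)$ as a mixture over the color of the source vertex, pass from per-vertex total variation to the Wasserstein-$1$ distance between the two pinnings differing only at $u$, and conclude $\lambda_{\max}(\Psi_\mu^\tau)\le 2\CI(\mu_{G,q})=O_{\eta,d}(\ln n)$ via \Cref{thm:coupling-independence}. You also explicitly check that the compared pinnings are feasible and live on the common boundary $\Lambda\cup\{u\}$, which the paper leaves implicit.
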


\begin{remark}
    The logarithmic orders in \Cref{thm:coupling-independence,cor:spectral-independence} are sharp for every fixed admissible pair $(d,q)$.
    In \Cref{prop:independence-lower-bounds}, we show that, with high probability, suitable feasible pinnings of an isolated tree component give $\Omega_{d,q}(\ln n)$ lower bounds for both independence parameters.
\end{remark}

The proof of \Cref{thm:coupling-independence} combines the tree-based coupling method of \cite{CLMM26} with the local structure of sparse random graphs.
Although neighborhoods of logarithmic radius are trees or unicyclic with high probability, high-degree vertices prevent a direct application of bounded-degree correlation-decay estimates.
We overcome this difficulty by introducing a message-passing process on locally tree-like graphs whose nodes are marked as good or bad.
Good nodes lie sufficiently far from high-degree vertices and enjoy strong spatial mixing and total influence decay inherited from bounded-degree trees; bad nodes are handled by a more conservative message recursion.
A comparison with a suitably marked Galton--Watson tree then bounds these messages and yields $O(\ln n)$ coupling independence uniformly over all feasible boundary conditions.

\subsubsection*{Second step: high-degree buffer and entropic factorization}
We next adapt the entropy-factorization framework of \cite{BGGS24} from two-spin systems to proper colorings.

Turning logarithmic spectral independence into a polynomial mixing bound requires special care because the coloring model has hard constraints and the random graph contains vertices of high degree.
We choose a fixed degree cutoff and introduce a one-step buffer consisting of all high-degree vertices and their neighbors.
Letting $U$ denote the remaining low-degree vertices, we study the marginal coloring distribution on $U$.
The buffer ensures that every proper coloring of the induced subgraph on $U$ extends to a proper coloring of the original graph; together with connected conditional supports, marginal boundedness, and spectral independence, this makes it possible to apply a uniform block entropy-factorization theorem in \cite{CLV26,BGGS24}.

To pass from block entropy factorization to a single-site inequality, we analyze the geometry of a randomly chosen block together with the buffered high-degree region.
A percolation argument adapted to the buffer shows that, except for a sufficiently small probability over the random block, every connected component of the induced subgraph has size $O(\ln n)$.
On such components, an elementary entropy tensorization bound incurs only a polynomial loss.
Combining this local bound with uniform block entropy factorization yields an approximate single-site entropy tensorization inequality with a polynomial factor, and hence polynomial relaxation time for the marginal Glauber dynamics on $U$.

\subsubsection*{Final step: Markov chain comparison}
The mixing bound for the marginal distribution does not immediately imply rapid mixing of the original Glauber dynamics, since the high-degree buffer has been marginalized out.
To recover the full coloring distribution, we introduce a heat-bath block dynamics in which each update resamples one vertex of $U$ together with the buffer.
An orthogonal decomposition of $L^2(\mu_{G,q})$ relates the spectral gap of this block dynamics to that of the marginal Glauber dynamics.
We then use the logarithmic size of the relevant connected components to bound the relaxation times of the conditional single-site chains within each block.
Finally, a comparison of Dirichlet forms transfers the polynomial relaxation-time bound to the original single-site Glauber dynamics and completes the proof of \Cref{thm:rapid-mixing}.

\subsection{Discussion}
We conclude the introduction with two directions suggested by our results.

\subsubsection*{Coloring random graphs with fewer colors}
%As mentioned above, the clustering threshold of $\mu_{G, q}$ is $d = q\ln{q}$ predicted by the cavity method in \cite{ZK07} and $G$ is colorable when $d < 2q\ln{q}$ \cite{AN05} with high probability.
%These phenomena indicate a possible algorithmic implication for estimating proper $q$-colorings with fewer colors, for example, $q = d - O(1)$ or $q = \Theta(d/\ln{d})$.
%Understanding how the mixing behavior of natural Markov chains relates to the clustering and condensation transitions remains an important open problem.
For large $q$, the clustering and colorability thresholds of sparse random graph colorings occur at average degrees of order $q\ln q$ \cite{ZK07,AN05}.
These phenomena suggest that the geometry of the coloring space may remain interesting well below the tree uniqueness threshold.
A natural direction is to determine whether efficient sampling is possible with fewer colors, for example when $q=d-O(1)$ or even $q=\Theta(d/\ln d)$.

%\subsubsection*{Random sampling vs. deterministic approximate counting}
%\Cref{thm:rapid-mixing} provides an $\FPAUS$ for $\mu_{G, q}$ with high probability when $q = (1 + o_d(1))d$.
%We remark that this $\FPAUS$ also works for list-coloring instances under the same hypothesis.
%and by self-reducibility, it provides an $\FPRAS$ for $\abs{\Omega_{G, q}}$.
%Recent work \cite{LY26} based on strong spatial mixing and a recursive framework by linear-fractional programming designs an $\FPTAS$ for proper $q$-colorings on random graphs when $q \ge (2 + o_d(1))d$.
%It still remains open whether $\mu_{G, q}$ exhibits strong spatial mixing when $q \ge (1 + o_d(1))d$ and thus there is an $\FPTAS$ for $\abs{\Omega_{G, q}}$.

\subsubsection*{Random sampling vs. deterministic approximate counting}
\Cref{thm:rapid-mixing} gives an $\FPAUS$ for $\mu_{G,q}$ with high probability whenever $q\ge(1+\eta)d$ for any fixed $\eta>0$ and sufficiently large $d$.
On the random-graph event used in the proof, a polynomial-time initial coloring is obtained by first three-coloring every tree or unicyclic component of $G[V^{\HD}]$ and then greedily coloring the remaining vertices, whose degrees are at most $D<q$.
The coupling-independence analysis also applies to the feasible list-coloring instances satisfying the list-size slack condition introduced in \Cref{sec:notation}.
On the deterministic side, recent work \cite{LY26} combines strong spatial mixing with a recursive linear-fractional-programming framework to obtain an $\FPTAS$ for counting proper colorings on sparse random graphs when $q\ge(2+o_d(1))d$.
Note that \Cref{prop:independence-lower-bounds}  implies that the Gibbs distribution does not exhibit strong spatial mixing at the threshold $q=(1+o_d(1))d$.
It remains open whether we can obtain deterministic approximation algorithms for $\abs{\Omega_{G,q}}$ without SSM.

\subsection{Organization of this paper}
\Cref{sec:notation} introduces the notation and the family of feasible list-coloring instances used throughout the paper.
In \Cref{sec:random-graph-property}, we establish the structural properties of sparse \Erdos--\Renyi random graphs needed in the analysis.
\Cref{sec:coupling-independence} proves \Cref{thm:coupling-independence} and \Cref{cor:spectral-independence} by combining the coupling method of \cite{CLMM26} with recursive messages on marked Galton--Watson trees. It also establishes matching logarithmic lower bounds for both independence parameters.
Finally, in \Cref{sec:rapid-mixing}, we complete the proof of  \Cref{thm:rapid-mixing} via entropy factorization and Markov chain comparison.

\subsection*{AI disclosure}
ChatGPT Pro 5.6 Sol and 6 Astra help to generate the proof of coupling independence after the authors provide the rough idea and related references.
The authors manage to accomplish the proof idea of the rapid mixing from spectral independence with GPT's help to handle the high-degree buffers.
Heavy calculations are provided by agents and verified by authors.
The authors rewrite the proof and take full responsibility for the paper.

%% file: section/notation.tex
\section{Notations} \label{sec:notation}

Before all, we fix some notations here to help understand our paper.
Let $\mathbb R$ be all real numbers, $\mathbb R_{> 0}$ be all positive real numbers and $\mathbb N$ be all natural numbers.
For a natural number $n \in \mathbb N$, denote by $[n]$ the set $\set{1, \ldots, n}$.
We use $\e$ to denote the natural logarithm base and $\ln$ to denote the natural logarithm function.
Let $H_N$ be the $N$-th harmonic number, \IE, $H_N = \sum_{j = 1}^N \frac{1}{j}$.
For an event $\mathcal{E}$, we also view it as a subset of the total universe and hence use $\overline{\mathcal{E}}$ to denote its complement.
The indicator function of $\mathcal{E}$ is denoted by $\id{\mathcal{E}}$.

A graph $G$ consists of a vertex set $V(G)$ and an edge set $E(G) \subseteq \binom{V(G)}{2}$.
When the context is clear, we simply use $V$ and $E$ to denote $V(G)$ and $E(G)$ respectively.
For a subset $S \subseteq V$, we use $N_G(S)$ to denote the neighborhood of $S$ in $G$ and set $N_G[S] \defeq S \cup N_G(S)$.
We use $\deg_G(\cdot)$ to denote the degree function of $G$.
Let $\dist_G(\cdot, \cdot)$ be the distance metric in $G$.
Let $B_G(v, \ell)$ be the $\ell$-radius ball centered at $v$ in $G$ and $S_G(v, \ell)$ be the $\ell$-radius sphere at $v$ in $G$, \IE,
\begin{align*}
    B_G(v, \ell) \defeq \set{u \in V(G) \mid \dist_G(u, v) \le \ell}, \quad
    S_G(v, \ell) \defeq \set{u \in V(G) \mid \dist_G(u, v) = \ell}.
\end{align*}
Furthermore, for a vertex subset $S \subseteq V$, let $B_G(S, \ell) = \cup_{v \in S} B_G(v, \ell)$ be the collection of vertices within distance $\ell$ from $S$.
With abuse of notations, we sometimes use the vertex subset $S$ to denote the induced subgraph $G[S]$ when $G$ is clear in context.

Given a finite universe $U$ and an assignment $\sigma : U \to [q]$, we use $\sigma(S)$ to denote the partial assignment of $\sigma$ on $S$.
For a subset $\Lambda \subseteq U$, the notation $\sigma_{\Lambda}$ is abused to denote a partial assignment on $\Lambda$.
When context is clear, we drop braces and use the element itself to denote the singleton set containing it.
Meanwhile, we sometimes use $\sigma(v)$ to denote the color of $\sigma$ on $v$ as well and use the term $v \gets c$ to denote the event that the vertex $v$ is assigned value $c$.

\subsubsection*{Proper colorings and list-coloring instances}
For a graph $G$ and a color number $q \ge 3$, a feasible list-coloring instance $\Phi = (G_\Phi, L_{\Phi} = \set{L_\Phi(v)}_{v \in V_\Phi})$ consists of an induced subgraph $G_\Phi=G[V_\Phi]$ and color lists $L_\Phi(v) \subseteq [q]$ for every $v \in V_\Phi$ such that $\Phi$ admits at least one proper list-coloring and
\begin{align*}
    \forall v \in V_\Phi, \quad \abs{L_\Phi(v)} - \deg_{G_\Phi}(v) \ge q - \deg_G(v).
\end{align*}
Let $\mathcal L(G, q)$ collect all such feasible list-coloring instances, and for each $\Phi\in\mathcal L(G,q)$ let $\mu_\Phi$ be the uniform distribution on its nonempty set of proper list-colorings.
Feasibility is a separate requirement: the list-size slack inequality alone need not imply the existence of a proper list-coloring.
Every feasible pinning of an instance in $\mathcal L(G,q)$ induces another instance in $\mathcal L(G,q)$ by deleting the pinned vertices and removing their colors from the lists of their unpinned neighbors.

\subsubsection*{Probability theory}
We use standard notations for probability theory.
For simplicity, when the context is clear, we abuse the usage of a random variable and its law.
For an event $\mathcal E$, we use the term `$\cdot \;|\; \mathcal E$' to denote random variables conditional on $\mathcal E$.
Let $\mu$ be a probability distribution on $\Omega \subseteq [q]^U$ where $q$ is the number of colors and $U$ is a finite universe.
A partial assignment $\tau$ on a boundary $\Lambda \subseteq U$ is feasible if it can be extended to an assignment on $U$ with positive mass in $\mu$.
We use $\mu^{\tau}$ to denote the probability distribution induced by $\mu$ conditional on pinning $\tau$ on $\Lambda$ and $\mu_{S}^{\tau}$ to denote the marginal of $\mu^{\tau}$ projected onto $S$.
For two random variables $X, Y$, we say $X$ is dominated by $Y$ (denoted by $X \preceq Y$) if there exists a coupling $\gamma$ of $(X, Y)$ such that
\begin{align*}
    \Pr[(X, Y) \sim \gamma]{X \le Y} = 1.
\end{align*}
For $p \ge 1$, define the $\ell_p$-norm of a random variable $X$ as
\begin{align*}
    \norm{X}_p \defeq \E{\abs{X}^p}^{1/p}.
\end{align*}

%% file: section/random-graph-property.tex
\section{Random Graph Properties} \label{sec:random-graph-property}

Before proving our main results \Cref{thm:rapid-mixing,thm:coupling-independence}, we investigate some structural properties of random graphs.
These properties provide some intuitions on sparse random graphs $\G(n, d/n)$, which play crucial roles in our analysis.

\subsection{Locality of random graphs}
In this part, we consider the locality of random graphs.
Let $\LocalTreeOrUnicyclic_{\ell}$ be the event that for every vertex $v \in V$, the ball $B_G(v, \ell)$ is tree or unicyclic.
Set
\begin{align*}
    L \defeq \left\lfloor \frac{\ln n}{1000\ln{(2\e d)}} \right\rfloor.
\end{align*}
% The following lemma shows that the event $\LocalTreeOrUnicyclic_{\ell}$ holds with high probability for some $\ell = O(L)$, whose proof is provided in \Cref{subsec:random-graph-locality}.
The following lemma shows that the event $\LocalTreeOrUnicyclic_{\ell}$ holds with high probability for some $\ell = O(L)$.
\begin{lemma} \label{lem:locally-tree-or-unicyclic}
    For sufficiently large positive integer $n$, it holds that
    \begin{align*}
        \Pr[G \sim \G(n, d/n)]{G \in \LocalTreeOrUnicyclic_{5L}} \ge 1 - o_n(1).
    \end{align*}
\end{lemma}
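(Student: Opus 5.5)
We argue by a first-moment bound on small dense connected subgraphs. If some ball $B_G(v,5L)$ is neither a tree nor unicyclic, then the connected induced subgraph $G[B_G(v,5L)]$ has at least two independent cycles. We then extract a small connected subgraph $H$ with $|E(H)|\ge |V(H)|+1$ and $|V(H)|\le 20L+2$.

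The extraction goes as follows. Take a BFS tree $T$ of the ball rooted at $v$; it has depth at most $5L$. There are at least two non-tree edges $e_1=\{a_1,b_1\}$ and $e_2=\{a_2,b_2\}$. Let $H$ be the union of the tree paths from $v$ to $a_1,b_1,a_2,b_2$, together with $e_1$ and $e_2$. This $H$ is connected. It has at most $4\cdot 5L+1$ vertices, and its edge count is its vertex count minus one plus two. So $H$ has $k\le 20L+1$ vertices and at least $k+1$ edges.

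Next we bound the expected number of such configurations. For fixed $k$, the number of connected graphs on a given $k$-set with $k+1$ edges is at most $k^{k-2}\binom{k}{2}^2\le k^{k+2}$: a spanning tree by Cayley's formula plus two extra edges. Hence
\begin{align*}
  \E{\#\{H\}} \le \sum_{k=3}^{20L+1}\binom{n}{k}k^{k+2}\Bigl(\frac dn\Bigr)^{k+1}
  \le \sum_{k}\frac{n^k}{k!}k^{k+2}\frac{d^{k+1}}{n^{k+1}}
  \le \frac{d}{n}\sum_k k^2(\e d)^k .
\end{align*}
Here we used $k^k/k!\le \e^k$. The sum is at most $(20L+1)^3(\e d)^{20L+1}$. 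Since $L\le \ln n/(1000\ln(2\e d))$, we have $(\e d)^{20L}\le (2\e d)^{20L}\le n^{1/50}$. The whole bound is therefore $O\bigl(d^2 (\ln n)^3 n^{1/50-1}\bigr)=o_n(1)$. Markov's inequality then shows that with probability $1-o_n(1)$ no such $H$ exists, and hence $\LocalTreeOrUnicyclic_{5L}$ holds. Presence of edges in a fixed subgraph on a labeled set occurs with probability exactly $(d/n)^{|E(H)|}$, and we only need a superset event, so the union bound is legitimate.

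The main care point is the combinatorial reduction: making sure that "not tree or unicyclic" really yields a connected subgraph with excess at least $2$ and diameter-bounded size $O(L)$, rather than bounding over the whole ball. The BFS-tree argument handles this. The rest is routine, given that the constant $1000$ in $L$ leaves ample room.
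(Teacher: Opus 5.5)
Your proof is correct and follows the paper's argument: the paper uses the same BFS-tree extraction, taking tree paths from $v$ to the endpoints of two non-tree edges, to get a connected subgraph on at most $20L+1$ vertices with two surplus edges. It then applies the same first-moment count (Cayley's formula for a spanning tree plus two extra edges, against $(d/n)^{k+1}$), and the differences are only cosmetic bookkeeping, such as $\binom{k}{2}^2$ versus $k^4$ and your cruder $m^3(\e d)^m$ bound on the sum. The $20L+2$ in your first paragraph should be $20L+1$, which is the bound you actually use.
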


We firstly provide the following probability of the occurrence of a subgraph containing two small cycles.
\begin{lemma} \label{lem:two-surplus-tree-probability}
    Let $\TwoSurplus_m$ be the event that $G$ contains a connected subgraph on at most $m$ vertices with at least two surplus edges.
    Then for $d > 1$, it holds that
    \begin{align*}
        \Pr[G \sim \G(n, d/n)]{G \in \TwoSurplus_m} \le \frac{\e d^2}{n(\e d - 1)} m^2 (\e d)^m.
    \end{align*}
\end{lemma}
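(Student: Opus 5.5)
We prove the bound by a first-moment count over minimal witnesses. Suppose $G$ contains a connected subgraph $H$ with $|V(H)|=k\le m$ and $|E(H)|\ge k+1$. Then $H$ contains a minimal witness $H'$: a connected subgraph with $k'\le k$ vertices and exactly $k'+1$ edges. To build it, take a spanning tree $T$ of $H$ and two non-tree edges $e_1,e_2$ of $H$, and consider the subgraph consisting of $e_1$, $e_2$, and the tree paths joining their endpoints. Trimming leaves from $T\cup\{e_1,e_2\}$ also works, since leaf deletion keeps the subgraph connected and keeps the surplus at $2$. So it suffices to bound the expected number of connected subgraphs on exactly $k$ vertices with exactly $k+1$ edges, for $k\le m$, and then apply Markov's inequality together with a union bound over $k$.

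Fix $k$ and a vertex set $S\subseteq[n]$ of size $k$. Every such subgraph on $S$ is a spanning tree of $K_S$ together with two further edges. By Cayley's formula there are $k^{k-2}$ spanning trees, and at most $\binom{k}{2}^2\le k^4/4$ choices for the pair of extra edges. This overcounts, which is harmless. Each specific set of $k+1$ edges is present with probability $(d/n)^{k+1}$. The expected count is therefore at most
\begin{align*}
\binom{n}{k}\, k^{k-2}\,\frac{k^4}{4}\,\Bigl(\frac{d}{n}\Bigr)^{k+1}
\le \frac{n^k}{k!}\,k^{k+2}\,\frac{d^{k+1}}{n^{k+1}}
\le \frac{1}{n}\,\frac{(\e d)^k\, k^{2}\, d}{\sqrt{2\pi k}}\cdot\frac{1}{?},
\end{align*}
where the last step uses $k!\ge (k/\e)^k$, which gives $n^k k^{k+2}/k!\le n^k\e^k k^2$. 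This yields a bound of roughly $\frac{d}{n}k^2(\e d)^k$. Summing over $k\le m$ gives
\begin{align*}
\sum_{k\le m}\frac{d}{n}k^2(\e d)^k \le \frac{d}{n}\,m^2\sum_{k\le m}(\e d)^k\le \frac{d}{n}\,m^2\frac{(\e d)^{m+1}}{\e d-1}=\frac{\e d^2}{n(\e d-1)}m^2(\e d)^m,
\end{align*}
which is exactly the claimed form. The geometric sum uses $\e d>1$, which holds since $d>1$.

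The only delicate point is the constant bookkeeping in the first display. Specifically, we need the estimate $\binom{n}{k}k^{k-2}\binom{k}{2}^2 \le n^k\e^k k^2$ to hold cleanly. This follows from $\binom{n}{k}\le n^k/k!$, from $k!\ge (k/\e)^k$, and from $\binom{k}{2}^2\le k^4$. With these, the count is $\frac{n^k}{k!}k^{k-2}k^4\le n^k\e^k k^2$, so the slack from the factor $1/4$ and from Stirling is not even needed. No other step poses an obstacle: the reduction to a minimal witness is standard, and Markov's inequality converts the expectation bound into the probability bound for $\TwoSurplus_m$.
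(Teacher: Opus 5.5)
Your argument is correct and is essentially the paper's proof: a witness made of a spanning tree on $k$ vertices plus two extra edges, counted via Cayley's formula, at most $k^4$ choices of extra edges and $\binom{n}{k}\le(\e n/k)^k$, which gives $\frac{d}{n}k^2(\e d)^k$ for each $k$ and then the same geometric sum over $k\le m$. Two clean-ups: replace the garbled middle display (the one with the ``$?$'') by the estimate $\binom{n}{k}k^{k-2}k^4\le \e^k n^k k^2$ from your last paragraph, and note that $T\cup\{e_1,e_2\}$ is already a witness on exactly $k$ vertices with $k+1$ edges, so you need neither leaf-trimming nor the union of the two fundamental cycles (which can be disconnected).
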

\begin{proof}
    Assume an arbitrary order on vertices and an arbitrary order on all possible edges.
    Fix $k = 1, \ldots, m$.
    For every connected subgraph on $k$ vertices with at least $k + 1$ edges, we construct a witness as: pick a minimal spanning tree firstly and then choose two minimal surplus edges.
    To count the number of such subgraphs, it suffices to count the number of all possible witnesses.
    By Cayley's formula and the bound $\binom{n}{k} \le (\e n/k)^k$, since there are at most $k^4$ possible choices of two surplus edges in a tree of size $k$, the number of possible witnesses on $k$ vertices in $G$ is at most
    \begin{align*}
        \binom{n}{k} k^{k - 2} k^4 \le \frac{\e^k n^k}{k^k} \cdot k^{k + 2} = \e^k n^k k^2.
    \end{align*}
    Therefore
    \begin{align*}
        \Pr[G \sim \G(n, d/n)]{G \in \TwoSurplus_m} &\le \sum_{k = 1}^m \e^k n^k k^2 \cdot \ab(\frac{d}{n})^{k + 1} \\
        &= \sum_{k = 1}^m \frac{d}{n} (\e d)^k k^2 \\
        &\le \frac{\e d^2}{n(\e d - 1)} m^2 (\e d)^m
    \end{align*}
    which concludes the lemma.
\end{proof}

\begin{proof}[Proof of \Cref{lem:locally-tree-or-unicyclic}]
    Suppose that there exists a vertex $v \in V$ such that the $5L$-radius ball $B_G(v, 5L)$ contains two cycles.
    Take a breadth-first spanning tree of the ball and two edges $e_1 = (u, w), e_2 = (x, y)$ (with possibly repeated occurrence of a vertex) outside this spanning tree.
    Consider the subgraph generated by joining tree paths from the endpoints $u, w, x$ and $y$ to $v$ in addition to $e_1$ and $e_2$, \IE,
    \begin{align*}
        v \stackrel{\text{tree path}}{\longrightarrow} u \stackrel{e_1}{\longrightarrow} w \stackrel{\text{tree path}}{\longrightarrow} v \stackrel{\text{tree path}}{\longrightarrow} x \stackrel{e_2}{\longrightarrow} y \stackrel{\text{tree path}}{\longrightarrow} v.
    \end{align*}
    This subgraph is on at most $20L + 1$ vertices and contains two cycles.
    Then it holds that
    \begin{align*}
        \Pr[G \sim \G(n, d/n)]{G \in \overline{\LocalTreeOrUnicyclic_{5L}}} \le \Pr[G \sim \G(n, d/n)]{G \in \TwoSurplus_{20L + 1}}.
    \end{align*}
    Apply \Cref{lem:two-surplus-tree-probability} with $m = 20L + 1$, and we obtain that
    \begin{align*}
        \Pr[G \sim \G(n, d/n)]{G \in \LocalTreeOrUnicyclic_{5L}} \ge 1 - \frac{\e^2 d^3}{\e d - 1} \ab(1 + \frac{\ln n}{50 \ln{(2\e d)}})^2 n^{-49/50} \ge 1 - O_d\ab(n^{-49/50} (\ln n)^2)
    \end{align*}
    which is $1 - o_n(1)$ for every sufficiently large integer $n$.
\end{proof}

\subsection{Small occupancy of high-degree vertices in long paths}
The next property is that, for every long path in $\G(n, d/n)$, although there might be some vertices with high degree in the path, they account for a small proportion.

For $\eta \in (0, 1)$ and $d > 1$, pick the cut-off $D$ as
\begin{align*}
    D \defeq \floor{(1 + \eta/3)d}.
\end{align*}
The high-degree vertex set is defined as
\begin{align*}
    V^{\HD} \defeq \set{v \in V \mid \deg_G(v) > D}.
\end{align*}
Let $\MajorGood_{\ell, r}$ be the event that for every simple path in $\G(n, d/n)$ on $\ell$ vertices (of length $\ell - 1$, equivalently), there are at least $\ell/2$ vertices outside $B_G(V^\HD, r)$.
\begin{lemma} \label{lem:high-degree-small-occupancy}
    On the event $\LocalTreeOrUnicyclic_{5L}$, for sufficiently large $d > 1$ and $r = O_\eta((\ln d)^2)$, it holds that
    \begin{align*}
        \Pr[G \sim \G(n, d/n)]{G \in \MajorGood_{\floor{L/4}, r} \mid \LocalTreeOrUnicyclic_{5L}} \ge 1 - o_n(1).
    \end{align*}
\end{lemma}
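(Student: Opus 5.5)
We bound the failure probability by a first-moment count over paths together with their witnessing high-degree structure. Since conditioning on $\LocalTreeOrUnicyclic_{5L}$ (which has probability $1-o_n(1)$ by \Cref{lem:locally-tree-or-unicyclic}) changes probabilities by at most a factor $1+o_n(1)$, it suffices to show that
\[
\Pr[G \sim \G(n, d/n)]{G \notin \MajorGood_{\floor{L/4}, r},\ G \in \LocalTreeOrUnicyclic_{5L}} = o_n(1).
\]
Fix $\ell = \floor{L/4}$. If the event fails, there is a simple path $P=(v_1,\dots,v_\ell)$ with at least $\ell/2$ vertices in $B_G(V^{\HD}, r)$. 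Each such $v_i$ has a witness path $Q_i$ of length at most $r$ to some $w_i$ with $\deg_G(w_i)>D$. All of these lie inside $B_G(v_1, \ell + r) \subseteq B_G(v_1, 5L)$, so the union $H = P\cup\bigcup_i Q_i$ is a tree or unicyclic. We may therefore assume that $H$ is a tree plus at most one extra edge.

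The next step is to prune the witnesses to a sparse, nearly disjoint family. Greedily select indices $i$ whose witness high-degree vertices $w_i$ are distinct, and discard any index whose $w_i$ was already used. One high-degree vertex $w$ at distance $\le r$ from the path covers path vertices in a window of length at most $2r+1$ around the point where its geodesic meets $P$, because tree geometry (up to one cycle) forces this. Hence at least $k \ge \ell/(2(2r+1))$ distinct high-degree vertices $w_1,\dots,w_k$ are needed. Their geodesics to $P$ form a tree $T$ consisting of $P$ plus $k$ hanging branches of lengths $a_j \le r$. The vertices $w_j$ have degree $>D$ in $G$, and in a tree-like structure most of that degree lies outside $T$: each $w_j$ has at most two $T$-neighbours, plus one more if the cycle passes through it.

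The count then goes as follows. The expected number of copies of a fixed shape with $m$ vertices and $m-1$ edges is at most $n \, d^{m-1}$. Given $T$, the number of $G$-edges from each $w_j$ to $V\setminus V(T)$ is $\mathrm{Bin}(n,d/n)$, and these are nearly independent across $j$, since the edge sets are disjoint apart from the $O(k^2)$ pairs among the $w_j$ themselves, which we handle by allowing degree $D-k$. We bound each by the Poisson tail
\[
\Pr{\mathrm{Po}(d) \ge (1+\eta/3)d - 3} \le \exp(-c_\eta d),
\]
using Chernoff (or the estimate in the appendix on Poisson tails), with $c_\eta \approx \eta^2/30$. We also enumerate attachment positions (at most $\ell^{k}$, or better $\binom{\ell}{k}$) and branch lengths (at most $r^k$). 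The total expected count is at most
\[
n\, d^{\ell-1}\binom{\ell}{k} \bigl(r\, d^{r}\bigr)^{k} \e^{-c_\eta d k}.
\]
This bound is hopeless as written, because $n d^{\ell}$ is polynomially large: a path of length $\ell$ alone exists with high probability. So the estimate has to be a conditional density statement instead. Fix the path, which contributes the factor $n d^{\ell-1}$. The remaining factor must then be $\le n^{-1-\Omega(1)}$, that is,
\[
\binom{\ell}{k}\bigl(\e^{-c_\eta d}\, r\, d^{r}\bigr)^{k} \le n^{-2}
\qquad\text{and}\qquad
n\,d^{\ell} \le n^{1+1/4000}.
\]
With $r$ the branch length and $k \approx \ell/(4r)$, we need $\e^{-c_\eta d}\, d^{r}$ small per branch, which fails since $d^{r}\gg \e^{c_\eta d}$ once $r=\Theta((\ln d)^2)$ is not tiny compared to $d/\ln d$.

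The main obstacle is this: naive union bounds over branch paths lose $d^{r}$ per branch, so we must exploit that a branch's interior vertices also need not be counted. The refined approach I would try first is to instead count, along $P$, the vertices $v_i$ that have many descendants at depth $\le r$. Let $X_i$ be the number of high-degree vertices within distance $r$ of $v_i$ in the subtree hanging off $P$ at $v_i$. In the exploration process the subtrees hanging off distinct path vertices are dominated by independent $\mathrm{Po}(d)$ Galton–Watson trees truncated at depth $r$. The probability that such a tree contains a vertex of degree $>D$ within depth $r$ is at most $\sum_{t\le r} d^{t}\e^{-c_\eta d} \le 2d^{r}\e^{-c_\eta d}$. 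Choosing $r \le c_\eta d/(2\ln d)$, and noting that $(\ln d)^2$ qualifies for large $d$, makes this at most $\e^{-c_\eta d/2}=:p$. The event requires at least $k\ge \ell/(4r+2)$ of the $\ell$ independent hanging trees to be bad, which by Chernoff has probability at most $\binom{\ell}{k}p^{k} \le (\e \ell p/k)^{k} \le (8\e r\, p)^{k} = n^{-\Omega(d/(r\ln d))}$. Here $k=\Theta(\ln n/(r\ln d))$ and $\ell=\Theta(\ln n/\ln d)$, which gives $n^{-\Omega(c_\eta d/((\ln d)^3))}$, beating the $n\, d^{\ell} \le n^{1.001}$ paths for large $d$. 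We also need to account for path vertices that are themselves high degree (branch length $0$), which are absorbed into the same count, and for the single possible cycle, which perturbs at most $O(r)$ path positions and is absorbed by replacing $\ell/2$ with $\ell/2 - O(r)$. I expect making this independence and domination rigorous, via a BFS exploration revealing the path first and then the hanging trees, to be the main technical work.
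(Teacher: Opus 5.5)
Your final exploration argument is sound in outline, but it takes a different route from the paper. Your stated reason for abandoning the direct count is also mistaken. Since $r=O_\eta((\ln d)^2)$, we have $d^{r}=\e^{O_\eta((\ln d)^3)}=\e^{o(d)}$, so $r d^{r}\e^{-c_\eta d}\le \e^{-c_\eta d/2}$ for large $d$. You use exactly this fact a few lines later. Hence $\binom{\ell}{k}\bigl(r d^{r}\e^{-c_\eta d}\bigr)^{k}\le \bigl(O(r)\,\e^{-c_\eta d/2}\bigr)^{k}=n^{-\Omega(c_\eta d/(\ln d)^3)}$ already beats the $n d^{\ell-1}\le n^{1+1/4000}$ paths.

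This direct first-moment count over witness structures is essentially the paper's proof. The real defect in your first attempt was the dependence among the degrees of the $w_j$: ``allowing degree $D-k$'' is useless, since $k=\Theta(\ln n)$. The paper removes this dependence with \Cref{lem:joint-degree-bound}. Conditional on the witness graph $J$ (the path together with geodesics to the nearest high-degree vertices $S$), it bounds the exponential moment of $\sum_{v\in S}\deg_G(v)$. The $O(r^2\abs{S})$ edges of $J$ then only lower the threshold to $D-8(r+1)(2r+1)\ge(1+\eta/4)d$, and edges between two vertices of $S$ are absorbed by the factor $\e^{2z}$. The paper then counts tree or unicyclic $J$ on at most $(r+1)\floor{L/4}$ vertices by $(2\e d)^m$, with $2^m$ choices of $S$. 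This requires $C_\eta d/(8r+4)\gg(r+1)\ln(2\e d)$.

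What your route buys: you union-bound only over the path and make the hanging subtrees independent via Poisson domination. You therefore pay only about $d^{r}$ per bad branch, so you need just $d\gg r\ln d$ instead of $d\gg r^{2}\ln d$, and you avoid \Cref{lem:joint-degree-bound}. The price is the coupling you flagged, which needs three ingredients:
\begin{itemize}
\item intersect with $\LocalTreeOrUnicyclic_{5L}$ before conditioning, so that every degree is at most the number of newly revealed children plus three;
\item run a multi-source BFS from all of $P$ that reveals the children of every vertex at depth at most $r$, so each vertex within distance $r$ of $P$ has its degree determined;
\item discard the $O(r)$ path positions whose witness geodesic uses the surplus edge.
\end{itemize}
The paper's purely combinatorial count avoids all of this and handles the unicyclic case for free.
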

% The proof of \Cref{lem:high-degree-small-occupancy} is in \Cref{subsec:high-degree-small-occupancy}.

To prove \Cref{lem:high-degree-small-occupancy}, we first need the following bound.
\begin{lemma} \label{lem:joint-degree-bound}
    Fix an arbitrary graph $J$ on a subset of $V = [n]$.
    For every subset $S \subseteq V$ and $z > 0$, it holds that
    \begin{align*}
        \Pr{S \subseteq V^\HD \mid J \subseteq G} \le \exp\ab(-zD\abs{S} + 2z\abs{E(J)} + \frac{\abs{S} d}{2}\ab(\e^{2z} - 1)).
    \end{align*}
\end{lemma}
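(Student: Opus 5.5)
The plan is to use the exponential moment method on the sum of degrees over $S$. On the event $S \subseteq V^\HD$, every $v \in S$ has $\deg_G(v) \ge D+1 > D$. Hence $\sum_{v\in S}\deg_G(v) > D\abs{S}$, and by Markov's inequality applied to $\exp(z\sum_{v\in S}\deg_G(v))$,
\begin{align*}
    \Pr{S \subseteq V^\HD \mid J \subseteq G} \le \e^{-zD\abs{S}}\, \E{\exp\ab(z\textstyle\sum_{v\in S}\deg_G(v)) \mid J \subseteq G}.
\end{align*}
It then suffices to bound this conditional moment generating function by $\exp(2z\abs{E(J)} + \frac{\abs{S}d}{2}(\e^{2z}-1))$.

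To do this, write $\sum_{v\in S}\deg_G(v) = \sum_{e} c_e \id{e\in G}$, where $c_e = \abs{e\cap S}\in\{0,1,2\}$ and the sum runs over all pairs $e\in\binom{[n]}{2}$. Conditioning on $J\subseteq G$ forces the edges of $J$ to be present and leaves all other edges independent $\mathrm{Bernoulli}(d/n)$. The conditional expectation therefore factorizes. The forced edges contribute $\prod_{e\in E(J)}\e^{zc_e}\le \e^{2z\abs{E(J)}}$. Each free edge contributes
\begin{align*}
    1+\tfrac dn(\e^{zc_e}-1)\le \exp\ab(\tfrac dn(\e^{zc_e}-1)).
\end{align*}

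For the free edges, use $\e^{zc}-1\le \frac{c}{2}(\e^{2z}-1)$ for $c\in\{0,1,2\}$. This holds by convexity of $c\mapsto \e^{zc}$, and for $c=1$ it amounts to $2\e^z\le \e^{2z}+1$. Since $\sum_e c_e \le \sum_{v\in S}(n-1)\le n\abs{S}$, the product over free edges is at most
\begin{align*}
    \exp\ab(\tfrac dn\cdot\tfrac{\e^{2z}-1}{2}\cdot n\abs{S}) = \exp\ab(\tfrac{\abs{S}d}{2}(\e^{2z}-1)).
\end{align*}
Combining the three factors gives the claimed bound.

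The only mildly delicate point is the edges inside $S$, which have $c_e=2$. They would double-count if handled naively, and the convexity bound $\e^{zc}-1\le\frac c2(\e^{2z}-1)$ is exactly what absorbs them into the per-vertex count. Otherwise the argument is a routine Chernoff-type computation.
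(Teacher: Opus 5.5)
Your proof is correct and takes essentially the same route as the paper. You split $\sum_{v\in S}\deg_G(v)$ into a forced part from $E(J)$ (at most $2\abs{E(J)}$) and a sum over free independent edges, bound the free part's moment generating function via $\e^{zx}-1\le\frac{x}{2}(\e^{2z}-1)$ and $\sum_e\abs{e\cap S}\le n\abs{S}$, and conclude with Markov's inequality. The only cosmetic difference is that the paper subtracts the forced contribution from the threshold $D\abs{S}$ before applying Markov to the free sum, whereas you keep it inside the exponential moment as the factor $\e^{2z\abs{E(J)}}$; the two are equivalent.
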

\begin{proof}
    We write
    \[
        \sum_{v \in S} \deg_G(v) = N(S, J) + \sum_{e = (u, v) \notin E(J)} \abs{\set{u, v} \cap S} \cdot \id{e \in E(G)}
    \]
    where $N(S, J)$ is the total number of degrees contributing to $\sum_{v \in S} \deg_G(v)$ in $J$.
    It holds that $N(S, J) \le 2\abs{E(J)}$ and
    \[
        \sum_{e = (u, v) \notin E(J)} \abs{\set{u, v} \cap S} \le n\abs{S}.
    \]
    By the inequality $\e^{z x} - 1 \le \frac{x}{2}(\e^{2z} - 1)$ for every $x \in [0, 2]$ and $z > 0$,
    \[
    \begin{aligned}
        \E{\exp\ab(z \sum_{e = (u, v) \notin E(J)} \abs{\set{u, v} \cap S} \cdot \id{e \in E(G)})} &= \prod_{e = (u, v) \notin E(J)} \ab(1 + \frac{d}{n}\ab(\e^{z \abs{\set{u, v} \cap S}} - 1)) \\
        &\le \exp\ab(\frac{d}{n} \sum_{e = (u, v) \notin E(J)} \ab(\e^{z\abs{\set{u, v} \cap S}} - 1)) \\
        &\le \exp\ab(\frac{\abs{S} d}{2} \ab(\e^{2z} - 1)).
    \end{aligned}
    \]
    When $S \subseteq V^{\HD}$, it must hold that $\sum_{v \in S} \deg_G(v) \ge D\abs{S}$.
    Therefore by the Markov inequality,
    \[
    \begin{aligned}
        \Pr{S \subseteq V^\HD \mid J \subseteq G} &\le \Pr{\sum_{e = (u, v) \notin E(J)} \abs{\set{u, v} \cap S} \id{e \in E(G)} \ge D\abs{S} - 2\abs{E(J)} \mid J \subseteq G} \\
        &\le \exp\ab(-z D\abs{S} + 2z\abs{E(J)} + \frac{\abs{S}d}{2}\ab(\e^{2z} - 1)).
    \end{aligned}
    \]
    Then we conclude the lemma.
\end{proof}

\begin{proof}[Proof of \Cref{lem:high-degree-small-occupancy}]
    Set $L_0 = \floor{L/4}$.
    Suppose that there is a path $P = (u = v_0, \ldots, v_{L_0 - 1})$ containing fewer than $L_0/2$ vertices outside $B_G(V^{\HD}, r)$.
    Note that $B_G(P, r)$ is contained in $B_G(u, L_0 + r - 1)$.
    Therefore $B_G(u, L_0 + r - 1)$ is tree or unicyclic conditional on the event $\LocalTreeOrUnicyclic_{5L}$ for all sufficiently large $n$.
    For every $v \in P \cap B_G(V^{\HD}, r)$, pick a vertex $w = w(v) \in V^{\HD}$ minimizing the distance $\dist_G(v, w)$.
    Let
    \begin{align*}
        S \defeq \set{w = w(v) \;\colon\; v \in P \cap B_G(V^{\HD}, r)}.
    \end{align*}
    Using a fact that in a tree or unicyclic graph, there are at most $4r + 2$ vertices in a simple path at distance $\le r$ from a fixed vertex, it holds that
    \begin{align*}
        \frac{L_0}{2(4r + 2)} \le \abs{S} \le L_0.
    \end{align*}
    To apply \Cref{lem:joint-degree-bound}, the graph $J$ is constructed as the union of $P$ and all shortest paths from $v$ to $w(v)$ for $v \in P \cap B_G(V^{\HD}, r)$.
    It is clear that $J$ is connected and part of $B_G(u, L_0 + r - 1)$, meaning that $J$ is a tree or a unicyclic graph.
    Thus
    \begin{align*}
        \abs{V(J)} \le (r + 1) L_0, \quad \abs{E(J)} \le \abs{V(J)}, \quad \abs{E(J)} \le (r + 1)(8r + 4) \abs{S}.
    \end{align*}
    Apply \Cref{lem:joint-degree-bound} on $J$, $S$ and $z = \frac{1}{2} \ln\frac{D - 8(r + 1)(2r + 1)}{d}$.
    It holds that for sufficiently large $d > 1$, $D - 8(r + 1)(2r + 1) \ge (1 + \eta/4)d$ and thus
    \begin{align*}
        \Pr{S \subseteq V^{\HD} \mid J \subseteq G} &\le \exp\ab(-\frac{\abs{S} d}{2} \ab((1 + \eta/4) \ln (1 + \eta/4) - (1 + \eta/4) + 1)) \\
        &\le \exp\ab(-\frac{C_\eta d L_0}{8r + 4})
    \end{align*}
    where $C_\eta = \frac{1}{2}((1 + \eta/4) \ln(1 + \eta/4) - \eta/4) > 0$.

    To conclude the lemma, we count the number of all possible $J$.
    Assume that $J$ is on $m$ vertices.
    Since $J$ is tree or unicyclic, the expected number of all possible $J$ is at most
    \begin{align*}
        \binom{n}{m} m^{m - 2} \frac{d^{m - 1}}{n^{m - 1}} + \binom{n}{m} m^{m - 2} \frac{m^2}{2} \frac{d^m}{n^m} \le \ab(\frac{n}{d m^2} + \frac{1}{2}) (\e d)^m.
    \end{align*}
    For a fixed $J$ on $m$ vertices, there are at most $2^{m}$ possible choices of $S$.
    Therefore, the failure probability is upper bounded by
    \begin{align*}
        \Pr[G \sim \G(n, d/n)]{G \in \overline{\MajorGood_{L_0, r}} \cap \LocalTreeOrUnicyclic_{5L}} &\le \sum_{m = 1}^{(r + 1)L_0} \ab(\frac{n}{d m^2} + \frac{1}{2}) (\e d)^m \cdot 2^m \cdot \exp\ab(-\frac{C_\eta d L_0}{8r + 4}) \\
        &\le n\ab(\frac{1}{d} + \frac{1}{2}) \frac{2\e d}{2\e d - 1} (2\e d)^{(r + 1)L_0} \exp\ab(-\frac{C_\eta d L_0}{8r + 4}) \\
        &\le n\ab(\frac{1}{d} + \frac{1}{2}) \frac{2\e d}{2\e d - 1} \exp\ab(-\ab(\frac{C_\eta d}{8r + 4} - (r + 1)\ln(2\e d))L_0).
    \end{align*}
    Increasing $d$ such that $\frac{C_\eta d}{8r + 4} - (r + 1)\ln(2\e d) \ge 20000 \ln(2\e d)$, this joint probability is at most $n^{-2 + o(1)}$.
    By \Cref{lem:locally-tree-or-unicyclic}, $\Pr{\LocalTreeOrUnicyclic_{5L}}=1-o_n(1)$, so
    \[
        \Pr{\overline{\MajorGood_{L_0,r}}\mid\LocalTreeOrUnicyclic_{5L}}
        =\frac{\Pr{\overline{\MajorGood_{L_0,r}}\cap\LocalTreeOrUnicyclic_{5L}}}{\Pr{\LocalTreeOrUnicyclic_{5L}}}
        =o_n(1),
    \]
    which concludes the lemma.
\end{proof}

%% file: section/coupling-independence.tex
\section{Coupling Independence for Random Graph Colorings} \label{sec:coupling-independence}

In this section, we present the proof of \Cref{thm:coupling-independence}.
First, we introduce the result of strong spatial mixing and total influence decay for colorings on trees in \cite{CLMM26}.
Then we show how to analyze the coupling discrepancy on random graphs using the introduced result.
Roughly speaking, a message passing process on local trees with good/bad marks would give an upper bound to the coupling independence under any arbitrary pinning.
Lastly, we estimate relevant messages, which finally concludes \Cref{thm:coupling-independence}.

\subsection{Strong spatial mixing and total influence decay on trees}
Before we analyze the coupling independence on $G \sim \mathcal G(n, d/n)$, let us recall the strong spatial mixing and total influence decay for colorings on trees derived by \cite{CLMM26}.
We consider the following form of strong spatial mixing and total influence decay.
\begin{definition}[Strong spatial mixing \cite{CLMM26}] \label{def:strong-spatial-mixing}
    The Gibbs distribution $\mu$ of a feasible list-coloring instance $\Phi = \left(G = (V, E), L = \set{L(v)}_{v \in V}\right)$ exhibits \emph{strong spatial mixing} with exponential decay rate $1 - \delta$ and constant $C_{\SSM} > 0$ if the following holds.
    For every integer $\ell \ge 2$, every feasible partial pinning $\pi$ on $\Lambda \subseteq V$, every vertex $v \in V \setminus \Lambda$, every color $c$ feasible at $v$ under $\pi$, and every two partial colorings $\sigma, \tau$ on $S_G(v, \ell) \setminus \Lambda$ feasible together with $\pi$, both conditional probabilities below are positive and
    \begin{align*}
        \abs{\frac{\mu(v \gets c \;|\; \pi, \sigma)}{\mu(v \gets c \;|\; \pi, \tau)} - 1} \le C_{\SSM} (1 - \delta)^\ell.
    \end{align*}
\end{definition}

\begin{definition}[Total influence decay \cite{CLMM26}] \label{def:total-influence-decay}
    The Gibbs distribution $\mu$ of a feasible list-coloring instance $\Phi = \left(G = (V, E), L = \set{L(v)}_{v \in V}\right)$ exhibits \emph{total influence decay} with exponential decay rate $1 - \delta$ and constant $C_{\TID} > 0$ if the following holds.
    For every integer $\ell \ge 1$, every feasible partial pinning $\pi$ on $\Lambda \subseteq V$, every vertex $v \in V \setminus \Lambda$, every two colors $a, b$ feasible at $v$ under $\pi$,
    \begin{align*}
        \sum_{u \in S_G(v, \ell) \setminus \Lambda} \TV{\mu(u \gets \cdot \;|\; \pi, v \gets a)}{\mu(u \gets \cdot \;|\; \pi, v \gets b)} \le C_{\TID}(1 - \delta)^\ell.
    \end{align*}
\end{definition}

The following proposition describes the strong spatial mixing and total influence decay for proper colorings on trees.
\begin{proposition}[{\cite[Proposition 8.8]{CLMM26}}] \label{prop:SSM-TID-on-tree}
    If $q = (1 + \eta) \Delta$ for $\eta \in (0, 1)$ and $\Delta \ge 7/\eta^2$, for all trees of maximum degree $\Delta$, the uniform distribution of proper $q$-colorings exhibits:
    \begin{itemize}
        \item strong spatial mixing with exponential decay rate $\e^{-\eta/4}$ and constant
        \[
            C_{\SSM} = \frac{2q}{(\eta \Delta)^{1/2}} \e^{1/\eta + \eta/4} = O_{\eta}\ab(\poly{\Delta}).
        \]
        \item total influence decay with exponential decay rate $\e^{-\eta/4}$ and constant
        \[
            C_{\TID} = \frac{q^3}{(\eta\Delta)^{3/2}(q - 1)} \e^{3/\eta} = O_{\eta}\ab(\poly{\Delta}).
        \]
    \end{itemize}
\end{proposition}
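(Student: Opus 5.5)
Since this is \cite[Proposition 8.8]{CLMM26}, the proof can simply be imported. Our reduced instances are list-coloring instances with the slack condition of \Cref{sec:notation}, so we must check that the argument runs in that setting. The plan is to re-derive both properties from the exact tree recursion for marginals. A pinning on a tree cuts it into a forest with shortened lists, so it suffices to treat a rooted tree $T$ with root $v$ and feasible lists satisfying $|L(u)|-\deg(u)\ge q-\Delta\ge \eta\Delta$. Writing $p_w(c)$ for the marginal of child $w$ in its own subtree $T_w$, the recursion is
\[
\mu(v\gets c)=\frac{\prod_{w}(1-p_w(c))}{\sum_{c'\in L(v)}\prod_w(1-p_w(c'))},\qquad c\in L(v).
\]

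\textbf{Step 1: local uniformity.} Apply the same recursion at $w$, using $1-p\ge \e^{-p-p^2}$ for small $p$ and the slack bound. I expect a uniform bound $p_w(c)\le \frac{1}{\eta\Delta}$, which should sharpen after one more level to $p_w(c)\le \frac{\e^{1/\eta}}{|L(w)|}$ up to $1+O(1/(\eta^2\Delta))$ factors. The hypothesis $\Delta\ge 7/\eta^2$ is exactly what should make these second-order terms negligible. The $\e^{1/\eta}$ factors appearing in $C_{\SSM}$ and $C_{\TID}$ come from this step.

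\textbf{Step 2: one-step contraction.} For SSM, track the log-ratio discrepancy $\epsilon_u=\max_c\bigl|\ln\frac{p_u^\sigma(c)}{p_u^\tau(c)}\bigr|$ between two boundary conditions. Since $\partial_p\ln(1-p)=-1/(1-p)$, differentiating the recursion gives
\[
\epsilon_v\le 2\sum_w \max_c\frac{p_w(c)}{1-p_w(c)}\,\epsilon_w .
\]
This is too lossy as stated. I would therefore use a weighted potential: measure the error at $u$ in $\ell_1$ over colors weighted by the marginal, which is the form CLMM use. The per-child influence is then essentially $\sum_c p_w(c)\,|\delta_w(c)|$ divided by the typical mass $|L(v)|-\#\text{children}$. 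Summed over at most $\Delta-1$ children, this gives a factor of roughly $\frac{\Delta}{(1+\eta)\Delta}\le \e^{-\eta/4}$ once the Step 1 bounds are inserted.

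\textbf{Step 3: iterate.} Iterating the contraction over $\ell$ levels gives the SSM bound, with $C_{\SSM}$ absorbing the first two levels where local uniformity is not yet available. For TID I would not route through SSM. Instead, write $\mu(\cdot\mid v\gets a)$ versus $\mu(\cdot\mid v\gets b)$ as a coupling propagating down the tree: the disagreement passes from a vertex to a child $w$ only through the two colors $a,b$. The expected number of disagreeing children is then bounded by $\sum_w (p_w(a)+p_w(b))$-type quantities, which Step 1 controls by the same $\e^{-\eta/4}$ factor. Summing over the sphere $S(v,\ell)$ gives a geometric bound, and the cube powers in $C_{\TID}$ come from applying local uniformity at three consecutive levels.

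\textbf{Main obstacle.} The hardest part is Step 2. One must choose a potential whose contraction rate is $\approx 1/(1+\eta)$ rather than $2/(1+\eta)$ under arbitrary pinnings, including the case where many children have drastically shortened lists. I would first try CLMM's potential $\Phi(x)=\ln x$ with the marginal-weighted norm. I would then verify that the list-slack condition, rather than literal maximum degree, is all that the argument uses.
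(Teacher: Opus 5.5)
Your first sentence is already the whole proof as far as the paper is concerned. The statement is about ordinary proper $q$-colorings on trees of maximum degree $\Delta$, and the paper quotes it from \cite[Proposition 8.8]{CLMM26} with no further argument.

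Your list-coloring concern matters for how the proposition is used later, but the paper does not re-run CLMM's analysis to address it. Instead, \Cref{lem:CLMM-list-coloring-extension} attaches to each free vertex $v$ one new leaf pinned to $c$, for every $c\in[q]\setminus L^\pi(v)$. This reduction works for four reasons:
\begin{itemize}
\item the slack $|L^\pi(v)|-\deg_{T^\pi}(v)\ge q-\Delta$ is exactly what keeps the augmented forest at maximum degree $\Delta$;
\item conditioning on the new leaves reproduces $\mu_\Phi^\pi$;
\item distances between original vertices are unchanged;
\item the added leaves are never free.
\end{itemize}
So Proposition~8.8 applies verbatim, plus a greedy-extension check that the SSM conditional probabilities are positive. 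That reduction takes a few lines, whereas re-verifying CLMM in the list setting is a research-level task you do not need.

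Taken as a proof, your re-derivation has a genuine gap, located exactly where the difficulty of the statement lies.

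Step~3 propagates a disagreement level by level and bounds each level by a worst-case factor like $\sum_w\max\{p_w(a),p_w(b)\}$. That factor is not below $1$ in this regime, as two examples show.
\begin{itemize}
\item Under pinnings, $v$ may have $\Delta$ children whose other neighbors are pinned to distinct colors outside $\{a,b\}$, leaving lists of size $q-\Delta+1$. Each child then contributes $1/(q-\Delta)$, for a total of about $1/\eta$.
\item Even with full lists at the children, pinning one leaf of every grandchild to $a$ gives $p_w(a)\approx \exp(\Delta/q)/q$ at all children simultaneously. The sharp local-uniformity bound is then attained, and the factor is about $(\Delta/q)\exp(\Delta/q)$.
\end{itemize}
The second factor is below $1$ only when $q/\Delta>\alpha^*\approx 1.763$. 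This is the classical local-uniformity barrier, the same $\alpha^*$ as in \cite{EHSV18} that this paper is designed to beat. Your cruder Step~1 bound $\exp(1/\eta)/|L(w)|$ makes things worse.

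Step~2 has the same problem.
\begin{itemize}
\item Dividing by $|L(v)|-\#\text{children}\ge q-\Delta$ yields $\Delta/(q-\Delta)=1/\eta$.
\item Dividing by the true normalizer $\sum_c\prod_w(1-p_w(c))$, which can be as small as about $q\exp(-\Delta/q)$, yields the $\alpha^*$ factor again.
\end{itemize}
So the claimed rate $\Delta/q$ is asserted, not derived. The missing ingredient is a potential, with a matching norm, under which the one-step Jacobian of the tree recursion contracts uniformly over all boundary conditions. Constructing it is the main technical content of \cite{CLMM26}.

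The constants $C_{\mathrm{SSM}}$, $C_{\mathrm{TID}}$ and the role of $\Delta\ge 7/\eta^2$ are likewise guessed rather than derived. As written, your sketch would at best give SSM and TID for $q$ above roughly $1.763\Delta$. For the stated proposition you should rely on the citation, as the paper does.
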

The same rates of strong spatial mixing and total influence decay hold for every feasible list-coloring instance on a tree of maximum degree at most $\Delta$ satisfying
\[
    \abs{L(v)} - \deg_T(v) \ge q - \Delta \qquad \text{for every free vertex }v \in V(T).
\]
This follows from the pinned-leaf reduction recorded in \Cref{lem:CLMM-list-coloring-extension}.

%\begin{lemma}\label{lem:list-tree-SSM-TID}
%    Fix \(\eta\in(0,1)\). For all sufficiently large \(d\), the following holds. Let \(\Phi\in\mathcal L(G,q)\), and suppose    that \(B_{G_\Phi}(v,r)\) is a tree containing no vertex of \(V^{\HD}\). Under every feasible pinning \(\pi\), the induced list-coloring instance on this ball satisfies SSM and TID with decay rate at most \(\exp(-\eta/16)\) and constant \(O_{\eta,q}(\poly{d})\).
%\end{lemma}

%\begin{proof}
%    For every free vertex \(x\) in the ball, pinning preserves the list-degree gap, and therefore
%    \[
%        |L_\Phi^\pi(x)|-\deg_{\Phi^\pi}(x)
%        \ge q-\deg_G(x)
%        \ge q-D
%        \ge \frac{2\eta}{3}d.
%    \]
%    Moreover, \(|L_\Phi^\pi(x)|\le q\) and \(\deg_{\Phi^\pi}(x)\le D\). 
%    Applying the list-coloring Jacobian estimate underlying~\cite[Theorems 8.3 and 8.5]{CLMM26}, with
%    \[
%        \gamma_x
%        \defeq |L_\Phi^\pi(x)|-\deg_{\Phi^\pi}(x)
%        \ge q-D,
%    \]
%    gives contraction at most
%    \[
%        \exp\!\left(
%            -\frac{2(q-D)}{q}
%            +\frac{3}{q-D-1}
%        \right)
%        \le \exp(-\eta/16)
%    \]
%    for sufficiently large \(d\). The corresponding SSM and TID constants are \(O_{\eta,q}(\poly{d})\), uniformly over all feasible pinnings.
%\end{proof}

An application of \Cref{prop:SSM-TID-on-tree} helps us analyze the coupling discrepancy on local parts of a global graph.
\begin{corollary} \label{cor:local-TID-on-tree}
    For every fixed parameter $\eta \in (0, 1)$, there exist $\kappa = \kappa(\eta) > 0$ and $\rho = \rho(\eta) \in (0, 1)$ such that the following holds for all sufficiently large $d > 1$. There exist integers
    \begin{align*}
        R = O_\eta(\ln d), \quad r = O_\eta\ab((\ln d)^2) > R + 2
    \end{align*}
    satisfying
    \begin{align} \label{eq:local-decay-parameter}
        4R \cdot \e^{-\kappa \cdot R} \cdot \rho^{-R} H_{D^R} \le 1\,.
    \end{align}
    Furthermore, for every integer $q \ge (1 + \eta)d$, the following conclusion holds.
    Let $\Phi \in \mathcal L(G, q)$ and suppose that the ball $B_G(v, r)$ is a tree and contains no vertex in $V^{\HD}$.
    For every feasible partial pinning $\pi$ on $\Lambda \subseteq V_\Phi \setminus \set{v}$ in $\Phi$ and two colors $a, b$ feasible at $v$ under $\pi$, it holds that
    \begin{align*}
        \sum_{\text{free vertex}~u \in S_{G_\Phi}(v, R)} \TV{\mu_\Phi(u \gets \cdot \;|\; \pi, v \gets a)}{\mu_\Phi(u \gets \cdot \;|\; \pi, v \gets b)} \le \e^{-\kappa \cdot R}.
    \end{align*}
\end{corollary}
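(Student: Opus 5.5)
Apply \Cref{prop:SSM-TID-on-tree}, in its list-coloring form, to a local tree around $v$. The work lies in the parameter choices and in checking that the local tree instance satisfies the list-size slack hypothesis with $\Delta = D$.

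\textbf{Choice of parameters.} Set $\Delta \defeq D = \floor{(1+\eta/3)d}$. Then $q \ge (1+\eta)d \ge (1+\eta')\Delta$ for some $\eta' = \eta'(\eta) > 0$, for instance $\eta' = \eta/2$ once $d$ is large. Also $\Delta \ge 7/\eta'^2$ for large $d$. Since \Cref{prop:SSM-TID-on-tree} is stated for $q = (1+\eta')\Delta$, I will use the list-coloring version with slack $q - \Delta$, which only improves with larger $q$. It gives total influence decay with rate $\e^{-\eta'/4}$ and constant $C_{\TID} = O_\eta(\poly{d})$.

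\textbf{Choice of constants.} Take $\kappa \defeq \eta'/8$ and choose $\rho \in (0,1)$ with $\rho^{-1} < \e^{\kappa/2}$, say $\rho = \e^{-\kappa/4}$.

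\textbf{Choice of $R$.} We need $C_{\TID}\e^{-\eta' R/4} \le \e^{-\kappa R}$, i.e.\ $\e^{-\eta' R/8} \le 1/C_{\TID}$. Since $C_{\TID} = \poly{d}$, this holds for $R = \ceil{A\ln d}$ with $A = A(\eta)$ large.

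\textbf{Checking \eqref{eq:local-decay-parameter}.} We have $H_{D^R} \le 1 + R\ln D = O_\eta((\ln d)^2)$. The left-hand side is therefore at most $4R(1+R\ln D)\e^{-\kappa R/2}$ with our $\rho$. Since $\e^{-\kappa R/2} = d^{-\kappa A/2}$, enlarging $A$ makes this at most $1$ for large $d$.

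\textbf{Choice of $r$.} Take $r = \ceil{(\ln d)^2}$, which exceeds $R+2$ for large $d$ and matches the $r$ of \Cref{lem:high-degree-small-occupancy}.

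\textbf{Reduction to a tree.} Fix $\Phi$, $\pi$, $a$, $b$. The quantity in question depends only on the conditional law $\mu_\Phi(\cdot \mid \pi, v\gets c)$ restricted to $S(v,R)$. The ball $B_G(v,r)$ is a tree, but $\mu_\Phi$ lives on all of $G_\Phi$, possibly with cycles outside the ball. Working in $G_\Phi$, which is an induced subgraph of $G$, the ball $B_{G_\Phi}(v,r) \subseteq B_G(v,r)$ is also a tree.

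\textbf{Main obstacle.} The outside of the ball can correlate vertices of $S(v,R)$ through the boundary sphere. I would handle this as follows.
\begin{itemize}
\item Condition additionally on the coloring $\xi$ of $V_\Phi \setminus B_{G_\Phi}(v, r-1)$, sampled from the joint law. By convexity of TV, writing each conditional law as a mixture, it suffices to bound the sum uniformly over boundary conditions $\xi$ for a fixed pair. This is not immediate, because the mixing weights for $a$ and $b$ differ.
\item Instead, I would invoke strong spatial mixing (\Cref{prop:SSM-TID-on-tree}). The law of $\xi$ given $v\gets a$ versus $v\gets b$ differs only through the ball, and SSM with $\ell = r - R$ shows that replacing $\mu(\cdot\mid\pi,v\gets c)$ on $S(v,R)$ by its tree version under a fixed $\xi$ costs at most $\abs{S(v,R)}\cdot C_{\SSM}\e^{-\eta'(r-R)/4}$.
\item Here $\abs{S(v,R)} \le D^R = \e^{O((\ln d)^2)}$, while $r - R \approx (\ln d)^2$. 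With the constant in $r$ chosen large relative to $A$, the error is at most $\e^{-\kappa R}$.
\end{itemize}

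\textbf{The tree instance.} Under a fixed boundary $\xi$ together with $\pi$, the instance on the tree $B_{G_\Phi}(v,r-1)$ has every free vertex $x$ satisfying $\abs{L(x)} - \deg(x) \ge q - \deg_G(x) \ge q - D$, since pinning preserves slack and $x \notin V^{\HD}$. Its maximum degree is at most $D$. TID on this tree gives the bound $C_{\TID}\e^{-\eta' R/4}$.

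\textbf{Conclusion.} Summing the two contributions gives at most $2\e^{-2\kappa R}$ after adjusting $A$, which is at most $\e^{-\kappa R}$.
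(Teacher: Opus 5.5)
Your architecture is the right one: TID for the list-coloring instance on the local tree, SSM at scale $r$ to decouple the outside of the ball, then parameter tuning. Your per-vertex mixture argument for the embedding is a valid variant of the paper's Bayes-rule coupling of the outer sphere. The gap is \emph{uniformity in $q$}.

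In the statement, $R$ and $r$ are fixed from $(\eta,d)$ before $q$ is chosen. The conclusion must then hold for every integer $q\ge(1+\eta)d$, including $q$ arbitrarily large compared with $d$. \Cref{prop:SSM-TID-on-tree} applies only when $q=(1+\eta')\Delta$ with $\eta'\in(0,1)$, so taking $\Delta=D$ covers only $q<2D$. For larger $q$ you must inflate $\Delta$, e.g.\ to $\Delta=\max\{D,\lceil 2q/3\rceil\}$, which keeps the slack hypothesis and an exponential rate bounded away from $1$. But then $C_{\SSM}$ and $C_{\TID}$ are of order $\sqrt q$, not $O_\eta(\mathrm{poly}(d))$. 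Your remark that the slack ``only improves with larger $q$'' is exactly the unproved step, since the available constants get worse as $q$ grows. Both your requirement $C_{\TID}\e^{-\eta'R/4}\le\e^{-\kappa R}$ and the SSM error term in your embedding step would then force $R$ and $r-R$ to be of order $\ln q$. This fails for $R=\lceil A\ln d\rceil$ once $q$ is superpolynomial in $d$.

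The missing idea is to peel off the root before invoking the tree bounds. Delete $v$, and for each free child $w$ let $T_w$ be its component. The marginal $p_w$ of $w$ in $T_w$ under $\pi$ satisfies $p_w(c)\le 1/(q-D+1)$, because $w$ has at most $D-1$ neighbors in $T_w$ and list-degree slack at least $q-D$. Hence pinning $v$ to $a$ versus $b$ changes the law at $w$ by at most $1/(q-D)$ in total variation. Couple at $w$ first, and invoke TID inside $T_w$ only on disagreement. With $B_\eta\sqrt q$ bounding the inflated constants, this bounds the total influence at distance $k$ by
$\frac{D}{q-D}B_\eta\sqrt q\,\e^{-(k-1)\eta/16}\le\frac{3B_\eta\e^{\eta/16}}{\eta}\sqrt D\,\e^{-k\eta/16}$,
using that $x\mapsto\sqrt x/(x-D)$ is decreasing for $x>D$. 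This bound is uniform in $q$.

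The same one-step argument, applied to $W_c=\prod_w(1-p_w(c))$ and followed by normalization, gives a $q$-free SSM constant, which your embedding step also needs. With these $q$-independent constants, your choices of $\kappa,\rho,R$ go through essentially as you set them up.

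One further point: $r$ must be $\lceil A'(\ln d)^2\rceil$ with $A'$ large relative to $A$, because $\abs{S(v,R)}$ can be as large as $D^R\approx\e^{A(\ln d)^2}$. Your initial choice $r=\lceil(\ln d)^2\rceil$ is therefore too small, although your later remark about enlarging the constant in $r$ corrects this.
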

\begin{proof}
    We first prove a weaker version restricted to trees by verifying the parameter choices in \Cref{prop:SSM-TID-on-tree}.
    Then we show how to remove the dependence on $q$ in the weak results.
    Finally, we embed the refined SSM and TID results on small trees into the global graph.

    Define
    \begin{align*}
        \Delta \defeq \max\;\set{D, \ceil{2q/3}}, \quad \delta = q/\Delta - 1.
    \end{align*}
    Since $q \ge (1 + \eta)d$, for sufficiently large $d > 1$, it holds that
    \begin{align*}
        \frac{\eta}{4} \le \min\;\set{\frac{2\eta}{3 + \eta}, \frac{q - 3}{2q + 3}} \le \delta \le \frac{1}{2}, \quad \Delta \ge 7/\delta^2.
    \end{align*}
%    By \Cref{prop:SSM-TID-on-tree} with choice of $\Delta$ and $q = (1 + \delta)\Delta$ at the local tree $T = G_\Phi[B_{G_\Phi}(v, r)]$, it holds that the SSM and TID hold with exponential decay $\e^{-\delta/4} \le \e^{-\eta/16}$ and constants
    We focus on the local tree at $v$ of radius $r$:
    \[
        T \defeq G_{\Phi}[B_{G_{\Phi}}(v, r)].
    \]
    After imposing the pinning $\pi$ and restricting the instance to $T$, let $T^\pi$ denote the forest induced by the free vertices and let $L^\pi(x)$ be the resulting effective list of each free vertex $x$.
    Observe that pinning and restriction to the local tree cannot decrease the gap between color lists and degree, \IE, for every free vertex $x$,
    \begin{align*}
        \abs{L^\pi(x)} - \deg_{T^\pi}(x) &\ge \abs{L_\Phi(x)} - \deg_{G_\Phi}(x) \\
        &\ge q - \deg_G(x) \\
        &\ge q - D \ge q - \Delta.
    \end{align*}
    Here the third inequality follows because the $r$-radius ball contains no vertex in $V^{\HD}$, while the last inequality follows from $\Delta \ge D$.
    
    Since $T^\pi$ has maximum degree at most $\Delta$ and $q = (1 + \delta)\Delta$, every component of the induced list-coloring instance satisfies the hypotheses of \Cref{lem:CLMM-list-coloring-extension}. Hence the SSM and TID bounds hold with exponential decay $\e^{-\delta/4} \le \e^{-\eta/16}$ and constants
    \begin{align*}
        C_{\SSM} = \frac{2q}{(\delta \Delta)^{1/2}} \e^{1/\delta + \delta/4}, \quad C_{\TID} = \frac{q^3 \e^{3/\delta}}{(\delta \Delta)^{3/2}(q - 1)}.
    \end{align*}
    A further upper bound for $C_{\SSM}$ and $C_{\TID}$ is $B_\eta \sqrt{q}$ where
    \begin{align*}
        B_\eta \defeq 4(6/\eta)^{3/2} \e^{12/\eta}.
    \end{align*}
    Therefore, for $3 \le k \le r$, every two additional feasible boundary pinnings $\sigma, \tau$ on $S_T(v,k)\setminus\Lambda$, and every color $c$ feasible at $v$ under $\pi$,
    \begin{align*}
        \abs{\frac{\mu(v \gets c \;|\; \pi, \sigma)}{\mu(v \gets c \;|\; \pi, \tau)} - 1} \le B_\eta \sqrt{q} \cdot \e^{-k \cdot \eta/16}
    \end{align*}
    and for two feasible colors $a, b$ at $v$ and $2 \le k \le r$,
    \begin{align*}
        \sum_{\text{free vertex}~u \in S_{G_\Phi}(v, k)} \TV{\mu(u \gets \cdot \;|\; \pi, v \gets a)}{\mu(u \gets \cdot \;|\; \pi, v \gets b)} \le B_\eta \sqrt{q} \cdot \e^{-k \cdot \eta/16},
    \end{align*}
    where $\mu$ is the Gibbs distribution of proper list-colorings on $T$ with all common pins and color lists.

    To remove the dependence on $q$, we delete the vertex $v$ and for every free child $w$, let $T_w$ be the component of $T - v$ containing $w$.
    Let $p_w$ be the marginal at $w$ in $T_w$ under the common pinning $\pi$.
    It holds that
    \begin{align*}
        \forall~\text{feasible color}~c, \quad p_w(c) \le \frac{1}{q - D + 1}
    \end{align*}
    since $w$ has at most $D - 1$ neighbors in $T_w$.
    Conditional on the assignment $a$ or $b$ at $v$, we only need to consider the maximal coupling of $p_w(\cdot\;|\; X_w \neq a)$ and $p_w(\cdot\;|\; X_w \neq b)$.
    The total variation distance is
    \begin{align*}
        \frac{\max\;\set{p_w(a), p_w(b)}}{1 - \min\;\set{p_w(a), p_w(b)}} \le \frac{1/(q - D + 1)}{1 - 1/(q - D + 1)} = \frac{1}{q - D}.
    \end{align*}
    Then we couple free vertices in $S_{G_\Phi}(v, k)$ conditional on the assignment $a$ or $b$ at $v$ as follows.
    For every free child $w$ of $v$, we maximally couple assignments at $w$ as $p_w(\cdot \;|\; X_w \neq a)$ and $p_w(\cdot \;|\; X_w \neq b)$.
    When they agree, we use the identical coupling in the remaining vertices in $T_w$.
    Otherwise, we apply \Cref{prop:SSM-TID-on-tree} at $T_w$.
    Therefore, the sum of marginal total variation distances over the free vertices of $S_{T_w}(w,k-1)$ is at most $\frac{1}{q - D} \cdot B_\eta \sqrt{q} \cdot \e^{-(k - 1)\eta/16}$.
    Summing over all children of $v$, the refined bound of total influence decay can be bounded by
    \begin{align*}
        \sum_{\text{free vertex } u \in S_{G_\Phi}(v, k)} \TV{\mu(u \gets \cdot \midl \pi, v \gets a)}{\mu(u \gets \cdot \midl \pi, v \gets b)} \le \frac{D}{q - D} B_\eta \sqrt{q} \cdot \e^{-(k - 1)\eta/16}.
    \end{align*}
    Since \(q\ge(1+\eta)d\), \(D\le(1+\eta/3)d\), and \(x\mapsto \sqrt{x}/(x-D)\) is decreasing for \(x>D\), we obtain that
    \begin{align*}
        \frac{D\sqrt q}{q-D} \le \frac{3\sqrt D}{\eta}.
    \end{align*}
    Therefore,
    \begin{align*}
        \sum_{\text{free }u\in S_{G_\Phi}(v,k)} \TV{\mu(u\gets\cdot\mid\pi,v\gets a)}   {\mu(u\gets\cdot\mid\pi,v\gets b)} \le \frac{3B_\eta\e^{\eta/16}}{\eta}\sqrt D\, \e^{-k\eta/16}.
    \end{align*}

    To refine the SSM, fix $3\le k\le r$ and two additional feasible pinnings $\sigma, \tau$ on $S_T(v,k)\setminus\Lambda$.
    For every free child $w$, let $p_w^\sigma$ and $p_w^\tau$ be the marginals at $w$ in $T_w$ under the common pinning $\pi$ and the additional pinnings $\sigma, \tau$ respectively.
    Since $q - D \ge 3$, it is clear to see $p_w^\sigma$ and $p_w^\tau$ share the same support.
    Apply \Cref{prop:SSM-TID-on-tree} at $T_w$ and we obtain that for all feasible colors $c$ at $w$,
    \begin{align*}
        \abs{p_w^\sigma(c) - p_w^\tau(c)} \le \frac{1}{q - D + 1} \cdot B_{\eta} \sqrt{q} \cdot \e^{-(k - 1) \cdot \eta/16}.
    \end{align*}
    Set
    \begin{align*}
        W_c^\sigma = \prod_{w~\text{free child of}~v} (1 - p_w^\sigma(c)), \quad
        W_c^\tau = \prod_{w~\text{free child of}~v} (1 - p_w^\tau(c)).
    \end{align*}
    By the mean value theorem,
    \begin{align*}
        \abs{\ln\frac{W_c^\sigma}{W_c^\tau}} \le \sum_{w~\text{free child of}~v} \frac{\abs{p_w^\sigma(c) - p_w^\tau(c)}}{1 - 1/(q - D + 1)} \le \frac{D}{q - D} \cdot B_\eta \sqrt{q} \cdot \e^{-(k - 1)\cdot \eta/16}.
    \end{align*}
    After normalization, we can obtain that
    \begin{align*}
        \abs{\ln\frac{\mu(v \gets c \;|\; \pi, \sigma)}{\mu(v \gets c \;|\; \pi, \tau)}} \le \frac{2D}{q - D} \cdot B_\eta \sqrt{q} \cdot \e^{-(k - 1)\cdot \eta/16}.
    \end{align*}
    Note that $(1 - 1/(q - D + 1))^D \le W_c^\sigma, W_c^\tau \le 1$.
    Then it holds that
    \begin{align*}
        \abs{\ln \frac{\mu(v \gets c \;|\; \pi, \sigma)}{\mu(v \gets c \;|\; \pi, \tau)}} \le -2D \ln\ab(1 - \frac{1}{q - D + 1}) \le \frac{2D}{q - D} \le \frac{6}{\eta}.
    \end{align*}
    Therefore, by the inequality $\abs{\e^x-1} \le \e^{\abs{x}}\abs{x}$, we have
    \begin{align*}
        \abs{\frac{\mu(v \gets c \;|\; \pi, \sigma)}{\mu(v \gets c \;|\; \pi, \tau)} - 1} \le \frac{6\sqrt{D} \e^{\eta/16 + 6/\eta}}{\eta} B_\eta \cdot \e^{-k \cdot \eta/16}. 
    \end{align*}
    Hence, we have the refined SSM and TID exponential decay rate $\e^{-\eta/16}$ and constant at most $C \defeq 512 \eta^{-3} \e^{20/\eta} \cdot D^2$.

    Now we show how to embed the SSM and TID results on trees into the global graph.
    Let $S_r \defeq S_{G_\Phi}(v, r) \setminus \Lambda$ and $S_R \defeq S_{G_\Phi}(v, R) \setminus \Lambda$ be the free spheres at radii $r$ and $R$ respectively.
    Let $\nu$ be the law of $X_{S_r}$ under the common pinning $\pi$.
    Then it holds that for every two feasible boundaries $\sigma, \tau$ on $S_r$,
    \begin{align*}
        \abs{\mu_\Phi(v \gets c \;|\; \pi, \sigma) - \mu_\Phi(v \gets c \;|\; \pi, \tau)} \le \mu_\Phi(v \gets c \;|\; \pi, \tau) \cdot C \e^{-r \cdot \eta/16}.
    \end{align*}
    Averaging over $\tau$ with the law $\nu$, it holds that
    \begin{align*}
        \left|\mu_\Phi(v \gets c \;|\; \pi, \sigma) - \sum_{\tau} \nu(\tau) \mu_\Phi(v \gets c \;|\; \pi, \tau)\right| \le C \e^{-r \cdot \eta/16} \sum_{\tau} \nu(\tau) \mu_\Phi(v \gets c \;|\; \pi, \tau). 
    \end{align*}
    Then by Bayes' rule, it holds that
    \begin{align*}
        \TV{\nu(\cdot \;|\; v \gets a)}{\nu} \le \frac{C}{2} \e^{-r \cdot \eta/16}
    \end{align*}
    meaning that $\TV{\nu(\cdot \;|\; v \gets a)}{\nu(\cdot \;|\; v \gets b)} \le C\e^{-r \cdot \eta/16}$.

    Therefore, we maximally couple $\sigma \sim \nu(\cdot \;|\; v \gets a)$ and $\tau \sim \nu(\cdot \;|\; v \gets b)$ and apply the refined TID results when $\sigma = \tau$.
    By the crude bound that the disagreements are at most $\abs{S_R}$ when $\sigma \neq \tau$, it holds that
    \begin{align*}
        \sum_{\text{free vertex}~u \in S_{G_\Phi}(v, R)} \TV{\mu_\Phi(u \gets \cdot \;|\; \pi, v \gets a)}{\mu_\Phi(u \gets \cdot \;|\; \pi, v \gets b)} \le \abs{S_R} C \e^{-r \cdot \eta/16} + C \e^{-R \cdot \eta/16}.
    \end{align*}

    We pick $\kappa = \eta/32$, $R$ as the smallest integer at least
    \begin{align*}
        \max\; \set{4, \frac{32}{\eta} \ln{(2C)}, \frac{8}{\kappa} \ln D}
    \end{align*}
    and $r$ as the smallest integer at least
    \begin{align*}
        \max\; \set{R + 3, \frac{16 R \ln D + 16 \kappa R + 16 \ln(2C)}{\eta}}.
    \end{align*}
    It is clear to see $R = O_\eta(\ln d)$ and $r = O_\eta((\ln d)^2)$ and the desired total influence decay holds.
    Moreover, pick $\rho = \e^{-\kappa/2}$.
    For sufficiently large $d > 1$, it can be easily verified that $4R \cdot \e^{-\kappa \cdot R} \cdot \rho^{-R} H_{D^R} \le 1$.
    Thus we conclude the corollary.
\end{proof}

We pick parameters $\kappa, \rho, R$ and $r$ with respect to $\eta$ and $d$ in the remaining part of \Cref{sec:coupling-independence} as in \Cref{cor:local-TID-on-tree}.

The second tool we introduce from \cite{CLMM26} is its \emph{one-disagreement coupling}.
The description below assumes that the intermediate pinnings are feasible; we verify this explicitly in our application in \Cref{lem:Wasserstein-distance-recursive-bound}.
We recall it here: to couple the two Gibbs distributions $\mu(\cdot \;|\; v \gets a)$ and $\mu(\cdot \;|\; v \gets b)$, we pick a free vertex $u \in S_G(v, \ell)$ and consider the maximal coupling of the marginals $\mu(u \gets \cdot \;|\; v \gets a), \mu(u \gets \cdot \;|\; v \gets b)$ at $u$.
If $u$ is successfully coupled to a common color $c$, we proceed to couple $\mu(\cdot \;|\; v \gets a, u \gets c)$ and $\mu(\cdot \;|\; v \gets b, u \gets c)$.
Otherwise, when $u$ is assigned the disagreement $c$ and $d$ respectively, we couple $\mu(\cdot \;|\; v \gets a, u \gets c)$ and $\mu(\cdot \;|\; v \gets b, u \gets d)$ using the path coupling:
\begin{itemize}
    \item recursively couple $\mu(\cdot \;|\; v \gets a, u \gets c)$ and $\mu(\cdot \;|\; v \gets a, u \gets d)$;
    \item recursively couple $\mu(\cdot \;|\; v \gets a, u \gets d)$ and $\mu(\cdot \;|\; v \gets b, u \gets d)$;
    \item concatenate the two couplings by path coupling.
\end{itemize}
Let $\W(\Phi, v)$ be the maximum Wasserstein-$1$ distance between the two Gibbs distributions obtained by pinning $v$ to two different feasible colors in $\Phi$, with Hamming distance measured on all of $V_\Phi$, including $v$.
We set $\W(\Phi, v) = 0$ if $v$ has fewer than two feasible colors.
Moreover, define
\begin{align*}
    \W(G, q) \defeq \max_{\Phi \in \mathcal L(G, q), v \in V_\Phi} \W(\Phi, v).
\end{align*}
The following recursive upper bound for $\W(\Phi, v)$ comes directly from the coupling.
\begin{lemma} \label{lem:Wasserstein-distance-recursive-bound}
    Under the hypothesis of \Cref{cor:local-TID-on-tree}, it holds that
    \begin{align*}
        \W(\Phi, v) \le (D^R + 1) + \e^{-\kappa R} H_{D^R} \max_{\Phi', u}\; \W(\Phi', u)
    \end{align*}
    where $\Phi'$ is the child instance of $\Phi$ obtained by pinning $v$ and possibly some vertices at the sphere $S_{G_\Phi}(v, R)$, and $u$ is an unpinned vertex in $S_{G_\Phi}(v, R)$.
    The maximum is zero if there are no such pairs $(\Phi',u)$.
\end{lemma}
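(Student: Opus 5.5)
We follow the one-disagreement coupling of \cite{CLMM26}, revealing the free vertices of the sphere $S \defeq S_{G_\Phi}(v,R)\setminus\Lambda$ one at a time. Fix two feasible colors $a,b$ at $v$ and order the free vertices of $S$ as $u_1,\dots,u_m$. Since $B_G(v,r)$ is a tree avoiding $V^{\HD}$ and $R<r$, we have $m\le \abs{B_{G_\Phi}(v,R)\setminus\set{v}}$, which is at most of order $D^R$. More precisely, the count needed is at most $D^R+1$ once the vertices inside the ball, together with $v$ itself, are counted.

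The coupling first proceeds sequentially on $S$. At step $i$, given the common colors already assigned to $u_1,\dots,u_{i-1}$, we maximally couple the conditional marginals of $u_i$ under $v\gets a$ and $v\gets b$. If they agree we continue. If $u_i$ receives $c\neq c'$, we stop and apply the path coupling through $\mu(\cdot\mid v\gets a,u_i\gets c')$. Both legs are couplings of $\mu_{\Phi'}$ under two different colors at $u_i$, where $\Phi'$ pins $v$ and $u_1,\dots,u_{i-1}$, which lie on the sphere. The two legs therefore cost at most $\W(\Phi',u_i)$ each. If all of $S$ agrees, the pinned sphere separates $v$ from the rest of $G_\Phi$, because the ball is a tree. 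Then the outside is coupled identically and disagreements lie only inside $B(v,R)$, giving at most $D^R+1$. This accounts for the additive term.

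Feasibility of the intermediate pinnings holds because every sampled color comes from a conditional marginal with positive mass, so each $\Phi'$ is a feasible instance in $\mathcal L(G,q)$. The list slack condition is preserved under pinning, as noted in \Cref{sec:notation}.

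The key estimate is the probability that the failure occurs at $u_i$, which is the step-$i$ total variation distance $p_i$. A naive bound $2\sum_i p_i$ against \Cref{cor:local-TID-on-tree} fails, because TID is stated only for the unconditioned sphere marginals, not for the sequential conditionals. The plan is the standard harmonic-sum trick from \cite{CLMM26}. We choose the ordering of the $u_i$ (or randomize it uniformly) and use that conditioning on agreed colors at earlier sphere vertices is again a feasible pinning $\pi'$ to which \Cref{cor:local-TID-on-tree} applies. This gives $\sum_{j\ge i} \TV{\mu(u_j\gets\cdot\mid \pi',a)}{\mu(u_j\gets\cdot\mid\pi',b)} \le \e^{-\kappa R}$ at every stage. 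Choosing at each stage the remaining vertex with the smallest current discrepancy, greedy or averaged over a uniform remaining choice, bounds the stage-$i$ failure probability by $\e^{-\kappa R}/(m-i+1)$.

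Summing over $i$ gives total failure weight at most $\e^{-\kappa R}H_m\le \e^{-\kappa R}H_{D^R}$. The factor $2$ from the two path-coupling legs must be absorbed, either by bounding each leg separately by one $\W(\Phi',u)$ and noting that the maximum over child instances covers both legs, or by accepting the constant. I expect this absorption to be the main obstacle: getting the clean factor $H_{D^R}$ requires the precise adaptive-ordering argument. Combining the additive and failure terms yields the stated recursion, and the bound is trivially zero when $S$ has no free vertices.
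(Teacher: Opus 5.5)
Your skeleton is the right one: a sequential maximal coupling over the free sphere vertices, choosing at each stage the remaining vertex of smallest discrepancy so that \Cref{cor:local-TID-on-tree} under the enlarged pinning gives $\e^{-\kappa R}/\ell$, with cost $D^R+1$ once the whole sphere agrees. The gap is in the disagreement step. In the path $\mu(\cdot\mid\sigma,v\gets a,u_i\gets c)\to\mu(\cdot\mid\sigma,v\gets a,u_i\gets c')\to\mu(\cdot\mid\sigma,v\gets b,u_i\gets c')$, only the first leg changes the color at $u_i$. That leg is bounded by $\W(\Phi',u_i)$, where $\Phi'$ pins $\sigma$ and $v\gets a$.

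The second leg has a \emph{common} color $c'$ at $u_i$ and two different colors at $v$. It is not of the form $\W(\Phi',u)$ with $u$ a free sphere vertex, so neither of your proposed absorptions works. The maximum over child instances does not cover it, and accepting a factor $2$ does not help, because this leg is not bounded by that maximum at all.

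The missing idea is that you must not stop at the first disagreement. The second leg is the original problem at $v$ with one more sphere vertex pinned, so the sequential coupling simply continues on it. Concretely:
\begin{itemize}
\item Let $\D_\ell$ be the maximum of $\W_1(\mu_\Phi(\cdot\mid\sigma,v\gets a),\mu_\Phi(\cdot\mid\sigma,v\gets b))$ over feasible $\sigma$ on subsets of the sphere leaving exactly $\ell$ sphere vertices free, and over feasible colors $a,b$. Let $\W_M$ denote the maximum appearing in the statement.
\item Maximally couple a free sphere vertex $u$ whose two marginals (under $v\gets a$ and $v\gets b$) are at total variation distance $\delta_u$. This costs $\D_{\ell-1}$ on agreement and at most $\W_M+\D_{\ell-1}$ on disagreement.
\item Hence $\D_\ell\le\D_{\ell-1}+\delta_u\W_M\le\D_{\ell-1}+\e^{-\kappa R}\W_M/\ell$ for the minimizing $u$.
\item Unrolling from $\D_0\le D^R+1$ gives $(D^R+1)+\e^{-\kappa R}H_m\W_M$, with exactly one $\W_M$ per disagreement and no factor $2$.
\end{itemize}

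Two further points need fixing.

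First, your feasibility argument does not cover the intermediate measure $\mu(\cdot\mid\sigma,v\gets a,u_i\gets c')$. The color $c'$ was drawn under $v\gets b$, and positivity there says nothing about compatibility with $v\gets a$. What you need is an extension property. The ball $B_{G_\Phi}(v,R-1)$ is a tree, all of its vertices satisfy $\abs{L_\Phi(x)}-\deg_{G_\Phi}(x)\ge q-D\ge3$, and $v$ has no neighbor outside it. So any feasible assignment outside this ball combines with any $a\in L_\Phi(v)$, by recoloring the ball greedily from the root outwards.

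Second, to replace $H_m$ by $H_{D^R}$ you need the exact bound $m\le D^R$, not merely ``of order $D^R$''. It holds because the ball is a tree of maximum degree at most $D$.
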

\begin{proof}
    Write $S = S_{G_\Phi}(v, R)$ and $m = \abs{S} \le D^R$.
    The sphere $S$ separates $B_{G_\Phi}(v, R - 1)$ from the remaining vertices outside $S$.
    Moreover, every vertex $x$ in this ball satisfies
    \[
        \abs{L_\Phi(x)} - \deg_{G_\Phi}(x) \ge q - D \ge 3.
    \]
    Consequently, any feasible assignment outside $B_{G_\Phi}(v, R - 1)$ can be combined with any color $a \in L_\Phi(v)$ at $v$.
    Indeed, keep that outside assignment, set $v \gets a$, and greedily recolor the other vertices of the tree $B_{G_\Phi}(v, R - 1)$ from the root outwards.
    The root has no neighbor outside the ball because $R \ge 4$, and the list-degree slack ensures an available color at every subsequent step, even after excluding all colors on already fixed neighbors.
    Fix $\sigma$ as a feasible partial assignment on $P \subseteq S$ and a feasible prescription $v \gets c$.
    Denote by $\Phi^{\sigma, c}$ the list-coloring instance from $\Phi$ under the pinning $\sigma \land \set{v \gets c}$ and let $U^{\sigma, c} \defeq V_\Phi \setminus (P \cup \set{v})$ be the free vertices of $\Phi^{\sigma, c}$.
    At first it is clear to see $\Phi^{\sigma, c} \in \mathcal L(G, q)$ for every $P \subseteq S$, every feasible partial assignment $\sigma$ on $P$ and every feasible color $c$ at $v$.

    Set the quantity
    \begin{align*}
        \W_M \defeq \max_{\substack{P \subseteq S,\; \text{feasible}\; \sigma\; \text{on}\; P, \\ \text{feasible color}\; v \gets c,\; u \in S \setminus P}} \W(\Phi^{\sigma, c}, u).
    \end{align*}
    As above, an empty maximum is zero, and we use $H_0=0$.
    It suffices to show $\W(\Phi, v) \le (D^R + 1) + \e^{-\kappa R} H_m \cdot \W_M$.
    For every $\ell = 0, \ldots, m$, define
    \begin{align*}
        \D_\ell \defeq \max_{\substack{P \subseteq S : \abs{S \setminus P} = \ell \\ \text{feasible}\; \sigma\; \text{on}\; P,\; v \gets a\; \text{and}\; v \gets b}} \W_1(\mu_\Phi(\cdot \;|\; \sigma, v \gets a), \mu_\Phi(\cdot \;|\; \sigma, v \gets b)).
    \end{align*}
    Then $\W(\Phi, v) = \D_m$.

    Now we show by induction that
    \begin{align*}
        \D_\ell \le \begin{cases}
            D^R + 1, & \ell = 0 \\
            \D_{\ell - 1} + \frac{\e^{-\kappa R}}{\ell} \W_M, & \ell = 1, \ldots, m
        \end{cases}
    \end{align*}
    and thus it concludes $\W(\Phi, v) = \D_m \le (D^R + 1) + \e^{-\kappa R} H_m \cdot \W_M$.
    
    \paragraph{\textbf{Base case $\ell = 0$.}}
    Conditional on the full assignment on $S$, the distributions inside and outside $B_{G_\Phi}(v, R - 1)$ are independent.
    We therefore couple the outside assignments identically, so disagreements can only occur in $B_{G_\Phi}(v, R - 1)$.
    Hence $\D_0 \le \sum_{j = 0}^{R - 1} D^j \le D^R + 1$.
    
    \paragraph{\textbf{The intermediate case $\ell = 1, \ldots, m$.}}
    Fix a sphere subset $P \subseteq S$ with $\abs{S \setminus P} = \ell$, a feasible pinning $\sigma$ on $P$ and an arbitrary free vertex $u \in S \setminus P$.
    If $(\sigma, v \gets b, u \gets d)$ is feasible, the extension argument above shows that $(\sigma, v \gets a, u \gets d)$ is feasible for any feasible color $a$ at $v$.
    Thus all intermediate conditional distributions in the one-disagreement coupling are well-defined.
    For all colors $a, b, c, d$ for which $(\sigma, v \gets a, u \gets c)$ and $(\sigma, v \gets b, u \gets d)$ are feasible, the coupling gives
    \begin{align*}
        \W_1(\mu_\Phi(\cdot \;|\; \sigma, v \gets a, u \gets c), \mu_\Phi(\cdot \;|\; \sigma, v \gets b, u \gets d)) &\le \D_{\ell - 1} + \id{c \neq d} \W(\Phi^{\sigma, a}, u) \\
        &\le \D_{\ell - 1} + \id{c \neq d} \W_M.
    \end{align*}
    Therefore for every free vertex $u \in S \setminus P$,
    \begin{align*}
        &\W_1(\mu_\Phi(\cdot \;|\; \sigma, v \gets a), \mu_\Phi(\cdot \;|\; \sigma, v \gets b)) \\
        &\qquad\le \D_{\ell - 1} + \TV{\mu_\Phi(u \gets \cdot \;|\; \sigma, v \gets a)}{\mu_\Phi(u \gets \cdot \;|\; \sigma, v \gets b)} \cdot \W_M.
    \end{align*}
    By \Cref{cor:local-TID-on-tree},
    \begin{align*}
        \sum_{u \in S \setminus P} \TV{\mu_\Phi(u \gets \cdot \;|\; \sigma, v \gets a)}{\mu_\Phi(u \gets \cdot \;|\; \sigma, v \gets b)} \le \e^{-\kappa R}.
    \end{align*}
    Pick the vertex $u$ to minimize $\TV{\mu_\Phi(u \gets \cdot \;|\; \sigma, v \gets a)}{\mu_\Phi(u \gets \cdot \;|\; \sigma, v \gets b)}$, and maximize over $P,\sigma,a,b$ to obtain
    \begin{align*}
        \D_\ell \le \D_{\ell - 1} + \frac{\e^{-\kappa R}}{\ell} \W_M.
    \end{align*}

    Combining all things together, we conclude that $\W(\Phi, v) \le (D^R + 1) + \e^{-\kappa R} H_m \cdot \W_M$, which leads to the lemma by monotonicity of harmonic numbers.
\end{proof}

\subsection{Message passing on marked trees}
In this part, we show a message passing process on a local tree with nodes marked as good or bad.
The messages would help us analyze the disagreements through the coupling on random graphs.

For a finite tree $T$ rooted at $\varrho$, each node $\omega$ in $T$ is labeled as good or bad and equipped with cost $\cost(\omega) \ge 0$.
Let $\child_T(\omega)$ be the child nodes of $\omega$ in $T$ and $\parent_T(\omega)$ be the parent node of $\omega$ in $T$ with convention $\parent_T(\varrho) = \varnothing$.
Let $\SubTree_T(\omega)$ be the subtree of $T$ rooted at $\omega$.
For $\alpha \in (0, 1)$ recursively define the message
\begin{align} \label{eq:message-passing-recursion}
    \phi(\alpha; T, \varrho) = \cost(\varrho) + \begin{cases}
        \alpha \cdot \max_{\omega \in \child_T(\varrho)} \phi(\alpha; \SubTree_T(\omega), \omega), & \varrho\; \text{is good} \\
        \sum_{\omega \in \child_T(\varrho)} \phi(\alpha; \SubTree_T(\omega), \omega), & \varrho\; \text{is bad}
    \end{cases}
\end{align}
where the summation and the maximum over an empty set are defined as zero.
For simplicity, for an unrooted tree $T$, we write $\phi(\alpha; T, \varrho)$ as the message after rooting $T$ at $\varrho$.
The following rerooting lemma records the change of messages.
\begin{lemma} \label{lem:rerooting}
    For a node $\omega \neq \varrho$ in $T$ at distance $\ell \ge 1$ from $\varrho$, it holds that
    \begin{align*}
        \phi(\alpha; T - \varrho, \omega) \le \ell \alpha^{-\ell} \phi(\alpha; T, \varrho)
    \end{align*}
    where $\phi(\alpha; T - \varrho, \omega)$ only needs to pass messages on the component of $T - \varrho$ containing $\omega$ after switching the root to $\omega$, with inherited marks and costs.
\end{lemma}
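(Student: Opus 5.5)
We argue by induction on $\ell$, tracing the path $\varrho=\omega_0,\omega_1,\dots,\omega_\ell=\omega$ in $T$. When $T$ is rerooted at $\omega$ with $\varrho$ deleted, the component containing $\omega$ is the subtree $\SubTree_T(\omega_1)$. In that rerooted tree, each $\omega_i$ with $i\ge 1$ has as children its original children in $T$, except that $\omega_{i+1}$ is replaced by $\omega_{i-1}$ (and $\omega_1$ has no new child, since $\varrho$ is deleted). Write $\phi_i\defeq\phi(\alpha;\SubTree_T(\omega_i),\omega_i)$ for the original messages. The recursion \eqref{eq:message-passing-recursion} gives
\[
\phi_{i-1}\ \ge\ \cost(\omega_{i-1})+\alpha\max_{\omega'\in\child_T(\omega_{i-1})}\phi(\alpha;\SubTree_T(\omega'),\omega')\ \ge\ \alpha\,\phi_i,
\]
since $\alpha<1$ and the good/bad cases both dominate $\alpha$ times any single child message. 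Hence every message hanging off the path at depth $i$, including $\phi_i$ itself, is at most $\alpha^{-i}\phi_0$.

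Next define the rerooted messages $\psi_i$ at $\omega_i$ inside the rerooted tree, for $i=1,\dots,\ell$, so that $\psi_\ell=\phi(\alpha;T-\varrho,\omega)$. Here $\psi_1$ is the message of $\omega_1$ with $\omega_2$ removed, so $\psi_1\le\phi_1$. For $i\ge 2$, $\psi_i$ is computed from the original side children of $\omega_i$ (excluding $\omega_{i+1}$) together with the new child $\omega_{i-1}$ carrying message $\psi_{i-1}$. For a bad $\omega_i$ we get
\[
\psi_i\le \phi_i+\psi_{i-1},
\]
since the sum over side children is bounded by the full sum. For a good $\omega_i$,
\[
\psi_i=\cost(\omega_i)+\alpha\max\{\text{side},\psi_{i-1}\}\le \phi_i+\alpha\psi_{i-1}\le\phi_i+\psi_{i-1}.
\]
Unrolling gives $\psi_\ell\le\sum_{i=1}^{\ell}\phi_i\le\sum_{i=1}^\ell\alpha^{-i}\phi_0\le \ell\alpha^{-\ell}\phi_0$, which is the claim.

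The only point needing care is the inequality $\phi_i\le\alpha^{-1}\phi_{i-1}$ when $\omega_{i-1}$ is bad. It holds because a sum of nonnegative messages dominates each term, and $1\ge\alpha$. Nonnegativity of all messages follows from $\cost\ge0$ by induction. I expect no real obstacle beyond correctly identifying the child sets after rerooting. In particular, the deletion of $\varrho$ means that $\omega_1$ acquires no parent-side child, which is exactly what makes $\psi_1\le\phi_1$ hold and starts the induction.
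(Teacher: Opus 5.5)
Your proof is correct and follows essentially the same route as the paper: the paper also walks the path $\varrho=\gamma_0,\dots,\gamma_\ell=\omega$, uses $\phi(\alpha;\SubTree_T(\gamma_i),\gamma_i)\le\alpha^{-i}\phi(\alpha;T,\varrho)$, and builds the rerooted message incrementally via $\phi(\alpha;B_i,\gamma_i)\le\phi(\alpha;B_{i-1},\gamma_{i-1})+\phi(\alpha;T_i,\gamma_i)$. This is exactly your $\psi_i\le\psi_{i-1}+\phi_i$, except that the paper's side term $\phi(\alpha;T_i,\gamma_i)$ is bounded by $\phi_i$ only afterwards. The only cosmetic point is that for $\ell=1$ there is no $\omega_2$ to remove; in that case $\psi_1=\phi_1$ and the bound is immediate.
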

\begin{proof}
    Assume that the unique path from $\varrho$ to $\omega$ is
    \begin{align*}
        \varrho = \gamma_0, \gamma_1, \ldots, \gamma_\ell = \omega.
    \end{align*}
    Remove all edges of this path.
    Let $T_i$ be the remaining component containing $\gamma_i$ for $i = 0, \ldots, \ell$.

    We firstly observe that
    \begin{align*}
        \phi(\alpha; T_i, \gamma_i) \le \phi(\alpha; \SubTree_T(\gamma_i), \gamma_i).
    \end{align*}
    In fact, $T_i$ is a subtree of $\SubTree_T(\gamma_i)$ and the upper bound is direct.
    Furthermore,
    \begin{align*}
        \phi(\alpha; \SubTree_T(\gamma_i), \gamma_i) \le \alpha^{-1} \phi(\alpha; \SubTree_T(\gamma_{i - 1}), \gamma_{i - 1})
    \end{align*}
    by recursion of the message $\phi$.
    Therefore for every $i = 0, \ldots, \ell$,
    \begin{align*}
        \phi(\alpha; T_i, \gamma_i) \le \alpha^{-i} \phi(\alpha; \SubTree_T(\gamma_0), \gamma_0) = \alpha^{-i} \phi(\alpha; T, \varrho).
    \end{align*}

    Now for $i = 1, \ldots, \ell$, let $B_i$ be the tree consisting of $T_1, \ldots, T_i$, joined by edges in path $\gamma_1, \ldots, \gamma_i$.
    Root $B_i$ at $\gamma_i$.
    Then it holds that
    \begin{align*}
        \phi(\alpha; B_1, \gamma_1) = \phi(\alpha; T_1, \gamma_1), \quad \phi(\alpha; B_\ell, \gamma_\ell) = \phi(\alpha; T - \varrho, \omega).
    \end{align*}

    For $i = 2, \ldots, \ell$, if $\gamma_i$ is bad, then we directly have
    \begin{align*}
        \phi(\alpha; B_i, \gamma_i) = \phi(\alpha; B_{i - 1}, \gamma_{i - 1}) + \phi(\alpha; T_i, \gamma_i).
    \end{align*}
    Otherwise, since $\alpha < 1$, it directly holds that
    \begin{align*}
        \phi(\alpha; B_i, \gamma_i) \le \phi(\alpha; B_{i - 1}, \gamma_{i - 1}) + \phi(\alpha; T_i, \gamma_i).
    \end{align*}
    Summing over $i = 2, \ldots, \ell$, it holds that
    \begin{align*}
        \phi(\alpha; B_\ell, \gamma_\ell) &\le \sum_{i = 1}^\ell \phi(\alpha; T_i, \gamma_i) \\
        &\le \phi(\alpha; T, \varrho) \sum_{i = 1}^\ell \alpha^{-i} \\
        &\le \ell \alpha^{-\ell} \cdot \phi(\alpha; T, \varrho)
    \end{align*}
    which concludes this rerooting lemma.
\end{proof}

\subsection{Coupling independence by local messages}
The next step is to bound the coupling independence by local messages.

For a subgraph $J$ of $G$ such that it is tree or unicyclic, we construct a tree $\T = \T(J)$ as:
\begin{itemize}
    \item if $J$ is a tree itself, let $\T$ be a copy of $J$ and $\node(\cdot) : V(J) \to V(\T)$ be the mapping such that it maps every vertex $u$ in $J$ to its corresponding node in $\T$;
    \item otherwise $J$ is unicyclic.
    Let $C$ be the unique cycle in $J$.
    We contract all vertices in $C$ into one cycle node and retain other vertices as single nodes.
    For every $(u, v) \in E(J)$ with $\node(u) \neq \node(v)$, there is an edge between $\node(u)$ and $\node(v)$.
\end{itemize}
Furthermore, we mark nodes in $\T$ as bad or good in the following way.
\begin{itemize}
    \item At first, we mark $\omega \in \T$ as bad if it is a cycle node or its corresponding vertex is in $V^{\HD}$.
    \item For every currently unmarked node $\omega$, if there exists a node marked as bad in the first step within distance $r$ from $\omega$, mark $\omega$ as bad.
    \item Finally mark all remaining nodes as good.
\end{itemize}

\begin{lemma} \label{lem:CI-by-local-message}
    Let $J$ be an induced subgraph of $G$ which is tree or unicyclic.
    Furthermore, fix the boundary $\Sigma \subseteq V(J)$ as:
    \[
        \Sigma \defeq \set{u \in V(J) \;:\; \dist_G(u, V(G) \setminus V(J)) \le r + R + 3}.
    \]
    We consider $\T = \T(J)$ with marks on nodes.
    Define the cost on $\T$ as:
    \begin{itemize}
        \item $\cost(\omega) = 2(D^R + 1) \cdot \abs{C \setminus \Sigma} + \W(G, q) \cdot \abs{C \cap \Sigma}$ if $\omega$ corresponds to the unique cycle $C$;
        \item $\cost(\omega) = 2(D^R + 1)$ if $\omega$ corresponds to a vertex not in $\Sigma$;
        \item $\cost(\omega) = \W(G, q)$ if $\omega$ corresponds to a vertex in $\Sigma$.
    \end{itemize}
    Then it holds that
    \begin{align*}
        \W(\Phi, v) \le \phi(\alpha; \T, \node(v))
    \end{align*}
    for every $\alpha \in [\rho, 1)$, feasible list-coloring instance $\Phi \in \mathcal L(G, q)$ and $v \in V_\Phi \cap V(J)$.
\end{lemma}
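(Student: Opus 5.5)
We prove the bound by strong induction on the number of free vertices of $\Phi$ (equivalently, on $\abs{V_\Phi}$), simultaneously for all $\Phi\in\mathcal L(G,q)$ and all $v\in V_\Phi\cap V(J)$. The induction has to allow the tree $\T$ to shrink. When vertices of $J$ are pinned, the remaining free vertices of $J$ that interact with $v$ lie in the component of $\T$ minus the nodes of pinned vertices. Pinned vertices disappear from $G_\Phi$, and messages are monotone under deleting subtrees, since costs are nonnegative and both the maximum and the sum are monotone. So it suffices to prove the statement with $\phi(\alpha;\T,\node(v))$ replaced by the message on the component of $\node(v)$ in any subforest of $\T$ that contains the nodes of the free vertices of $J$ reachable from $v$. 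We case split on the mark and nature of $\node(v)$.

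\emph{Case 1: $v\in\Sigma$.} Then $\cost(\node(v))\ge \W(G,q)\ge\W(\Phi,v)$, which is trivial. The same covers a cycle node with $C\cap\Sigma\neq\emptyset$ whenever $v\in C\cap\Sigma$.

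\emph{Case 2: $\node(v)$ is good and $v\notin\Sigma$.} Goodness means no bad node of $\T$ lies within distance $r$. Since $v\notin\Sigma$, the ball $B_G(v,r)$ stays inside $J$, contains no high-degree vertex, and meets no cycle vertex. Hence $B_G(v,r)$ is a tree free of $V^{\HD}$, and \Cref{lem:Wasserstein-distance-recursive-bound} applies:
\[
\W(\Phi,v)\le (D^R+1)+\e^{-\kappa R}H_{D^R}\max_{\Phi',u}\W(\Phi',u).
\]
Each $\Phi'$ has $v$ pinned, so it has fewer free vertices, and $u\in S_{G_\Phi}(v,R)\subseteq V(J)$. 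Induction gives $\W(\Phi',u)\le\phi(\alpha;\T-\node(v),\node(u))$. By \Cref{lem:rerooting} with $\ell=R$, this is at most $R\alpha^{-R}\,\phi(\alpha;\T,\node(v))$. That bound is too weak on its own, because the factor is not below one and the recursion would not close. We refine it as follows. The path from $\node(v)$ to $\node(u)$ stays inside the good region, so the rerooting sum is $\sum_{i=1}^R \phi(\alpha;T_i,\gamma_i)\le\sum_i\alpha^{-i}\cdot(\text{message of }\SubTree(\gamma_i))$. Here $\phi(\alpha;\T,\node(v))\ge \cost+\alpha\,\phi(\SubTree(\gamma_1))$, and more generally $\phi(\SubTree(\gamma_{i-1}))\ge\alpha\phi(\SubTree(\gamma_i))$. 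This yields
\[
\W(\Phi',u)\le R\alpha^{-R}\bigl(\phi(\alpha;\T,\node(v))-\cost(\node(v))\bigr)\alpha^{-1}\cdot\alpha
\]
up to constants. With $\alpha\ge\rho$ and \eqref{eq:local-decay-parameter}, $\e^{-\kappa R}H_{D^R}R\rho^{-R}\le 1/4$. Then
\[
\W(\Phi,v)\le (D^R+1)+\tfrac14\cdot O\bigl(\phi(\alpha;\T,\node(v))\bigr)\le \phi(\alpha;\T,\node(v)),
\]
using that the cost $2(D^R+1)$ absorbs the additive term and the factor $4$ in \eqref{eq:local-decay-parameter} absorbs the remaining slack. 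This bookkeeping, which shows the recursion really closes using exactly the factor $2$ in the cost and the $4R$ in \eqref{eq:local-decay-parameter}, is the main obstacle. I would try first to bound $\max_u\phi(\alpha;\T-\node(v),\node(u))$ by $\tfrac{1}{2}R\alpha^{-R}\cdot 2\,\phi(\alpha;\T,\node(v))$ and accept a factor-$2$ loss.

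\emph{Case 3: $\node(v)$ is bad and $v\notin\Sigma$.} We use a plain path-coupling/telescoping bound instead of the tree decay. Pinning $v$ to $a$ versus $b$ splits the remaining graph into branches through the neighbors of $v$ (or through the cycle). We couple by sequentially revealing the neighbors of $v$ and apply path coupling along a chain of pinnings that changes one neighbor at a time. This gives
\[
\W(\Phi,v)\le 1+\sum_{w\sim v}\W(\Phi^{(w)},w),
\]
where each $\Phi^{(w)}$ has $v$ pinned, and hence fewer free vertices. By induction, $\W(\Phi^{(w)},w)\le\phi(\alpha;\SubTree(\node(w)),\node(w))$, since removing $\node(v)$ leaves $\node(w)$'s subtree as its component. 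Summing, and using $\cost\ge 1$, gives exactly the bad-node recursion.

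For a cycle node we pin the whole cycle one vertex at a time. The vertices in $C\setminus\Sigma$ each contribute a base term $2(D^R+1)$ and those in $C\cap\Sigma$ contribute $\W(G,q)$. The children of the cycle node are then handled as in Case 3. Feasibility of the intermediate pinnings follows, as in \Cref{lem:Wasserstein-distance-recursive-bound}, from the list-degree slack $q-D\ge3$ away from high-degree vertices; at bad vertices it follows because we only use pinnings that are genuinely feasible, taking the maximum over feasible intermediate colorings.
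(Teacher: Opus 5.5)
\textbf{The gap is in your Case 3.} Your bound $\W(\Phi,v)\le 1+\sum_{w\sim v}\W(\Phi^{(w)},w)$ comes from a path coupling that switches the forbidden color at the neighbors $w_1,\dots,w_k$ of $v$ one at a time. For the triangle inequality to apply, every hybrid instance on $G_\Phi-v$ must be feasible, namely the one in which $w_1,\dots,w_j$ forbid $b$ and $w_{j+1},\dots,w_k$ forbid $a$. If one of them is infeasible, the chain has a missing link and gives no bound. Saying you take ``the maximum over feasible intermediate colorings'' does not repair this.

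At a bad node, feasibility can genuinely fail, for two reasons:
\begin{itemize}
    \item The neighbors of $v$ may lie in $V^{\HD}$, where the slack $q-\deg_G(w)$ may be negative. So $w_j$ cannot simply be recolored.
    \item The neighbors $w_1,w_2$ may be linked in $G_\Phi-v$ through the cycle of $J$, or through vertices outside $J$, about which the lemma assumes nothing. In particular, $G_\Phi-v$ need not split into independent branches.
\end{itemize}
For a concrete instance, join $w_1,w_2$ by a path with an odd number of vertices. Make every vertex on it high-degree, with list $\{a,b\}$ and no other free neighbors; this is admissible in $\mathcal L(G,q)$ when $\deg_G\ge q$ there. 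Then $v\gets a$ and $v\gets b$ are both feasible. But the path forces $w_1=w_2$, while the hybrid forces $w_1=a$ and $w_2=b$, so the hybrid is infeasible. The same objection applies to your sketch for the cycle node, whose children may again be high-degree.

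\textbf{What is missing is the paper's device of processing the whole maximal connected bad component $B\ni\node(v)$ of $\T[\Phi]$ at once.} Here $\T[\Phi]$ is $\T$ with all nodes lacking a free vertex deleted. Use this in place of your ``reachable'' formulation, because $G_\Phi$ can connect different branches of $\T$ outside $J$. Let $U$ be the free vertices of $J$ whose nodes lie in $B$.
\begin{itemize}
    \item If $U\cap\Sigma\neq\emptyset$, the sum rule along the bad path already gives $\phi\ge\W(G,q)\ge\W(\Phi,v)$.
    \item Otherwise all neighbors of $U$ lie in $J$. Every edge of $G_\Phi$ leaving $U$ is some $(u_j,v_j)$, one per adjacent component $\T_j$ of $\T[\Phi]-B$, with $\node(v_j)$ good and hence $\deg_G(v_j)\le D$.
    \item Couple the two marginals on $U$ arbitrarily, at cost $\abs{U}\le\sum_{\omega\in B}\cost(\omega)$.
    \item Then interpolate only the lists of $v_1,\dots,v_\ell$. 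Every intermediate instance is feasible because $v_j$ alone can be recolored: $\abs{L_\Phi(v_j)}\ge\deg_{G[W]}(v_j)+3$.
    \item Each step costs at most $\W(\widehat\Phi_j,v_j)\le\phi(\rho;\T_j,\node(v_j))$. This holds because both neighboring distributions in the chain are mixtures of $\mu_{\widehat\Phi_j}(\cdot\mid v_j=c)$, where $\widehat\Phi_j$ restores the full list at $v_j$.
\end{itemize}

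\textbf{Separately, in Case 2 no refinement of the rerooting bound is needed.} For $\alpha\ge\rho$, \eqref{eq:local-decay-parameter} gives $\e^{-\kappa R}H_{D^R}\cdot R\alpha^{-R}\le\frac14$. Hence $\W(\Phi,v)\le(D^R+1)+\frac14\phi\le\frac12\cost(\node(v))+\frac14\phi\le\phi$. Your ``$\frac14\cdot O(\phi)$'' version closes only if the hidden constant is at most $2$.
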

\begin{proof}
    Fix a feasible list-coloring instance $\Phi \in \mathcal L(G, q)$.
    Let $\T[\Phi]$ be the forest generated from $\T$ by deleting all nodes with no vertices in $V_\Phi$.
    Let $\T[\Phi](v)$ be the tree in $\T[\Phi]$ containing the corresponding node of $v$.
    % Define the quantity
    % \[
    %     \psi(\rho; V_\Phi, v) \defeq \phi(\rho; \T[\Phi](v), \node(v)).
    % \]
    We take $d$ sufficiently large that $q-D\ge 2$. We prove by induction on $\abs{V_\Phi}$ that
    \begin{equation}\label{eq:strong-CI-by-local-message}
        \W(\Phi,v) \le    \phi\bigl(\rho;\T[\Phi](v),\node(v)\bigr).
    \end{equation}
    
    \paragraph{\textbf{The boundary case: $v\in\Sigma$.}}
    By the definition of the costs, including the case where $\node(v)$ is a cycle node,
    \[
        \W(\Phi, v) \le \W(G, q) \le \cost(\node(v)) \le \phi\ab(\rho; \T[\Phi](v), \node(v)).
    \]
    
    \paragraph{\textbf{The first interior case: $v\notin\Sigma$ and $\node(v)$ is bad.}}
    Let $B$ be the maximal connected component of bad nodes containing $\node(v)$ in $\T[\Phi]$, and define
    \[
        U \defeq \set{x \in V(J) \cap V_\Phi \midc \node(x) \in B}, \qquad W \defeq V_\Phi\setminus U.
    \]

    Suppose first that $U \cap \Sigma \neq \emptyset$, and choose $x \in U \cap \Sigma$.
    Every node on the path from $\node(v)$ to $\node(x)$ is bad.
    Consequently, the sum rule in the message recursion gives
    \[
        \phi\ab(\rho; \T[\Phi](v), \node(v)) \ge \cost(\node(x)) \ge \W(G, q) \ge \W(\Phi, v).
    \]
    Thus we may assume that $U \cap \Sigma = \emptyset$.
    Every vertex of $U$ has all its neighbors in $J$.
    Every component of $\T[\Phi] - B$ adjacent to $B$ has at most one edge of $G_\Phi$ to $U$.
    Indeed, $\T[\Phi]$ is a forest and a retained cycle node is bad; an edge incident to that cycle node may have its original cycle endpoint outside $V_\Phi$ and hence be absent from $G_\Phi$.
    Index just the components with such an edge as $\T_1, \ldots, \T_\ell$, and write the edges of $G_\Phi$ between $U$ and $W$ as
    \[
        (u_1, v_1), \ldots, (u_\ell, v_\ell), \qquad u_j \in U, \quad v_j \in W, \quad \node(v_j) \in V(\T_j).
    \]
    Moreover, the vertices $v_1, \ldots, v_\ell$ are distinct, their corresponding nodes are good, and $\deg_G(v_j) \le D$.
    Expanding the sum rule throughout $B$ and discarding the nonnegative messages from any other adjacent components, we obtain
    \begin{equation} \label{eq:bad-component-message-expansion}
        \phi\ab(\rho; \T[\Phi](v), \node(v)) \ge \sum_{\omega \in B} \cost(\omega) + \sum_{j = 1}^{\ell} \phi\ab(\rho; \T_j, \node(v_j)).
    \end{equation}
    Since $U \cap \Sigma = \emptyset$, the first sum is at least $\abs{U}$.
    Fix two feasible colors at $v$, and couple the corresponding marginals on $U$ arbitrarily.
    For a pair of resulting assignments $\sigma, \tau$ on $U$, we now couple the conditional distributions on the \emph{entire} remaining vertex set $W$.
    For $j = 0, \ldots, \ell$, let $\Phi^{(j)}$ be the list-coloring instance on $G[W]$ with lists
    \[
        L^{(j)}(x) = \begin{cases}
            L_\Phi(v_i) \setminus \set{\tau(u_i)}, & x = v_i,\ i \le j, \\
            L_\Phi(v_i) \setminus \set{\sigma(u_i)}, & x = v_i,\ i > j, \\
            L_\Phi(x), & x \notin \set{v_1, \ldots, v_\ell}.
        \end{cases}
    \]
    Write $\nu^{(j)} = \mu_{\Phi^{(j)}}$.
    Then $\nu^{(0)}$ and $\nu^{(\ell)}$ are precisely the conditional distributions on $W$ given $\sigma$ and $\tau$, respectively.
    
    We verify that these intermediate instances are feasible.
    The instance $\Phi^{(0)}$ is feasible by construction.
    At step $j$, only the list at $v_j$ changes.
    If a coloring of $\Phi^{(j - 1)}$ is not valid for $\Phi^{(j)}$, recolor $v_j$ to a color in
    \[
        L_\Phi(v_j) \setminus \ab(\set{\tau(u_j)} \cup \set{\xi(y) : y \in N_{G[W]}(v_j)}),
    \]
    where $\xi$ denotes the current coloring.
    Such a color exists because
    \[
        \abs{L_\Phi(v_j)} \ge \deg_{G_\Phi}(v_j) + q - \deg_G(v_j) = \deg_{G[W]}(v_j) + 1 + q - \deg_G(v_j) \ge \deg_{G[W]}(v_j) + 3.
    \]
    Hence every $\Phi^{(j)}$ is feasible.
    Furthermore, for every $x \in W$,
    \[
        \abs{L^{(j)}(x)} - \deg_{G[W]}(x) \ge \abs{L_\Phi(x)} - \deg_{G_\Phi}(x) \ge q - \deg_G(x),
    \]
    so $\Phi^{(j)} \in \L(G, q)$.
    
    For $j = 1, \ldots, \ell$, define $\widehat{\Phi}_j$ on $G[W]$ by using the common lists of $\Phi^{(j - 1)}$ and $\Phi^{(j)}$ at all vertices other than $v_j$, and restoring the list $L_\Phi(v_j)$ at $v_j$.
    This is a feasible instance in $\L(G, q)$, and
    \[
        \nu^{(j - 1)} = \mu_{\widehat{\Phi}_j} \ab(\cdot \mid v_j \neq \sigma(u_j)), \qquad \nu^{(j)} = \mu_{\widehat{\Phi}_j} \ab(\cdot \mid v_j \neq \tau(u_j)).
    \]
    Both distributions are mixtures of the same family
    \[
        \set{\mu_{\widehat{\Phi}_j}(\cdot \mid v_j = c) \midc c \text{ is feasible at } v_j}.
    \]
    Coupling the mixing colors and then optimally coupling the corresponding conditional distributions yields
    \[
        \W_1\ab(\nu^{(j - 1)}, \nu^{(j)}) \le \W(\widehat{\Phi}_j, v_j).
    \]
    Since $V_{\widehat{\Phi}_j} = W$ and $v \in U$, these instances have fewer free vertices than $\Phi$.
    Moreover,
    \[
        \T[\widehat{\Phi}_j] = \T[\Phi] - B, \qquad \T[\widehat{\Phi}_j](v_j) = \T_j.
    \]
    The induction hypothesis and the triangle inequality give
    \[
        \W_1\ab(\nu^{(0)}, \nu^{(\ell)}) \le \sum_{j = 1}^{\ell} \W(\widehat{\Phi}_j, v_j) \le \sum_{j = 1}^{\ell} \phi\ab(\rho; \T_j, \node(v_j)).
    \]
    Combining this conditional coupling on $W$ with the coupling on $U$, and then maximizing over the two feasible colors at $v$, we obtain
    \[
        \W(\Phi, v) \le \abs{U} + \sum_{j = 1}^{\ell} \phi\ab(\rho; \T_j, \node(v_j)) \le \phi\ab(\rho; \T[\Phi](v), \node(v))
    \]
    where the last inequality follows from \eqref{eq:bad-component-message-expansion}.
    
    \paragraph{\textbf{The second interior case: $v \notin \Sigma$ and $\node(v)$ is marked as good.}}
    By the marking process, $B_G(v, r) \subseteq J$ and $B_G(v, r) \cap V^{\HD} = \emptyset$.
    \Cref{lem:Wasserstein-distance-recursive-bound} provides that
    \[
        \W(\Phi, v) \le (D^R + 1) + \e^{-\kappa R} H_{D^R} \cdot \max_{\Phi', u}\; \W(\Phi', u).
    \]
    Here each child instance $\Phi'$ is defined on a subset $V_{\Phi'} \subseteq V_\Phi \setminus \set{v}$ and has a vertex $u \in V_{\Phi'} \cap B_{G_\Phi}(v, R)$.
    Furthermore, the $R$-length path from $v$ to $u$ completely lies in the intersection of $J$ and $G_\Phi$ and therefore there is a path from $\node(v)$ to $\node(u)$ of length at most $R$ in $\T[\Phi]$.
    By the induction hypothesis and \Cref{lem:rerooting},
    \begin{align*}
        \W(\Phi', u) &\le \phi(\rho; \T[\Phi'](u), \node(u)) \\
        &\le \phi(\rho; \T[\Phi](v) - \node(v), \node(u)) \\
        &\le \ell \rho^{-\ell} \phi(\rho; \T[\Phi](v), \node(v)) \\
        &\le R \rho^{-R} \phi(\rho; \T[\Phi](v), \node(v))
    \end{align*}
    where the last inequality holds for $\ell \le R$ and $\rho < 1$.
    Recall \Cref{cor:local-TID-on-tree}, and we obtain that
    \begin{align*}
        \W(\Phi, v) &\le (D^R + 1) + \e^{-\kappa R} H_{D^R} \cdot R \rho^{-R} \cdot \phi(\rho; \T[\Phi](v), \node(v)) \\
        &\le (D^R + 1) + \frac{1}{4} \phi(\rho; \T[\Phi](v), \node(v)) \\
        &\le \phi(\rho; \T[\Phi](v), \node(v))
    \end{align*}
    where the last inequality holds for
    \[
        (D^R + 1) \le \frac{1}{2} \cost(\node(v)) \le \frac{1}{2} \phi(\rho; \T[\Phi](v), \node(v)).
    \]
    
    Putting all things together, we conclude the lemma with monotonicity of $\phi$ on $\alpha \ge \rho$.
\end{proof}

\subsubsection{Further decomposition of messages}
To bound $\phi(\alpha; \T, \node(v))$ in \Cref{lem:CI-by-local-message}, it is convenient to introduce the following decomposition.

Let $Y(\alpha; \T, \node(v)), Z(\alpha; \T, \node(v))$ be the messages on $\T$ as \eqref{eq:message-passing-recursion} with retained marks and the costs defined as
\begin{align*}
    \cost_Y(\omega) &= \begin{cases}
        2(D^R + 1) \cdot \abs{C}, & \omega\; \text{corresponds to the unique cycle}\; C \\
        2(D^R + 1), & \omega\; \text{corresponds to a single vertex}\; v
    \end{cases}\;, \\
    \cost_Z(\omega) &= \begin{cases}
        \abs{C \cap \Sigma}, & \omega\; \text{corresponds to the unique cycle}\; C \\
        \id{v \in \Sigma}, & \omega\; \text{corresponds to a single vertex}\; v
    \end{cases}\;.
\end{align*}
\begin{proposition} \label{prop:decomposed-message-property}
    The following properties hold.
    \begin{itemize}
        \item $\phi(\alpha; \T, \node(v)) \le Y(\alpha; \T, \node(v)) + \W(G, q) Z(\alpha; \T, \node(v))$.
        \item $Z(\alpha; \T, \node(v)) \le \frac{Y(\alpha; \T, \node(v))}{2(D^R + 1)}$.
        \item Assume that every path from $\node(v)$ to a node of positive $\cost_Z$ contains at least $\ell$ good nodes other than its endpoint.
        Then for every $0 < \alpha_0 \le \alpha_1 < 1$,
        \begin{align*}
            Z(\alpha_0; \T, \node(v)) \le \ab(\frac{\alpha_0}{\alpha_1})^\ell Z(\alpha_1; \T, \node(v)).
        \end{align*}
    \end{itemize}
\end{proposition}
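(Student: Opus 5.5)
All three items will follow from structural induction on the finite rooted tree, using the recursion \eqref{eq:message-passing-recursion}. The key facts are that the recursion is monotone in the costs, and that it is positively homogeneous and subadditive in the cost function.

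\textbf{First item.} By definition, $\cost(\omega) \le \cost_Y(\omega) + \W(G,q)\cost_Z(\omega)$ for every node. For a cycle node this reads $2(D^R+1)\abs{C\setminus\Sigma} + \W(G,q)\abs{C\cap\Sigma} \le 2(D^R+1)\abs{C} + \W(G,q)\abs{C\cap\Sigma}$; for single vertices it is immediate. I will then prove by induction on the height of the subtree that for nonnegative costs $c \le c_1 + \lambda c_2$ with $\lambda\ge 0$, the corresponding messages satisfy $\phi_c \le \phi_{c_1} + \lambda\phi_{c_2}$. At a bad node this is linear. At a good node, $\alpha\max_\omega(\phi_{c_1}(\omega)+\lambda\phi_{c_2}(\omega)) \le \alpha\max\phi_{c_1} + \lambda\alpha\max\phi_{c_2}$.

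\textbf{Second item.} Pointwise, $2(D^R+1)\cost_Z(\omega) \le \cost_Y(\omega)$, since $\abs{C\cap\Sigma}\le\abs{C}$ and $\id{v\in\Sigma}\le 1$. The recursion is monotone in the costs and homogeneous of degree one under scaling the costs by a positive constant. An induction as above therefore gives $2(D^R+1)Z \le Y$.

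\textbf{Third item.} This is the only step needing care. I will strengthen the claim to every node. For a node $\omega$, let $g(\omega)$ be the minimum, over paths from $\omega$ to nodes $\xi$ of positive $\cost_Z$ inside $\SubTree(\omega)$, of the number of good nodes on the path excluding the endpoint $\xi$; set $g=\infty$ if no such node exists. The claim is that $Z(\alpha_0;\SubTree(\omega),\omega) \le (\alpha_0/\alpha_1)^{g(\omega)} Z(\alpha_1;\SubTree(\omega),\omega)$ whenever $g(\omega)$ is finite, and that both messages vanish when $g=\infty$. The case $g=\infty$ is clear, because all $\cost_Z$ in the subtree are then zero. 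For the induction step, write $\beta=\alpha_0/\alpha_1\le 1$.
\begin{itemize}
\item If $\cost_Z(\omega)>0$, then $g(\omega)=0$, and the claim is just monotonicity of $Z$ in $\alpha$.
\item If $\cost_Z(\omega)=0$ and $\omega$ is bad, then $g(\omega)=\min_{\text{children}} g(\chi)$. Each child term satisfies $Z(\alpha_0;\chi)\le \beta^{g(\chi)}Z(\alpha_1;\chi)\le\beta^{g(\omega)}Z(\alpha_1;\chi)$ because $\beta\le1$. Summing over children gives the claim.
\item If $\cost_Z(\omega)=0$ and $\omega$ is good, then $g(\omega)=1+\min_\chi g(\chi)$. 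Hence $\alpha_0 Z(\alpha_0;\chi) \le \alpha_0\beta^{g(\omega)-1}Z(\alpha_1;\chi) = \beta^{g(\omega)}\alpha_1 Z(\alpha_1;\chi)$. Taking the maximum over children gives the claim.
\end{itemize}

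Finally, I apply the strengthened claim at the root $\node(v)$. The hypothesis says $g(\node(v))\ge\ell$, and $\beta^{g}\le\beta^{\ell}$ finishes the proof. The main obstacle is bookkeeping with the endpoint convention: the root counts as a good node on its own paths unless it is itself the endpoint, which matches the hypothesis's phrase ``other than its endpoint.''
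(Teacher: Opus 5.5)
Your proposal is correct and follows essentially the same route as the paper. The first two items come from node-wise cost comparisons combined with monotonicity, positive homogeneity and subadditivity of the update, with $\max_i(x_i+y_i)\le\max_i x_i+\max_i y_i$ at good nodes. The third comes from the same strengthened induction on $g(\omega)$, the minimum number of good non-terminal nodes on a path to a node of positive $Z$-cost, with exponent $\min_i g_i$ at bad nodes and $1+\min_i g_i$ at good nodes.
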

\begin{proof}
    Set $A\defeq2(D^R+1)$ and $W\defeq\W(G,q)$.
    The update in \eqref{eq:message-passing-recursion} is monotone,
    positively homogeneous, and subadditive, where subadditivity at a
    good node follows from
    \[
        \max_i(x_i+y_i)\le\max_i x_i+\max_i y_i.
    \]
    At every node, the three costs satisfy
    \[
        \cost\le\cost_Y + \,W\cost_Z,
        \qquad
        \cost_Z\le A^{-1}\cost_Y.
    \]
    For a cycle node these inequalities reduce to
    $\abs{C\cap\Sigma}\le\abs C$; for a single-vertex node they are
    immediate.  Induction on the height of the rooted subtree, using
    the three properties of the update, proves the first two claims.

    For the last claim, put $\theta\defeq\alpha_0/\alpha_1$.
    For a node $\omega$, let $g(\omega)$ be the minimum number of good
    nonterminal nodes on a path from $\omega$ to a node of positive
    $\cost_Z$ in its descendant subtree.  If no such node exists, its
    $Z$-message is zero.  We claim, again by induction on subtree height,
    that
    \begin{equation}\label{eq:decomposed-Z-alpha-comparison}
        Z(\alpha_0;\T_\omega,\omega)
        \le
        \theta^{g(\omega)}Z(\alpha_1;\T_\omega,\omega).
    \end{equation}
    If $\cost_Z(\omega)>0$, then $g(\omega)=0$, and the claim follows
    from monotonicity in $\alpha$.  Otherwise, writing $g_i$ for the
    corresponding values at the children and applying the induction
    hypothesis gives
    \begin{align*}
        Z(\alpha_0;\T_\omega,\omega)
        &\le \theta^{\min_i g_i}
        Z(\alpha_1;\T_\omega,\omega), && \omega\text{ bad},\\
        Z(\alpha_0;\T_\omega,\omega)
        &\le \theta^{1+\min_i g_i}
        Z(\alpha_1;\T_\omega,\omega), && \omega\text{ good}.
    \end{align*}
    These exponents are precisely $g(\omega)$, proving
    \eqref{eq:decomposed-Z-alpha-comparison}.  The assumption gives
    $g(\node(v))\ge\ell$, and the third claim follows since
    $0<\theta\le1$.
\end{proof}

\subsection{Estimation of local messages}
On the event $\LocalTreeOrUnicyclic_{5L}$, for every vertex $v$, we pick $J_v$ as:
\begin{align*}
    J_v = \begin{cases}
        B_G(v, 2L), & \text{if}\; B_G(v, 2L)\; \text{is a tree} \\
        B_G(v, 4L), & \text{if}\; B_G(v, 2L)\; \text{is unicyclic}
    \end{cases}\;.
\end{align*}
Let $\T_v$ be the tree induced by $J_v$ as in \Cref{lem:CI-by-local-message}.
Now we give the following upper bound to $Y(\alpha; \T_v, \node(v))$.
\begin{lemma}\label{lem:message-estimation}
    Fix $\alpha\in(0,1)$.
    For $d$ sufficiently large, depending on $\eta$ and
    $\alpha$, there exists a constant
    $C=C(\eta,d,\alpha)>0$ such that, with probability
    at least $1-o_n(1)$, the event
    $\LocalTreeOrUnicyclic_{5L}$ holds and
    \[
        Y(\alpha;\T_v,\node(v))\le C\ln n
        \qquad\text{for every }v\in V.
    \]
\end{lemma}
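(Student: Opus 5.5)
We bound $Y$ through its exponential moments. After dividing by $2(D^R+1)$, $Y$ is the message with unit cost per single-vertex node and cost $\abs{C}$ at a cycle node. Unrolling the recursion \eqref{eq:message-passing-recursion} gives
\[
Y(\alpha;\T_v,\node(v))=2(D^R+1)\max_{\text{root-to-leaf "good-max" selections}}\sum_{\omega}\alpha^{g(\omega)}\,\cost_Y(\omega)/(2(D^R+1)).
\]
At each good node we follow one child, and at each bad node we keep all children. Here $g(\omega)$ counts the good nodes strictly above $\omega$ on its path. In particular $Y\le 2(D^R+1)\sum_{\omega}\alpha^{g(\omega)}\abs{\omega}$, where $\abs{\omega}$ is the number of vertices represented by $\omega$. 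It therefore suffices to prove that, with high probability and for every $v$,
\[
\sum_{\omega\in\T_v}\alpha^{g(\omega)}\abs{\omega}\le C'\ln n.
\]

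The first step is to control the bad region. A node is bad only if it lies within distance $r$ of $V^{\HD}$ or of the cycle. On $\LocalTreeOrUnicyclic_{5L}$, the ball $J_v$ has radius at most $4L$. By \Cref{lem:high-degree-small-occupancy}, every path of $\floor{L/4}$ vertices has at least half of its vertices good, apart from the at most $2r+1$ nodes near the cycle. Consequently a node at depth $k$ has $g(\omega)\ge k/2-O(r)-O(L)\cdot\id{\cdot}$; more precisely, $g(\omega)\ge \lfloor k/\floor{L/4}\rfloor\cdot \floor{L/4}/2-O(r)$. Summing over depths gives
\[
\sum_\omega\alpha^{g(\omega)}\abs{\omega}\le \alpha^{-O(L)}\sum_k \abs{S_G(v,k)}\,\alpha^{k/2}.
\]
This is too lossy, since $\abs{S_G(v,k)}\approx d^k$ while $\alpha^{k/2}$ decays only at a constant rate. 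So the sum rule at good nodes cannot be discarded; we must use the max.

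I would instead use the selection structure directly and compare with a marked Galton–Watson tree. The exploration of $B_G(v,4L)$ is dominated by a Poisson$(d)$ Galton–Watson tree, up to a $(1+o(1))$ factor on the edge probabilities, with one extra cycle handled separately. The quantity to control is $M=\max$ over selections of the total cost collected, which is a "branching at bad nodes, single path at good nodes" process. Because good nodes pick a single child, only the number of nodes reachable through bad clusters matters. Bad clusters are the $r$-neighbourhoods of high-degree vertices. In a Poisson$(d)$ tree, a vertex has degree $>D=(1+\eta/3)d$ with probability $e^{-c_\eta d}$, which is much smaller than $d^{-2r}=e^{-O((\ln d)^3)}$. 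Hence the bad cluster containing a given node is a subcritical object with exponential tails: its size $K$ satisfies $\E{e^{sK}}<\infty$ for some $s>0$, by a union bound over trees as in \Cref{lem:joint-degree-bound}.

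Define, for the GW tree, $m_t=\E{e^{\lambda Y_t}}$ with $Y_t$ the normalized message at depth-truncation $t$. The recursion gives $m$ at a good node equal to $e^{\lambda}\E{\max_i e^{\lambda\alpha Y^{(i)}}}\le e^{\lambda}\E{\sum_i e^{\lambda\alpha Y^{(i)}}}$. The factor $\alpha$ in the exponent is what makes a contraction possible: using Jensen, $\E{e^{\lambda\alpha Y}}\le m^{\alpha}$. Through a bad cluster we get a product over the cluster's boundary children. I would aim to show, for a small $\lambda=\lambda(\eta,d,\alpha)$, a fixed-point bound
\[
m\le \E{\exp(\lambda K)}\cdot\E{(\#\text{children})}\cdot m^{\alpha}\cdots,
\]
which yields $m\le C''$ uniformly in the truncation depth. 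A Markov bound $\Pr{Y\ge C\ln n}\le C''n^{-\lambda C}$ and a union bound over the $n$ vertices then finish the argument, after transferring from the GW tree to $\G(n,d/n)$ via the standard exploration coupling and treating the single cycle node by a constant-factor adjustment.

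The main obstacle is exactly this fixed-point step. The max at a good node over roughly $d$ children costs a factor $d$ when bounded by a sum, and one must check that $e^{\lambda}\,d\,m^{\alpha}\le m$ has a bounded solution. This works because the exponent $\alpha<1$ makes the map $m\mapsto d e^{\lambda}m^\alpha$ contracting for large $m$. The delicate part is making the bad-cluster contributions, which raise $m$ to the power equal to the number of boundary children, compatible with this, using that bad clusters occur with probability $e^{-\Omega(d)}$.
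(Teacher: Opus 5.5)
Your Galton--Watson comparison and the final Markov/union-bound step match the paper. However, the step you flag as the main obstacle is exactly where the argument fails, and the mechanism you offer does not fix it.

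Bounding the max at a good node by the sum gives $m\le \e^{\lambda}(d+1)m^{\alpha}$. Its right-hand side already exceeds $d$ at $m=1$, so this bound can never certify an exponential moment below order $d$, however small $\lambda$ is. Through a bad cluster, by contrast, the message is a \emph{sum} over the boundary subtrees. That branch therefore contributes about $\Pr{\text{bad}}\cdot\E{\e^{\lambda\abs{\mathcal C}}m^{N_\partial}}$, where $N_\partial$ (the number of boundary children) is typically at least of order $d^{r}$ once a high-degree node is present. With $m\ge d$ this term is $\e^{-O(d)}d^{\Omega(d^{r})}\gg m$.

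Hence the right-hand side $F(m)$ of your inequality $m\le F(m)$ satisfies $F(m)>m$ for every $m\ge 1$, and iterating over truncation depths diverges. The contraction of $m\mapsto d\e^{\lambda}m^{\alpha}$ for large $m$ does not help, because the bad-cluster term dominates there. The natural refinement $\e^{\lambda\alpha\max_i x_i}\le 1+\sum_i(\e^{\lambda\alpha x_i}-1)$ fails too. Writing $m=1+\varepsilon$, it gives to first order $\varepsilon\le\lambda+(d+1)\alpha\varepsilon$, and $(d+1)\alpha>1$ for fixed $\alpha$ and large $d$, so $m$ is again driven up to order $d^{1/(1-\alpha)}$. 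Any first-moment (union-bound) treatment of the max loses a factor $d$ per level, which a single factor $\alpha$ cannot repay.

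The missing idea is the content of \Cref{prop:block-message-bound}, which underlies \Cref{prop:GWT-message-well-definedness-finiteness}: pay for the max in $L^P$. Since $\E{(\max_{i\le Z}X_i)^P}\le\E{Z}\,\E{X^P}$, the max costs only a factor $\E{Z}^{1/P}$. The paper groups $L_{\mathrm{block}}=\lceil\max\{3r,4\ln 4/\ln(1/\alpha)\}\rceil$ levels into blocks, so a good block transmits $\beta=\alpha^{L_{\mathrm{block}}}$ times the max over its frontier. It then chooses $P=\Theta_{\eta,\alpha}(\ln d)$, balancing two requirements:
\begin{itemize}
\item $P$ is large enough that $\beta\,\E{Z}^{1/P}\le 1/4$;
\item $P$ is small enough that the rare-bad-block term $p_{\mathcal E}^{1/(2P)}\norm{N_3}_{2P}\le\exp(-\Omega(d/\ln d)+O((\ln d)^3))$ stays tiny.
\end{itemize}
This gives bounds on the first $P$ moments that are uniform in the truncation depth. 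These are upgraded to an exponential moment by a coefficient-wise power-series majorant, whose coefficients of degree above $P$ are controlled by the contraction matrix $B_P$ (good-block entry $\beta^{P+1}\E{Z}$). The block marking (``within two block distance'') and the marked-parent message $\Y^{\mathrm{block}}$ also handle badness caused by high-degree nodes outside a given subtree. Your product over boundary children does not address this.

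Finally, the cycle is not a constant-factor adjustment. The cycle and the path from $v$ to it may have length $\Theta(\ln n)$. The paper therefore uses a separate union bound over rooted cores of size $k$: it weighs $kn^k$ cores of probability $(d/n)^k$ against an $\e^{k}$ exponential moment for the $\Poisson(k(d+1))$ branches. Each branch is dominated by $\Y$, whose first $2h$ levels are forced bad to absorb the $r$-neighbourhood of the core.
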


To prove \Cref{lem:message-estimation}, we consider the message passing on a Galton--Watson tree.
Let $(\GWT, \varrho)$ be a Galton--Watson tree rooted at $\varrho$ such that for every node $\omega \in \GWT$, it independently has $K_{\omega} \sim \Poisson(d + 1)$ children.
Equip $\GWT$ with the depth $\depth(\omega) \defeq \dist_{\GWT}(\varrho, \omega)$.
Let $\mathscr H$ be the collection of nodes of more than $D - 3$ children, \IE,
\[
    \mathscr H \defeq \set{\omega \in \GWT \mid K_\omega > D - 3}
\]
and choose a parameter $h$ as
\[
    h \defeq \left\lceil\max\;\set{3r, \frac{4\ln 4}{\ln(1/\alpha)}}\right\rceil.
\]
We mark nodes in $\GWT$ as following two ways:
\begin{align*}
    \mathscr{B} \defeq \set{\omega \in \GWT \mid \dist_{\GWT}(\omega, \mathscr H) \le r}, \quad
    \mathscr{B}^+ \defeq \mathscr{B} \cup \set{\omega \in \GWT \mid \depth(\omega) < 2h}.
\end{align*}
We define the message on $\GWT$ as the limit of messages on finite depth.
Formally, for $\ell \ge 0$ and a subset of nodes $S$ representing bad nodes, for every $\depth(\omega) \le \ell$,
\begin{align*}
    \phi_{\ell, S}(\alpha; \omega) = 2(D^R + 1) + \begin{cases}
        \alpha \cdot \max_{\gamma \in \child_{\GWT}(\omega)} \phi_{\ell, S}(\alpha; \gamma) & \omega \notin S \\
        \sum_{\gamma \in \child_{\GWT}(\omega)} \phi_{\ell, S}(\alpha; \gamma) & \omega \in S
    \end{cases}
\end{align*}
with boundary condition $\phi_{\ell, S}(\alpha; \gamma) = 0$ whenever $\depth(\gamma) > \ell$, and with the maximum and sum over an empty set defined as zero.
Define
\begin{align*}
    \X \defeq \lim_{\ell \to \infty} \phi_{\ell, \mathscr{B}}(\alpha; \varrho), \quad
    \Y \defeq \lim_{\ell \to \infty} \phi_{\ell, \mathscr{B}^+}(\alpha; \varrho).
\end{align*}
We first clarify here that $\X$ and $\Y$ are well-defined and finite almost surely for all sufficiently large $d > 1$.
\begin{proposition} \label{prop:GWT-message-well-definedness-finiteness}
    It holds that $\X$ and $\Y$ exist.
    Furthermore, for sufficiently large $d > 1$, there exists some $t_0 > 0$ such that
    \begin{align*}
        \E{\e^{t_0\X}} < \infty, \quad \E{\e^{t_0\Y}} < \infty.
    \end{align*}
\end{proposition}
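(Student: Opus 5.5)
Existence comes from monotonicity, and finiteness together with the exponential moment comes from a branching-process comparison. First, $\phi_{\ell,S}(\alpha;\omega)$ is nondecreasing in $\ell$ for every fixed node and every $S$. This follows by induction on $\ell-\depth(\omega)$: raising $\ell$ replaces some zero boundary values by nonnegative messages, and both update rules (sum and $\alpha\cdot\max$) are monotone. Hence $\X$ and $\Y$ exist as limits in $[0,\infty]$. Since $\mathscr B\subseteq\mathscr B^+$ and the sum rule dominates $\alpha\max$, we get $\X\le\Y$ pointwise. It therefore suffices to prove $\E{\e^{t_0\Y}}<\infty$.

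To bound $\Y$, unfold the recursion. Call a node \emph{bad} if it lies in $\mathscr B^+$, and let $\mathcal C$ be the maximal connected cluster of bad nodes containing $\varrho$; it contains the full first $2h$ generations. Summing over $\mathcal C$ gives
\[
\Y\le 2(D^R+1)\abs{\mathcal C}+\sum_{\gamma\in\partial\mathcal C}\Y_\gamma ,
\]
where $\partial\mathcal C$ are the good children of cluster nodes and $\Y_\gamma$ is the message of the subtree at $\gamma$. For a good node $\gamma$ and its good descendants, the $\alpha\max$ rule along a chain of good nodes yields geometric damping. A good node has at most $D-3$ children, and its subtree message is dominated by $2(D^R+1)/(1-\alpha)$ plus $\alpha^{k}$ times the message at the first bad node reached after $k\ge1$ good steps, maximized over all such entries. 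Because $\gamma$ lies at distance more than $r$ from $\mathscr H$, the next bad cluster is entered only after at least one good step, and each entry point is the start of a fresh cluster.

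This reduces the problem to a multitype branching structure whose "particles" are bad clusters. Its key ingredients are the following.
\begin{enumerate}
\item By Poisson tails, $\Pr{K_\omega>D-3}\le \e^{-c_\eta d}$ for $d$ large, since $D-3\ge(1+\eta/4)(d+1)$. The cluster generated by one high-degree node within radius $r$ then has size dominated by a subcritical branching process, because the expected number of further high-degree nodes within distance $2r+1$ is at most $(2d)^{2r+1}\e^{-c_\eta d}\ll1$, given $r=O((\ln d)^2)$. Hence $\abs{\mathcal C}$ has an exponential moment. The initial $2h$ levels contribute only a Poisson tree of bounded depth, which has all exponential moments.
\item The messages entering from below through good chains get weight $\alpha^{k}$ with $k\ge1$, and are maximized rather than summed along good nodes. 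The relevant generating-function fixed point is therefore $F(t)\ge \E{\exp(t\cdot\text{cluster cost}+\sum \text{weighted children})}$. I would run the standard truncated iteration $F_\ell$, where $F_\ell$ is monotone in $\ell$, and show by induction that $\E{\e^{t_0\phi_{\ell,\mathscr B^+}(\alpha;\varrho)}}\le M$ uniformly in $\ell$. The argument uses the bound $\e^{t\max_i x_i}\le\sum_i \e^{tx_i}$ at good nodes and H\"older at bad nodes. Monotone convergence then gives the claim for $\Y$.
\end{enumerate}

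The main obstacle is closing the uniform bound in item 2. The sum over the boundary of a cluster can have many terms, while the $\alpha$-damping only applies along good chains. I would choose $t_0$ small and exploit two facts. First, each cluster is entered only after a good chain, and $\alpha\max$ over at most $D-3$ good children costs a factor at most $D$ via the union bound above. Second, the number of clusters reachable is exponentially rare in $d$, with probability $\e^{-c_\eta d}$ per high-degree node. This makes the effective mean offspring of the cluster process at most $D\cdot (2d)^{O(r)}\e^{-c_\eta d}<1/2$ for $d$ large, so the map $M\mapsto 1+\tfrac12 M^{\alpha}$ (obtained via Jensen or H\"older) has a finite invariant bound. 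The choice $h\ge 4\ln4/\ln(1/\alpha)$ is used only to ensure that the $\alpha^{h}$ weights from the forced-bad top layer are at most $1/4$. The only $\mathscr B^+$-specific term, the top layer, is handled separately by its bounded depth.
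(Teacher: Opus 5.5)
\textbf{There is a genuine gap in item 2.} Your existence argument (monotonicity in $\ell$) and the comparison $\X\le\Y$ are fine. But item 2 is the whole content of the proposition, and it does not close: the recursion $M\mapsto 1+\tfrac12M^{\alpha}$ does not follow from the ingredients you list.

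The root problem is that $\alpha$ is a fixed constant (in the application $\alpha=\rho^{1/2}=\e^{-\eta/128}$), while a good node has about $d+1$ children. Let $G$ be your bound on $\mathbb{E}\,\e^{t\phi}$ at good nodes.
\begin{itemize}
\item \emph{The union bound expands near $G=1$.} Replacing $\e^{t\max_i x_i}$ by $1+\sum_i(\e^{tx_i}-1)$ costs the full offspring number. So near $G=1$ the good-node inequality is linear with coefficient $\alpha(d+1)>1$, and it expands rather than contracts. At the level of first moments: bounding $\mathbb{E}\max_i$ by $\sum_i\mathbb{E}$ cannot be paid for by one factor $\alpha$ per level.
\item \emph{Your cluster count is wrong for the same reason.} The bound $D\cdot(2d)^{O(r)}\e^{-c_\eta d}$ on bad clusters reachable from a good node only counts entries at bounded good-depth. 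Below a good node there are about $(d+1)^k$ good paths of length $k$ for every $k$. So the expected number of first bad entries is infinite, and $\sum_k(d+1)^k\alpha^k$ diverges.
\item \emph{Jensen only helps in log scale.} The bound $\mathbb{E}\,\e^{t\alpha\phi}\le(\mathbb{E}\,\e^{t\phi})^{\alpha}$ makes good chains invariant only at $G$ of order $(d+1)^{1/(1-\alpha)}$, far from $1$.
\item \emph{That large $G$ is then fatal at bad clusters.} A bad cluster around a node of $\mathscr H$ contains that node's whole $r$-ball, so it has $|\partial\mathcal C|$ of order at least $(d+1)^r$ good boundary children. An invariant bound $M_{\mathrm{bad}}$ for clusters must therefore satisfy $M_{\mathrm{bad}}\ge\mathbb{E}\,G^{|\partial\mathcal C|}\ge G^{\mathbb{E}|\partial\mathcal C|}$. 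This feeds back through the term $\epsilon'(M_{\mathrm{bad}}^{\alpha}-1)$ of the good-node inequality, where $\epsilon'=\e^{-\Theta(d)}$ is the expected number of bad children of a good node. Invariance then requires $(\alpha\,\mathbb{E}|\partial\mathcal C|-1)\ln G\le\ln(1/\epsilon')=O(d)$, i.e.\ $\ln G=O(d\,(d+1)^{-r})$. This contradicts $G$ being of order $(d+1)^{1/(1-\alpha)}$.
\end{itemize}
Hence, for no choice of $t_0$ do the inequalities of your scheme admit a finite invariant pair.

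\textbf{The missing idea is a way to make the price of a maximum over many children sublinear.} The paper does this in several steps.
\begin{itemize}
\item \emph{Blocks.} It groups $L_{\block}\ge 4\ln4/\ln(1/\alpha)$ levels into blocks, so a good block damps by $\beta=\alpha^{L_{\block}}$.
\item \emph{Independent marks.} It embeds the tree into a block Galton--Watson tree with offspring $Z$ and independent $\mathrm{Bernoulli}(\delta)$ marks, and declares a block bad if a mark lies within two block distances. This also supplies the conditional independence your cluster decomposition assumes tacitly: the subtree below a good boundary child is conditioned to have no $\mathscr H$-node within distance $r$, so its marks are not independent of the cluster.
\item \emph{Moments of order $P\asymp\ln d$.} Since $\mathbb{E}[(\max_{i\le Z}X_i)^P]\le m\sup_i\mathbb{E}[X_i^P]$, a good block costs only $\beta m^{1/P}\le 1/4$. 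The sums over bad regions are paid for by $p_{\mathcal E}^{1/(2P)}\|N_3\|_{2P}$, where $p_{\mathcal E}^{1/(2P)}=\e^{-\Omega(d/\ln d)}$ beats $\|N_3\|_{2P}=\e^{O((\ln d)^3)}$.
\item \emph{Upgrade to an exponential moment.} Uniform $P$-th moments still do not give an exponential moment. The paper's second step is the truncated power-series fixed point: Taylor coefficients of order above $P$ are controlled because a good block damps them by $\beta^{P+1}$ against a union-bound cost $m$, so the matrix $B_P$ is a contraction in a weighted norm.
\end{itemize}
Your proposal has neither the high-moment mechanism nor this upgrade step, and these are exactly where the difficulty of the proposition lies.
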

The proof of \Cref{prop:GWT-message-well-definedness-finiteness} is provided in \Cref{sec:Poisson-estimation}.
Here we show how to upper bound local messages by \Cref{prop:GWT-message-well-definedness-finiteness}.
\begin{proof}[Proof of \Cref{lem:message-estimation}]
    Fix $\alpha \in (0, 1)$ and take $d$ sufficiently large for \Cref{prop:GWT-message-well-definedness-finiteness} to apply.
    Write
    \[
        a_0 \defeq 2(D^R + 1), \qquad \lambda \defeq d + 1, \qquad L_0 \defeq 4L + 2r + 3, \qquad \mathcal{E} \defeq \LocalTreeOrUnicyclic_{5L}.
    \]
    Since $r$ is fixed as $n \to \infty$, we have $L_0 \le 5L$ for all sufficiently large $n$.
    Consequently, on $\mathcal E$, every ball $B_G(v, L_0)$ is a tree or unicyclic.
    Therefore the prescribed choice of $J_v$ is well-defined.
    On this event write
    \[
        Y_v \defeq Y(\alpha; \T_v, \node(v))
    \]
    and for notational convenience, set $Y_v = 0$ otherwise.
    
    By \Cref{prop:GWT-message-well-definedness-finiteness}, there exists $t_0 > 0$ such that $\E{\e^{t_0\X}}$ and $\E{\e^{t_0\Y}}$ are finite.
    Choose $t \in (0, t_0]$ sufficiently small that
    \[
        M_X \defeq \E{\e^{t\X}} < \infty, \qquad M_Y \defeq \E{\e^{t\Y}} < \infty, \qquad ta_0 + \lambda(M_Y - 1) \le 1.
    \]
    Such a choice is possible because, by dominated convergence, $M_Y \to 1$ as $t \downarrow 0$.

    To compare local messages on the original graph with $\X$ and $\Y$, we first describe the following exploration coupling.
    Reveal edges adaptively without conditioning on the absence of future collisions.
    Conditional on the revealed history, the edges from the current vertex to undiscovered vertices that have not previously been queried are independent Bernoulli variables with parameter $d/n$.
    Thus their number has distribution $\Binomial(N,d/n)$ for some $N \le n$, and
    \[
        \Binomial(N, d/n) \preceq \Poisson\ab(-N\ln(1 - d/n)) \preceq \Poisson(\lambda)
    \]
    for all sufficiently large $n$.

    At successive exploration steps, use fresh independent randomness to realize this domination.
    Complete all unused offspring slots independently.
    The resulting auxiliary offspring variables are independent $\Poisson(\lambda)$ variables.
    If the exploration reveals a surplus edge not prescribed in advance, stop the comparison and complete the auxiliary trees independently.
    We use the comparison only on events on which no such surplus edge occurs.
    We mark that the exploration is carried out far enough to determine the relevant degrees and marks since the exploration is possible inside $B_G(v, L_0)$ because $J_v \subseteq B_G(v, 4L)$ and $4L + r + 1 < L_0$.

    \paragraph{\textbf{The first case: tree balls.}}
    Fix $v \in V$ and let
    \[
        \A_v^{T} \defeq \set{B_G(v, L_0) \text{ is a tree}}.
    \]
    Apply the preceding exploration coupling from $v$ with no prescribed surplus edge.
    On $\A_v^{T}$, the exploration gives a rooted embedding of $J_v$ into the Galton--Watson tree.
    Every relevant non-root vertex has one parent edge and all its other incident edges are counted among
    its children.
    Hence a vertex of degree greater than $D$ is mapped to a node whose auxiliary offspring count is greater than $D - 3$.
    The embedding preserves the tree paths, so every bad node of $\T_v$ is mapped to a node of $\scrB$.

    Adding branches and changing a good rule to a bad rule can only increase a message with non-negative costs.
    Therefore, under this coupling,
    \[
        Y_v \le \X \qquad \text{on } \A_v^{T}.
    \]
    The comparison here is event-wise.
    In particular,
    \[
        \E{\id{\A_v^{T} \e^{t Y_v}}} \le M_X.
    \]
    By Markov inequality, for every $x \ge 0$,
    \begin{equation} \label{eq:tree-message-joint-tail}
        \Pr{\A_v^{T} \cap \set{Y_v > x}} \le M_X \e^{-tx}.
    \end{equation}

    \paragraph{\textbf{The second case: unicyclic balls}}
    A rooted core $(v, H)$ consists of a simple cycle together with a simple path from $v$ to that cycle,
    meeting the cycle only at the endpoint of the path.
    The path is allowed to have length zero.
    In particular, $\abs{V(H)} = \abs{E(H)}$.
    For a fixed deterministic rooted core $(v, H)$, define the event $\A_{v, H}$ as the joint event of
    \begin{align*}
        \set{B_G(v, L_0) \text{ is unicyclic}} \land \set{H \text{ is its unique cycle together with the path from } v \text{ to that cycle}}.
    \end{align*}
    Obviously, $\A_{v, H} \subseteq \set{H \subseteq G}$.

    Let $k = \abs{V(H)}$.
    Conditional on the event $H\subseteq G$, all edges outside $E(H)$ remain independent Bernoulli variables with parameter $d/n$.
    Explore outwards from the prescribed core.
    Give each core vertex its distance from $v$ within $H$ as its initial distance label, and explore in increasing order of these labels and the subsequently discovered
    distances, up to distance $L_0$.
    Stop the comparison if an additional surplus edge is discovered.
    On $\A_{v, H}$, no such additional surplus edge occurs in the explored ball, and the discovered branches are trees attached to $H$.
    Force the vertices of $H \cap J_v$ to be bad and contract this connected root set.
    If the cycle is present in $J_v$, use its already contracted cycle node.
    Increase the cost of the resulting root to $k a_0$.
    These operations cannot decrease the message: replacing a good rule by the sum rule increases it, and contracting connected bad nodes adds their costs and still retains their outgoing branches.

    The total number of branches leaving the core is dominated by
    \[
        K_k \sim \Poisson(k\lambda),
    \]
    obtained by summing $k$ independent $\Poisson(\lambda)$ variables.
    All descendant auxiliary trees can be chosen independently of one another and of $K_k$.
    Within each branch, high-degree marks are dominated by the high-offspring marks.
    Any additional bad mark caused by the contracted core or by a high-degree vertex outside that branch can affect only vertices within distance $r$ of the core.
    These vertices are covered by the extra bad levels in the definition of $\Y$, since $h \ge 3r$ and $\scrB^+$ contains every node of depth less than $2h$.

    Consequently, conditional on $H \subseteq G$, the exploration admits a coupling with
    \[
        Z_k \defeq k a_0 + \sum_{j = 1}^{K_k} \Y_j
    \]
    where the $\Y_j$ are independent copies of $\Y$ independent of $K_k$, such that
    \[
        Y_v \le Z_k \qquad \text{on } \A_{v, H}.
    \]
    The exponential moment of $Z_k$ satisfies
    \[
        \E{\e^{t Z_k}} = \exp\ab(k t a_0 + k \lambda(M_Y - 1)) \le \e^k.
    \]
    Hence
    \[
        \Pr{\A_{v, H} \cap \set{Y_v > x} \mid H \subseteq G} \le \e^{k - tx}.
    \]
    Since $\A_{v, H} \subseteq \set{H \subseteq G}$ and $H$ has exactly $k$ edges, we obtain the unconditional joint-event bound
    \begin{equation} \label{eq:unicyclic-message-joint-tail}
        \Pr{\A_{v, H} \cap \set{Y_v > x}} \le \ab(\frac{d}{n})^k \e^{k - tx}.
    \end{equation}

    Now we count the number of possible cores.
    If $B_G(v, L_0)$ is unicyclic, its unique cycle has length at most $2L_0 + 1$, and the path from $v$ to
    that cycle has length at most $L_0$.
    Thus its rooted core satisfies
    \[
        3 \le k \le k_{\max} \defeq 3L_0 + 1.
    \]
    Let $\frakH_k$ be the collection of all deterministic rooted cores $(v, H)$ on $k$ vertices in $[n]$.
    To encode such a core, choose an ordered list of $k$ distinct vertices, starting with $v$, and choose
    the position at which the path ends and the cycle begins.
    The cycle is then closed by joining the last vertex
    to that position.
    This gives the upper bound $\abs{\frakH_k} \le k n^k$.
    
    On $\mathcal E$, every vertex belongs to one of the tree or unicyclic cases considered above.
    Applying the union bound to intersections with $\mathcal E$, and then using \eqref{eq:tree-message-joint-tail} and \eqref{eq:unicyclic-message-joint-tail}, gives
    \begin{align*}
        \Pr{\mathcal E \cap \set{\exists v \in V \midc Y_v > x}} &\le \sum_{v \in V}
        \Pr{\A_v^T \cap \set{Y_v > x}} + \sum_{k = 3}^{k_{\max}}
        \sum_{(v, H) \in \frakH_{k}} \Pr{\A_{v, H} \cap \set{Y_v > x}} \\
        &\le n M_X \e^{-tx} + \sum_{k = 3}^{k_{\max}} kn^k\ab(\frac{d}{n})^k \e^{k - tx} \\
        &= \e^{-tx} \ab(nM_X + \sum_{k = 3}^{k_{\max}} k(\e d)^k).
    \end{align*}
    Recall that
    \[
        L = \left\lfloor \frac{\ln n}{1000\ln(2\e d)} \right\rfloor.
    \]
    Since $r$ is fixed as $n \to \infty$, we have $k_{\max} = 12L + O_{\eta, d}(1)$    and therefore
    \[
        \sum_{k = 3}^{k_{\max}} k(\e d)^k \le k_{\max}^2(\e d)^{k_{\max}} = n^{\frac{12\ln(\e d)}{1000\ln(2\e d)} + o_n(1)} = o(n).
    \]
    Taking $x = (4/t) \ln{n}$, we conclude that
    \[
        \Pr{\mathcal{E} \cap \set{\exists v \in V \midc Y_v > \frac{4}{t}\ln n}} = O(n^{-3}).
    \]

    Finally, by \Cref{lem:locally-tree-or-unicyclic}, $\Pr{\overline{\mathcal{E}}} = o_n(1)$.
    Thus
    \[
        \Pr{\mathcal{E} \cap \set{Y_v \le \frac{4}{t} \ln{n} \text{ for every } v \in V}} \ge 1 - \Pr{\overline{\mathcal{E}}} - O(n^{-3}) = 1 - o_n(1)
    \]
    which concludes the lemma with $C = 4/t$.
\end{proof}

\subsection{Logarithmic bound for coupling independence}
In this section, we complete the proof of \Cref{thm:coupling-independence}.

\begin{lemma}\label{lem:many-good-nodes-to-boundary}
    Assume \(\LocalTreeOrUnicyclic_{5L}\cap\MajorGood_{\floor{L/4},r}\). For all sufficiently large \(n\), every path in \(\T_v\) from \(\node(v)\) to a node of positive $\cost_Z$ contains at least \(\floor{L/8}\) good nodes other than its endpoint.
\end{lemma}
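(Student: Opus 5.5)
Nodes with positive $\cost_Z$ correspond to vertices in $\Sigma$, or to the cycle node when $C\cap\Sigma\neq\emptyset$. Such vertices are within distance $r+R+3$ of $V(G)\setminus V(J_v)$. The plan is to show that any path in $\T_v$ from $\node(v)$ to such a node lifts to a long simple path in $G$, and then to apply $\MajorGood_{\floor{L/4},r}$ to a subpath of it.

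\textbf{Step 1: distance to the boundary.} Since $J_v$ is the ball $B_G(v,2L)$ or $B_G(v,4L)$, every $u\in\Sigma$ satisfies $\dist_G(u,v)\ge 2L-(r+R+3)$. As $r,R$ are fixed while $L\to\infty$, this is at least $2L-O(1)$.

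\textbf{Step 2: lifting the path.} In the tree case, a path in $\T_v$ from $\node(v)$ to $\node(u)$ is the unique tree path in $J_v$. Its length is at least $\dist_G(v,u)\ge 2L-O(1)$.

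In the unicyclic case, $B_G(v,2L)$ is unicyclic, so the cycle $C$ lies within distance $2L$ of $v$. On $\LocalTreeOrUnicyclic_{5L}$ the cycle has length at most $4L+1$, so $C\subseteq B_G(v,4L)$. Contracting $C$ shortens a path by at most $|C|$. Instead of tracking this, I would argue separately:
\begin{itemize}
\item A vertex $u\in\Sigma$ at distance at least $4L-O(1)$ from $v$ is reached in $\T_v$ by a path that visits the cycle node at most once.
\item Removing the cycle node splits this path into at most two simple $G$-paths that avoid $C$.
\item One of these pieces has length at least roughly $(4L-O(1)-2L)/2$, or else the cycle contributes a short detour.
\end{itemize}
A cleaner route is to restrict to the final segment of the $\T_v$-path after its last visit to the cycle node. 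This segment is an honest path in $G$ from a neighbour of $C$ to $u$. Its length is at least $\dist_G(C,u)\ge 4L-O(1)-2L-(2L+1)/2$, which is still $\Omega(L)$; I would tune the constants so that it is at least $L/4$. If $C\cap\Sigma\neq\emptyset$, then $C$ lies near the boundary of $B_G(v,4L)$, so $\dist(v,C)\ge 4L-O(1)-|C|/2\ge 2L-O(1)$. The path from $v$ to $C$ is then a tree path of length at least $L/4$.

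\textbf{Step 3: counting good nodes.} The result is a simple $G$-path of $\floor{L/4}$ vertices whose nodes are non-cycle nodes of $\T_v$, all interior to the path, excluding the endpoint. By $\MajorGood_{\floor{L/4},r}$, at least $\floor{L/4}/2\ge\floor{L/8}$ of these vertices lie outside $B_G(V^{\HD},r)$.

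It remains to check that such a vertex $x$ is marked good. Its node is not a cycle node and not high-degree. It is also not within $\T_v$-distance $r$ of a bad-in-first-step node: a high-degree vertex within $r$ would contradict $x\notin B_G(V^{\HD},r)$, since tree distance dominates graph distance. However, $x$ could still be within distance $r$ of the cycle node. I would handle this by choosing the $\floor{L/4}$-vertex segment away from the cycle, at distance greater than $r$ from $C$, losing only $O(r)$ vertices. Equivalently, apply $\MajorGood$ to the path segment beyond distance $r$ from $C$; this segment still has length at least $L/4$ for large $n$.

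\textbf{Main obstacle.} The hard part is the unicyclic bookkeeping in Step 2. Contraction can shorten paths, so the length bound must come from the tree part of the path. Justifying the inequality $\dist_G(C,\Sigma)\gtrsim L/4+r$ requires the bound $|C|\le 2\cdot 2L+1$ together with the fact that $C$ meets $B_G(v,2L)$.
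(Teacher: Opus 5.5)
Your overall route is the paper's: lift the unique $\T_v$-path to a simple path in $G$, take a subpath on $\floor{L/4}$ vertices that avoids the terminal node and the $r$-neighbourhood of the cycle node, and apply $\MajorGood_{\floor{L/4},r}$. The tree case and Step~3 are fine. One caveat on Step~3: your remark that tree distance dominates graph distance is false in $\T_v$ in general, because contracting $C$ shortens paths. It does hold for $\T_v$-paths that avoid the cycle node, and that is all you need once you restrict to vertices at distance greater than $r$ from $C$.

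The gap is the unicyclic distance estimate $\dist_G(C,\Sigma)=\Omega(L)$. You justify it only from two facts: $C$ meets $B_G(v,2L)$, and $|C|\le 4L+1$. These give only $\dist_G(v,c)\le 2L+(4L+1)/2\approx 4L$ for $c\in C$. A vertex $u\in\Sigma$ is only guaranteed $\dist_G(v,u)\ge 4L+1-(r+R+3)$, so the resulting lower bound on $\dist_G(C,u)$ is $O(1)$ or even negative. Your displayed bound $4L-O(1)-2L-(2L+1)/2$ reaches $\Omega(L)$ only because it uses $(2L+1)/2$ in place of $|C|/2\le(4L+1)/2$, which contradicts your own cycle-length bound. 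Under the hypotheses you actually invoke, $\Sigma$ could sit right next to the cycle. The segment after the last visit to the cycle node could then have bounded length, and Step~3 would have nothing to apply $\MajorGood$ to.

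The missing observation is that $J_v=B_G(v,4L)$ is chosen precisely when $G[B_G(v,2L)]$ already contains a cycle. Since $B_G(v,5L)$ is unicyclic, that cycle is the unique cycle $C$ of $J_v$, so $C\subseteq B_G(v,2L)$ entirely. Hence every $u\in\Sigma$ satisfies
$\dist_G(u,C)\ge 4L+1-(r+R+3)-2L=2L+1-(r+R+3)$.

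This has two consequences. First, $C\cap\Sigma=\emptyset$, so your separate case $C\cap\Sigma\neq\emptyset$ is vacuous and every node of positive $\cost_Z$ is a single vertex. Second, distance to $C$ changes by at most one per step along a path. So for large $n$, the last $\floor{L/4}$ vertices before $u$ on the lifted segment are non-cycle vertices at distance greater than $r$ from $C$. That is exactly the subpath the paper feeds to $\MajorGood_{\floor{L/4},r}$.
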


\begin{proof}
    Set \(L_0=\floor{L/4}\) and \(c_0=r+R+3\).
    If \(J_v=B_G(v,2L)\), then every \(u\in\Sigma\) satisfies
    \[
        \dist_G(v,u)\ge 2L+1-c_0.
    \]
    If \(J_v=B_G(v,4L)\), its unique cycle \(C\) is contained in \(B_G(v,2L)\), and hence
    \[
        \dist_G(u,C)\ge 2L+1-c_0
        \qquad\text{for every }u\in\Sigma.
    \]
    In particular, $C\cap\Sigma=\emptyset$, so every node of positive
    $\cost_Z$ corresponds to a single vertex $u\in\Sigma$.
    Thus, for sufficiently large \(n\), the path from \(\node(v)\) to \(\node(u)\) contains an internal subpath \(P\) on \(L_0\) vertices which is disjoint from the \(r\)-neighborhood of the cycle node and does not contain the terminal node.
    
    The subpath \(P\) corresponds to a simple path in \(G\). By \(\MajorGood_{L_0,r}\), at least \(L_0/2\) of its vertices lie outside \(B_G(V^{\HD},r)\). These vertices are also at distance greater than \(r\) from the cycle and therefore are good. Since
    \[
        \ceil{L_0/2}\ge \floor{L/8},
    \]
    we conclude the lemma.
\end{proof}

Now we are ready to prove \Cref{thm:coupling-independence}.

\begin{proof}[Proof of \Cref{thm:coupling-independence}]
    Pick $L$ as in \Cref{sec:random-graph-property} and $\kappa$, $\rho$, $R$ and $r$ as \Cref{cor:local-TID-on-tree}.
    We assume $\LocalTreeOrUnicyclic_{5L}$ and $\MajorGood_{\floor{L/4}, r}$.
    The joint probability is at least $1 - o_n(1)$ by \Cref{lem:locally-tree-or-unicyclic,lem:high-degree-small-occupancy}.

    These events also ensure that $G$ is $q$-colorable.
    Indeed, no component of $G[V^{\HD}]$ contains a simple path on $\floor{L/4}$ vertices, by $\MajorGood_{\floor{L/4},r}$.
    Each such component therefore lies in a ball of radius less than $L/4$ and is a tree or unicyclic by $\LocalTreeOrUnicyclic_{5L}$.
    Color these components using three colors, then greedily color the remaining vertices, all of which have degree at most $D < q$.

    For every $\Phi \in \mathcal L(G, q)$ and $v$, by~\Cref{lem:many-good-nodes-to-boundary} and \Cref{lem:CI-by-local-message,prop:decomposed-message-property}, applied with coefficients \(\rho\) and \(\rho^{1/2}\), it holds that
    \begin{align*}
        \W(\Phi, v) \le Y(\rho^{1/2}; \T_v, \node(v)) + \frac{\W(G, q)}{2(D^R + 1)}  \rho^{\floor{L/8}/2} \,Y(\rho^{1/2}; \T_v, \node(v)).
    \end{align*}
    By \Cref{lem:message-estimation}, with probability at least $1 - o_n(1)$, $Y(\rho^{1/2}; \T_v, \node(v)) \le C_{\eta, d} \ln n$ for some constant $C_{\eta, d} > 0$.
    Then for sufficiently large $n$,
    \begin{align*}
        \W(\Phi, v) \le C_{\eta, d} \ln{n} + \eps_n \W(G, q)
    \end{align*}
    where $\eps_n = O(\rho^{\floor{L/8}/2} \ln{n}) \to 0$ as $n \to \infty$.
    Take maximum on the left side to obtain that
    \begin{align*}
        \W(G, q) \le C_{\eta, d} \ln{n} + \eps_n \W(G, q)
    \end{align*}
    meaning that $\W(G, q) \le C' \ln n$ for some constant $C' > 0$ depending only on $\eta$ and $d$.
    Finally, consider any two feasible pinnings $\sigma,\tau$ on $\Lambda$ differing only at $v\in\Lambda$.
    The common pinning on $\Lambda\setminus\set{v}$ induces a feasible list-coloring instance $\Phi\in\mathcal L(G,q)$ on $V\setminus(\Lambda\setminus\set{v})$, since pinning preserves the list-degree slack.
    The two conditional distributions are obtained by pinning $v$ in $\Phi$ to $\sigma(v)$ and $\tau(v)$.
    Their Wasserstein distance is at most $\W(\Phi,v)\le\W(G,q)$: the common pinned coordinates contribute zero, and the disagreement at $v$ is already included in the definition of $\W(\Phi,v)$.
    Thus $\CI(\mu_{G,q})\le\W(G,q)\le C'\ln n$, concluding the theorem.
\end{proof}

\begin{proof}[Proof of \Cref{cor:spectral-independence}]
    Fix a feasible pinning \(\tau\) and a feasible source pair \(v \gets a\).
    For every \(w \neq v\) and color \(c\),
    \begin{align*}
        \Psi_\mu^\tau(v \gets a, w\gets c) = \sum_{b} \mu^\tau(v \gets b)\ab(\mu^{\tau}(w \gets c \midl v \gets a) - \mu^\tau(w \gets c \midl v \gets b)).
    \end{align*}
    Therefore,
    \begin{align*}
        \sum_{w \neq v} \sum_c \abs{\Psi_\mu^\tau(v\gets a, w \gets c)} &\le 2\sum_b \mu^\tau(v \gets b) \sum_{w\neq v} \TV{\mu_w^{\tau, v \gets a}}{\mu_w^{\tau, v \gets b}} \\
        &\le 2\sum_b \mu^\tau(v \gets b) \W_1\ab(\mu^{\tau, v \gets a}, \mu^{\tau,v\gets b}) \\
        &\le 2\CI(\mu).
    \end{align*}
    Hence \(\norm{\Psi_\mu^\tau}_{\infty} \le 2\CI(\mu)\), leading to the upper bound that $\lambda_{\max}(\Psi_{\mu}^{\tau}) \le 2\CI(\mu)$.
    The conclusion follows from \Cref{thm:coupling-independence}.
\end{proof}

\subsection{Sharpness of the logarithmic bounds}

Denote by $\SI(\mu)$ the smallest $\gamma$ such that $\mu$ is $\gamma$-spectrally independent.
The next proposition shows that the logarithmic orders in \Cref{thm:coupling-independence,cor:spectral-independence} cannot be improved when the independence parameters are required to hold under all feasible pinnings.

\begin{proposition} \label{prop:independence-lower-bounds}
    Fix $\eta\in(0,1)$ and a sufficiently large constant $d>1$, and fix an integer $q\ge(1+\eta)d$ independently of $n$.
    There exists a constant $c_{d,q}>0$ such that, with probability $1-o_n(1)$ over $G\sim\G(n,d/n)$, the graph $G$ is $q$-colorable and
    \begin{align*}
        \CI(\mu_{G,q})\ge c_{d,q}\ln n,
        \qquad
        \SI(\mu_{G,q})\ge c_{d,q}\ln n.
    \end{align*}
\end{proposition}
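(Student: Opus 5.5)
The plan is to exhibit, with high probability, a small isolated tree component of $G$ on which a suitable pinning makes $\mu_{G,q}$ rigid along a path of length $\Theta(\ln n)$. The natural witness is a \emph{caterpillar}. Let $T_k$ be the tree consisting of a spine path $x_1,\dots,x_k$ in which every spine vertex carries exactly $q-2$ pendant leaves, so that $\abs{V(T_k)}=k(q-1)$.

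For each spine vertex $x_i$, pin its $q-2$ leaves to the $q-2$ distinct colors of $[q]\setminus\set{1,2}$. This is a feasible pinning: a leaf has only the neighbor $x_i$, and $x_i$ can be colored $1$ or $2$. Under this pinning, every spine vertex has effective list $\set{1,2}$, and the spine is a path. The conditional distribution restricted to $T_k$ is therefore uniform on the two alternating colorings. Colorability of the rest of $G$ is given, on the high-probability event, exactly as in the proof of \Cref{thm:coupling-independence}.

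First I would show that, with probability $1-o_n(1)$, $G$ contains an isolated copy of $T_k$ with $k=\lfloor c\ln n\rfloor$ for a small constant $c=c_{d,q}>0$. For a fixed vertex set of size $m=k(q-1)$, the probability that $G$ induces a given labeled copy of $T_k$ and that copy is isolated is $(d/n)^{m-1}(1-d/n)^{\binom m2-(m-1)+m(n-m)}\approx (d/n)^{m-1}\e^{-dm}$. The number of labeled placements is $n^{m}/\abs{\mathrm{Aut}}$, and $\abs{\mathrm{Aut}(T_k)}\le 2((q-2)!)^k$. Hence the expected number $\mathbb E N$ of such components is at least $n\,(c_1)^{m}$ for some $c_1=c_1(d,q)\in(0,1)$, which is $n^{1-o(1)}\cdot n^{c\,(q-1)\ln c_1}\to\infty$ once $c$ is small. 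A second-moment computation then gives $N>0$ whp. Two distinct isolated copies are either vertex-disjoint or equal, so $\mathbb E N^2\le \mathbb E N+(1+O(m^2/n))(\mathbb E N)^2$, and Chebyshev finishes. This second-moment step, and the bookkeeping of automorphisms and isolation factors, is the main technical obstacle, though it is standard.

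For the coupling-independence bound, take $\Lambda$ to be all leaves of the isolated copy together with $x_1$. Let $\sigma,\tau$ agree on the leaves and set $\sigma(x_1)=1$, $\tau(x_1)=2$; both are feasible. Since the copy is a connected component, the two conditional laws agree in distribution outside $T_k$. Inside $T_k$ they are point masses on the two alternating spine colorings, which differ at all of $x_2,\dots,x_k$. Therefore $\W_1\ge k-1$, and $\CI(\mu_{G,q})\ge k-1\ge c_{d,q}\ln n$.

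For spectral independence, pin only the leaves. Then every spine vertex $u$ has marginal $1/2$ on each of the colors $1,2$, and the influence matrix restricted to the indices $\set{x_i\gets j: j\in\set{1,2}}$ has entries $\Psi(x_i\gets j,x_{i'}\gets j')=\pm\frac12$. The sign is $+$ exactly when $x_i\gets j$ and $x_{i'}\gets j'$ belong to the same alternating coloring. Indices outside the component do not interact with these, so this block is a direct summand of $\Psi$. Take the test vector $y$ with $y(x_i\gets j)=+1$ if $j$ agrees with the alternating coloring $A$ and $-1$ otherwise. Every off-diagonal product $\Psi(\cdot,\cdot)\,y\,y$ equals $+\frac12$, so the Rayleigh quotient is $\frac{(2k)(2k-2)\cdot\frac12}{2k}=k-1$. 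Hence $\lambda_{\max}(\Psi_\mu^\tau)\ge k-1$, giving $\SI(\mu_{G,q})\ge c_{d,q}\ln n$ after adjusting the constant.
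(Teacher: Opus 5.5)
Your proposal is correct and follows essentially the paper's route: the same caterpillar (a $k$-vertex spine with $q-2$ private leaves per spine vertex), the same first- and second-moment argument for an isolated copy with $k=\Theta_{d,q}(\ln n)$, the same leaf pinning that forces the two alternating spine colorings, and the same test vector for the influence matrix (it is exactly the paper's eigenvector with eigenvalue $k-1$). The differences are cosmetic. You leave the rest of the graph unpinned, which your block-diagonal observation handles correctly. You also record a Wasserstein-$1$ lower bound of $k-1$, whereas counting the disagreement at $x_1$ itself gives $k$.
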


\begin{proof}
    For an integer $k\ge3$, let $H_k$ be the tree obtained from a path on $k$ vertices by attaching $q-2$ private leaves to every path vertex.
    Write
    \begin{align*}
%        m=(q-1)k, \qquad p=\frac dn, \qquad
        A_{d,q}=\frac{(d\e^{-d})^{q-1}}{(q-2)!}, \qquad
        \ell_{d,q}= -\ln{A_{d,q}}.
    \end{align*}
    Since $q>d>1$, we have $0<A_{d,q}<1$ and $\ell_{d,q}>0$. Fix $k=\floor{\frac{\ln n}{2\ell_{d,q}}}$. Then $k\ge3$ for all sufficiently large $n$. We first claim that $G$ contains a copy of $H_k$ with high probability by a direct second moment method.
%    Take
%    \begin{align*}
%        k=\floor{\frac{\ln n}{2\ell_{d,q}}}.
%    \end{align*}
%    Then $k\ge3$ for all sufficiently large $n$ and
%    $m=O_{d,q}(\ln n)$.

    Let $Z_k$ be the number of isolated connected components of $G$ isomorphic to $H_k$.
    The automorphism group of $H_k$ has size $2((q-2)!)^k$.
    Requiring the $m-1$ tree edges to be present and every other edge incident to its vertex set to be absent gives
    \begin{align} \label{eq:decorated-path-first-moment}
        \E{Z_k}
        =\frac{(n)_m}{2((q-2)!)^k}
        p^{m-1}
        (1-p)^{m(n-m)+\binom{m}{2}-(m-1)},
    \end{align}
    where $m = (q-1)k$ is the number of vertices in $H_k$, and $p = \frac dn$.
    Since $m=O_{d,q}(\ln n)$, we have $(n)_m=n^m\exp\ab(O\ab(\frac{m^2}{n}))$, and $$(1-p)^{m(n-m)+\binom{m}{2}-(m-1)}=\exp\ab(-dm+O_d\ab(\frac{m^2+m}{n})).$$
    Substituting these estimates into \eqref{eq:decorated-path-first-moment} yields
    \begin{align*}
        \E{Z_k}
        =(1+o_n(1))\frac{n}{2d}A_{d,q}^k
        =\Theta_{d,q}(\sqrt n).
    \end{align*}
    Two distinct candidate isolated components that overlap cannot both
    occur.
    For two disjoint candidates, the ratio between their joint
    occurrence probability and the product of their individual
    occurrence probabilities is $(1-p)^{-m^2}$, because the $m^2$
    absent edges between the two vertex sets are counted twice in the
    product.
    Consequently,
    \begin{align*}
        \frac{\Var{Z_k}}{\E{Z_k}^2}
        \le \frac{1}{\E{Z_k}}+(1-p)^{-m^2}-1=o_n(1).
    \end{align*}
    Chebyshev's inequality shows that $G$ contains an isolated copy of
    $H_k$ with probability $1-o_n(1)$.

    Intersect this event with
    $\LocalTreeOrUnicyclic_{5L}\cap
    \MajorGood_{\floor{L/4},r}$.
    As observed in the proof of \Cref{thm:coupling-independence}, on
    this latter event the graph is $q$-colorable: every component of
    $G[V^{\HD}]$ is a tree or unicyclic and can be colored using three
    colors, after which all remaining vertices can be colored greedily
    because their degrees are at most $D<q$.
    The intersection still has probability $1-o_n(1)$.

    Fix an isolated copy of $H_k$ on this intersection.
    Pin all vertices outside this component to any proper coloring of
    the remaining graph.
    At every path vertex, pin its $q-2$ private leaves bijectively to
    the colors $3,\ldots,q$.
    Denote this common feasible pinning by $\tau$.
    The path then has exactly two feasible colorings, the two alternating
    colorings with colors 1 and 2, and they have equal conditional mass.

    For coupling independence, extend $\tau$ by pinning one endpoint of
    the path to color 1 or to color 2.
    These two feasible pinnings differ at exactly one coordinate and
    induce deterministic path colorings that disagree at all $k$ path
    vertices.
    Therefore
    \begin{align*}
        \CI(\mu_{G,q})\ge k.
    \end{align*}

    For spectral independence, retain the common pinning $\tau$ and
    leave the path free.
    At each path vertex, order its two feasible colors so that the first
    alternating coloring has local label 1.
    This blockwise relabeling only conjugates the influence matrix by a
    permutation matrix.
    In the new labels all path variables are the same uniform binary
    random variable.
    Every diagonal $2\times2$ block of the influence matrix is zero,
    while every off-diagonal block is
    \begin{align*}
        B=\frac12
        \begin{pmatrix}
            1&-1\\
            -1&1
        \end{pmatrix}.
    \end{align*}
    Since $B(1,-1)^{\mathsf T}=(1,-1)^{\mathsf T}$, repeating the
    vector $(1,-1)$ at all $k$ path vertices gives an eigenvector with
    eigenvalue $k-1$.
    Hence
    \begin{align*}
        \SI(\mu_{G,q})\ge k-1.
    \end{align*}

    For sufficiently large $n$, $k-1\ge (4\ell_{d,q})^{-1}\ln n$.
    Taking $c_{d,q}=(4\ell_{d,q})^{-1}$ proves the proposition.
\end{proof}

%% file: section/rapid-mixing.tex
\section{Rapid Mixing of Glauber Dynamics} \label{sec:rapid-mixing}

Assuming the $(C_{\SI} \ln{n})$-spectral independence of $\mu = \mu_{G, q}$ for $G \sim \G(n, d/n)$ where $C_{\SI}$ depends only on $\eta$ and $d$, we show how to derive \Cref{thm:rapid-mixing}, the rapid mixing of Glauber dynamics $\PGD$ for $\mu_{G, q}$.
We follow the \emph{entropic factorization} in \cite{CLV26} with the degree separation on random graphs $\G(n, d/n)$ as in \cite{BGGS24} to obtain the rapid mixing of the block dynamics.
To complete the mixing time of Glauber dynamics, we compare it with the block dynamics by the Dirichlet form.

\subsection{Preliminaries}
We first provide some necessary preliminaries.

\begin{lemma}[Connectivity of list-colorings]
\label{lem:list-recoloring-connectivity}
    Let $H$ be a finite graph with color lists $L(x)$.
    Suppose that its vertices admit an ordering
    $x_1,\ldots,x_m$ such that, writing
    \[
        H_i\defeq H[\{x_i,\ldots,x_m\}],
    \]
    we have
    \[
        \abs{L(x_i)}\ge \deg_{H_i}(x_i)+2
        \qquad\text{for every }i.
    \]
    Then proper list-colorings exist, and any two proper
    list-colorings can be joined by a sequence of proper
    list-colorings in which consecutive colorings differ
    at exactly one vertex.
\end{lemma}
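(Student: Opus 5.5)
We argue by induction on the number of vertices $m$, peeling off the first vertex $x_1$ of the ordering. Existence is the easy part. Color $x_m, x_{m-1}, \ldots, x_1$ greedily in reverse order. When $x_i$ is colored, its already colored neighbors all lie in $H_{i+1}$, so at most $\deg_{H_i}(x_i)$ colors are blocked. Since $\abs{L(x_i)} \ge \deg_{H_i}(x_i)+2$, a color remains available, and in fact at least two do.

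For connectivity, let $H' = H_2 = H - x_1$, with the inherited ordering $x_2,\ldots,x_m$. The hypothesis for $H'$ is literally the hypothesis for $i\ge 2$, so by induction any two proper list-colorings of $H'$ are joined by single-vertex recolorings. Fix two proper list-colorings $\sigma,\tau$ of $H$. We lift a recoloring path $\sigma|_{H'} = \xi_0,\xi_1,\ldots,\xi_T = \tau|_{H'}$ in $H'$ to a path in $H$, carrying along a current color $c$ at $x_1$. Start with $c=\sigma(x_1)$.

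Consider a step $\xi_{s-1}\to\xi_s$ that recolors some vertex $y$ to a new color $c'$. If $y$ is not a neighbor of $x_1$, or $c'\ne c$, we perform the step with $x_1$ kept at $c$. Otherwise $y\in N_H(x_1)$ and $c'=c$. In that case we first recolor $x_1$ to some color in $L(x_1)$ avoiding all colors currently on $N_H(x_1)$ together with $c'$. The neighbors of $x_1$ all lie in $H'$, and $\deg_H(x_1) = \deg_{H_1}(x_1)$, so at most $\deg_{H_1}(x_1)+1$ colors are excluded. The list has at least $\deg_{H_1}(x_1)+2$ colors, so a choice exists. This intermediate coloring is proper, and so is the one after $y$ moves to $c'$.

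At the end the coloring on $H'$ is $\tau|_{H'}$, which is proper together with $\tau(x_1)$. So the final move, if needed, is to recolor $x_1$ to $\tau(x_1)$, which is a single-vertex step between proper colorings. Every consecutive pair differs at exactly one vertex, where we drop any trivial repeated steps.

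The only delicate point is this counting at $x_1$. We must exclude the colors of all current neighbors plus the incoming color $c'$; the count of the current neighbor colors already includes $y$'s old color. This is exactly why the hypothesis has slack $+2$ rather than $+1$. The base case $m=1$ is immediate, since $x_1$ has no neighbors and any two colors differ at one vertex.
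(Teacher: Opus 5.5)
Your proposal is correct and follows the paper's proof essentially step for step. Existence comes from greedy coloring in reverse order. Connectivity is by induction on $m$: you lift a recoloring sequence of $H - x_1$, and whenever a neighbor is about to take the current color of $x_1$, you first move $x_1$ to a color avoiding its current neighbor colors and that incoming color (the $+2$ slack leaves room for this), then finish by recoloring $x_1$ to $\tau(x_1)$.
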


\begin{proof}
    Existence follows by coloring the vertices in reverse
    order.

    We prove connectivity by induction on $m$.
    The assertion is immediate for $m\le 1$.
    Let $\sigma$ and $\tau$ be two proper list-colorings.
    By induction, their restrictions to $H-x_1$ can be
    joined by single-site recolorings.

    We lift this sequence to $H$.
    Whenever a proposed recoloring of a neighbor of $x_1$
    would assign the current color of $x_1$ to that neighbor,
    first recolor $x_1$.
    Choose its new color from $L(x_1)$ so that it avoids
    all current neighbor colors and the proposed new color.
    At most $\deg_H(x_1)+1$ colors are forbidden, whereas
    $\abs{L(x_1)}\ge\deg_H(x_1)+2$.
    Thus such a recoloring is always possible.

    After lifting the entire sequence, all vertices other
    than $x_1$ agree with $\tau$.
    Finally recolor $x_1$ to $\tau(x_1)$ if necessary.
    This last step is proper because $\tau$ is proper.
\end{proof}

\subsubsection*{Markov chains and continuous generators}
We briefly introduce the concept of Markov chains here and recommend referring to \cite{LP17}.
For a state space $\Omega \subseteq [q]^V$, a discrete-time Markov chain consists of a transition kernel $P$ on $\Omega$ with its stationary distribution $\mu$.
Its transition process $X_0, \ldots, X_t, \ldots$ is generated by $X_{t + 1} \sim P(X_t, \cdot)$ for every $t \in \mathbb{N}$.
The mixing time of $P$ is defined as
\begin{align*}
    \tau_{\mix}(P, \eps) \defeq \inf\;\set{t \ge 0 \midc \forall x \in \Omega, \TV{P^t(x, \cdot)}{\mu} \le \eps}
\end{align*}
In this paper, we only consider the \emph{irreducible and self-adjoint} Markov chains.

A continuous Markov chain is specified by a generator $\L$ with non-negative off-diagonal entries and zero row sum.
To test it, for every function $f : \Omega \to \mathbb{R}$, we observe the following function
\begin{align} \label{eq:test-continuous-Markov-chain}
    (\L{f})(x) \defeq \sum_{y \in \Omega} \L(x, y)(f(y) - f(x)).
\end{align}
For simplicity, we sometimes use \eqref{eq:test-continuous-Markov-chain} to specify the generator $\L$.
The transition semigroup is denoted by $H_t \defeq \e^{t \L}$, meaning that
\begin{align*}
    (H_t{f})(x) = \E{f(X_t) \mid X_0 = x}, \quad \frac{\d}{\d{t}} H_t{f} \bigl\vert_{t = 0} = \L{f}.
\end{align*}
Given a discrete-time Markov chain $P$, its continuous-time version is the generator $\L = P - I$ obtained by applying a $P$-update at times of a rate-one Poisson clock.
We always use the uppercase letter $P$ to denote a discrete-time Markov chain and use the calligraphic letter $\L$ to denote the continuous-time generator.

\subsubsection*{Dirichlet form and Markov chain comparison}
The \emph{Dirichlet form} of a discrete-time Markov chain $P$ is the following functional defined as: for all observable functions $f, g : \Omega \to \mathbb{R}$,
\begin{align*}
    \DF_{P}(f, g) \defeq \frac{1}{2} \sum_{x, y \in \Omega} \mu(x)P(x, y) (f(x) - f(y))(g(x) - g(y)).
\end{align*}
Meanwhile, for a continuous-time generator $\L$, the Dirichlet form is of the following form:
\begin{align*}
    \DF_{\L}(f, g) \defeq \frac{1}{2} \sum_{x, y \in \Omega} \mu(x)\L(x, y) (f(x) - f(y))(g(x) - g(y)).
\end{align*}
The following Rayleigh--Ritz variational characterization relates the Dirichlet form with the spectral gap:
\begin{align*}
    \Gap{P} &\defeq 1 - \lambda_2(P) = \inf_{\Var[\mu]{f} > 0} \frac{\DF_{P}(f, f)}{\Var[\mu]{f}}, \quad
    \Gap{\L} \defeq -\lambda_2(\L) = \inf_{\Var[\mu]{f} > 0} \frac{\DF_{\L}(f, f)}{\Var[\mu]{f}}.
\end{align*}
Set the relaxation time $T_{\rel}(\M) \defeq \Gap{\M}^{-1}$ with convention $1/0 = \infty$.

The next comparison is between the single-site Glauber dynamics $\PGD$ and the heat-bath block dynamics $\L_{\B}$.
For a subset $S \subseteq V$, define its block heat-bath operator as
\begin{align*}
    (\K_S{f})(x) \defeq \E[\mu]{f(X) \mid X(V \setminus S) = x(V \setminus S)}.
\end{align*}
For a block cover $\B = \set{B_1, \ldots, B_{r}}$ of $V$ ($\cup_{B \in \B} B = V$) with update rates $w : \B \to \mathbb R_{> 0}$, the continuous-time heat-bath block dynamics is specified by the generator $\L_{\B}$ defined as
\begin{align*}
    \L_{\B} \defeq \sum_{B \in \B} w(B) (\K_{B} - I).
\end{align*}
The single-site Glauber dynamics $\PGD$ is a special case of the heat-bath block dynamics: let $\B$ be the natural single-vertex cover of $V$ and set $w(v) = 1/\abs{V}$ for every vertex $v \in V$.
Then the transition kernel of the discrete-time single-site Glauber dynamics is
\begin{align*}
    \PGD = \frac{1}{\abs{V}} \sum_{v \in V} \K_v.
\end{align*}
The following comparison lemma builds a bridge between $\PGD$ and the block dynamics $\L_{\B}$.
\begin{lemma}[\cite{Martinelli99,LM11}] \label{lem:Markov-chain-comparison}
    Given a block cover $\B = \set{B_1, \ldots, B_r}$ with update rates $w : \B \to \mathbb{R}_{> 0}$, for every $i = 1, \ldots, r$ and every feasible partial assignment $\tau$ on $V \setminus B_i$, define the generator:
    \begin{align*}
        \ab(\L_i^{\tau}{f})(x_{B_i}) \defeq \sum_{v \in B_i} \ab(\E[\mu^{\tau}]{f(X_{B_i}) \mid X(B_i \setminus \set{v}) = x(B_i \setminus \set{v})} - f(x_{B_i})).
    \end{align*}
    If there is a factor $R > 0$ such that for every $i = 1, \ldots, r$ and every feasible partial assignment $\tau$ on $V \setminus B_i$, $T_{\rel}(\L_i^{\tau}) \le R$, then it holds that
    \begin{align*}
        T_{\rel}(\PGD) \le n R \cdot T_{\rel}(\L_{\B}) \cdot \max_{v \in V}\; \sum_{B \in \B: v \in B} w(B).
    \end{align*}
\end{lemma}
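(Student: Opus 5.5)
The plan is to compare Dirichlet forms through the variational characterization, with everything written in terms of conditional variances. The first ingredient is an identity. For any $S \subseteq V$, the heat-bath operator $\K_S$ is the orthogonal projection in $L^2(\mu)$ onto functions of $X(V\setminus S)$. Hence
\[
    \DF_{\K_S - I}(f,f) = \langle f, (I-\K_S) f\rangle_\mu = \E[\mu]{\Var[\mu]{f \mid X(V\setminus S)}} .
\]
I will write $\mu[\Var_S f]$ for the right-hand side. Summing with the rates gives
\[
    \DF_{\L_\B}(f,f) = \sum_{B\in\B} w(B)\, \mu[\Var_B f], \qquad \DF_{\PGD}(f,f) = \frac1n\sum_{v\in V}\mu[\Var_v f].
\]
The same computation, applied to the conditional measure $\mu^\tau$ on $B_i$, gives
\[
    \DF_{\L_i^\tau}(f,f) = \sum_{v\in B_i} \E[\mu^\tau]{\Var_{\mu^\tau}(f \mid X(B_i\setminus\set{v}))} .
\]

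Next I localize inside a block. Fix $B_i$ and a feasible $\tau$ on $V\setminus B_i$. The hypothesis $T_{\rel}(\L_i^\tau)\le R$ and the Rayleigh--Ritz characterization, applied to the restriction $f(\cdot,\tau)$, give
\[
    \Var_{\mu^\tau}(f) \le R\sum_{v\in B_i} \E[\mu^\tau]{\Var_{\mu^\tau}(f\mid X(B_i\setminus\set{v}))}.
\]
Conditioning $\mu^\tau$ on $X(B_i\setminus\set{v})$ is the same as conditioning $\mu$ on $X(V\setminus\set{v})$. So the inner variance is exactly the global single-site conditional variance $\Var_v f$ evaluated at a configuration extending $\tau$. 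Averaging over $\tau\sim\mu_{V\setminus B_i}$ and using the tower property yields
\[
    \mu[\Var_{B_i} f]\le R\sum_{v\in B_i}\mu[\Var_v f].
\]

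Finally I sum with the weights and exchange the order of summation:
\[
    \DF_{\L_\B}(f,f)\le R\sum_{v\in V}\mu[\Var_v f]\sum_{B\ni v}w(B) \le R\,\max_{v}\sum_{B\ni v}w(B)\cdot n\,\DF_{\PGD}(f,f).
\]
Combining this with $\Var[\mu]{f}\le T_{\rel}(\L_\B)\,\DF_{\L_\B}(f,f)$ and taking the infimum over nonconstant $f$ gives the claimed bound on $T_{\rel}(\PGD)$. If $\L_\B$ has zero gap, the bound is trivial.

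The only delicate point is the step identifying the block-internal single-site conditional variances under $\mu^\tau$ with the global ones, and the integration over $\tau$. This requires that the restricted functions and conditional measures are well-defined on feasible pinnings only. I would handle this by restricting all sums to configurations with $\mu(x)>0$, so every conditioning is on a positive-probability event, and the projection identity remains valid throughout.
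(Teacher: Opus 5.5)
Your proof is correct. The paper gives no proof of its own here and simply cites \cite{Martinelli99,LM11}; your argument is the standard Dirichlet-form comparison from those sources: the block Poincar\'e inequality under each feasible outside pinning, averaged over the pinning using the identification of conditional variances under $\mu^\tau$ with those under $\mu$, and then summed against the overlap factor $\max_v\sum_{B\ni v}w(B)$. Here the factor $n$ in the statement is $|V|$, which is exactly what your final step produces from the normalization of the discrete-time Glauber kernel as $|V|^{-1}$ times the sum of single-site heat-bath operators.
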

The following lemma shows how to derive the mixing time from the relaxation time.
\begin{lemma}[\cite{LP17}] \label{lem:relaxation-time-to-mixing-time}
    For a reversible, positive semidefinite discrete-time Markov chain $P$ with stationary distribution $\mu$, it holds that
    \begin{align*}
        \tau_{\mix}(P, \eps) \le T_{\rel}(P) \ab(\ln\frac{1}{\min_{x : \mu(x) > 0}\; \mu(x)} + \ln\frac{1}{\eps}).
    \end{align*}
\end{lemma}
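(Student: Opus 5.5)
Since $P$ is reversible with respect to $\mu$, I will use the spectral decomposition of $P$ on $L^2(\mu)$ together with Cauchy--Schwarz, which is the standard route in \cite{LP17}. The positive semidefiniteness is what lets the spectral gap alone control all nontrivial eigenvalues, so no absolute spectral gap is needed.

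\textbf{Spectral setup.} Restrict to the support $\Omega^+ \defeq \set{x : \mu(x) > 0}$ and write $N = \abs{\Omega^+}$. Reversibility makes $P$ self-adjoint on $L^2(\mu)$, so there is an orthonormal eigenbasis $f_1 \equiv 1, f_2, \ldots, f_N$ with real eigenvalues $1 = \lambda_1 \ge \lambda_2 \ge \cdots \ge \lambda_N$. Positive semidefiniteness gives $\lambda_N \ge 0$. Hence for every $j \ge 2$,
\[
    \abs{\lambda_j} \le \lambda_2 = 1 - \Gap{P} \le \e^{-\Gap{P}}.
\]
If $\Gap{P} = 0$, then $T_{\rel}(P) = \infty$ and there is nothing to prove, so I assume $\Gap{P} > 0$.

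\textbf{$L^2$ bound.} Fix $x \in \Omega^+$ and set $h_t(y) \defeq P^t(x,y)/\mu(y)$. Reversibility gives $h_t = P^t h_0$ with $h_0 = \id{\cdot = x}/\mu(x)$. Expanding in the eigenbasis, $h_0 - 1$ is orthogonal to constants, so
\[
    \norm{h_t - 1}_{L^2(\mu)}^2 = \sum_{j \ge 2} \lambda_j^{2t} \langle h_0, f_j \rangle_\mu^2 \le \e^{-2t\Gap{P}} \norm{h_0 - 1}_{L^2(\mu)}^2.
\]
The last norm equals $\tfrac{1}{\mu(x)} - 1 \le 1/\mu_{\min}$, where $\mu_{\min} \defeq \min_{x : \mu(x) > 0} \mu(x)$.

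\textbf{From $L^2$ to total variation.} By Cauchy--Schwarz,
\[
    2\,\TV{P^t(x,\cdot)}{\mu} = \norm{h_t - 1}_{L^1(\mu)} \le \norm{h_t - 1}_{L^2(\mu)} \le \e^{-t\Gap{P}}\, \mu_{\min}^{-1/2}.
\]
Take $t \ge T_{\rel}(P)\bigl(\ln(1/\mu_{\min}) + \ln(1/\eps)\bigr)$. Then $\e^{-t\Gap{P}} \le \eps\,\mu_{\min}$, so the right-hand side is at most $\eps \sqrt{\mu_{\min}} \le \eps$. This gives the claimed bound on $\tau_{\mix}(P,\eps)$; the argument even has a factor $2$ and a square root of slack.

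\textbf{Main obstacle.} The only delicate point is the eigenvalue bound. Without positive semidefiniteness, a negative eigenvalue near $-1$ could prevent convergence. The hypothesis removes this issue, and in the later application it will be checked that the heat-bath chain $\PGD = \frac{1}{\abs{V}}\sum_v \K_v$ is an average of orthogonal projections and hence positive semidefinite.
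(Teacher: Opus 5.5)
Your proof is correct and is essentially the standard spectral argument of \cite{LP17}, which the paper cites without proof. There, positive semidefiniteness identifies the absolute spectral gap with $\Gap{P}$, and your $L^2(\mu)$ (chi-square) estimate is a slightly sharper variant of the pointwise bound $\abs{P^t(x,y)/\mu(y)-1}\le \e^{-t\Gap{P}}/\mu_{\min}$ used in \cite{LP17}.

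One minor point: $t$ ranges over integers, so your threshold step literally gives the bound with a ceiling. The slack you mention absorbs this: for $\abs{\Omega^+}\ge 2$ one has $\Gap{P}\le 1$ and $\mu_{\min}\le 1/2$, so already at $t=\lfloor T_{\rel}(P)\ln(1/(\eps\mu_{\min}))\rfloor$ you get $\TV{P^t(x,\cdot)}{\mu}\le \tfrac{\e}{2}\eps\sqrt{\mu_{\min}}<\eps$.
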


\subsubsection*{Entropic factorization}
Given a state space $\Omega \subseteq [q]^V$ and a probability distribution $\mu$ on $\Omega$, for an observable function $f : \Omega \to \mathbb{R}$, the expectation of $f$ with respect to $\mu$ is denoted by $\E[\mu]{f} \defeq \sum_{x \in \Omega} \mu(x) f(x)$ and its variance is $\Var[\mu]{f} \defeq \E[\mu]{f^2} - \E[\mu]{f}^2$.
If $f$ is non-negative valued, define its entropy as
\begin{align*}
    \Ent[\mu]{f} \defeq \E[\mu]{f \ln f} - \E[\mu]{f} \ln \E[\mu]{f}
\end{align*}
with convention $0 \ln 0 = 0$.
For $S \subseteq V$, define $\ENT_{\mu}^{S}(f) = \E[\tau \sim \mu_{V \setminus S}]{\Ent[\mu^{\tau}]{f}}$.
The following inequality holds immediately for the tensor product.
\begin{fact}[{\cite[Fact 15]{BGGS24}}] \label{fact:tensor-product-entropic-factoriation}
    Let $\mu = \mu_1 \otimes \cdots \otimes \mu_k$ be the product distribution on $\Omega = \Omega_1 \times \cdots \times \Omega_k$ where $\mu_i$ is a distribution on $[q]^{V_i}$ and $V_1, \ldots, V_k$ are pairwise disjoint sets.
    Then for every test function $f : \Omega \to \mathbb{R}_{\ge 0}$, $\Ent[\mu]{f} \le \sum_{i = 1}^k \ENT_{\mu}^{V_i}(f)$.
\end{fact}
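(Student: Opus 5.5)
The statement is Fact~\ref{fact:tensor-product-entropic-factoriation}: for a product measure $\mu=\mu_1\otimes\cdots\otimes\mu_k$, the entropy is at most the sum of the block conditional entropies. This is the classical subadditivity (tensorization) of entropy. I would prove it by induction on $k$ via the chain rule for entropy, and use the product structure only to identify the terms.

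\textbf{Step 1: the two-block case.} Write $x=(y,z)$ with $y\in\Omega_1$ and $z\in\Omega'\defeq\Omega_2\times\cdots\times\Omega_k$, and $\mu=\mu_1\otimes\mu'$. Assume $f\ge 0$ and $\E[\mu]{f}>0$; otherwise both sides vanish. Let $g(z)\defeq\E[\mu_1]{f(\cdot,z)}$. The chain rule gives
\[
\Ent[\mu]{f}=\E[z\sim\mu']{\Ent[\mu_1]{f(\cdot,z)}}+\Ent[\mu']{g}.
\]
Because $\mu$ is a product, conditioning on $X(V\setminus V_1)=z$ yields exactly $\mu_1$. Hence the first term equals $\ENT_\mu^{V_1}(f)$.

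For the second term, I would use the variational formula $\Ent[\nu]{h}=\sup\set{\E[\nu]{hu} \midc \E[\nu]{\e^u}\le 1}$, or equivalently convexity of $\Ent$ under averaging. This gives $\Ent[\mu']{\E[\mu_1]{f(y,\cdot)}}\le \E[y\sim\mu_1]{\Ent[\mu']{f(y,\cdot)}}$. The right-hand side is the entropy of $f$ with respect to $\mu$ conditional on the $V_1$-coordinates, that is, $\ENT_\mu^{V\setminus V_1}(f)$, again by the product structure.

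\textbf{Step 2: induction.} Apply the induction hypothesis to the product $\mu'=\mu_2\otimes\cdots\otimes\mu_k$ for each fixed $y$. This gives $\Ent[\mu']{f(y,\cdot)}\le\sum_{i\ge2}\E[y'\sim\mu']{\cdots}$. Averaging over $y\sim\mu_1$ and again using independence of the coordinates, $\E[y]{\ENT_{\mu'}^{V_i}(f(y,\cdot))}=\ENT_\mu^{V_i}(f)$. This identity holds because the conditional law of $X(V_i)$ given everything else is $\mu_i$ both in $\mu$ and in $\mu'$. Summing the bounds yields the claim.

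\textbf{Main obstacle.} The only nontrivial point is the convexity inequality $\Ent(\E_y f_y)\le\E_y\Ent(f_y)$ in Step~1. I would prove it via the duality formula. Choose $u$ optimal for the left-hand side, so that $\E[\mu']{\e^u}\le1$. Then $\E[\mu']{(\E_y f_y)u}=\E_y\E[\mu']{f_y u}\le\E_y\Ent[\mu']{f_y}$, since each inner term is bounded by the supremum. The remaining care is in the bookkeeping that matches the conditional expectations in $\ENT^{S}_\mu$ with the factor measures. That matching is immediate from independence but should be stated explicitly.
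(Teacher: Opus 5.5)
Your proof is correct. It is the standard tensorization argument for entropy under product measures: chain rule, then convexity of $h\mapsto \mathrm{Ent}(h)$ via the variational formula, then induction on $k$. The paper gives no proof of its own and simply imports the statement as Fact 15 of \cite{BGGS24}.

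One small imprecision is in the convexity step. When $\mathbb{E}_{y}f(y,\cdot)$ has zeros, no real-valued $u$ attains the supremum, so you cannot pick an optimal $u$. You also do not need one: bound $\mathbb{E}_{\mu'}\bigl[(\mathbb{E}_y f(y,\cdot))\,u\bigr]\le \mathbb{E}_y\,\mathrm{Ent}_{\mu'}(f(y,\cdot))$ for every admissible $u$, then take the supremum.
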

\begin{remark} \label{rem:variance-tensorization}
    We remark here that the same inequality also holds for $\Var[\mu]{f}$.
\end{remark}

A distribution $\mu$ on $\Omega \subseteq [q]^V$ is said to be $b$-marginally bounded if for every boundary $\Lambda \subseteq V$, every feasible partial assignment $\tau$ on $\Lambda$, $v \in V \setminus \Lambda$ and every color $c \in [q]$, either $\mu^{\tau}(v \gets c) = 0$ or $\mu^{\tau}(v \gets c) \ge b$.
A distribution is said to have
\emph{connected conditional supports} if, under every
feasible pinning, its support is connected in the graph
whose edges join configurations differing at exactly one
free coordinate.

We use the following consequence of
\cite[Lemma 4.2]{CLV26}, with its connectivity assumption
made explicit.

\begin{lemma}[{\cite[Lemma 4.2]{CLV26}}]
\label{lem:marginal-boundedness-to-entropic-factorization}
    Let $\mu$ be a probability distribution on $k\ge 1$
    coordinates.
    Suppose that $\mu$ has connected conditional supports
    and is $b$-marginally bounded for some $b\in(0,1/2]$.
    Then, for every non-negative function $f$,
    \[
        \Ent[\mu]{f}
        \le
        4b^{-(3k+1)}
        \sum_v \ENT_\mu^v(f).
    \]
\end{lemma}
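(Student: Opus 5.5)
The plan is to induct on the number of coordinates $k$, peeling off one coordinate at a time with the chain rule for entropy. Each step should cost a factor of order $b^{-3}$, which produces the $b^{-3k}$ growth in the statement. The base case $k=1$ is trivial, since then $\Ent[\mu]{f}=\ENT_\mu^v(f)$ and $4b^{-4}\ge 1$.

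\textbf{Step 1: the reduction.} Fix a coordinate $v$ and set $h(x_{-v})\defeq\E[\mu]{f\mid X_{-v}=x_{-v}}$. The chain rule gives
\[
\Ent[\mu]{f}=\Ent[\mu_{-v}]{h}+\ENT_\mu^v(f).
\]
The marginal $\mu_{-v}$ lives on $k-1$ coordinates and inherits both hypotheses. Every pinned single-site marginal of $\mu_{-v}$ is a pinned single-site marginal of $\mu$, so $b$-marginal boundedness transfers. Connectivity of supports transfers by projecting a single-site path in $\mu$'s support: moves at $v$ become trivial and are dropped. The induction hypothesis then yields
\[
\Ent[\mu_{-v}]{h}\le 4b^{-(3k-2)}\sum_{u\ne v}\ENT^u_{\mu_{-v}}(h).
\]
Fix a pinning $\tau$ of the coordinates other than $u,v$. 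By convexity of entropy, $\Ent[\mu^\tau_u]{\E{f\mid x_u}}\le \Ent[\mu^\tau_{\{u,v\}}]{f}$. Hence $\ENT^u_{\mu_{-v}}(h)\le\ENT^{\{u,v\}}_\mu(f)$.

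\textbf{Step 2: the two-site inequality.} It then suffices to prove the $k=2$ bound
\[
\Ent[\nu]{f}\le b^{-3}\bigl(\ENT^u_\nu(f)+\ENT^v_\nu(f)\bigr)
\]
for every pinned two-site conditional $\nu=\mu^\tau_{\{u,v\}}$. Averaging over $\tau$ converts this into $\ENT^{\{u,v\}}_\mu(f)\le b^{-3}\bigl(\ENT^u_\mu(f)+\ENT^v_\mu(f)\bigr)$. Substituting into Step 1 produces $\sum_{u\ne v}\bigl(\ENT^u_\mu+\ENT^v_\mu\bigr)$, in which $\ENT^v_\mu$ appears $k-1$ times. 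To avoid an extra factor of $k$, average the whole inequality over $v$ chosen uniformly from the $k$ coordinates. Then each term $\ENT^w_\mu(f)$ receives total weight at most $4b^{-(3k-2)}b^{-3}\cdot\frac{2(k-1)}{k}+1\le 4b^{-(3k+1)}$, using $b\le 1/2$ for the final absorption.

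\textbf{Step 3 (main obstacle): proving the two-site inequality.} Here $\nu$ is supported on a set $\Omega_2\subseteq[q]^2$ which is connected under single-coordinate moves. By marginal boundedness, each conditional marginal is either zero or at least $b$. This gives two consequences: every row and column of $\Omega_2$ has at most $1/b$ entries, and every $\nu(x)$ is at least $b\cdot\nu_u(x_u)\ge b^2$. I would compare with the product measure $\nu_u\otimes\nu_v$ restricted to $\Omega_2$ via the variational formula $\Ent[\nu]{f}=\sup_g\{\E[\nu]{fg}:\E[\nu]{\e^g}\le 1\}$. The entropy of $f$ is controlled by the differences of $f$ between states joined by single-site moves, each weighted by the conditional probabilities, which are at least $b$. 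Connectivity guarantees that any two states of $\Omega_2$ are joined by a path of single-site moves. Because rows and columns are small, such a path needs only a bounded number of row/column switches before it "covers" mass comparable to $b^{-1}$.

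Concretely, I would first try to show that the two-block Gibbs sampler on $\nu$ (alternately resampling $u$ and $v$) has entropy contraction at least $b^{3}$. Applying the chain rule twice, this is equivalent to the desired two-site inequality. For the contraction, I would use a one-step-overlap argument: from any state, two alternating updates reach a fixed reference row with probability at least $b^2$, and connectivity supplies the additional factor $b$. This is the step where the constants are most delicate, and I am least sure the $b^{-3}$ factor comes out exactly. If it does not, any $b^{-O(1)}$ bound would still suffice after adjusting the exponent bookkeeping in Step 2.
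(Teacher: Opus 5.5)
Your Step 1 is sound: the chain rule, the transfer of connectivity and $b$-marginal boundedness to $\mu_{-v}$, and $\ENT^{u}_{\mu_{-v}}(h)\le \ENT^{\{u,v\}}_{\mu}(f)$ are all fine. The problem is that the induction does not close at the stated exponent, even if you grant the two-site inequality. Let $c_2$ be the two-site constant and $C_{k-1}$ the constant for $k-1$ coordinates. After averaging over $v$, the weight on $\ENT^{w}_{\mu}(f)$ is $\frac1k+\frac{2(k-1)}{k}\,c_2C_{k-1}$. With $c_2=b^{-3}$ and $C_{k-1}=4b^{-(3k-2)}$ this equals $\frac1k+\frac{2(k-1)}{k}\cdot 4b^{-(3k+1)}$, which is strictly larger than $4b^{-(3k+1)}$ for every $k\ge 2$. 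A factor $2(k-1)/k\to 2$ is lost at every step. The hypothesis $b\le\frac12$ can absorb it only by spending an extra power of $b^{-1}$ per step, so your scheme yields a constant of order $b^{-(4k+O(1))}$. To reach $4b^{-(3k+1)}$ you would need $c_2\le b^{-3}/2$.

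The more serious gap is Step 3. It carries all the content, it is not proved, and the mechanism you propose cannot work. A two-update minorization toward a fixed reference row fails because the support's single-site diameter can be of order $1/b$. Take $\Omega_2=\{(i,i):i\le m\}\cup\{(i,i+1):i<m\}$ with uniform weights. This support is connected and $b$-marginally bounded with $b=1/(2m-1)$. From $(1,1)$ two alternating updates reach only rows $1,2$, and from $(m,m)$ only rows $m-1,m$, so for $m\ge4$ no common reference row exists. Your claim that paths need only a bounded number of row/column switches is also false here, since about $1/b$ switches are needed. The linear test function along this path shows $c_2$ is genuinely at least of order $b^{-2}$.

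Any valid argument must handle paths of length $\Theta(1/b)$. For example, canonical paths in the row--column bipartite graph (at most $2/b$ vertices, edge weights $\nu(x)\ge b^2$) give a \emph{variance} constant of order $b^{-3}$. Passing to entropy typically costs a further factor of order $\ln(1/b)$, which is exactly the $\ln(1/b)$ in the cited bound. So neither $c_2\le b^{-3}$ nor the needed $c_2\le b^{-3}/2$ is delivered. Your fallback $c_2=b^{-O(1)}$ gives only a $b^{-O(k)}$ bound. That would suffice for the later application with $k=O(\ln n)$, but it is not the lemma as stated.

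The paper proves nothing new here. For $k\ge2$ it takes the constant $\frac{3k^2\ln(1/b)}{2b^{2k+2}}$ from \cite[Lemma 4.2]{CLV26}. It then checks the elementary inequality $\frac{3k^2\ln(1/b)}{2b^{2k+2}}\le 4b^{-(3k+1)}$, which is equivalent to $\frac38k^2\ln(1/b)\le b^{-(k-1)}$ for $k\ge 2$ and $b\le\frac12$. Invoking that bound is how to close your argument.
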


\begin{proof}
    For $k=1$, the assertion follows from
    $\Ent[\mu]{f}=\ENT_\mu^v(f)$.

    For $k\ge 2$, the bound in
    \cite[Lemma 4.2]{CLV26} gives
    \[
        \Ent[\mu]{f}
        \le
        \frac{3k^2\ln(1/b)}{2b^{2k+2}}
        \sum_v\ENT_\mu^v(f).
    \]
    The elementary inequality
    \[
        \frac{3k^2\ln(1/b)}{2b^{2k+2}}
        \le 4b^{-(3k+1)},
        \qquad k\ge 2,\quad 0<b\le \frac12,
    \]
    completes the proof.
\end{proof}

The following theorem shows that spectral independence together with marginal boundedness implies the entropic factorization.
\begin{theorem}[\cite{CLV26}] \label{thm:entropic-factorization}
    For a probability distribution $\mu$ on $\Omega \subseteq [q]^V$, suppose that $\mu$ is $\gamma$-spectrally independent and $b$-marginally bounded with connected conditional supports.
    Then for every $r = 1, \ldots, \abs{V}$, there exists a factor $C_r > 0$ such that
    \begin{align} \label{eq:block-entropic-factorization}
        \frac{r}{\abs{V}} \Ent[\mu]{f} \le C_r \frac{1}{\binom{\abs{V}}{r}} \sum_{S \in \binom{V}{r}} \ENT_{\mu}^S(f).
    \end{align}
    Furthermore, if $r \ge 2R_0$ where $R_0 = \ceil{4\gamma/b^2}$, one can take
    \begin{align*}
        C_r \le \ab(\frac{\abs{V} - R_0}{r - R_0})^{R_0} \le \ab(\frac{2\abs{V}}{r})^{R_0}.
    \end{align*}
\end{theorem}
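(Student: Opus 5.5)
The plan is the local-to-global argument of \cite{CLV26} in its entropic form. Think of $\mu$ as a distribution on the simplicial complex whose faces are feasible partial assignments. For $0\le k\le \abs{V}$, let $\Ent_k(f)$ be the average of $\Ent[\mu]{\E[\mu]{f\mid X_S}}$ over uniformly random $S\in\binom{V}{k}$, i.e.\ the entropy left after revealing a random $k$-set of coordinates. Then $\Ent_{\abs{V}}(f)=\Ent[\mu]{f}$ and $\Ent_0(f)=0$. By the chain rule for entropy,
\[
\frac{1}{\binom{\abs V}{r}}\sum_{\abs S=r}\ENT_\mu^S(f)=\Ent[\mu]{f}-\Ent_{\abs V-r}(f).
\]
So \eqref{eq:block-entropic-factorization} is equivalent to an upper bound on $\Ent_{\abs V-r}(f)/\Ent[\mu]{f}$, that is, to entropy contraction of the down-walk from level $\abs V$ to level $\abs V-r$.

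\textbf{Step 1: a one-level inequality.} Write $\kappa_k$ for the entropy contraction factor of the down step from level $k$ to level $k-1$, so that
\[
\Ent_{k-1}(f)\le (1-\kappa_k)\,\Ent_k(f)\quad\text{for every } f \ge 0 .
\]
Fix a feasible pinning $\tau$ on $k-2$ coordinates, leaving $m=\abs V-k+2$ free coordinates. The needed local inequality compares the one-vertex marginal entropy with the two-vertex marginal entropy under $\tau$:
\[
\E_{v}\,\Ent[\mu^\tau]{\E[\mu^\tau]{f\mid X_v}}\;\le\;\ab(1-\frac{1}{m}\ab(1-\frac{C\gamma}{b^2 m}))^{-1}\cdots
\]
The precise form is what I must establish, and it should come from two ingredients. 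The first is the spectral bound $\lambda_{\max}(\Psi_\mu^\tau)\le\gamma$, which controls the second-order (variance) version. The second is $b$-marginal boundedness, which lets one compare entropy with variance of the one-site marginals up to factors polynomial in $1/b$. Connected conditional supports, through Lemma~\ref{lem:marginal-boundedness-to-entropic-factorization}, handle the small-$m$ levels, where no spectral gain is available.

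\textbf{Step 2: telescoping.} Step 1 should give $\kappa_k\ge \frac{1}{k}\bigl(1-\frac{R_0}{k}\bigr)$ for the top levels, with $R_0=\ceil{4\gamma/b^2}$. Multiplying from level $\abs V$ down to level $\abs V-r$ then gives
\[
\frac{\Ent_{\abs V-r}(f)}{\Ent[\mu]{f}}\le\prod_{k=\abs V-r+1}^{\abs V}\Bigl(1-\tfrac{1}{k}+\tfrac{R_0}{k^2}\Bigr).
\]
When $r\ge 2R_0$, a standard binomial estimate turns this product into
\[
1-\frac{r}{\abs V}\ab(\frac{r-R_0}{\abs V-R_0})^{R_0}.
\]
That yields $C_r\le\bigl(\frac{\abs V-R_0}{r-R_0}\bigr)^{R_0}\le(2\abs V/r)^{R_0}$, where the last inequality uses $r-R_0\ge r/2$. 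For small $r$, existence of some finite $C_r$ follows from the same chain rule together with Lemma~\ref{lem:marginal-boundedness-to-entropic-factorization} applied at each pinning.

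\textbf{Main obstacle.} The hard step is the entropy version of the one-level inequality in Step 1. Spectral independence directly gives only variance (local spectral expansion) bounds, and upgrading these to entropy is where marginal boundedness must enter. I would first try the route of \cite{CLV26}: prove the local inequality for the quadratic approximation $f=1+\varepsilon g$, where it reduces to $\lambda_{\max}(\Psi^\tau_\mu)\le\gamma$. I would then extend it to general $f\ge0$ by a convexity argument over the pinning, paying the $b^{-2}$ factor that appears in $R_0$.
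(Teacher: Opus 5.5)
Your chain-rule reduction, $\binom{N}{r}^{-1}\sum_{\abs{S}=r}\mathrm{ENT}^S_\mu(f)=\mathrm{Ent}_N(f)-\mathrm{Ent}_{N-r}(f)$ with $N=\abs{V}$, is correct. The local-to-global framework of \cite{CLV26} is also the right one; the paper does not reprove the theorem, but cites \cite{CLV26} and records only the product estimate from the proof of its Lemma~2.5. However, your Step~2 is false, and your Step~1 is not yet an argument.

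At $k=N$, the claimed rate $\kappa_k\ge\frac1k(1-\frac{R_0}{k})$ says $\mathrm{Ent}_\mu(f)\le(1-R_0/N)^{-1}\sum_v\mathrm{ENT}^v_\mu(f)$. That is approximate tensorization with constant $1+o(1)$, which fails in simple cases. Take $\mu$ to be the product of $N/2$ independent copies of the pair law $\mu_2(00)=\mu_2(11)=3/8$, $\mu_2(01)=\mu_2(10)=1/8$.
\begin{itemize}
\item It is $\tfrac14$-marginally bounded and $\tfrac12$-spectrally independent, since each free pair contributes an influence block with top eigenvalue $\tfrac12$.
\item It has full conditional supports, and $R_0=32$.
\item For $f=1+\varepsilon\bigl((-1)^{X_x}+(-1)^{X_y}\bigr)$ on one pair $\{x,y\}$, letting $\varepsilon\to0$ gives $\sum_v\mathrm{ENT}^v_\mu(f)/\mathrm{Ent}_\mu(f)\to\frac{3/2}{3}=\frac12$.
\end{itemize}
So $\kappa_N\le\frac{1+o(1)}{2N}$, contradicting your claim once $N>64$. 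Your own Step~1 puts the loss at $C\gamma/(b^2m)$ with $m=N-k+2$ free coordinates, and $m$ is small at the top levels; nothing justifies replacing $m$ by $k$. Taken at face value, your rates would give $C_r=1+O(R_0/N)$ for $r\le N/2$. The ``binomial estimate'' reaches $(\frac{N-R_0}{r-R_0})^{R_0}$ only by discarding almost everything.

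The missing idea is structural. A link inequality at $j$ pinned coordinates, averaged over links, never gives a per-level bound $\mathrm{Ent}_{j+1}\le(1-\kappa)\mathrm{Ent}_{j+2}$ by itself. It only compares consecutive increments $\delta_k:=\mathrm{Ent}_k(f)-\mathrm{Ent}_{k-1}(f)$, in the form $\delta_{j+2}\ge\alpha_j\,\delta_{j+1}$. The local entropy contraction lemma of \cite{CLV26} supplies $\alpha_j=\max\{0,1-\frac{4\gamma}{b^2(N-1-j)}\}$.

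That lemma is exactly what you leave as ``$\cdots$'' in Step~1. Linearizing at $f=1+\varepsilon g$ only returns the variance statement $\lambda_{\max}(\Psi^\tau_\mu)\le\gamma$. Passing to all $f\ge0$ is the nontrivial part, where $b$-marginal boundedness is used essentially; it is not a routine convexity step.

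Per-level rates appear only after iterating the increment recursion up from level $0$: $\kappa_k\ge\Gamma_{k-1}/\sum_{i<k}\Gamma_i$ with $\Gamma_i=\prod_{j<i}\alpha_j$. The ratio $\sum_{i\ge N-r}\Gamma_i/\sum_{i\ge0}\Gamma_i$ is nondecreasing in each $\alpha_j$. Moreover $\alpha_j\ge\max\{0,1-\frac{R_0}{N-1-j}\}$, and for these values $\Gamma_i=\binom{N-1-i}{R_0}/\binom{N-1}{R_0}$, which vanishes on the top $R_0$ levels. The product over $k=N-r+1,\dots,N$ then telescopes, and the hockey-stick identity gives
\[
\mathrm{Ent}_N(f)-\mathrm{Ent}_{N-r}(f)\ \ge\ \frac{\sum_{i=N-r}^{N-1}\Gamma_i}{\sum_{i=0}^{N-1}\Gamma_i}\,\mathrm{Ent}_N(f)\ \ge\ \frac{\binom{r}{R_0+1}}{\binom{N}{R_0+1}}\,\mathrm{Ent}_N(f).
\]
This is $C_r\le\prod_{i=1}^{R_0}\frac{N-i}{r-i}$, the bound recorded in the paper's remark. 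The factor $(N/r)^{R_0}$ comes precisely from these weak top levels.

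Your handling of small $r$ is fine: some finite $C_r$ follows from \Cref{lem:marginal-boundedness-to-entropic-factorization} together with $\mathrm{ENT}^S_\mu\ge\mathrm{ENT}^v_\mu$ for $v\in S$.
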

\begin{remark}
    Write $N = \abs{V}$. For $1 \le r \le N$ with $r \ge 2R_0$, the product estimate in the proof of \cite[Lemma 2.5]{CLV26} gives the following upper bound for a valid block factorization constant:
    \begin{align*}
%        C_r = \frac{r}{\abs{V}} \frac{\sum_{k = 0}^{\abs{V} - 1} \Gamma_k}{\sum_{k = \abs{V} - r}^{\abs{V} - 1} \Gamma_k}, \quad \Gamma_k = \prod_{j = 0}^{k - 1} \max\;\set{0, 1 - \frac{4\gamma}{b^2(\abs{V} - 1 - j)}}.\\
        C_r \le \prod_{i = 1}^{R_0} \frac{N - i}{r - i} \le \ab(\frac{N - R_0}{r - R_0})^{R_0} \le \ab(\frac{2N}{r})^{R_0}.
    \end{align*}
    Here the empty product is one when $R_0 = 0$.
%    For convenience, we use the explicit form in \Cref{thm:entropic-factorization}.
\end{remark}

The following proposition provides that the entropic factorization implies an upper bound for the relaxation time.
\begin{proposition}[{\cite[Fact 3.5]{CLV26}}] \label{prop:entropic-factorization-to-relaxation-time}
    If $\Ent[\mu]{f} \le C \sum_{v \in V} \ENT_{\mu}^{v}(f)$ for every observable $f$, then the following \Poincare inequality holds:
    \begin{align*}
        \Var[\mu]{g} \le C \sum_{v \in V} \E[\mu]{\Var[\mu]{g \mid X(V \setminus \set{v})}} = C\DF_{\L}(g, g)
    \end{align*}
    where $\L$ is the corresponding continuous-time single-site Glauber dynamics of $\mu$, \IE
    \begin{align*}
        \L = \sum_{v \in V} \ab(\K_v - I).
    \end{align*}
    Thus $T_{\rel}(\L) \le C$.
\end{proposition}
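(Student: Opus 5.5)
The plan is the standard linearization argument: apply the assumed entropy inequality to a small perturbation of the constant function, and read off the second-order terms. Fix an arbitrary $g : \Omega \to \mathbb{R}$. Because $\Omega$ is finite, $g$ is bounded, so $f_\eps \defeq 1 + \eps g$ is non-negative (indeed strictly positive) for all sufficiently small $\eps > 0$. The hypothesis therefore applies and gives
\[
    \Ent[\mu]{f_\eps} \le C \sum_{v \in V} \ENT_\mu^v(f_\eps).
\]

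First, expand both sides to second order in $\eps$. Writing $\Phi(x) = x \ln x$, we have $\Phi(1+y) = y + y^2/2 + O(y^3)$ uniformly for $\abs{y}$ bounded by a small constant. The linear terms cancel between $\E[\mu]{\Phi(f_\eps)}$ and $\Phi(\E[\mu]{f_\eps})$. This gives
\[
    \Ent[\mu]{f_\eps} = \frac{\eps^2}{2}\Var[\mu]{g} + O(\eps^3).
\]
The same expansion applies under each conditional law $\mu^{\tau}$, for every feasible $\tau$ on $V \setminus \set{v}$. Averaging over $\tau \sim \mu_{V\setminus\set{v}}$ yields
\[
    \ENT_\mu^v(f_\eps) = \frac{\eps^2}{2}\,\E[\mu]{\Var[\mu]{g \mid X(V\setminus\set{v})}} + O(\eps^3).
\]
The $O(\eps^3)$ constants depend only on $\max\abs{g}$ and on $\abs{V}$. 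This is legitimate since there are finitely many $\tau$ and all conditional laws are supported on a finite set. Dividing the inequality by $\eps^2/2$ and letting $\eps \to 0$ gives the claimed Poincaré inequality
\[
    \Var[\mu]{g} \le C \sum_{v} \E[\mu]{\Var[\mu]{g \mid X(V\setminus\set{v})}}.
\]

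Second, identify the right-hand side with $\DF_\L(g,g)$ for $\L = \sum_v (\K_v - I)$. Since $\K_v$ is the $\mu$-orthogonal projection onto functions of $X(V\setminus\set{v})$, the operator $I - \K_v$ is self-adjoint and idempotent. Hence
\[
    \DF_{\K_v - I}(g,g) = \langle g, (I-\K_v) g\rangle_\mu = \E[\mu]{(g - \K_v g)^2} = \E[\mu]{\Var[\mu]{g\mid X(V\setminus\set{v})}}.
\]
Summing over $v$ gives the stated identity. Combining it with the Rayleigh--Ritz characterization of $\Gap{\L}$ yields $\Gap{\L} \ge 1/C$, that is, $T_{\rel}(\L) \le C$.

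The only step needing care is the uniformity of the Taylor remainder, so that the $O(\eps^3)$ terms are negligible after division by $\eps^2$. Finiteness of $\Omega$ and boundedness of $g$ make this routine, so I expect no real obstacle.
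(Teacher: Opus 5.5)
Your proposal is correct. It is essentially the standard argument behind \cite[Fact 3.5]{CLV26}, which the paper cites without reproducing: you linearize the entropy inequality at $f_\varepsilon = 1 + \varepsilon g$ to get approximate tensorization of variance, then identify the sum of averaged conditional variances with the Dirichlet form of $\sum_v (\K_v - I)$ using the projection property of each heat-bath operator. Your treatment of the uniform $O(\varepsilon^3)$ remainder over the finitely many pinnings is exactly the care that step needs.
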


\subsection{High-degree buffers}
Now we show how to handle the high-degree vertices in $G \sim \G(n, d/n)$.
Recall the high-degree cutoff $D = \floor{(1 + \eta/3)d}$ and the high-degree vertex set
\begin{align*}
    V^{\HD} = \set{v \in V \mid \deg_G(v) > D}.
\end{align*}
Define $S \defeq N_{G}[V^{\HD}]$ and $U \defeq V \setminus S$.
Take $d$ sufficiently large such that $q - D \ge 3$, $q \ge 5$ and $D - 6 > d$.

The following buffered percolation accommodates the dependence between high-degree events.
\begin{lemma}[Buffered percolation] \label{lem:buffered-percolation}
    There is a fixed real number $p \in (0, 1)$ depending on $\eta$ and $d$ such that the following holds.
    Let $Z$ mark each vertex independently with probability $p$ independently of $G$ and set $A = V^{\HD} \cup Z$.
    For every fixed $J > 0$, there is a $K_J < \infty$ such that
    \begin{align*}
        \Pr[G, Z]{\max_{R\; \text{component of}\; G[N_G[A]]} \abs{R} > K_J \ln n} \le n^{-J}
    \end{align*}
    for all sufficiently large $n$.
    Moreover, with probability at least \(1-O_{\eta,d}(n^{-1})\) over \(G\) every component of $G[S]$ is a tree or unicyclic and has at most $K_1 \ln n$ vertices.
\end{lemma}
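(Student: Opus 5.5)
We will prove the tail bound by a first-moment count over connected witness trees. The moreover clause then follows as the special case $p=0$, i.e.\ $A=V^{\HD}$, combined with \Cref{lem:locally-tree-or-unicyclic}.

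Suppose $G[N_G[A]]$ has a component $R$ with $\abs{R}>K\ln n$. Since every vertex of $N_G[A]$ is within distance one of $A$, we may extract a connected subset of $R$ of size exactly $m\defeq\lceil K\ln n\rceil$ (pruning leaves of a spanning tree). Its core $A'\subseteq A$, the set of $A$-vertices it contains or is adjacent to, satisfies $\abs{A'}\ge m/(D'+1)$ only if degrees are bounded, which fails at high-degree vertices. To avoid this, I would instead build the witness directly: a tree $T$ in $G$ on $m$ vertices together with a set $W\subseteq V(T)$ of \emph{centers} lying in $A$, such that every vertex of $T$ is in $W$ or adjacent in $T$ to $W$. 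Then $\abs{W}\ge m/(\max\text{-}\deg+1)$ is still problematic, so I would exploit exactly the high degree. A high-degree center is good for us because $\Pr{v\in V^{\HD}}$ is tiny: large degree costs probability. Concretely, I would use \Cref{lem:joint-degree-bound} in the form: for a fixed tree $T$ (as $J$) and a set $S\subseteq V(T)$ of claimed high-degree vertices, $\Pr{S\subseteq V^{\HD}\mid T\subseteq G}\le \exp(-c\,D\abs{S}+2z\abs{E(T)\cap\partial S})$. Taking the tree edges incident to $S$ into account, each high-degree vertex contributes its tree degree $k_v$ with weight $\e^{2z}$ against a gain $\e^{-zD}$. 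Choosing $z$ moderately, a vertex of tree degree $k$ costs roughly $\e^{2zk-zD+O(d)}$.

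The counting then proceeds as follows. Enumerate rooted trees on $m$ vertices in $[n]$ (at most $n\,(\e d)^{m}\cdot n^{-1}\cdots$, i.e.\ the expected number of embedded trees is at most $n(\e d)^m$ up to polynomial factors, as in \Cref{lem:high-degree-small-occupancy}). For each, choose the center set $W$ ($\le 2^m$ ways) split into $Z$-centers (probability $p^{\abs{W\cap Z}}$) and $\HD$-centers. The domination constraint gives $\sum_{w\in W}(1+k_w)\ge m$. Each $Z$-center is charged $p$, and each $\HD$-center is charged via \Cref{lem:joint-degree-bound}. The $\HD$-center with tree degree $k_w$ must have $\deg_G\ge k_w$ anyway, and conditioning only on the tree, its remaining degree is Poisson-like. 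The key estimate I aim for is a per-vertex generating bound: the weighted count of stars of size $k$ around a center is $\lesssim (\e d)^k\cdot\Pr{\text{center in } A\text{ with }k\text{ tree-neighbors}}\le \epsilon_0^{k+1}$, which I will prove by summing $\binom{n}{k}(d/n)^k$ against $\min(1,p+\Pr{\Poisson(d)\ge D-k})$. It is small for small $k$ because $p$ and $\Pr{\Poisson(d)>D}$ are small for large $d$. For large $k$ it is small because $d^k/k!\cdot(\e d)^k$ decays superexponentially. This shows that the total expected number of witnesses of size $m$ is at most $n\,(\e d\cdot 2\cdot\epsilon_1)^{m}$. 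Choosing $p$ small and $d$ large makes the base below $\e^{-1}$, hence $\le n^{1-K}\le n^{-J}$ for $K=K_J=J+1$.

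The main obstacle is the dependence of the degrees of overlapping centers: neighboring high-degree centers share edges, and the tree edges themselves inflate the degree. I would handle this with \Cref{lem:joint-degree-bound} applied jointly to all $\HD$-centers with $J=T$, absorbing $2z\abs{E(T)}\le 2zm$ into the $(\e d)^m$ budget by taking $z$ a fixed constant, say $z=1$, while $D\abs{S}$ with $\abs{S}$ large beats it. The delicate case is a small $\abs{S}$ with large tree degrees. There the star count $d^k/k!$ supplies the decay, so I will split centers by $k_w\le 2D$ versus $k_w>2D$ and use Lemma \ref{lem:joint-degree-bound} only for the former.

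For the moreover part, I take $p=0$ and $J=1$. Components of $G[S]$ of size at most $K_1\ln n\le L$ for large $d$ lie within a ball of radius $<5L$, so they are trees or unicyclic on $\LocalTreeOrUnicyclic_{5L}$. The failure probability of that event is $O(n^{-49/50}\ln^2 n)$ by \Cref{lem:locally-tree-or-unicyclic}. To achieve $O(n^{-1})$, I would instead apply \Cref{lem:two-surplus-tree-probability} with $m=K_1\ln n$ and $d$ large, noting that $(\e d)^{K_1\ln n}$ must be compared with $n$. If that comparison fails, I would settle for the bound available and recheck the required rate.
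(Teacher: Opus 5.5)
\textbf{There is a genuine gap: the first-moment count over dominated witness trees of exact size $m$ fails.} The trouble is structural, not a loose constant.

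Your per-star bound is false for $k=\Theta(d)$. At $k=D$ the weight is about $\binom{n}{D}(d/n)^D\approx d^D/D!\ge(\e d/D)^D/(\e\sqrt D)$. Since $\e d/D\ge \e/(1+\eta/3)>2$, this is exponentially large in $d$, not at most $\epsilon_0^{D+1}$.

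Moreover, the expected number of your witnesses genuinely grows exponentially in $m$. Take centers $c_1,\dots,c_j$, give each $D$ tree-neighbours, and join one neighbour of $c_i$ to $c_{i+1}$. Every $c_i$ with $i\ge 2$ has tree degree $D+1$, so it lies in $V^{\HD}$ with conditional probability one. With $m=j(D+1)$, the expected number of such pairs $(T,W)$ is, up to constants, $n\,(d^D/D!)^j(dD)^{j-1}\ge n\,2^{Dj}/(\e\sqrt D)$. This holds no matter how small $p$ or how large $d$ is.

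The first moment is dominated by rare vertices of degree about $D+d$, each carrying $\binom{D+d}{D}$ sub-stars. A constant $z$ in \Cref{lem:joint-degree-bound} cannot repair this. Here $\abs{W}$ can be as small as $m/(D+1)$, so $zD\abs{W}$ never beats $2z\abs{E(T)}$. Also, the factor $(\e d)^m$ counting trees is a cost the probability must overcome, not a budget that absorbs further losses.

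\textbf{The missing idea is to decouple the number of seeds from the component size.} The paper assigns each non-seed of $N_G[A]$ to an adjacent seed, contracts these stars, and takes a spanning tree of the quotient on $j$ seeds. It lifts this tree using only the star arms needed for its $j-1$ edges. The resulting certificate is a tree on $k\le 3j-2$ vertices of which $j$ are seeds, so seeds are a constant fraction.

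With $t=\frac12\ln\frac{D-6}{d}$, \Cref{lem:joint-degree-bound} then gives probability at most $\e^{2t(k-1)}(a_0+p)^j\le\rho^j$. The expected number of certificates is $O(n(j+1)\theta_0^j)$ with $\theta_0=(2\e d)^3\rho<1/16$, so no component has $L_J\ln n$ seeds.

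The component size is then bounded by $\abs{B_R}+\sum_{v\in W}\deg_G(v)$ over the lifted vertex set $W$, where $\abs{W}\le 3L_J\ln n$. A separate exponential-moment union bound over connected sets shows every such degree sum is at most $M\ln n$, giving $K_J=L_J+M$.

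\textbf{The same certificate is what gives the moreover part, where your plan also breaks.}
\begin{itemize}
\item The inequality $K_1\ln n\le L$ is backwards. $L=\lfloor \ln n/(1000\ln(2\e d))\rfloor$ is far smaller than $K_1\ln n$ and decreases with $d$. So $\LocalTreeOrUnicyclic_{5L}$ does not control components of that diameter, and its failure probability is only $O(n^{-49/50}\ln^2 n)$ anyway.
\item \Cref{lem:two-surplus-tree-probability} at $m=K_1\ln n$ gives only $n^{-1+K_1\ln(\e d)+o(1)}$, far from $O(n^{-1})$.
\end{itemize}
The paper instead lifts a quotient spanning tree plus two extra edges to a connected $F$ on $k\le 3j+2$ vertices with $k+1$ edges. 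The surplus supplies the factor $1/n$, and the seed cost $\beta^j$ with $\beta=\e^{6t}a_0$ beats the entropy $(2\e d)^k$. Summing over all $j,k$ gives $O_{\eta,d}(n^{-1})$. The size bound then follows from the first part with $J=1$, since $S\subseteq N_G[A]$; there is no need to set $p=0$.
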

\begin{proof}
    For a fixed edge set $F$, by \Cref{lem:joint-degree-bound},
    \begin{align*}
        \Pr{B \subseteq V^{\HD} \mid F \subseteq G} \le \e^{2t\abs{F}} a_0^{\abs{B}} \quad \text{where} \quad a_0 = \exp\ab(-tD + \frac{d}{2}(\e^{2t} - 1)).
    \end{align*}
    Therefore,
    \begin{align*}
        \Pr{B \subseteq A \mid F \subseteq G} &\le \sum_{C \subseteq B} p^{\abs{B} - \abs{C}} \Pr{C \subseteq V^{\HD} \mid F \subseteq G} \le \e^{2t\abs{F}} (a_0 + p)^{\abs{B}}.
    \end{align*}
    Define
    \begin{align*}
        \psi(x) &\defeq x \ln{x} - x + 1, \quad x = \frac{D - 6}{d}, \quad t = \frac{1}{2} \ln{x}, \\
        \rho &\defeq \e^{6t} (a_0 + p) = \e^{-\frac{d}{2}\psi((D - 6)/d)} + p\e^{6t}.
    \end{align*}
    For sufficiently large $d$, pick $p$ small enough such that
    \begin{align*}
        \theta_0 \defeq (2\e d)^3 \rho < \frac{1}{16}.
    \end{align*}
    The choice of $p$ is possible and only depends on $\eta$ and $d$ since $\e^{-\frac{d}{2}\psi((D - 6)/d)}$ decays exponentially in $d$ for fixed $\eta$.

    Conditional on a revealed pair of $(G, Z)$, we say the vertices in $A$ are \emph{seeds}.
    For every non-seed vertex in $N_G[A]$, we assign it to one adjacent seed using a fixed tie-breaking rule.
    Additionally each seed owns itself.
    The assigned edges together with some seeds themselves form disjoint stars covering $N_G[A]$.
    We contract all star edges and keep other edges including loops and parallel edges.
    If a component contains at least $j$ seeds, a tree on exactly $j$ vertices can be selected in the quotient graph after contraction.
    Now we lift this quotient tree back to $G$, including its $j$ center seeds, the original edges representing its $j - 1$ quotient edges and the star arms needed to connect their endpoints to their centers.

    We count the number of original vertices after lifting.
    Observe that a quotient edge introduces at most two non-seed vertices.
    Then the lifted certificate is connected on $j \le k \le 3j - 2$ vertices.
    Note that there are exactly $k - j$ star arms in this lifting graph with $j - 1$ inter-star edges.
    Then it has $k - 1$ edges, leading to the fact that it is a tree together with connectivity.
    For such a certificate $B$, by \Cref{lem:joint-degree-bound},
    \begin{align*}
        \Pr{B \subseteq A \mid F \subseteq G} \le \e^{2t(k - 1)} (a_0 + p)^j \le \rho^j
    \end{align*}
    since $k - 1 \le 3j$.
    Then with Cayley's formula, the expected number of certificates is at most
    \begin{align*}
        \sum_{k = j}^{3j} \binom{n}{k} k^{k - 2} 2^k \ab(\frac{d}{n})^{k - 1} \rho^j \le O_d\ab(n(j + 1) \theta_0^j).
    \end{align*}
    Fix $J > 0$ and we pick $L_J$ so that $L_J \ln(1/\theta_0) > J + 4$.
    Markov inequality provides that for all sufficiently large $n$,
    \begin{align*}
        \Pr{\text{some component contains at least $L_J \ln n$ seeds}} \le n^{-J - 2}.
    \end{align*}

    Now we bound the neighborhood of seeds.
    Fix a vertex set $W$ of size $k$ and a tree $F$ on $W$.
    Conditional on $F \subseteq G$, a similar calculation as \Cref{lem:joint-degree-bound} shows, for any fixed $s > 0$,
    \begin{align*}
        \E{\e^{s \sum_{v \in W} \deg_G(v)} \mid F \subseteq G} \le \exp\ab(2s(k - 1) + \frac{kd}{2}\ab(\e^{2s} - 1)).
    \end{align*}
    Consequently by Markov inequality,
    \begin{align*}
        \Pr{F \subseteq G, \sum_{v \in W} \deg_G(v) > M \ln{n}} \le \ab(\frac{d}{n})^{k - 1} n^{-sM} \exp\ab(2s(k - 1) + \frac{kd}{2}\ab(\e^{2s} - 1)).
    \end{align*}
    Set
    \begin{align*}
        B_d \defeq \e d \exp\ab(2s + \frac{d}{2}\ab(\e^{2s} - 1)) > 1.
    \end{align*}
    Union over all spanning trees and $k = 1, \ldots, 3L_J \ln{n}$, and we obtain that
    \begin{align*}
        \Pr{\exists W \subseteq V, G[W]\; \text{connected}, \abs{W} \le 3L_J\ln{n}, \sum_{v \in W} \deg_G(v) > M \ln{n}} \le n^{1 - sM + 3L_J \ln B_d + o(1)}.
    \end{align*}
    Pick an arbitrary fixed $s > 0$ and choose $M$ large enough such that $sM > 3L_J \ln B_d + J + 4$.
    Then the probability is at most $n^{-J - 2}$.

    Now, suppose that every component of $G[N_G[A]]$ contains at most $L_J \ln n$ seeds, and for every connected vertex subset $W$ of size at most $3L_J \ln{n}$, $\sum_{v \in W} \deg_G(v) \le M \ln{n}$.
    For every connected component $R$ in $G[N_G[A]]$, let $B_R = A \cap R$ and $r = \abs{B_R} \le L_J \ln{n}$.
    Lift its quotient spanning tree connecting all its seeds.
    Its vertex set $W$ is connected and contains at most $3r \le 3L_J\ln{n}$ vertices.
    Therefore,
    \begin{align*}
        \abs{R} \le \abs{B_R} + \sum_{v \in W} \deg_G(v) \le (L_J + M) \ln n.
    \end{align*}
    Take $K_J \defeq L_J + M$ which depends only on $\eta, d$ and $J$ and we conclude the first argument.

    To show the second argument, for a connected graph $Q$, define its cycle rank:
    \begin{align*}
        \CycleRank(Q) \defeq \abs{E(Q)} - \abs{V(Q)} + 1.
    \end{align*}
    Suppose that a component $Q$ of $G[S]$ has $\CycleRank(Q)\ge 2$.
    Use the same star partition with seeds $V^{\HD}$ only, and contract its star edges, retaining loops and parallel edges.
    Each contraction preserves the cycle rank, so the quotient contains a spanning tree together with two additional edges.
    Suppose that this quotient certificate has $j$ seed vertices and hence $j+1$ edges.
    Lifting it back to $G$ gives a connected graph $F$ on $k$ vertices with
    \[
        j\le k\le j+2(j+1)=3j+2, \qquad |E(F)|=(k-j)+(j+1)=k+1,
    \]
    since each quotient edge introduces at most two non-seed endpoints and there are $k-j$ required star arms.
    Its seed set $B\subseteq V(F)$ has size $j$ and satisfies $B\subseteq V^{\HD}$.
    By \Cref{lem:joint-degree-bound},
    \[
        \Pr{B\subseteq V^{\HD}\mid F\subseteq G}
        \le \e^{2t(k+1)}a_0^j.
    \]
    Write $\beta=\e^{6t}a_0$.
    Since $\beta\le\rho<1$, $k+1\le3j+3$ and $j\ge(k-2)/3$, we have
    \[
        \e^{2t(k+1)}a_0^j
        \le \e^{6t}\beta^j
        \le \e^{6t}\beta^{(k-2)/3}.
    \]
    Therefore the expected number of such certificates is at most
    \begin{align*}
        &\sum_{j\ge1}\sum_{k=j}^{3j+2} \binom nk k^{k-2}k^4 2^k \left(\frac dn\right)^{k+1} \e^{2t(k+1)}a_0^j \\
        &\qquad\le \frac{C_{\eta,d}}{n} \sum_{k\ge1}k^3 \left((2\e d)^3\beta\right)^{k/3}
        =O_{\eta,d}(n^{-1}),
    \end{align*}
    where the constant absorbs the factor $\e^{6t}\beta^{-2/3}$, and the series converges because
    \[
        (2\e d)^3\beta
        \le (2\e d)^3\rho
        <\frac1{16}.
    \]
    Thus, with probability \(1-O_{\eta,d}(n^{-1})\), every
    component of \(G[S]\) is a tree or unicyclic. Combining this
    with the first part of the proof gives the asserted
    \(K_1\ln n\) component-size bound.
    Put everything together and we conclude the lemma.
\end{proof}

\subsection{Buffered marginals}
Now we study the marginal distribution on $U$.
Let $\nu = \mu_{U}$ be the marginal distribution of $\mu = \mu_{G, q}$ projected on $U$.
Assume the event that every component in $G[S]$ is a tree or unicyclic of size at most $K_1 \ln{n}$.

We first state the following marginal lower bound for $\nu$.
\begin{proposition} \label{prop:marginal-lower-bound}
    With high probability, $\abs{U} \ge c_d n$ where $c_d = \e^{-d}/2$.
    Under the standing assumption that every component of $G[S]$ is a tree or unicyclic, every proper coloring of $G[U]$ extends to a proper coloring of $G$.
    Furthermore, $\nu$ is $b$-marginally bounded under all its feasible pinnings with
    \begin{align*}
        b = q^{-(D + 1)}.
    \end{align*}
\end{proposition}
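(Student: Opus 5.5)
Three claims need to be established: the size bound $|U|\ge c_d n$, the extension property, and marginal boundedness. I treat them in that order.

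\textbf{Size of $U$.} A vertex $v$ lies in $U$ whenever $v$ and all its neighbours have degree at most $D$. A convenient sufficient condition is that $v$ is isolated: its only neighbours then number zero, and $\deg_G(v)=0\le D$. The number of isolated vertices has mean $n(1-d/n)^{n-1}=(1+o(1))\e^{-d}n$. Its variance is $O(n)$, since any two vertices interact only through the single edge between them. Chebyshev's inequality then gives at least $\e^{-d}n/2$ isolated vertices with high probability, which yields $|U|\ge c_d n$. A sharper argument counting vertices whose closed neighbourhood avoids $V^{\HD}$ would give a larger constant, but it is not needed.

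\textbf{Extension.} Fix a proper coloring $\sigma$ of $G[U]$. We must color $S=N_G[V^{\HD}]$ consistently with $\sigma$, and we do this one component $Q$ of $G[S]$ at a time. Vertices in different components of $G[S]$ are not adjacent, so the components can be handled independently.

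Inside $Q$, every vertex of $Q\setminus V^{\HD}$ is adjacent to some high-degree vertex. Every neighbour of a high-degree vertex lies in $S$, so no vertex of $V^{\HD}$ has a neighbour in $U$. Consequently, only the vertices of $Q\setminus V^{\HD}$ see colors from $\sigma$, and each such vertex has degree at most $D$.

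The plan is to color $Q$ by greedy list coloring in a suitable order. First we color $Q\cap V^{\HD}$ and then the remaining vertices of $Q$; at each step the list of a vertex is $[q]$ minus the $\sigma$-colors on its $U$-neighbours. A vertex $x\in Q\setminus V^{\HD}$ has at most $D$ neighbours in total, so its list has size at least $q-(\deg_G(x)-\deg_Q(x))\ge \deg_Q(x)+q-D\ge\deg_Q(x)+3$. Such vertices can therefore always be colored last by greedy.

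The main obstacle is the high-degree vertices of $Q$, whose degrees may be unbounded. What helps is that they have no outside constraints: their lists are all of $[q]$. Since $Q$ is a tree or unicyclic, the subgraph $Q[V^{\HD}]$ is 3-colorable, and $q\ge5$ colors are available.

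The difficulty is that a low-degree vertex $x$ may have many high-degree neighbours, so it may see many colors. I would handle this by reversing the order: color the low-degree vertices of $Q$ first, then the high-degree ones. Alternatively, I would use that $Q$ is tree or unicyclic and choose an elimination ordering by repeatedly removing a leaf (after removing one cycle edge). Concretely, order $Q$ so that each vertex has at most 2 later neighbours in $Q$ (a degeneracy ordering; unicyclic graphs are 2-degenerate), and color in reverse order. A vertex $x$ then has at most 2 already-colored $Q$-neighbours. For $x\in V^{\HD}$ this forbids at most 2 colors. For $x\notin V^{\HD}$ it forbids at most $\deg_G(x)-\deg_Q(x)+2\le D$ colors in total. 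Both counts are below $q$, so the greedy step always succeeds. This settles the extension claim.

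\textbf{Marginal boundedness.} Take a feasible pinning $\tau$ of $\nu$ on $\Lambda\subseteq U$, a vertex $v\in U\setminus\Lambda$, and a color $c$ with $\nu^\tau(v\gets c)>0$.

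Condition on all coordinates of $G$ other than $v$, drawn from $\mu(\cdot\mid\tau)$. Given such a configuration $\xi$ on $V\setminus\{v\}$, the conditional law of $v$ is uniform on the colors not used by its at most $D$ neighbours. Hence each available color has probability at least $1/q$, whenever it is available.

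It then suffices to show that, with probability at least $q^{-D}$ under $\mu^\tau$, the neighbourhood $N(v)$ avoids $c$. By the extension argument, $\xi|_{N(v)}$ can be modified to avoid $c$ while staying feasible. This is done by recoloring, one at a time, each neighbour that uses $c$. Each such neighbour lies in $U$ or in $S\setminus V^{\HD}$, so it has degree at most $D$ and slack $q-D\ge3$, which leaves a free color; the pinned vertices in $\Lambda$ are never touched, since feasibility of $c$ at $v$ under $\tau$ ensures no pinned neighbour has color $c$.

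A cleaner way to phrase the bound is to resample $N(v)$ given everything at distance 2. Under $\mu^\tau$, the conditional probability of any particular feasible coloring of the $\le D$ free neighbours is at least $q^{-D}$. This holds because, given the outside, the neighbours of $v$ are nonadjacent or nearly so (the ball is locally tree-like); at worst a union of at most $q^{D}$ configurations is involved. Combining this with the factor $1/q$ for $v$ itself gives $\nu^\tau(v\gets c)\ge q^{-(D+1)}=b$.

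The only delicate point is to perform this resampling within the marginal $\nu$, rather than $\mu$, when neighbours of $v$ lie in $S$. This is handled by the same argument applied to $\mu$ followed by projection, since pinnings of $\nu$ are pinnings of $\mu$ on $U$.
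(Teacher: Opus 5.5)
Your route is the paper's in all three parts: isolated vertices for $|U|$ (your Chebyshev computation fills in what the paper omits), a $2$-degenerate ordering of each component of $G[S]$ colored in reverse against residual lists for the extension, and local recoloring of $N_G(v)$ for marginal boundedness. The first two parts are fine. Two small remarks on the extension. The high-degree-first order you abandoned would also have worked, because a low-degree vertex $x$ sees at most $\deg_G(x)\le D<q$ colors in total, however many of its neighbors are high-degree. Also, your count $\deg_G(x)-\deg_Q(x)+2\le D$ should read $\deg_G(x)-\deg_Q(x)+\min\{2,\deg_Q(x)\}\le D$, though either bound is below $q$.

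The marginal-boundedness step, however, does not go through as written. The recoloring only shows that configurations with $c\notin\xi(N_G(v))$ exist. You never turn this into $\mu^\tau(c\notin\xi(N_G(v)))\ge q^{-D}$, and the ``cleaner'' justification rests on a false claim. Condition on the coloring $\omega$ of $V\setminus N_G[v]$ but not on the color of $v$. The law of the neighbor coloring $\zeta$ is then not uniform: it is proportional to $w(\zeta)$, the number of colors left for $v$, which ranges over $[q-D,q]$. For instance, if $N_G(v)$ consists of $D\ge 2$ leaves and $q\le 2D$, a $\zeta$ using $D$ distinct colors has probability $(q-D)/(q(q-1)^D)<q^{-D}$. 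What the argument actually gives for a single $\zeta$ is $(q-D)q^{-(D+1)}$. Multiplying by your factor $1/q$ for $v$ yields only $(q-D)q^{-(D+2)}$, which falls short of the claimed $b$. Local tree-likeness plays no role here and is not among the standing assumptions.

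The fix is not to factor the bound. One option is to use $\mu^\tau(v\gets c\mid\zeta,\omega)=1/w(\zeta)$ exactly, for $\zeta$ avoiding $c$. The weights then cancel, and $\mu^\tau(v\gets c\mid\omega)$ is the fraction of proper completions on $N_G[v]$ that put $c$ at $v$. There are at most $q^{|N_G[v]|}\le q^{D+1}$ completions, and your recoloring provides at least one with $v\gets c$. Alternatively, argue as the paper does. Your recoloring map sends $\Omega^\tau$ (proper colorings of $G$ agreeing with $\tau$) to $\Omega^{\tau,c}$ (those with $v\gets c$) and changes only coordinates in $N_G[v]$. It is therefore at most $q^{D+1}$-to-one, so $|\Omega^\tau|\le q^{D+1}|\Omega^{\tau,c}|$ and hence $\nu^\tau(v\gets c)=|\Omega^{\tau,c}|/|\Omega^\tau|\ge q^{-(D+1)}$.
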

\begin{proof}
    Fix a proper coloring $\tau$ of $G[U]$ and define the residual lists on $S$ by
    \[
        L_\tau(x)\defeq[q]\setminus\{\tau(y):y\in N_G(x)\cap U\}.
    \]
    Since every component of $G[S]$ is a tree or unicyclic, $G[S]$ is $2$-degenerate.
    Choose an ordering $x_1,\ldots,x_m$ such that, writing $H_i=G[\{x_i,\ldots,x_m\}]$, we have $\deg_{H_i}(x_i)\le2$.
    If $x_i\in V^{\HD}$, all its neighbors belong to $S$, so
    \[
        \abs{L_\tau(x_i)}=q\ge5\ge\deg_{H_i}(x_i)+2.
    \]
    Otherwise $\deg_G(x_i)\le D$, and
    \begin{align*}
        \abs{L_\tau(x_i)}
        &\ge q-\abs{N_G(x_i)\cap U} =\deg_{G[S]}(x_i)+q-\deg_G(x_i)\\
        &\ge\deg_{H_i}(x_i)+q-D
        \ge\deg_{H_i}(x_i)+2.
    \end{align*}
    By \Cref{lem:list-recoloring-connectivity}, these lists admit a proper coloring of $G[S]$, which extends $\tau$ to $G$.
    Since $G[U]$ has maximum degree at most $D<q$, it has a proper $q$-coloring; thus $G$ is $q$-colorable and $\mu$ and $\nu$ are well-defined.
    
    Now we show the marginal boundedness of $\nu$.
    Fix a feasible pinning $\tau$ on $P \subseteq U$, a free vertex $v \in U \setminus P$ and a color $c$ which is feasible at $v$.
    Define
    \begin{align*}
        \Omega^{\tau} \defeq \set{\sigma \in \Omega(G, q) \midc \sigma(P) = \tau}, \quad \Omega^{\tau, c} \defeq \set{\sigma \in \Omega^{\tau} \midc \sigma(v) = c}.
    \end{align*}
    Then
    \begin{align*}
        \nu^{\tau}(v \gets c) = \frac{\abs{\Omega^{\tau, c}}}{\abs{\Omega^{\tau}}}.
    \end{align*}
    We construct a mapping from $\Omega^{\tau}$ to $\Omega^{\tau, c}$ as follows.
    Because $v\in U=V\setminus N_G[V^{\HD}]$, both $v$ and every neighbor $u\in N_G(v)$ have degree at most $D$.
    Moreover, feasibility of $c$ at $v$ under $\tau$ implies that no pinned neighbor $u\in N_G(v)\cap P$ has color $c$.
    Start from $\sigma \in \Omega^{\tau}$.
    Check the neighbors of $v$ in a prescribed order.
    Whenever a neighbor $u$ currently is assigned the color $c$, recolor it to the smallest color which is not $c$ and also not used by its neighbors.
    There are at least $q-(\deg_G(u)+1)\ge2$ such colors, and no vertex of $P$ is recolored; hence every step is proper and preserves $\tau$.
    After the exploration, change the color of $v$ to $c$.
    Therefore, for any output $\pi \in \Omega^{\tau, c}$, there are at most $q^{\abs{N_G[v]}}$ pre-images in $\Omega^{\tau}$ corresponding to $\pi$, leading to the bound
    \begin{align*}
        \abs{\Omega^{\tau}} \le q^{\abs{N_G[v]}} \abs{\Omega^{\tau, c}} \le q^{D + 1} \abs{\Omega^{\tau, c}}.
    \end{align*}
    This bound concludes the marginal boundedness of $\nu$.

    To see $\abs{U} \ge \e^{-d}n/2$, it is convenient to calculate the number of isolated vertices in $G$.
    A typical calculation shows that with probability $1 - o_n(1)$, $G$ contains at least $\e^{-d}n/2$ isolated vertices, and thus $\abs{U} \ge c_d n$ for $c_d = \e^{-d}/2$.
    For simplicity we omit the calculation here and suggest referring to \cite{FK15}.
\end{proof}

\begin{corollary}[Connectivity of buffered marginals]
\label{cor:buffered-conditional-connectivity}
    Assume the extension property in
    \Cref{prop:marginal-lower-bound} and $q-D\ge 2$.
    Then $\nu=\mu_U$ has connected conditional supports.
    Moreover, every coordinate marginal of a feasible
    conditional distribution of $\nu$ also has connected
    conditional supports.
\end{corollary}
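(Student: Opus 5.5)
We have to show two things. First, under every feasible pinning $\tau$ of $\nu=\mu_U$ on $P\subseteq U$, the set of feasible colorings of $U\setminus P$ is connected under single-site moves. Second, the same holds for every coordinate marginal of such a conditional distribution. The plan is to identify the support of $\nu^\tau$ with the proper colorings of an explicit list-coloring instance on $G[U\setminus P]$, and then apply \Cref{lem:list-recoloring-connectivity}.

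\textbf{Step 1 (description of the support).} By the extension property in \Cref{prop:marginal-lower-bound}, a coloring of $U$ lies in the support of $\nu$ exactly when it is a proper coloring of $G[U]$. Hence, for a feasible pinning $\tau$ on $P\subseteq U$, the support of $\nu^\tau$ is the set of proper list-colorings of $H\defeq G[U\setminus P]$ with lists
\[
    L(x)=[q]\setminus\{\tau(y):y\in N_G(x)\cap P\}.
\]
Two configurations differing at exactly one free coordinate correspond to two such list-colorings differing at one vertex. So it suffices to show that these list-colorings are connected under single-vertex recolorings.

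\textbf{Step 2 (verifying the hypothesis of \Cref{lem:list-recoloring-connectivity}).} Every $x\in U$ has $\deg_G(x)\le D$. Its list therefore satisfies
\[
    \abs{L(x)}\ge q-\abs{N_G(x)\cap P}=q-\deg_G(x)+\deg_{G[U]}(x)-\cdots\ge \deg_H(x)+q-D\ge\deg_H(x)+2.
\]
Take any ordering $x_1,\ldots,x_m$ of $U\setminus P$. Since $\deg_{H_i}(x_i)\le\deg_H(x_i)$, the required inequality $\abs{L(x_i)}\ge\deg_{H_i}(x_i)+2$ holds for every $i$. (No degeneracy is needed here, unlike for $G[S]$.) \Cref{lem:list-recoloring-connectivity} then gives connectivity. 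Because $\tau$ was an arbitrary feasible pinning, this also covers every further pinning.

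\textbf{Step 3 (coordinate marginals).} I read ``every coordinate marginal of a feasible conditional distribution'' as the marginal of $\nu^\tau$ on a subset $W\subseteq U\setminus P$, together with its own pinnings. A pinning of that marginal on $W'\subseteq W$ is just a pinning of $\nu$ on $P\cup W'$. Its support is the projection onto $W\setminus W'$ of the connected support of $\nu^{\tau,\sigma_{W'}}$. A single-site move in the full support either fixes the projection or changes exactly one coordinate of $W\setminus W'$. So a path in the full support projects, after deleting repeated states, to a path in the marginal's support, and the projected support is connected.

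\textbf{Main obstacle.} The subtle point is Step 1. Connectivity must be shown for $\nu$ itself, not for $\mu$ restricted to $U$, so one needs that every proper coloring of $G[U]$ compatible with $\tau$ has positive $\nu$-mass. This is exactly what the extension property supplies, so the step is short. The rest is the degree count.
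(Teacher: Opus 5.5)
Your proposal is correct and follows the paper's proof essentially step for step: identify the support of $\nu^\tau$ with the proper list-colorings of $G[U\setminus P]$ via the extension property, check $\abs{L(x)}\ge\deg_H(x)+q-D\ge\deg_H(x)+2$ so that any vertex ordering satisfies \Cref{lem:list-recoloring-connectivity}, and handle marginals by projecting single-site paths after an additional pinning. The only blemish is the garbled middle of your Step 2 display; the clean identity is $q-\abs{N_G(x)\cap P}=q-\deg_{G[U]}(x)+\deg_H(x)$, which holds because $P\subseteq U$.
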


\begin{proof}
    Fix a feasible pinning $\tau$ on $P\subseteq U$,
    and put $F=U\setminus P$.
    For each $x\in F$, define
    \[
        L_\tau(x)
        \defeq
        [q]\setminus
        \{\tau(y):y\in N_G(x)\cap P\}.
    \]
    By the extension property, the support of $\nu^\tau$
    is exactly the set of proper list-colorings of $G[F]$
    with lists $L_\tau$.
    Although $\nu^\tau$ need not be uniform, this
    identification of its support is sufficient.

    Since every vertex of $U$ has degree at most $D$,
    \begin{align*}
        \abs{L_\tau(x)}
        &\ge q-\abs{N_G(x)\cap P} = \deg_{G[F]}(x)+q-\deg_{G[U]}(x)\\
        &\ge \deg_{G[F]}(x)+q-D \ge \deg_{G[F]}(x)+2.
    \end{align*}
    Thus any vertex ordering satisfies the hypothesis of
    \Cref{lem:list-recoloring-connectivity}.
    Hence the support of $\nu^\tau$ is connected.
    Since $P$ and $\tau$ were arbitrary, $\nu$ has
    connected conditional supports.

    Finally, the coordinate projection of a connected
    single-site support is connected: projecting a
    single-site path gives a path after consecutive
    repetitions are removed.
    Applying this observation after any additional
    feasible pinning proves the assertion about marginals.
\end{proof}

The second ingredient on the buffered probability distribution $\nu$ is the spectral independence of it.
\begin{proposition} \label{prop:buffered-distribution-spectral-independence}
    Let $\mu$ be a distribution on $[q]^V$ and $\nu = \mu_U$ for $U \subseteq V$.
    If $\mu$ is $\gamma$-spectrally independent under every feasible pinning, then $\nu$ is also $\gamma$-spectrally independent.
\end{proposition}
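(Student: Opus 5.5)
The plan is to show that the influence matrix of $\nu$ under any feasible pinning is literally a principal submatrix of the influence matrix of $\mu$ under the same pinning. The spectral bound then follows by Cauchy interlacing, or more directly by the Rayleigh quotient after a symmetrization.

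\textbf{Step 1: the pinnings agree.} Fix $\Lambda \subseteq U$ and a feasible pinning $\tau \in [q]^{\Lambda}$ for $\nu$. Feasibility for $\nu$ means that $\tau$ extends to an assignment on $U$ of positive $\nu$-mass. By the definition of the marginal, such an assignment extends to a full assignment on $V$ of positive $\mu$-mass, so $\tau$ is feasible for $\mu$ as well. Moreover, for $u \in U \setminus \Lambda$ and any event on $U$, the conditional laws coincide: $\nu(\cdot \mid \tau) = \mu_U(\cdot \mid \tau)$. In particular $\nu(u \gets i \mid \tau) = \mu(u \gets i \mid \tau)$ for every such $u$ and $i$.

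\textbf{Step 2: the influence entries agree.} Conditioning further on $u \gets i$ with positive mass, we get
\[
\nu(v \gets j \mid u \gets i, \tau) = \mu(v \gets j \mid u \gets i, \tau)
\]
for all $v \in U$. Hence $\Psi_\nu^\tau$ is exactly the restriction of $\Psi_\mu^\tau$ to the index set of pairs $u \gets i$ with $u \in U \setminus \Lambda$ and $\mu(u\gets i\mid\tau)>0$. In other words, it is a principal submatrix, and the diagonal zero blocks match.

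\textbf{Step 3: the eigenvalue bound.} This is the only real obstacle: $\Psi_\mu^\tau$ is not symmetric, so interlacing does not apply verbatim. We use the standard fact that $\Psi_\mu^\tau$ is self-adjoint with respect to the weight $\mu(u\gets i\mid\tau)$. Concretely,
\[
\mu(u\gets i\mid\tau)\,\Psi_\mu^\tau(u\gets i,v\gets j) = \mu(u\gets i, v\gets j\mid \tau)-\mu(u\gets i\mid\tau)\mu(v\gets j\mid\tau),
\]
which is symmetric in the two index pairs. Let $\Pi = \mathrm{diag}(\mu(u\gets i\mid\tau))$. Then $\Pi^{1/2}\Psi_\mu^\tau\Pi^{-1/2}$ is a symmetric matrix with the same spectrum, so all eigenvalues of $\Psi_\mu^\tau$ are real.

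The same diagonal conjugation applied to $\Psi_\nu^\tau$ yields a principal submatrix of this symmetric matrix. This works because the weights for $\nu$ coincide with those for $\mu$, as shown in Step 1. By Cauchy interlacing (equivalently, by restricting the Rayleigh quotient to vectors supported on the $U$-indices), we obtain
\[
\lambda_{\max}(\Psi_\nu^\tau)\le\lambda_{\max}(\Psi_\mu^\tau)\le\gamma.
\]

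Since $\Lambda$ and $\tau$ were arbitrary, $\nu$ is $\gamma$-spectrally independent. This is also where the hypothesis ``under every feasible pinning'' is used: we need the bound for $\mu$ at pinnings supported inside $U$.
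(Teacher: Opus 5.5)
Your proposal is correct and follows the paper's own argument: $\Psi_\nu^\tau$ is a principal submatrix of $\Psi_\mu^\tau$, and after symmetrizing Cauchy interlacing gives $\lambda_{\max}(\Psi_\nu^\tau)\le\lambda_{\max}(\Psi_\mu^\tau)\le\gamma$. Your steps spell out what the paper only asserts: the transfer of feasibility from $\nu$ to $\mu$, the agreement of the weights and entries, and the explicit symmetrization by the diagonal matrix of conditional marginals.
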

\begin{proof}
    Observe that for every feasible pinning $\tau$ on $\Lambda \subseteq U$, $\Psi_{\nu}^{\tau}$ is a principal submatrix of $\Psi_{\mu}^{\tau}$.
    After symmetrizing $\Psi_{\mu}^{\tau}$ and by Cauchy interlacing theorem, it is easy to show
    \begin{align*}
        \lambda_{\max}\ab(\Psi_{\nu}^{\tau}) \le \lambda_{\max}\ab(\Psi_{\mu}^{\tau}) \le \gamma.
    \end{align*}
    Thus we conclude this proposition.
\end{proof}

\subsection{Entropic factorization}
Now we establish the entropic factorization for $\nu$.
Pick $p$ as in \Cref{lem:buffered-percolation}, $\zeta = p/4$ and set $r = \floor{\zeta \abs{U}}$, $\vartheta = r/\abs{U}$.
For sufficiently large $n$, $\zeta/2 \le \vartheta \le \zeta$ and $r \ge 2R_0$.
Pick
\begin{align*}
    a \defeq \ab(\frac{4C_{\SI}}{b^2} + 2) \ln{(4/\zeta)} + 2.
\end{align*}
\begin{lemma} \label{lem:small-block}
    Fix $J > 2a + 6$ and take $K = K_J$ in \Cref{lem:buffered-percolation}.
    With high probability over $G \sim \G(n, d/n)$,
    \begin{align*}
        \Pr[T \sim \binom{U}{r}]{\max_{R\;\text{component of}\; G[S \cup T]} \abs{R} > K\ln{n} \mid G} \le n^{-a - 2}.
    \end{align*}
\end{lemma}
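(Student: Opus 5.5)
Our plan is to compare the uniformly random $r$-subset $T \sim \binom{U}{r}$ with the independent $p$-percolation $Z$ of \Cref{lem:buffered-percolation}, and then pass from a joint bound over $(G,Z)$ to a bound conditional on $G$ by Markov's inequality.

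The first observation is monotonicity. Since $S = N_G[V^{\HD}]$, we have $S \cup T \subseteq N_G[V^{\HD} \cup T]$. Hence every component of $G[S \cup T]$ lies inside a component of $G[N_G[A]]$ whenever $T \subseteq A = V^{\HD} \cup Z$. The event $\mathcal{M}_G(W) = \{\text{some component of } G[S\cup W] \text{ has more than } K\ln n \text{ vertices}\}$ is increasing in $W \subseteq U$. By symmetry, the conditional law of $Z \cap U$ given $|Z\cap U| = m$ is uniform on $\binom{U}{m}$, and for $m \ge r$ a uniform $m$-subset contains a uniform $r$-subset.

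Combining these facts, for every $m \ge r$ we get
\[
\Pr[T]{\mathcal{M}_G(T)} \le \Pr[W \sim \binom{U}{m}]{\mathcal{M}_G(W)} = \Pr[Z]{\mathcal{M}_G(Z\cap U) \midl |Z \cap U| = m,\, G}.
\]
Averaging over $m$ on the event $\{|Z\cap U| \ge r\}$ gives
\[
\Pr[T]{\mathcal{M}_G(T)} \le \frac{\Pr[Z]{\max_R |R| > K\ln n \midl G}}{\Pr[Z]{|Z\cap U|\ge r \midl G}},
\]
where the maximum in the numerator is over components $R$ of $G[N_G[A]]$. On the high-probability event $|U| \ge c_d n$ from \Cref{prop:marginal-lower-bound}, we have $r = \floor{p|U|/4}$ while $|Z \cap U| \sim \Binomial(|U|, p)$. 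A Chernoff bound then makes the denominator at least $1 - \e^{-\Omega(n)} \ge 1/2$.

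It remains to bound the numerator by Markov's inequality over $G$. By \Cref{lem:buffered-percolation} with this $J$, we have $\E[G]{\Pr[Z]{\max_R|R| > K\ln n \mid G}} \le n^{-J}$. The probability over $G$ that the conditional probability exceeds $n^{-a-3}$ is therefore at most $n^{-J+a+3}$, which is $o(1)$ since $J > 2a+6$ implies $J - a - 3 > 0$. Off this exceptional set and on $\{|U|\ge c_d n\}$, the displayed ratio is at most $2n^{-a-3} \le n^{-a-2}$, which proves the lemma.

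The main obstacle is making the coupling step rigorous. One point is that $Z$ also marks vertices of $S$; this is harmless because it only enlarges $A$ and monotonicity still applies. The other point is confirming that conditioning $Z$ on $G$ leaves it independent with uniform exchangeability on $U$, which holds because $Z$ is independent of $G$. Everything else reduces to the Chernoff bound and Markov's inequality.
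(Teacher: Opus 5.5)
Your proof is correct and follows essentially the same route as the paper: you dominate the uniform $r$-subset of $U$ by the Bernoulli-$p$ marks of \Cref{lem:buffered-percolation} via monotonicity, control the event $\abs{Z\cap U}<r$ by a Chernoff bound on $\{\abs{U}\ge c_d n\}$, and pass to a statement conditional on $G$ by Markov's inequality. The only difference is cosmetic: the paper realizes the domination with an explicit permutation-plus-binomial coupling and pays an additive error $\Pr{B<r\mid G}$, whereas your exchangeability argument pays an equivalent multiplicative factor $1/\Pr{\abs{Z\cap U}\ge r\mid G}$.
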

\begin{proof}
    Conditional on $G$, couple the uniform subset $T \in \binom{U}{r}$ to Bernoulli-$p$ marks on $U$ as follows: draw a permutation $u_1, \ldots, u_{\abs{U}}$ of $U$ uniformly at random and independently draw $B \sim \Binomial(\abs{U}, p)$.
    Set
    \begin{align*}
        T = \set{u_1, \ldots, u_r}, \quad Z_U = \set{u_1, \ldots, u_B}.
    \end{align*}
    It is clear that $T$ is a uniform $r$-subset of $U$ and $Z_U$ follows the Bernoulli-$p$ marks on $U$.
    Additionally, mark the vertices of $S$ independently with probability $p$.
    Let $Z$ be the full marked set.
    Then for every $W \subseteq V$,
    \begin{align*}
        \Pr{Z = W \mid G} = p^{\abs{W}}(1 - p)^{n - \abs{W}}.
    \end{align*}
    Since $\vartheta = r/\abs{U} \le \zeta = p/4$ and $\E{B \mid G} = p\abs{U}$, the Chernoff bound provides that
    \begin{align*}
        \Pr{B < r \mid G} \le \exp\ab(-\frac{9p\abs{U}}{32}).
    \end{align*}
    On the event $B \ge r$, it holds that $T \subseteq Z_U$ and hence $T \subseteq Z$, meaning that $S \cup T \subseteq N_G[V^{\HD} \cup Z]$.
    Therefore, conditional on the event $B \ge r$, the event that a component of $G[S \cup T]$ has size surpassing $K\ln{n}$ forces the event that a component of $G[N_G[V^{\HD} \cup Z]]$ is larger than $K\ln{n}$.

    Denote by $\mathcal{E}$ the event that there is a component of $G[S \cup T]$ of size larger than $K \ln{n}$.
    By \Cref{lem:buffered-percolation}, it holds that
    \begin{align*}
        \E{\Pr{\mathcal{E}} \cdot \id{\abs{U} \ge c_d n}} &\le n^{-J} + \E[G]{\id{\abs{U} \ge c_d n} \cdot \Pr{B < r \mid G}} \\
        &\le n^{-J} + \exp\ab(-\frac{9p c_d n}{32}).
    \end{align*}
    By Markov inequality,
    \begin{align*}
        \Pr[G]{\Pr{\mathcal{E}} > n^{-a-2} \land \id{\abs{U} \ge c_d n}} \le n^{a + 2 - J} + n^{a + 2}\exp\ab(-\frac{9p c_d n}{32}).
    \end{align*}
    Union bound together with \Cref{prop:marginal-lower-bound} provides that
    \begin{align*}
        \Pr[G]{\Pr{\mathcal{E}} > n^{-a-2}} \le o_n(1) + n^{a + 2 - J} + n^{a + 2} \exp\ab(-\frac{9p c_d n}{32})
    \end{align*}
    which is still $o_n(1)$ for sufficiently large $n$.
    Thus we conclude the lemma.
\end{proof}

Now we are ready to establish the following entropic factorization for $\nu$.
\begin{proposition} \label{prop:buffered-entropic-factorization}
    With high probability over $G \sim \G(n, d/n)$, for every observable test function $f$,
    \begin{align*}
        \Ent[\nu]{f} \le C_{\LD} \sum_{u \in U} \ENT_{\nu}^{u}(f), \quad C_{\LD} = 2C_r \cdot 4b^{-(3K\ln{n} + 1)}
    \end{align*}
    where $K$ is decided by \Cref{lem:small-block}.
    Consequently, the continuous-time marginal Glauber dynamics $\L^{\nu}$ with rate one per vertex of $U$ has relaxation time at most $C_{\LD} \le n^B$ for a fixed constant $B > 0$ depending only on $\eta, d$ and $q$.
\end{proposition}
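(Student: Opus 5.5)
Start from the block factorization of \Cref{thm:entropic-factorization} applied to $\nu$ on the ground set $U$ with block size $r=\floor{\zeta\abs{U}}$. By \Cref{prop:buffered-distribution-spectral-independence} together with \Cref{cor:spectral-independence}, $\nu$ is $\gamma$-spectrally independent with $\gamma=C_{\SI}\ln n$. It is $b$-marginally bounded by \Cref{prop:marginal-lower-bound}, and it has connected conditional supports by \Cref{cor:buffered-conditional-connectivity}. Since $r\ge 2R_0$ for large $n$, we obtain
\[
\vartheta\,\Ent[\nu]{f}\le C_r\,\E[T\sim\binom{U}{r}]{\ENT_\nu^T(f)} .
\]
The constant satisfies $C_r\le(2/\vartheta)^{R_0}\le (4/\zeta)^{4C_{\SI}\ln n/b^2+1}$, which is $n^{O(1)}$. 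This is exactly why $a$ was chosen so that $C_r\le n^{a}$, roughly.

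The next step is to tensorize each block entropy $\ENT_\nu^T(f)$ into single sites. Fix a pinning $\tau$ on $U\setminus T$. The conditional law $\nu^\tau$ on $T$ is the marginal on $T$ of $\mu$ conditioned on $\tau$, and the vertices of $S$ are left free. Therefore $\nu^\tau$ factorizes as a product over components of $G[S\cup T]$, each restricted to its $T$-part. Conditioned on $\tau$, different components interact only through pinned vertices, and marginalizing $S$ keeps the components independent.

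By \Cref{fact:tensor-product-entropic-factoriation}, $\Ent[\nu^\tau]{f}\le\sum_R \ENT^{R\cap T}_{\nu^\tau}(f)$. On each factor $R\cap T$, which is itself a pinned marginal of $\nu$, apply \Cref{lem:marginal-boundedness-to-entropic-factorization}. The hypotheses hold because \Cref{cor:buffered-conditional-connectivity} gives connected supports for conditional marginals and \Cref{prop:marginal-lower-bound} gives $b$-marginal boundedness of pinnings. This yields a factor $4b^{-(3k+1)}$ with $k=\abs{R\cap T}$. If all components have size at most $K\ln n$, we get $\Ent[\nu^\tau]{f}\le 4b^{-(3K\ln n+1)}\sum_{u\in T}\ENT^u_{\nu^\tau}(f)$. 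Averaging over $\tau$ and using the chain rule $\E{\ENT^u_{\nu^\tau}}=\ENT^u_\nu$ then gives a bound in terms of $\sum_{u\in T}\ENT^u_\nu(f)$.

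The main obstacle is the bad blocks $T$, those with a component larger than $K\ln n$, which have probability at most $n^{-a-2}$ by \Cref{lem:small-block}. For these we use the trivial bound $\ENT^T_\nu(f)\le\Ent[\nu]{f}$. Their total contribution to the right-hand side is at most $C_r n^{-a-2}\Ent[\nu]{f}$. Since $C_r\le n^{a}$ and $\vartheta\ge\zeta/2$, this is at most $\tfrac{\vartheta}{2}\Ent[\nu]{f}$ for large $n$, so it can be absorbed into the left-hand side. That absorption is the source of the factor $2$ in $C_{\LD}$.

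For the good blocks, summing over $T$ gives $\E[T]{\sum_{u\in T}\ENT^u_\nu(f)}=\vartheta\sum_{u\in U}\ENT^u_\nu(f)$. Combining everything yields $\Ent[\nu]{f}\le 2C_r\cdot4b^{-(3K\ln n+1)}\sum_u\ENT^u_\nu(f)$. Because $b=q^{-(D+1)}$ is a constant, $b^{-3K\ln n}=n^{3K(D+1)\ln q}$, and so $C_{\LD}\le n^B$. Finally, \Cref{prop:entropic-factorization-to-relaxation-time} converts this into the stated relaxation time bound.
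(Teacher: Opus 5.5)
Your proposal is correct and follows the paper's proof essentially step for step: the block factorization from \Cref{thm:entropic-factorization} with $C_r\le n^{a}$; for good blocks, the product decomposition of $\nu^\tau$ over the components of $G[S\cup T]$, followed by \Cref{fact:tensor-product-entropic-factoriation} and \Cref{lem:marginal-boundedness-to-entropic-factorization} on each factor; for bad blocks, the trivial bound $\ENT_\nu^T(f)\le\Ent[\nu]{f}$, absorbed into the left-hand side via $C_r n^{-a-2}\le\vartheta/2$; and finally \Cref{prop:entropic-factorization-to-relaxation-time}. The only detail the paper spells out more carefully is that \Cref{lem:marginal-boundedness-to-entropic-factorization} is applied to $f$ with the coordinates in $T\setminus A_R$ held fixed and the result is then averaged over them, which is what your step ``on each factor'' implicitly requires.
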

\begin{proof}
    Assume the joint event of \Cref{prop:marginal-lower-bound,lem:small-block}.
    In addition, assume that $\mu = \mu_{G, q}$ is $(C_{\SI}\ln{n})$-spectrally independent for some constant $C_{\SI} = C_{\SI}(\eta, d)$ and therefore $\nu$ is $(C_{\SI}\ln{n})$-spectrally independent.
    By \Cref{cor:buffered-conditional-connectivity}, $\nu$ has connected conditional supports. Together with its marginal boundedness and spectral independence, \Cref{thm:entropic-factorization} gives
    \begin{align} \label{eq:buffered-block-entropic-factorization}
        \vartheta \cdot \Ent[\nu]{f} \le C_r \E[T \sim \binom{U}{r}]{\ENT_{\nu}^{T}(f)}
    \end{align}
    where $C_r \le n^{a}$ (recall that $a = (4C_{\SI}/b^2 + 2) \ln{(4/\zeta)} + 2$).
    For simplicity, we say an $r$-subset $T$ of $U$ is good if every component of $G[S \cup T]$ has at most $K\ln{n}$ vertices and otherwise it is bad.
    By \Cref{lem:small-block}, $\Pr[T]{T\;\text{is bad}} \le n^{-a-2}$.

    Now we fix a good subset $T$ and a feasible assignment $\tau$ on $U \setminus T$.
    Let $R$ range over all connected components of $G[S \cup T]$ and set $A_R = R \cap T$.
    Then it holds that
    \begin{align*}
        \nu^{\tau} = \bigotimes_{R : A_R \neq \emptyset} \nu\ab(A_R \gets \cdot \mid \tau).
    \end{align*}
    For every $R$, since $T$ is good, by \Cref{prop:marginal-lower-bound}, the following properties hold for $\nu(A_R \gets \cdot \midl \tau)$:
    \begin{itemize}
        \item it has at most $K \ln{n}$ coordinates since $\abs{A_R} \le \abs{R} \le K\ln{n}$;
        \item it has connected conditional supports by \Cref{cor:buffered-conditional-connectivity};
        \item it is $b$-marginally bounded for $b = q^{-(D + 1)}$.
    \end{itemize}
    First apply \Cref{fact:tensor-product-entropic-factoriation} to this product distribution:
    \begin{align*}
        \Ent[\nu^{\tau}]{f}
        \le \sum_{R:A_R\neq\emptyset}\ENT_{\nu^{\tau}}^{A_R}(f).
    \end{align*}
    For each nonempty $A_R$, fix the coordinates in $T\setminus A_R$.
    By independence, the conditional law on $A_R$ is still $\nu(A_R\gets\cdot\midl\tau)$.
    Apply \Cref{lem:marginal-boundedness-to-entropic-factorization} to $f$ with these outside coordinates fixed, and then average over them, obtaining
    \begin{align*}
        \ENT_{\nu^{\tau}}^{A_R}(f)
        \le 4b^{-(3K\ln{n} + 1)}\sum_{u\in A_R}\ENT_{\nu^{\tau}}^u(f).
    \end{align*}
    Summing over the components yields
    \begin{align*}
        \Ent[\nu^{\tau}]{f} \le 4b^{-(3K\ln{n} + 1)} \sum_{u \in T} \ENT_{\nu^{\tau}}^{u}(f).
    \end{align*}
    Taking expectation over both sides, by the tower property of expectation, for every good $r$-subset $T$,
    \begin{align*}
        \ENT_{\nu}^{T}(f) \le 4b^{-(3K\ln{n} + 1)} \sum_{u \in T} \ENT_{\nu}^{u}(f).
    \end{align*}
    For a bad $r$-subset $T$, the chain rule of entropy shows
    \begin{align*}
        \Ent[\nu]{f} = \ENT_{\nu}^{T}(f) + \Ent[\nu]{\E[\nu]{f \mid X(U \setminus T)}}
    \end{align*}
    which implies that $\ENT_{\nu}^{T}(f) \le \Ent[\nu]{f}$ by non-negativity.

    Back to \eqref{eq:buffered-block-entropic-factorization}, by \Cref{lem:small-block},
    \begin{align*}
        \vartheta \cdot \Ent[\nu]{f} \le C_r\ab(\vartheta \cdot 4b^{-(3K\ln{n} + 1)} \sum_{u \in U} \ENT_{\nu}^{u}(f) + n^{-a - 2} \cdot \Ent[\nu]{f}).
    \end{align*}
    For sufficiently large $n$, $C_r n^{-a-2} \le n^{-2} \le \vartheta/2$.
    Therefore,
    \begin{align*}
        \Ent[\nu]{f} \le 2C_r \cdot 4b^{-(3K\ln{n} + 1)} \sum_{u \in U} \ENT_{\nu}^{u}(f)
    \end{align*}
    which is the desired entropic factorization.
    With the entropic factorization for $\nu$ and the choice of $B = a + 3K\ln{(1/b)} + 3$, the relaxation time is a direct corollary of \Cref{prop:entropic-factorization-to-relaxation-time}.
\end{proof}

\subsection{Comparison back to the original chain}
Now we put back our attention to the original distribution $\mu$ and its corresponding discrete-time Glauber dynamics $\PGD$.
For every $u \in U$, define $B_u = S \cup \set{u}$.
Then the block cover $\B$ is defined as $\B = \set{B_u \midc u \in U}$.
Define the operator $P_u$ as: for every test function $f$, $P_u{f} = \E[\mu]{f \midl X(U \setminus \set{u})}$.
Hence the generator and its Dirichlet form are set by
\begin{align*}
    \L_{\B} \defeq \sum_{u \in U} (P_u - I), \quad \DF_{\L_{\B}}(f, f) = \sum_{u \in U} \E[\mu]{\Var{f \mid X(U \setminus \set{u})}}.
\end{align*}
\begin{lemma} \label{lem:block-operator-spectral-gap}
    The following identity holds for this block dynamics:
    \begin{align*}
        \Gap{\L_{\B}} = \min\;\set{\abs{U}, \Gap{\L^{\nu}}}
    \end{align*}
    where $\L^{\nu}$ is the continuous-time single-site Glauber dynamics for $\nu$, with the $\abs{U}$ branch omitted if its invariant subspace is zero.
    Consequently, its relaxation time is at most $T_{\rel}\ab(\L_{\B}) \le C_{\LD}$ assuming \Cref{prop:buffered-entropic-factorization}.
\end{lemma}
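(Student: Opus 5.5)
The plan is to decompose $L^2(\mu)$ orthogonally according to the $\sigma$-algebra generated by $X_U$, and to compute the spectrum of $\L_{\B}$ on each piece. For $f\in L^2(\mu)$ write $f = g + h$, where $g \defeq \E[\mu]{f \midl X_U}$ is a function of $X_U$ alone and $h \defeq f - g$ satisfies $\E[\mu]{h\midl X_U}=0$. Call these subspaces $\mathcal{H}_U$ and $\mathcal{H}_U^{\perp}$; they are orthogonal in $L^2(\mu)$.

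First we check that each $P_u$ preserves both subspaces. Since $U\setminus\{u\}\subseteq U$, the operator $P_u$ conditions on $X(U\setminus\{u\})$, which is coarser than $X_U$.
\begin{itemize}
    \item For $g\in\mathcal H_U$, $P_u g = \E[\nu]{g \midl X(U\setminus\{u\})}$ is again a function of $X_U$. It coincides with the heat-bath update of $\nu$ at $u$.
    \item For $h\in\mathcal H_U^\perp$, the tower property gives $P_u h = \E{\E{h\midl X_U}\midl X(U\setminus\{u\})}=0$.
\end{itemize}
Hence $\L_{\B}$ is block diagonal. On $\mathcal H_U$ it acts as $\sum_{u\in U}(\K^\nu_u - I)=\L^\nu$. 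On $\mathcal H_U^\perp$ it acts as $-\abs{U}\cdot I$.

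Next we read off the spectral gap. Constants lie in $\mathcal H_U$, so the gap restricted to $\mathcal H_U\ominus\mathbf 1$ is exactly $\Gap{\L^\nu}$, by identifying $\mathcal H_U$ with $L^2(\nu)$ isometrically. On $\mathcal H_U^\perp$ every nonzero vector is an eigenvector with eigenvalue $-\abs U$. If $\mathcal H_U^\perp=\{0\}$, which happens when $X_U$ determines $X$, this branch is absent. By orthogonality and self-adjointness, the Rayleigh quotient of $f=g+h$ with $\E{f}=0$ equals
\[
\frac{\DF_{\L^\nu}(g,g)+\abs U\norm{h}^2}{\Var[\nu]{g}+\norm h^2},
\]
where the cross terms vanish because the blocks are invariant and orthogonal. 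Minimizing this quotient gives $\min\{\abs U,\Gap{\L^\nu}\}$. The identity $\DF_{\L_\B}(f,f)=\sum_u\E{\Var{f\midl X(U\setminus\{u\})}}$ is consistent with this: splitting the conditional variance along $g,h$ gives $\Var{g\midl\cdot}+\E{h^2\midl\cdot}$.

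For the consequence, \Cref{prop:buffered-entropic-factorization} gives $\Gap{\L^\nu}\ge C_{\LD}^{-1}$. Since $C_{\LD}\ge 1$, because $b<1$, $C_r\ge1$, and the constant 8 appears, we also have $\abs U\ge 1\ge C_{\LD}^{-1}$. Therefore $T_{\rel}(\L_\B)\le C_{\LD}$.

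The main obstacle is the invariance step: we must justify that $P_u$ restricted to functions of $X_U$ is precisely the $\nu$-heat-bath at $u$, so that $\L_{\B}$ on $\mathcal H_U$ has the same Dirichlet form as $\L^\nu$. This is a direct marginalization, since $\mu$ conditioned on $X(U\setminus\{u\})$ and projected onto $X_u$ is $\nu$ conditioned on the same values. The remaining steps are bookkeeping.
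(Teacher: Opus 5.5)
Your proposal is correct and follows the paper's proof essentially step for step. The steps match: the orthogonal splitting of $L^2(\mu)$ into functions of $X_U$ and functions with zero conditional mean given $X_U$, the identification of $P_u$ with the $\nu$-heat-bath on the first piece, $P_u h=0$ on the second by the tower property, and the resulting Rayleigh quotient. The only deviation is in the consequence: you get $|U|^{-1}\le C_{\LD}$ by asserting $C_{\LD}\ge 1$ from its explicit form, whereas the paper extracts $|U|^{-1}\le C_{\LD}$ directly from the Poincar\'e inequality. Your claim $C_r\ge 1$ is true but left unjustified; testing the block factorization on a function of a single coordinate forces it.
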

\begin{proof}
    We decompose the space $L^2(\mu)$.
    Define the operator $Q$ as for every $f \in L^2(\mu)$,
    \begin{align*}
        Q{f} = \E[\mu]{f \mid X(U)}.
    \end{align*}
    Consider the two subspaces
    \begin{align*}
        \calF \defeq \set{f \in L^2(\mu) \midc f(x) = f(y)\;\text{whenever}\; x(U) = y(U)}, \quad \calH \defeq \set{h \midc \E[\mu]{h \mid X(U)} = 0}
    \end{align*}
    Therefore, for every $f \in L^2(\mu)$, $f$ has a unique decomposition $f = Q{f} + (f - Q{f})$ such that $Q{f} \in \calF$ and $f - Q{f} \in \calH$.
    It is easy to verify that for every $g \in \calF$ and $h \in \calH$, $\E[\mu]{g h} = 0$.
    Then we represent $L^2(\mu) = \calF \oplus \calH$.

    Since $\nu$ is the marginal of $\mu$ projected on $U$, the first identity is $\E[\mu]{g(X(U))^2} = \E[\nu]{g^2}$.
    Moreover, let $f(x) = g(x(U))$.
    It holds that
    \begin{align*}
        P_u{f} = \E[\mu]{g(X(U)) \mid X(U \setminus \set{u})} = \E[\nu]{g \mid X(U \setminus \set{u})}.
    \end{align*}
    Summing over $u$, we obtain that
    \begin{align*}
        \L_{\B}(g(X(U))) = \ab(\L^{\nu} g)(X(U)).
    \end{align*}
    For $h \in \calH$, the tower property provides that
    \begin{align*}
        P_u{h} = \E[\mu]{h \mid X(U \setminus \set{u})} = \E[\mu]{\E[\mu]{h \mid X(U)}\mid X(U \setminus \set{u})} = 0.
    \end{align*}
    Hence $\L_{\B}h = -\abs{U}h$.
    Then for every non-constant $f \in L^2(\mu)$ with decomposition $f = g + h$,
    \begin{align*}
        \frac{\DF_{\L_{\B}}(f, f)}{\Var[\mu]{f}} = \frac{\DF_{\L^{\nu}}(g, g) + \abs{U} \cdot \E[\mu]{h^2}}{\Var[\nu]{g} + \E[\mu]{h^2}} \ge \min\;\set{\Gap{\L^{\nu}}, \abs{U}}.
    \end{align*}
    To achieve equality, we only need to consider $\calF$ and $\calH$ respectively.
    Moreover, when $\calH = \set{0}$, the $\abs{U}$ is obviously omitted by calculation.
    Thus we conclude the spectral gap for $\L_{\B}$.

    To show the relaxation time, it is clear that $T_{\rel}(\L^{\nu}) \le C_{\LD}$.
    When $U \neq \emptyset$, by \Cref{prop:entropic-factorization-to-relaxation-time,prop:buffered-entropic-factorization},
    \begin{align*}
        \Var[\nu]{g} &\le C_{\LD} \sum_{u \in U} \E[\nu]{\Var{g \mid X(U \setminus \set{u})}} \le C_{\LD} \abs{U} \cdot \Var[\nu]{g}.
    \end{align*}
    Therefore $\abs{U}^{-1} \le C_{\LD}$.
    Hence by \Cref{prop:entropic-factorization-to-relaxation-time},
    \begin{align*}
        T_{\rel}(\L_{\B}) \le \max\;\set{T_{\rel}\ab(\L^{\nu}), \abs{U}^{-1}} \le C_{\LD}.
    \end{align*}
    Thus we conclude the full lemma.
\end{proof}

The following proposition provides the relaxation time of Glauber dynamics restricted on $B_u$.
\begin{proposition} \label{prop:single-block-relaxation-time}
    Assume the event in \Cref{lem:buffered-percolation}.
    For every block $B_u$ and every feasible outside pinning $\tau$ on $V \setminus B_u$, the continuous-time single-site Glauber dynamics is connected and has relaxation time at most $q^{2(1 + D K_1) \ln{n} + 1}$ for all sufficiently large $n$.
\end{proposition}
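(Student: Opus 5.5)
The plan is to reduce the block chain to a product of small independent pieces and then apply a crude canonical-path bound to each piece.

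\textbf{Step 1: factorize.} Fix $u\in U$ and a feasible pinning $\tau$ on $V\setminus B_u$. The conditional law $\mu^\tau$ on $B_u=S\cup\{u\}$ is the uniform distribution on proper list-colorings of $G[B_u]$, with lists $L_\tau(x)=[q]\setminus\{\tau(y):y\in N_G(x)\setminus B_u\}$. It factorizes as a product over the connected components of $G[B_u]$. Each such component is either a component of $G[S]$, or the component $R_u$ containing $u$. The component $R_u$ consists of $u$ together with the at most $\deg_G(u)\le D$ components of $G[S]$ adjacent to $u$.

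\textbf{Step 2: bound component sizes.} On the event of \Cref{lem:buffered-percolation}, every component of $G[S]$ has at most $K_1\ln n$ vertices. Hence every component of $G[B_u]$ has at most
\[
m\defeq 1+DK_1\ln n\le(1+DK_1)\ln n
\]
vertices for large $n$. The continuous-time single-site Glauber dynamics, with rate one per vertex, on a product measure is the sum of commuting generators acting on the separate factors. Its spectral gap is therefore the minimum of the factor gaps, and it suffices to bound each factor.

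\textbf{Step 3: connectivity.} In the component $R_u$, order $u$ first and then the vertices of $S$ in the $2$-degenerate order used in \Cref{prop:marginal-lower-bound}. For a component of $G[S]$, use that $2$-degenerate order alone.
\begin{itemize}
\item For $u$ itself, $\abs{L_\tau(u)}\ge q-(\deg_G(u)-\deg_{G[B_u]}(u))\ge \deg_{G[B_u]}(u)+q-D\ge\deg+2$.
\item For vertices of $S$, the same computation as in \Cref{prop:marginal-lower-bound} gives the slack $+2$. For $x\in V^{\HD}$ this uses $q\ge5$. For $x\notin V^{\HD}$ it uses $q-D\ge 3$, since neighbors outside $B_u$ only remove colors already accounted for by $\deg_G(x)\le D$.
\end{itemize}
\Cref{lem:list-recoloring-connectivity} then shows that each factor's support $\Omega_R$ is nonempty and connected under single-site moves. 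This gives irreducibility of the product chain.

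\textbf{Step 4: crude gap bound per factor.} Each factor is uniform on $\Omega_R$ with $\abs{\Omega_R}\le q^{\abs R}\le q^m$. Any single-site move between two colorings in $\Omega_R$ occurs at rate at least $1/q$, since the heat-bath update at a vertex picks the new color from at most $q$ options. Hence every edge of the state graph has ergodic flow
\[
Q(e)\ge \frac{1}{q\abs{\Omega_R}}.
\]
Choose canonical paths along the connected state graph, each of length at most $\abs{\Omega_R}$. The canonical path bound gives
\[
T_{\rel}\le \max_e \frac{1}{Q(e)}\sum_{\gamma_{xy}\ni e}\pi(x)\pi(y)\abs{\gamma_{xy}}\le q\abs{\Omega_R}\cdot\abs{\Omega_R}\le q^{2m+1}.
\]
Here we used that $\sum_{x,y}\pi(x)\pi(y)=1$.

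\textbf{Conclusion.} Substituting $m\le(1+DK_1)\ln n$ gives the stated bound $q^{2(1+DK_1)\ln n+1}$ for every block and every feasible pinning.

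The only delicate step is the slack verification for vertices of $S$ adjacent to the pinned region, which is the extension argument in \Cref{prop:marginal-lower-bound} restricted to $B_u$. The remaining steps are routine.
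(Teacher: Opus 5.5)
Your proposal is correct and follows essentially the same route as the paper. Like the paper, you factorize over components of $G[B_u]$ of size at most $1+DK_1\ln n$, verify the hypothesis of \Cref{lem:list-recoloring-connectivity}, bound each factor by canonical paths to get $qN^2\le q^{1+2s}$, and then tensorize. The only cosmetic difference is the vertex ordering: you place $u$ first and then use the $2$-degenerate order of $G[S]$ (with $u$'s own slack coming from $q-D$), whereas the paper shows that $G[B_u]$ is $3$-degenerate. Both orderings satisfy the required slack condition.
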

\begin{proof}
    Fix a vertex $u \in U$ and a feasible pinning $\tau$ on $V \setminus B_u = U \setminus \set{u}$.
    We firstly consider the connected component containing $u$ in $G[S \cup \set{u}]$.
    Note that $u$ connects at most $D$ components in $G[S]$.
    Therefore this component is of size at most $1 + DK_1\ln{n} \le (1 + D K_1)\ln{n}$ for sufficiently large $n$.
    Other components in $G[S]$ remain unchanged.
    Hence every component in $G[B_u]$ has at most $(1 + D K_1) \ln{n}$ vertices.
    We next prove connectivity under the outside pinning. Let $H=G[B_u]$, and for each $x\in B_u$ define
    \[
        L_\tau(x)
        \defeq
        [q]\setminus
        \{\tau(y):y\in N_G(x)\setminus B_u\}.
    \]
    The conditional support is exactly the set of proper list-colorings of $H$ with these lists.

    First, $H$ is $3$-degenerate.
    Indeed, every component of $G[S]$ is a tree or unicyclic,
    so every nonempty induced subgraph of $G[S]$ has a vertex
    of degree at most $2$.
    For any nonempty $A\subseteq B_u$ with
    $A\setminus\{u\}\neq\emptyset$, choose such a vertex in
    $G[A\setminus\{u\}]$.
    It has degree at most $3$ in $G[A]$.
    The remaining case $A=\{u\}$ is immediate.
    
    Choose a degeneracy ordering $x_1,\ldots,x_m$ of $H$,
    and write
    \[
        H_i=H[\{x_i,\ldots,x_m\}],
        \qquad
        \deg_{H_i}(x_i)\le 3.
    \]
    We verify the list-size condition in
    \Cref{lem:list-recoloring-connectivity}.
    
    If $x_i\in V^{\HD}$, then all neighbors of $x_i$ belong to
    $S=N_G[V^{\HD}]\subseteq B_u$.
    Thus no outside pinning removes a color from its list, and
    \[
        \abs{L_\tau(x_i)}
        =q
        \ge 5
        \ge \deg_{H_i}(x_i)+2.
    \]
    If $x_i\notin V^{\HD}$, then $\deg_G(x_i)\le D$, and
    \begin{align*}
        \abs{L_\tau(x_i)}
        &\ge q-\abs{N_G(x_i)\setminus B_u} = \deg_H(x_i)+q-\deg_G(x_i)\\
        &\ge \deg_H(x_i)+q-D \ge \deg_{H_i}(x_i)+2.
    \end{align*}
    Therefore \Cref{lem:list-recoloring-connectivity} applies.
    Every proper single-site recoloring has positive
    heat-bath transition rate, so the conditional
    continuous-time Glauber dynamics on $B_u$ is irreducible.
    
    To bound the relaxation time, for a component $R$ of $G[S \cup \set{u}]$, let $s$ be its size and $\Omega_R$ be its proper list-colorings.
    Write $N = \abs{\Omega_R} \le q^s$ and $\pi$ be the conditional probability distribution of $\mu^{\tau}$ on $R$.
    Observe that $\pi$ is uniform over $\Omega_R$.
%    For a transition edge $e$ from $\sigma$ to $\xi$ and every function $g$, define $\Delta_e g \defeq g(\sigma) - g(\xi)$.
%    For every ordered pair $(\sigma, \xi)$, we choose a simple transition path $P_{\sigma, \xi}$.
%    Each path has length at most $N - 1$.
%    By Cauchy-Schwarz inequality,
%    \begin{align*}
%        \Var[\pi]{g} &= \frac{1}{2N^2} \sum_{\sigma, \xi} (g(\sigma) - g(\xi))^2 \\
%        &\le \frac{1}{2N^2} \sum_{\sigma, \xi} (N - 1) \sum_{e \ni P_{\sigma, \xi}} (\Delta_e g)^2 \\
%        &\le \frac{N - 1}{2} \sum_{e} (\Delta_e g)^2.
%    \end{align*}
%    For the Dirichlet form denoted by $\DF_R(g, g)$, since the transition probability is at least $1/q$,
%    \begin{align*}
%        \DF_R(g, g) &\ge \frac{1}{2} \sum_{\sigma, \xi} \frac{1}{N} \frac{1}{q} (\Delta_e g)^2 \ge \frac{1}{qN} \sum_{e} (\Delta_e g)^2.
%    \end{align*}
    Let $\mathcal{L}_R$ be the generator of the continuous-time Glauber dynamics on $\Omega_R$. Define the directed and undirected transition-edge sets by
    \begin{align*}
        \overrightarrow{\mathcal{T}}_R
        \defeq
        \set{(\sigma,\xi)\in\Omega_R^2:
              \sigma\neq\xi,\ \mathcal{L}_R(\sigma,\xi)>0},\quad
        \mathcal{T}_R
        \defeq
        \set{\{\sigma,\xi\}:    (\sigma,\xi)\in\overrightarrow{\mathcal{T}}_R}.
    \end{align*}
    For each $e=\{\sigma,\xi\}\in\mathcal{T}_R$, fix an arbitrary orientation and define
    \[
        \Delta_e g\defeq g(\sigma)-g(\xi).
    \]
    For every ordered pair $(\sigma,\xi)\in\Omega_R^2$, choose a simple path $P_{\sigma,\xi}$ in the undirected transition graph. Since this path has length at most $N-1$, the Cauchy--Schwarz inequality gives
    \begin{align*}
        \Var[\pi]{g}
        &=\frac{1}{2N^2}
          \sum_{\sigma,\xi\in\Omega_R}
          \bigl(g(\sigma)-g(\xi)\bigr)^2\\
        &\le
          \frac{1}{2N^2}
          \sum_{\sigma,\xi\in\Omega_R}
          (N-1)\sum_{e\in P_{\sigma,\xi}}(\Delta_e g)^2\\
        &\le
          \frac{N-1}{2}\sum_{e\in\mathcal{T}_R}(\Delta_e g)^2.
    \end{align*}
    Since every off-diagonal transition rate is at least $1/q$,
    \begin{align*}
        \DF_R(g,g)
        &=\frac{1}{2}
          \sum_{(\sigma,\xi)\in\overrightarrow{\mathcal{T}}_R}
          \frac{1}{N}\mathcal{L}_R(\sigma,\xi)
          \bigl(g(\sigma)-g(\xi)\bigr)^2\\
        &\ge
          \frac{1}{2qN}
          \sum_{(\sigma,\xi)\in\overrightarrow{\mathcal{T}}_R}
          \bigl(g(\sigma)-g(\xi)\bigr)^2\\
        &=\frac{1}{qN}
          \sum_{e\in\mathcal{T}_R}(\Delta_e g)^2.
    \end{align*}
    Therefore the following \Poincare inequality holds:
    \begin{align*}
        \Var[\pi]{g} \le \frac{qN(N - 1)}{2} \DF_R(g, g).
    \end{align*}
    Thus the relaxation time for a single component $R$ is at most
    \begin{align*}
        qN^2 \le q^{1 + 2s} \le q^{1 + 2(1 + D K_1)\ln{n}}.
    \end{align*}
    By \Cref{fact:tensor-product-entropic-factoriation,rem:variance-tensorization}, we conclude the relaxation time for the continuous-time Glauber dynamics for $\mu_{B_u}^{\tau}$.
\end{proof}

\begin{proof}[Proof of \Cref{thm:rapid-mixing}]
    Assume the joint event of \Cref{cor:spectral-independence,lem:buffered-percolation,prop:marginal-lower-bound,lem:small-block}.
    This joint event happens with probability $1 - o_n(1)$.
    Write $\PGD$ as the discrete-time Glauber dynamics for $\mu$, $\L^{\mu}$ as the continuous-time single-site chain with rate one clock per vertex of $V$, $\L_{\B}$ as the above heat-bath block dynamics.
    By \Cref{lem:Markov-chain-comparison} with \Cref{prop:buffered-entropic-factorization,lem:block-operator-spectral-gap,prop:single-block-relaxation-time},
    \begin{align*}
        \Var[\mu]{f} \le C_{\LD} \DF_{\L_{\B}}(f, f) \le C_{\LD} \cdot n \cdot q^{1 + 2(1 + DK_1)\ln{n}} \DF_{\L^{\mu}}(f, f) \le n^{C_1} \cdot \DF_{\PGD}(f, f)
    \end{align*}
    for the constant $C_1 = B + 2(1 + D K_1) \ln{q} + 10$ which depends only on $\eta, d$ and $q$.
    Therefore $T_{\rel}(\PGD) \le n^{C_1}$.
    Each $\K_v$ is a conditional expectation operator and hence an orthogonal projection in $L^2(\mu)$.
    Thus $\PGD=\abs{V}^{-1}\sum_{v\in V}\K_v$ is positive semidefinite, as required by \Cref{lem:relaxation-time-to-mixing-time}.
    Note that for every feasible coloring $\sigma \in \Omega_{G, q}$, $\mu(\sigma) \ge q^{-n}$.
    Therefore by \Cref{lem:relaxation-time-to-mixing-time}, $\PGD$ exhibits mixing time at most $n^{C_1}\ab(n \ln{q} + \ln(1/\eps)) \le n^C (1 +\ln(1/\eps))$ for some constant $C$ depending only on $\eta, d$ and $q$.
\end{proof}

%% file: appendix/CLMM-clarification.tex
\section{SSM and TID for List-coloring Instances} \label{sec:CLMM-clarification}

We record the list-coloring extension of \Cref{prop:SSM-TID-on-tree} used in \Cref{sec:coupling-independence}.

\begin{lemma} \label{lem:CLMM-list-coloring-extension}
    Fix $\eta \in (0, 1)$, let $\Delta \ge 7/\eta^2$, and suppose that $q = (1 + \eta) \Delta$.
    Let $T$ be a tree of maximum degree at most $\Delta$, and let $\Phi = (T, L)$ be a feasible list-coloring instance satisfying
    \begin{equation} \label{eq:list-degree-gap}
        \abs{L(v)} - \deg_T(v) \ge q - \Delta \qquad \text{for every } v \in V(T).
    \end{equation}
    Then for any feasible pinning $\pi$ on $T$, the Gibbs distribution $\mu_{\Phi}^{\pi}$ exhibits strong spatial mixing and total influence decay with exponential decay rate $\exp(-\eta/4)$ and constants
    \begin{align*}
        C_{\SSM} &= \frac{2q}{(\eta\Delta)^{1/2}} \exp(1/\eta + \eta/4), \\
        C_{\TID} &= \frac{q^3}{(\eta\Delta)^{3/2}(q - 1)} \exp(3/\eta)
    \end{align*}
    respectively.
    The multiplicative SSM bound is understood at distances $\ell\ge2$ and for colors feasible at the root under the common pinning, as in \Cref{def:strong-spatial-mixing}.
\end{lemma}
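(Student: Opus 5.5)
The plan is to reduce \Cref{lem:CLMM-list-coloring-extension} to \Cref{prop:SSM-TID-on-tree} by a pinned-leaf reduction. The proposition concerns proper $q$-colorings of a tree with maximum degree $\Delta$, with an arbitrary feasible pinning. Pinning already turns an uncolored tree into a list-coloring instance, so the task is the converse: realize the given lists by pinned leaves.

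First fix $v\in V(T)$ with list $L(v)$. Let $k_v\defeq q-\abs{L(v)}$ be the number of forbidden colors at $v$. Attach $k_v$ new pendant leaves to $v$ and pin them bijectively to the colors of $[q]\setminus L(v)$. The new degree of $v$ is
\[
    \deg_T(v)+k_v=\deg_T(v)+q-\abs{L(v)}\le q-(q-\Delta)=\Delta,
\]
by \eqref{eq:list-degree-gap}. The enlarged tree $T'$ therefore has maximum degree at most $\Delta$.

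Second, combine these leaf pins with $\pi$ into a pinning $\pi'$ of $T'$. Under $\pi'$, the proper $q$-colorings of $T'$ restricted to $V(T)$ are exactly the proper list-colorings of $\Phi$ consistent with $\pi$, and each appears with equal weight. Hence $\mu_{T',q}^{\pi'}$ projected to $V(T)$ equals $\mu_\Phi^\pi$, and $\pi'$ is feasible because $\Phi$ is feasible under $\pi$.

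Third, transfer the SSM and TID statements. Distances between vertices of $T$ are unchanged in $T'$. The spheres $S_{T'}(v,\ell)$ contain $S_T(v,\ell)$ together with some added leaves, and those leaves are pinned. The TID sums in \Cref{def:total-influence-decay} run over free vertices only, so they coincide. For SSM, boundary pinnings $\sigma,\tau$ on $S_T(v,\ell)\setminus\Lambda$ are exactly boundary pinnings on the free part of $S_{T'}(v,\ell)$ under $\pi'$. Feasibility of a color at the root likewise agrees between the two instances. Applying \Cref{prop:SSM-TID-on-tree} to $T'$ with the pinning $\pi'$ then gives the stated rate $\e^{-\eta/4}$ and the stated constants, with the same restriction to distances $\ell\ge2$.

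The main obstacle is a subtlety in how \Cref{prop:SSM-TID-on-tree} is formulated. It must hold for every feasible pinning on trees of maximum degree at most $\Delta$; a version stated only for $\Delta$-regular trees, or for unpinned trees, would not suffice. I would check that the argument of \cite[Proposition 8.8]{CLMM26} runs through the Jacobian and potential analysis under arbitrary pinnings, with each free vertex of a pinned tree seen as a list instance of slack at least $q-\Delta$. If that phrasing is the native one, the reduction above is literally an instance of it. Otherwise I would re-derive the contraction bound directly for lists satisfying \eqref{eq:list-degree-gap}, using the effective degree $\deg_T(v)+k_v\le\Delta$.
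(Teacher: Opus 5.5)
Your proposal is correct and is essentially the paper's pinned-leaf reduction. The paper first passes to the free forest $T^\pi$ with residual lists $L^\pi$, checking that pinning preserves the slack $q-\Delta$, and attaches pinned leaves only at free vertices. You instead attach leaves for the original lists at every vertex and keep $\pi$ inside $\pi'$. This is equally valid, because \Cref{prop:SSM-TID-on-tree}, read with the paper's definitions of SSM and TID, already quantifies over all feasible pinnings; the paper's own proof relies on this too, since its augmented forest has pinned leaves.

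The paper also spells out separately, via a greedy extension using the list-degree slack, why the conditional probabilities in the multiplicative SSM bound are positive. In your version this positivity is inherited directly from the proposition applied to $T'$, so nothing is missing.
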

\begin{proof}
    Fix a feasible pinning $\pi$ on $\Lambda \subseteq V(T)$.
    Let $T^\pi \defeq T[V(T) \setminus \Lambda]$
    be the remaining free forest, and for every free vertex $v$ let 
    \[
        L^\pi(v) \defeq L(v) \setminus \set{\pi(u) \midc u \in N_T(v) \cap \Lambda}.
    \]
    It clearly holds that
    \begin{equation} \label{eq:pinned-list-gap}
    \begin{aligned}
        \abs{L^\pi(v)} - \deg_{T^\pi}(v) &\ge \abs{L(v)} - \abs{N_T(v) \cap \Lambda} - \ab(\deg_T(v) - \abs{N_T(v) \cap \Lambda})\\ &= \abs{L(v)} - \deg_T(v) \ \ge q - \Delta.
    \end{aligned}
    \end{equation}
    We now convert the list constraints into ordinary coloring constraints.
    For every free vertex $v$ and every color $c \in [q] \setminus L^\pi(v)$, attach a new leaf $z_{v, c}$ to $v$ and pin $z_{v, c}$ to color $c$.
    Denote the resulting augmented forest by $\widehat{T}^\pi$.
    For every original free vertex $v$, by \eqref{eq:pinned-list-gap},
    \begin{align*}
        \deg_{\widehat{T}^\pi}(v) &= \deg_{T^\pi}(v) + q - \abs{L^\pi(v)} \le \Delta.
    \end{align*}
    Meanwhile, all newly added vertices are leaves.
    Hence every component of $\widehat{T}^\pi$ is a tree of maximum degree at most $\Delta$.

    Observe that conditional on the colors of the newly added leaves, the uniform distribution on proper $q$-colorings of $\widehat{T}^\pi$, restricted to the original free vertices, is exactly $\mu_{\Phi}^{\pi}$.
    Apply \cite[Proposition~8.8]{CLMM26} to every component of $\widehat{T}^\pi$.
    It gives strong spatial mixing and total influence decay with rate $\exp(-\eta/4)$ and the displayed
    constants.
    The list-degree slack in \eqref{eq:pinned-list-gap} ensures that a color feasible at a free vertex under the common pinning remains feasible under additional feasible pinnings at distance at least two: fix that color and greedily color the remaining free forest while respecting the pinned boundary.
    Hence the conditional probabilities in the multiplicative SSM bound are positive.
    Adding pinned leaves does not change distances between original vertices, and the added vertices never appear among the free vertices in the definitions of SSM or TID.
    Thus the same bounds hold for $\mu_\Phi^\pi$ on the original list-coloring instance, concluding this lemma.
\end{proof}

%% file: appendix/Poisson-estimation.tex
\section{Poisson Estimations on Galton--Watson Trees} \label{sec:Poisson-estimation}

This section provides a proof of \Cref{prop:GWT-message-well-definedness-finiteness}.
At the beginning, we introduce the following properties of a Poisson random variable and its related variants.
\begin{lemma} \label{lem:Poisson-inequality}
    Let $K \sim \Poisson(d + 1)$ be a Poisson random variable of rate $d + 1$.
    Recall that $D = \floor{(1 + \eta/3)d}$.
    The following inequalities hold for sufficiently large $d > 1$.
    \begin{itemize}
        \item \textbf{Poisson tail bound:} there exists some constant $C_\tail = C_\tail(\eta) > 0$ such that
        \begin{align*}
            p_{\scrH} \defeq \Pr{K > D - 3} \le \e^{-C_\tail d}.
        \end{align*}
        \item \textbf{Finite moments:} for every integer $p \ge 2$, there exists some constant $C_\moment = C_\moment(\eta) > 0$ such that
        \begin{align*}
            \norm{K}_p, \norm{K + 1}_p, \norm{K \mid K > D - 3}_p \le C_\moment(d + p).
        \end{align*}
    \end{itemize}
\end{lemma}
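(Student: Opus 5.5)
The two parts need different tools: a Chernoff-type exponential-moment bound for the tail, and tail integration (or explicit Poisson moment formulas) for the moments.

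\textbf{Tail bound.} Write $\lambda = d+1$ and apply the exponential Markov inequality:
\[
    \Pr{K \ge m} \le \exp\bigl(-\lambda\,\psi(m/\lambda)\bigr),
    \qquad m = D-2,
    \qquad \psi(x) = x\ln x - x + 1 .
\]
Since $D = \floor{(1+\eta/3)d}$, we have $m/\lambda \to 1+\eta/3$ as $d\to\infty$. For $d$ large enough (depending on $\eta$), $m/\lambda \ge 1+\eta/4$. Because $\psi$ is increasing on $[1,\infty)$ and $\lambda \ge d$, this gives
\[
    p_{\scrH} \le \exp\bigl(-d\,\psi(1+\eta/4)\bigr).
\]
So we can take $C_\tail = \psi(1+\eta/4) > 0$.

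\textbf{Moments of $K$ and $K+1$.} Minkowski gives $\norm{K+1}_p \le \norm{K}_p + 1$, so it suffices to bound $\norm{K}_p$. I would use $\E{K^p} \le \E{(K)_p}$-style bounds via the factorial moments, or more simply the mgf. Markov's inequality with $\E{\e^{K}} = \e^{\lambda(\e-1)}$ gives $\Pr{K \ge t} \le \e^{\lambda(\e-1) - t}$. Splitting the integral at $t_0 = 2\e\lambda$:
\[
    \E{K^p} \le t_0^p + \int_{t_0}^\infty p\,t^{p-1}\e^{\lambda(\e-1)-t}\,dt .
\]
For $t \ge t_0$ we have $\lambda(\e-1) \le t/2$, so the integral is at most $p\int t^{p-1}\e^{-t/2}\,dt = 2^p p!$. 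Hence
\[
    \norm{K}_p \le 2\e\lambda + 2p \le C(d+p).
\]
An alternative route is Touchard's formula, $\E{K^p} = \sum_j S(p,j)\lambda^j \le (\lambda+p)^p$, which gives the same conclusion.

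\textbf{Conditional moment.} This is the one step needing care, since conditioning on a rare event could in principle inflate moments. I would write
\[
    \E{K^p \mid K > D-3} = \frac{\E{K^p \,\id{K \ge m}}}{\Pr{K \ge m}}
\]
and compare the tail pmf ratios. For $k \ge m > \lambda$, the ratio $\Pr{K=k+1}/\Pr{K=k} = \lambda/(k+1)$ is below $1$. Therefore the conditional law of $K-m$ given $K \ge m$ is stochastically dominated by the law of $K$ itself. This follows by a monotone-likelihood-ratio argument: the ratio
\[
    \frac{\Pr{K = m+j}}{\Pr{K = j}} = \frac{\lambda^m\, j!}{(m+j)!}
\]
is decreasing in $j$. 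Then Minkowski gives
\[
    \norm{K \mid K \ge m}_p \le m + \norm{K}_p \le (1+\eta/3)d + C(d+p),
\]
which is $O_\eta(d+p)$. The main point to verify carefully is this likelihood-ratio domination; I expect it to go through directly. Taking $C_\moment$ as the maximum of the constants obtained above finishes the proof.
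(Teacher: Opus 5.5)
Your proposal is correct. The tail bound and the bounds on $\norm{K}_p$ and $\norm{K+1}_p$ follow essentially the paper's route: the Chernoff bound with $\psi(x)=x\ln x-x+1$ once $D-2$ exceeds $d+1$ by a constant factor, then $\E{\e^K}=\e^{(d+1)(\e-1)}$ with Markov, the tail-integral formula, and Minkowski. The paper shifts $K$ by $(d+1)(\e-1)$ instead of splitting the integral at $2\e\lambda$, which is cosmetic.

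You genuinely differ on the conditional moment. The paper compares successive tails, $\Pr{K\ge K_0+j+k}/\Pr{K\ge K_0+j}\le ((d+1)/(K_0+1))^k$ with $K_0=D-2$. This shows the overshoot $K-K_0$ given $K\ge K_0$ has a geometric tail of rate $\theta=\ln\frac{K_0+1}{d+1}\ge\eta/6$, so $\norm{K\mid K>D-3}_p\le K_0+p/\theta$. That argument needs the gap $K_0\ge(1+\eta/5)(d+1)$, and it is where the $1/\eta$ in $C_\moment$ comes from.

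Your likelihood-ratio argument instead uses that $\lambda^m j!/(m+j)!$ is decreasing in $j$, i.e.\ log-concavity of the Poisson pmf. It shows the overshoot is stochastically dominated by $K$ itself for every threshold $m$, needs no gap, and reuses your bound on $\norm{K}_p$. The cost is a cruder overshoot bound, of order $\lambda$ rather than $p/\eta$. That is harmless, since only $O_\eta(d+p)$ is required.

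One small slip: the aside ``$\E{K^p}\le\E{(K)_p}$'' has the inequality backwards, since $(K)_p\le K^p$ and $\E{(K)_p}=\lambda^p<\E{K^p}$. Drop it; neither your mgf argument nor the Touchard bound $\sum_j S(p,j)\lambda^j\le(\lambda+p)^p$ needs it.
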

\begin{proof}
    The Poisson tail bound is a direct corollary of the Markov inequality: for every $x > d + 1$,
    \begin{align*}
        \Pr{K \ge x} \le \exp\ab(-(d + 1) \ab(\frac{x}{d + 1} \ln \frac{x}{d + 1} - \frac{x}{d + 1} + 1)).
    \end{align*}
    For sufficiently large $d > 1$, $D - 2 \ge (1 + \eta/5)(d + 1)$ and thus
    \begin{align*}
        \Pr{K > D - 3} \le \Pr{K \ge D - 2} \le \e^{-\eta^2 d/60}.
    \end{align*}

    To show finite moments of random variables, we show the standard calculation for the moment generating function of $K$ as
    \begin{align*}
        \E{\e^K} = \exp((d + 1)(\e - 1))
    \end{align*}
    and therefore for every $t \ge 0$, by the Markov inequality,
    \begin{align*}
        \Pr{K > (d + 1)(\e - 1) + t} \le \e^{-t}.
    \end{align*}
    For the random variable $X = \max\;\set{K - (d + 1)(\e - 1), 0}$, by the tail integral formula,
    \begin{align*}
        \E{X^p} = \int_0^\infty p x^{p - 1} \Pr{X > x} \d{x} \le \int_0^\infty p x^{p - 1} \e^{-x} \d{x} = p! \le p^p.
    \end{align*}
    Then by Minkowski's inequality,
    \begin{align*}
        \norm{K}_p \le (\e - 1)(d + 1) + p.
    \end{align*}
    
    For the random variable $K + 1$, the Minkowski's inequality gives that
    \begin{align*}
        \norm{K + 1}_p \le 1 + \norm{K}_p \le 1 + (\e - 1)(d + 1) + p.
    \end{align*}

    To show the finite moments of $K \;|\; K > D - 3$, we first pick $d > 1$ sufficiently large such that $K_0 \defeq D - 2 \ge (1 + \eta/5)(d + 1)$.
    For integers $j \ge 0$ and $k > 0$, it holds that
    \begin{align*}
        \frac{\Pr{K \ge K_0 + j + k}}{\Pr{K \ge K_0 + j}} \le \frac{(d + 1)^k}{(K_0 + j + k) \cdots (K_0 + j + 1)} \le \ab(\frac{d + 1}{K_0 + 1})^k.
    \end{align*}
    Hence we have
    \begin{align*}
        \Pr{K \ge K_0 + k \mid K \ge K_0} \le \ab(\frac{d + 1}{K_0 + 1})^k.
    \end{align*}
    Consider the random variable $X = K - K_0$ conditional on $K \ge K_0$ and set $\theta = -\ln\frac{d + 1}{K_0 + 1}$.
    For every $x \ge 0$, it holds that
    \begin{align*}
        \Pr{X > x} \le \ab(\frac{d + 1}{K_0 + 1})^x \le \e^{-\theta x}.
    \end{align*}
    By the tail integral formula,
    \begin{align*}
        \E{X^p} = \int_0^\infty p x^{p - 1} \Pr{X > x} \d{x} \le \int_0^\infty p x^{p - 1} \e^{-\theta x} \d{x} = \frac{p!}{\theta^p} \le \ab(\frac{p}{\theta})^p.
    \end{align*}
    Then by the Minkowski's inequality,
    \begin{align*}
        \norm{K \mid K > D - 3}_p = \norm{K \mid K \ge K_0}_p \le K_0 + \frac{p}{\theta} \le \frac{6}{\eta}(d + p).
    \end{align*}

    Pick $C_{\moment} = 6/\eta$ and we conclude the finite moments of these three random variables.
\end{proof}

Based on the finite exponential moment of the offspring variable, we show a similar bound for the moment of the population through any fixed height.
\begin{lemma} \label{lem:population-moment}
    Let $\T$ be a Galton-Watson tree rooted at $\varrho$ whose nodes have independent offspring counts distributed as a non-negative integer-valued random variable $U$.
    Define $N_j$ as the number of nodes of depth $j$ and $T_h \defeq \sum_{j = 0}^{h} N_j$.
    Then:
    \begin{enumerate}
        \item If $\E{\e^{tU}} < \infty$ for some $t > 0$, then for every fixed integer $h \ge 0$, there exists $s_h > 0$ such that $\E{\e^{s_h T_h}} < \infty$.

        \item For every $p \ge 1$, if $\norm{U}_p \le B_p < \infty$, then $\norm{T_h}_p \le (1 + B_p)^h$.
    \end{enumerate}
    The conclusion in (2) also holds when the nodes have independent, possibly different offspring counts $U_\omega$ satisfying $\norm{U_\omega}_p \le B_p$ uniformly in $\omega$.
\end{lemma}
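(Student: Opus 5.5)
We prove both parts by induction on the height $h$, using a generating-function recursion for (1) and Minkowski's inequality over depth levels for (2). Throughout, write $N_{j+1}=\sum_{i=1}^{N_j}U_{j,i}$, where the $U_{j,i}$ are the offspring counts of the depth-$j$ nodes. They are independent of each other and of $N_j$, which depends only on offspring counts at depths below $j$.

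\textbf{Part (1).} We use the branching recursion $T_h \overset{d}{=} 1+\sum_{i=1}^{U} T_{h-1}^{(i)}$, where the $T_{h-1}^{(i)}$ are i.i.d.\ copies of $T_{h-1}$ independent of $U$. The base case is $T_0=1$, so any $s_0>0$ works. Set $M_{h-1}(s)\defeq\E{\e^{sT_{h-1}}}$. Conditioning on $U$ gives
\[
    \E{\e^{sT_h}}=\e^{s}\,\E{M_{h-1}(s)^U}=\e^{s}\,\E{\e^{U\ln M_{h-1}(s)}}.
\]
By induction $M_{h-1}(s_{h-1})<\infty$, so dominated convergence gives $M_{h-1}(s)\to1$ as $s\downarrow0$. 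We can therefore choose $s_h\in(0,s_{h-1}]$ with $\ln M_{h-1}(s_h)\le t$. This makes $\E{\e^{s_hT_h}}\le\e^{s_h}\E{\e^{tU}}<\infty$.

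\textbf{Part (2).} We first claim $\norm{N_j}_p\le B_p^{\,j}$. Condition on $N_j=m$. Since $\norm{\cdot}_p$ is a norm, Minkowski's inequality gives $\norm{\sum_{i=1}^m U_{j,i}}_p\le mB_p$, using only the uniform bound $\norm{U_\omega}_p\le B_p$. Raising to the $p$-th power and averaging over $N_j$ yields $\E{N_{j+1}^p}\le B_p^p\,\E{N_j^p}$, that is, $\norm{N_{j+1}}_p\le B_p\norm{N_j}_p$. Combined with $N_0=1$, induction proves the claim. This argument needs no identical distribution, so it also covers the final sentence of the lemma, where the offspring laws may differ.

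We then apply Minkowski once more across depths:
\[
    \norm{T_h}_p\le\sum_{j=0}^h B_p^{\,j}\le\sum_{j=0}^h\binom{h}{j}B_p^{\,j}=(1+B_p)^h.
\]

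The only delicate point is the conditioning step in part (2): it relies on the independence of the offspring counts at depth $j$ from $N_j$, and on Minkowski's inequality holding for each fixed $m$. Both follow directly from the Galton--Watson construction, so no step here should be a real obstacle.
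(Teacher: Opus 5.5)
Your part (1) is the paper's argument (same distributional recursion $T_h \overset{d}{=} 1+\sum_{i=1}^{U}T_{h-1}^{(i)}$, same dominated-convergence choice of $s_h$).

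Part (2) takes a different but equally valid route, and both end at the same bound $\sum_{j=0}^h B_p^j\le(1+B_p)^h$. The paper conditions on the root's offspring and reuses the subtree recursion. With the power-mean inequality $(\sum_{i\le m}x_i)^p\le m^{p-1}\sum_{i\le m}x_i^p$ it gets $\|T_h\|_p\le 1+B_p\|T_{h-1}\|_p$. You instead go level by level: you prove $\|N_{j+1}\|_p\le B_p\|N_j\|_p$ and then apply Minkowski across depths.

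Your route has two modest advantages. First, it handles the heterogeneous-offspring clause directly from the per-node bound. The paper has to invoke a further induction to get a uniform $\ell_p$ bound over all descendant subtrees, which are no longer identically distributed, and it only sketches that step. Second, your intermediate estimate $\|N_j\|_p\le B_p^j$ is exactly the fact $\|N_j\|_p\le\|Z\|_p^j$ that the paper later uses, justified only by ``by convexity'', in the proof of \Cref{prop:GWT-message-well-definedness-finiteness}. The paper's route, in turn, has the economy of reusing the recursion from part (1).

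One wording correction is needed in the heterogeneous case. The offspring count of ``the $i$-th depth-$j$ node'' need not be independent of $N_j$, because which node occupies that position, and hence its law, can depend on the history. So condition on the $\sigma$-field $\mathcal F_j$ generated by all offspring counts at depths below $j$, not on $N_j$. Given $\mathcal F_j$, the depth-$j$ offspring counts are independent with $p$-norms at most $B_p$. Hence $\mathbb{E}[N_{j+1}^p\mid\mathcal F_j]\le (B_pN_j)^p$, and your argument goes through unchanged.
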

\begin{proof}
    To show this lemma, observe that
    \begin{align*}
        \forall h \ge 1, \quad T_h \diseq 1 + \sum_{i = 1}^{U} T_{h - 1}^{(i)}
    \end{align*}
    where $T_{h - 1}^{(i)}$ is an independent copy of $T_{h - 1}$ and is independent of $U$.
    To see this recursion, the root contributes itself.
    For each of its $U$ children, it is an independent subtree of height $h - 1$.

    For simplicity, we denote by $M_h(s) \defeq \E{\e^{s T_h}}$ the moment generating function of $T_h$ at $s$.
    By the recursion of $T_h$ and the independence of random variables $T_{h - 1}^{(i)}$,
    \begin{align*}
        M_h(s) &= \e^s \E{\E{\prod_{i = 1}^U \e^{s T_{h - 1}^{(i)}} \mid U}} \\
        &= \e^s \E{M_{h - 1}(s)^U} \\
        &= \e^s \E{\exp\ab(U \ln M_{h - 1}(s))}.
    \end{align*}
    We prove the first argument by induction on $h$.
    \begin{itemize}
        \item \textbf{Boundary case: $h = 0$.}
        It holds that $T_0 = N_0 = 1$.
        Direct calculation on $M_0(s)$ tells that $M_0(s) = \e^s < \infty$ for every $s > 0$.

        \item \textbf{Inductive case: $h > 0$.}
        Suppose that $M_{h - 1}(\eta) < \infty$ for some $\eta > 0$.
        For every $s \in [0, \eta]$, $\e^{s T_{h - 1}} \le \e^{\eta T_{h - 1}}$.
        Dominated convergence provides that as $s \downarrow 0$, $M_{h - 1}(s) \to 1$ and consequently, $\ln M_{h - 1}(s) \to 0$.
        Choose $s_h$ sufficiently small to make
        \begin{align*}
            \ln M_{h - 1}(s_h) \le t.
        \end{align*}
        Then by assumption $\E{\e^{t U}} < \infty$,
        \begin{align*}
            M_h(s_h) = \e^{s_h} \E{\exp\ab(U \ln M_{h - 1}(s_h))} \le \e^{s_h} \E{\e^{t U}} < \infty.
        \end{align*}
        We note here that the choice of $s_h$ only depends on $h$ and the offspring distribution.
    \end{itemize}

    For the bound of the $\ell_p$-norm of $T_h$, by convexity of $x \mapsto x^p$,
    \begin{align*}
        \forall x_1, \ldots, x_m \ge 0, \quad \ab(\frac{1}{m} \sum_{i = 1}^m x_i)^{p} \le \frac{1}{m} \sum_{i = 1}^m x_i^p.
    \end{align*}
    Conditional on $U = m \ge 1$,
    \begin{align*}
        \E{\ab(\sum_{i = 1}^{U} T_{h - 1}^{(i)})^p \mid U = m} \le m^{p - 1} \sum_{i = 1}^m \E{\ab(T_{h - 1}^{(i)})^p} = m^p \E{T_{h - 1}^p}.
    \end{align*}
    When $m = 0$, both sides are zero.
    By law of total expectation,
    \begin{align*}
        \E{\ab(\sum_{i = 1}^{U} T_{h - 1}^{(i)})^p} \le \E{U^{p}} \E{T_{h - 1}^{p}}
    \end{align*}
    which is equivalent to
    \begin{align*}
        \norm{\sum_{i = 1}^U T_{h - 1}^{(i)}}_p \le \norm{U}_p \norm{T_{h - 1}}_p.
    \end{align*}
    The Minkowski's inequality provides that
    \begin{align*}
        \forall h \ge 1, \quad \norm{T_h}_p \le 1 + B_p \norm{T_{h - 1}}_p
    \end{align*}
    with the boundary condition $\norm{T_0}_p = 1$.
    Solving this recursion and using the binomial theorem, we conclude that $\norm{T_h}_p \le \sum_{i = 0}^h B_p^i \le (B_p + 1)^h$.
    For independent offspring counts with different laws, apply the same argument inductively with a uniform upper bound on the $\ell_p$-norms of all depth-$(h - 1)$ descendant populations.  The same recursion, with boundary value $1$, gives the asserted bound.
\end{proof}

The next step is replacing high-degree marks by independent block marks to simplify the analysis.
Set
\begin{align*}
    L_{\block} = L_{\block}(\alpha) \defeq \left\lceil \max\;\set{3r, \frac{4\ln{4}}{\ln(1/\alpha)}} \right\rceil.
\end{align*}
It is clear to see $L_{\block} = O_{\eta}((\ln{d})^2)$.
We partition the original tree into blocks containing levels $0, \ldots, L_{\block} - 1$ relative to each block root.
The depth-$L_{\block}$ frontier of each block consists of the roots of its child blocks.
Conditional on the existence of the root, a block reveals exactly the offspring of its internal nodes.

For a block, define $\BlockMark$ as the event that it contains a node in $\scrH$.
Set $\delta = \Pr{\BlockMark}$ for convenience.
Independence of a node's offspring from its existence in its generation provides that
\begin{align*}
    p_{\scrH} \le \delta \le \sum_{j = 0}^{L_{\block} - 1} (d + 1)^{j} p_{\scrH} \le \exp\ab(-C_{\tail} d + O_{\eta}\ab((\ln{d})^3)).
\end{align*}

Let $S$ be the number of internal nodes plus the frontier size of a block.
Independently sample $S^+$ with the conditional law $S \midl \BlockMark$ and $S^-$ as $S \midl \overline{\BlockMark}$ and set $Z \defeq S^+ + S^-$.
Moreover, to choose the actual block data, independently pick a $\Bernoulli(\delta)$ selector.
Recursively place the selected child blocks in the first available slots of a node with $Z$ children and fill extra slots independently by the same construction.
This process embeds the actual block tree into a Galton-Watson tree with offspring $Z$ and independent $\Bernoulli(\delta)$ marks.
\begin{proposition} \label{prop:moments-block-random-variable}
    For the random variable $Z$, there exists some $t > 0$ making $\E{\e^{t Z}} < \infty$.
    Moreover, for every integer $p \ge 2$,
    \begin{align*}
        \frac{\ln \E{(1 + Z)^p}}{p} \le C_\eta L_{\block} \ln{(d + p)}
    \end{align*}
    where $C_\eta$ depends only on $\eta$.
\end{proposition}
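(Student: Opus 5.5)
We bound $S^+$ and $S^-$ separately and combine them, using $Z = S^+ + S^-$ with $S^+,S^-$ independent. The key observation is that $S$, the number of internal nodes plus frontier nodes of a block, is at most $T_{L_{\block}}$, the population of a $\Poisson(d+1)$ Galton--Watson tree through depth $L_{\block}$. Hence $S \le T_{L_{\block}}$ pointwise.

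\emph{Exponential moment.} By \Cref{lem:population-moment}(1) with $U = K \sim \Poisson(d+1)$, which has all exponential moments, there is $s>0$ with $\E{\e^{sT_{L_{\block}}}}<\infty$. Since $\E{\e^{sS}}<\infty$ and both conditioning events have positive probability,
\[
\E{\e^{sS^\pm}} \le \E{\e^{sS}}/\Pr{\cdot},
\]
where the denominator is $\delta$ or $1-\delta$. This uses $\delta \ge p_{\scrH}>0$, and $1-\delta>0$ because $\delta<1$ for large $d$. Independence then gives
\[
\E{\e^{sZ}} = \E{\e^{sS^+}}\,\E{\e^{sS^-}} < \infty .
\]
This part is routine.

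\emph{Polynomial moments.} The bound on $S^-$ is easy. Since $1-\delta\ge 1/2$, we have $\E{(1+S^-)^p}\le 2\E{(1+T_{L_{\block}})^p}$. By \Cref{lem:population-moment}(2) with $B_p = C_\moment(d+p)$ from \Cref{lem:Poisson-inequality},
\[
\norm{1+T_{L_{\block}}}_p \le 1+(1+C_\moment(d+p))^{L_{\block}} .
\]
Hence $\ln\norm{1+S^-}_p \le C L_{\block}\ln(d+p)$, after absorbing $2^{1/p}$.

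The main obstacle is $S^+$. Conditioning on $\BlockMark$ divides by $\delta$, which is as small as $\e^{-C_{\tail}d}$. A crude bound $\E{(S^+)^p}\le \E{S^p}/\delta$ costs a factor $\delta^{-1/p}\le \e^{C_{\tail}d/p}$ after taking $p$-th roots, and that factor is not polynomial in $d$ for small $p$. The plan avoids dividing by $\delta$ through a first-hit decomposition.
\begin{itemize}
\item On $\BlockMark$, explore the block in breadth-first order and let $\omega^*$ be the first node in $\scrH$.
\item Conditional on $\BlockMark$ and on the identity and position of $\omega^*$, the offspring of $\omega^*$ has law $K \mid K > D-3$.
\item Nodes explored before $\omega^*$ have law $K \mid K\le D-3$, which is dominated by $K$.
\item Nodes explored after $\omega^*$ are unconditioned, so their law is exactly $K$.
\end{itemize}
So, conditional on $\BlockMark$, the block is stochastically dominated by a tree of depth $L_{\block}$ in which every node has independent offspring with $p$-norm at most $C_\moment(d+p)$; the conditional tail $K\mid K>D-3$ also satisfies this by \Cref{lem:Poisson-inequality}. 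One technicality is that the breadth-first exploration uses a fixed order. I expect the domination to go through by a coupling that revisits each node in order, using the fact that conditioning on "no earlier hit" only lowers offspring counts.

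With this domination in hand, the heterogeneous version of \Cref{lem:population-moment}(2) gives
\[
\norm{1+S^+}_p\le 1+(1+C_\moment(d+p))^{L_{\block}} .
\]
Finally, Minkowski's inequality gives $\norm{1+Z}_p\le \norm{1+S^+}_p+\norm{1+S^-}_p$. Taking logarithms and using $L_{\block}\ge 1$ and $\ln(d+p)\ge 1$ yields
\[
\frac{\ln\E{(1+Z)^p}}{p}\le C_\eta L_{\block}\ln(d+p) .
\]
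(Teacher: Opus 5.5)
Your outline is sound, but for $S^+$ it takes a genuinely different route from the paper's. The paper never conditions on a first hit. It bounds the indicator of the marked event by $\sum_{\omega}\mathbf 1\{K_\omega>D-3\}$ over internal nodes. Each depth-$j$ term is evaluated by the many-to-one (spine) formula as $(d+1)^j p_{\scrH}$ times the $p$-th moment of a block in which the $j$ spine ancestors have size-biased offspring $1+\mathrm{Poisson}(d+1)$, the spine endpoint has offspring $K\mid K>D-3$, and all other nodes are ordinary. It then divides by $\delta\ge p_{\scrH}$. The factor $p_{\scrH}$ cancels exactly, and the union-bound overcount costs only about $L_{\block}(d+1)^{L_{\block}}$, which is absorbed into $b_p^{pL_{\block}}$. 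For $S^-$ the paper uses the deterministic bound $S^-\le\sum_{j\le L_{\block}}(D-3)^j$. Your first-hit decomposition partitions the marked event exactly, so neither $\delta$, nor $p_{\scrH}$, nor any union-bound loss appears, and you get the slightly sharper $\|1+S^+\|_p\le 1+(1+C(d+p))^{L_{\block}}$. The price is that you need a stopping-node coupling instead of a first-moment identity, and your stated justification for it does not hold.

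Conditional on the identity and position of $\omega^*$, the earlier nodes do \emph{not} have law $K\mid K\le D-3$, and they need not be dominated by $K$. For example, if $\omega^*$ is the $(D-3)$-th child of an internal node $w$, then $w$ is explored earlier and must have exactly $D-3$ children. That is a point mass, which $\mathrm{Poisson}(d+1)$ does not dominate. Conditioning on the position of $\omega^*$ pushes ancestor counts up, not down.

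What is true, and suffices, is a pointwise bound: every internal block node explored before $\omega^*$ has at most $D-3$ children. To use it:
\begin{enumerate}
\item Index offspring variables $\xi_u$ by Ulam--Harris labels and order the labels breadth-first.
\item Then $\{\omega^*=u^*\}=A\cap\{\xi_{u^*}>D-3\}$, with $A$ measurable with respect to $\{\xi_v: v\text{ before }u^*\}$. Given this event, $\xi_{u^*}$ has law $K\mid K>D-3$, the $\xi_v$ with $v$ after $u^*$ are i.i.d.\ copies of $K$, and these are independent of each other and of the earlier labels.
\item Set $\xi'_v=D-3$ for $v$ before $u^*$ and $\xi'_v=\xi_v$ otherwise. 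Induction on depth shows the block through depth $L_{\block}$ is contained in the tree generated by $\xi'$.
\item That tree has independent offspring counts with $p$-norms $O_\eta(d+p)$, so the heterogeneous form of \Cref{lem:population-moment}(2) applies.
\end{enumerate}
Averaging over the disjoint events $\{\omega^*=u^*\}$ then gives your bound on $\|1+S^+\|_p$.
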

\begin{proof}
    Let $S_h$ be the total population through depth $h$ and set $M_h(t) \defeq \E{\e^{t S_h}}$.
    The branching recursion provides that
    \begin{align*}
        \forall h \ge 1, \quad M_h(t) = \exp\ab(t + (d + 1)(M_{h - 1}(t) - 1))
    \end{align*}
    with boundary condition $M_0(t) = \e^t$.
    For every $t > 0$ and $h \ge 0$, this recursion implies that $M_h(t) < \infty$.

    For $\BlockMark$, it contains the event that the root of this block is in $\scrH$ while its complement contains the event that the root has no children.
    Therefore,
    \begin{align*}
        \E{\e^{t S^+}} = \frac{\E{\e^{tS} \id{\BlockMark}}}{\delta} < \infty, \quad \E{\e^{t S^-}} = \frac{\E{\e^{tS} \id{\overline\BlockMark}}}{1 - \delta} < \infty.
    \end{align*}
    By independence of $S^+$ and $S^-$, $\E{\e^{tZ}} = \mathbb E\e^{tS^+} \mathbb E\e^{tS^-} < \infty$.

    Define a size-biased random variable $K^*$ as
    \begin{align*}
        \Pr{K^* = k} = \frac{k \Pr{K = k}}{d + 1}
    \end{align*}
    and therefore it follows the law $1 + \Poisson(d + 1)$.
    Moreover, let $K^+$ be the conditional random variable with law $K \midl K > D - 3$.
    By \Cref{lem:Poisson-inequality},
    \begin{align*}
        \norm{K}_p, \norm{K^*}_p, \norm{K^+}_p \le C_{\moment} (d + p).
    \end{align*}
    Choose $b_p = C(\eta) (d + p)$ such that
    \begin{align} \label{eq:block-tree-moment-upper-bound}
        b_p \ge \max\;\set{3, d + 1, D + 1, 1 + \max\;\set{\norm{K}_p, \norm{K^*}_p, \norm{K^+}_p}}.
    \end{align}

    Observe that $\BlockMark$ requires at least one high-degree node in the block:
    \begin{align*}
        \id{\BlockMark} \le \sum_{j = 0}^{L_{\block} - 1} \sum_{\depth(\omega) = j} \id{K_{\omega} > D - 3}.
    \end{align*}
    Hence
    \begin{align} \label{eq:expected-block-event-inequality}
        \E{S^p \id{\BlockMark}} \le \sum_{j = 0}^{L_{\block} - 1} \E{\sum_{\depth(\omega) = j} S^p \id{K_{\omega} > D - 3}}.
    \end{align}
    Recall the law of the size-biased random variable $K^*$:
    \begin{align*}
        \Pr{K^* = k} = \frac{k \Pr{K = k}}{d + 1}.
    \end{align*}
    Hence we obtain that for every level $j$,
    \begin{align*}
        \E{\sum_{\depth(\omega) = j} S^p \id{K_{\omega} > D - 3}} = (d + 1)^j p_{\scrH}\E{\widetilde{S}_j^p}
    \end{align*}
    where $\widetilde{S}_j$ is the total population through $L_{\block}$ in the following modified tree.
    \begin{itemize}
        \item The $j$ ancestors on a distinguished path from root to a node of depth $j$ have offspring law $K^*$.
        \item At each such ancestor, one uniform child continues the path.
        \item The endpoint has offspring law $K^+$.
        \item Every other vertex has ordinary offspring law $K$.
    \end{itemize}

    Relabeling the children so that the distinguished child is always first does not change the total population.  After this relabeling, the offspring counts in the modified tree are independent, with laws $K^*$, $K^+$ or $K$ at the prescribed nodes, and all satisfy \eqref{eq:block-tree-moment-upper-bound}.
    By \Cref{lem:population-moment},
    \begin{align*}
        \E{\widetilde{S}_j^p} \le b_p^{p L_{\block}}.
    \end{align*}
    Plugging above arguments into \eqref{eq:expected-block-event-inequality}, we have that
    \begin{align*}
        \E{S^p \id{\BlockMark}} \le p_{\scrH} \sum_{j = 0}^{L_{\block} - 1} (d + 1)^j b_p^{p L_{\block}} \le p_{\scrH} (b_p^p (d + 1))^{L_{\block}}
    \end{align*}
    Since $\delta \ge p_{\scrH}$, for $S^+$, it holds that
    \begin{align*}
        \E{(S^+)^p} = \frac{\E{S^p \id{\BlockMark}}}{\delta} \le L_\block (d + 1)^{L_\block} b_p^{p L_\block}.
    \end{align*}
    Together with the choice of $b_p$, $\norm{S^+}_p \le b_p^{2L_\block}$.
    For $S^-$, since every internal node has at most $D - 3$ children,
    \begin{align*}
        S^- \le \sum_{j = 0}^{L_{\block}} (D - 3)^j \le (1 + D - 3)^{L_{\block}} \le b_p^{L_\block}
    \end{align*}
    and thus $\norm{S^-}_p \le b_p^{L_\block}$.
    By Minkowski's inequality,
    \begin{align*}
        \norm{1 + Z}_p \le 1 + \norm{S^+}_p + \norm{S^-}_p \le b_p^{3L_\block}.
    \end{align*}
    Hence we conclude that $\ln \E{(1 + Z)^p} \le C_{\eta} p L_{\block} \ln(d + p)$, which completes the proof.
\end{proof}

A block is said to be bad if it is a marked block itself or there is a marked block within two block distance of it, and otherwise it is good.
By $L_{\block} \ge 3r$, every bad node in the original tree belongs to a bad block.
Set
\begin{align*}
    \beta \defeq \alpha^{L_{\block}}, \quad a \defeq \frac{2(1 + D^R)}{1 - \alpha}.
\end{align*}
Note that a good block has internal contribution at most $a$ and transmits at most $\beta$ times the largest frontier value, while the influence of a bad block is bounded by summing all internal costs and frontier influences.
Therefore, the message $\X$ is dominated by a message on the block tree equipped with $\cost(\omega) = a(1 + Z_{\omega})$ passed by the good rule $\beta \max$ and the bad rule sum.
Let $\X^{\block}$ be the message of the root on the block tree, and let $\Y^{\block}$ be the message with an additional marked parent outside the tree.
Therefore
\begin{align*}
    \X \le \X^{\block}, \quad \Y \le \Y^{\block}.
\end{align*}
We provide the following proposition to $\X^{\block}$ and $\Y^{\block}$.
\begin{proposition} \label{prop:block-message-bound}
    Assume all definitions and notations above.
    Let $m = \E{Z}$, $N_h$ be the generation-$h$ population, $C_h \defeq \sum_{\depth(\omega) < h} a(1 + Z_\omega)$ and $b_h \defeq \E{N_h \cdot \id{\mathcal E}}$.
    Let $\mathcal E$ be the event that there is a mark at depth zero, one or two.
    Then
    \begin{align*}
        p_{\mathcal E} \defeq \Pr{\mathcal E} \le \delta(1 + m + m^2).
    \end{align*}
    Suppose that there is an integer $P \ge 2$ satisfying that
    \begin{align}
        \beta m^{1/P} &\le \frac{1}{4}, \label{eq:block-message-bound-moment-1} \\
        p_{\mathcal E}^{1/(2P)} \norm{N_3}_{2P} &\le \frac{1}{4}, \label{eq:block-message-bound-moment-2} \\
        p_{\mathcal E}^{1/(2P)} \norm{N_2}_P \norm{N_3}_{2P} & \le \frac{1}{16}. \label{eq:block-message-bound-moment-3}
    \end{align}
    Then the matrix
    \begin{align*}
        B_P \defeq \begin{pmatrix}
            \beta^{P + 1} m & b_3 \\
            m^2 & b_3
        \end{pmatrix}
    \end{align*}
    has spectral radius less than $1$ and for some $s > 0$, $\E{\exp(s \X^{\block})} < \infty$ and $\E{\exp(s \Y^{\block})} < \infty$.
\end{proposition}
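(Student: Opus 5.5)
The plan is to analyse the two-type structure of the block tree directly and reduce everything to a linear recursion governed by $B_P$.

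\textbf{Bound on $p_{\mathcal E}$.} Marks are independent $\Bernoulli(\delta)$ and independent of the offspring variables $Z_\omega$. A union bound over the nodes at depths $0,1,2$ gives
\[
p_{\mathcal E}\le \delta\,\E{1+N_1+N_2}=\delta(1+m+m^2).
\]

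\textbf{Spectral radius of $B_P$.} First bound $b_3$ by Hölder:
\[
b_3=\E{N_3\id{\mathcal E}}\le p_{\mathcal E}^{1-1/(2P)}\norm{N_3}_{2P}\le p_{\mathcal E}^{1/(2P)}\norm{N_3}_{2P}\le \tfrac14
\]
by \eqref{eq:block-message-bound-moment-2}. Next, since $m^2\le \norm{N_2}_P$, condition \eqref{eq:block-message-bound-moment-3} gives $m^2 b_3\le 1/16$. Condition \eqref{eq:block-message-bound-moment-1} gives $\beta^{P+1}m\le \beta(\beta m^{1/P})^P\le 4^{-P}$. For the nonnegative $2\times 2$ matrix $B_P$, I will exhibit a positive test vector $v=(1,\lambda)$ with $B_Pv<v$ coordinatewise. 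The requirement is $\beta^{P+1}m+\lambda b_3<1$ and $m^2+\lambda b_3<\lambda$; taking, say, $\lambda=2m^2+1$ should work given the three bounds above. By Perron--Frobenius (Collatz--Wielandt), this yields spectral radius $<1$.

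\textbf{Setting up the recursion.} Work with the finite-depth truncations $\X^{\block}_\ell$ and pass to the limit by monotone convergence. Condition on the event $\mathcal E$ at the root, i.e.\ whether a mark lies within block distance two of it.

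\emph{Off $\mathcal E$:} the root and its children are good (the children's badness depends only on marks within distance two of them, which is handled by the second coordinate). The good rule then gives
\[
\X^{\block}\le a(1+Z_\varrho)+\beta\max_i \X_i .
\]
Using $(\max_i x_i)^P\le\sum_i x_i^P$ turns the max into a sum and produces the factor $\beta^P$ times the expected offspring $m$. The extra factor $\beta$ in the $(1,1)$ entry comes from the Minkowski-type splitting of the cost and message contributions.

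\emph{On $\mathcal E$:} the bad rule sums everything down to depth three. The contribution is then bounded by $C_3$ plus the sum of the messages of the $N_3$ depth-three descendants, whose subtrees are independent of the marks above. This produces the entries $b_3$ and $m^2$; the $m^2$ comes from the second type, namely a node with a marked parent outside, i.e.\ the $\Y^{\block}$-type quantity.

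Tracking the pair $\bigl(\E{(\X^{\block}_\ell)^P\text{-type term}},\ \E{\text{$\Y$-type term}}\bigr)$ yields a vector inequality
\[
u_{\ell+1}\le B_P u_\ell + c .
\]
Here $c$ is finite by Proposition~\ref{prop:moments-block-random-variable} (all moments of $1+Z$) and Lemma~\ref{lem:population-moment} (moments of $N_2,N_3$). Since $\rho(B_P)<1$, the $u_\ell$ stay bounded uniformly in $\ell$.

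\textbf{Upgrading to an exponential moment (main obstacle).} Uniform $P$-th moment bounds alone do not give exponential moments. I will therefore run the same two-type comparison for the Laplace-type quantities $\E{\e^{s\X^{\block}_\ell}}-1$ and $\E{\e^{s\Y^{\block}_\ell}}-1$ at small $s$. The idea is to linearise using
\[
\e^{s(c+\beta\max_i x_i)}\le \e^{sc}\Bigl(1+\sum_i\bigl(\e^{s\beta x_i}-1\bigr)\Bigr)
\]
and its analogue for sums on $\mathcal E$. The exponential moment of $Z$ from Proposition~\ref{prop:moments-block-random-variable} and the exponential moment of $N_3$ from Lemma~\ref{lem:population-moment}(1) then make the inhomogeneous terms $O(s)$.

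The delicate point is that on $\mathcal E$ the event and $N_3$ are correlated, so one needs Hölder with exponent $2P$ exactly as in \eqref{eq:block-message-bound-moment-2}--\eqref{eq:block-message-bound-moment-3}. The $\beta\max$ step is where $\beta m^{1/P}\le 1/4$ enters: the discount $\beta^{P}$ must beat the $m$-fold branching after converting max to a sum in $L^P$. I expect the bulk of the work to be verifying that the resulting linear operator is dominated entrywise by $B_P$ plus $o_s(1)$. Once that is done, spectral radius $<1$ gives a uniform bound in $\ell$ for some $s>0$, and monotone convergence completes the argument for both $\X^{\block}$ and $\Y^{\block}$.
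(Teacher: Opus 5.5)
Your bound on $p_{\mathcal E}$ and your spectral-radius argument are fine; the test vector $(1,2m^2+1)$ plays the role of the paper's weighted max-norm. The exponential-moment step, which you rightly flag as the main obstacle, would fail as described.

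\textbf{Where the linearisation breaks.} Suppose you linearise the recursion for $\phi_\ell(s)=\E{\e^{sX_\ell}}-1$ itself, where $X_\ell$ is the truncated message. The $\beta\max$ term then contributes about $\E{Z}\,\phi_{\ell-1}(\beta s)$. For small $s$ one has $\phi_{\ell-1}(\beta s)\approx\beta s\,\E{X_{\ell-1}}\approx\beta\,\phi_{\ell-1}(s)$. So the $(1,1)$ entry of your operator is about $\beta m$, not $\beta^{P+1}m$.

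The hypotheses give only $\beta^{P}m\le 4^{-P}$, not $\beta m<1$. In the application $\beta m=\alpha^{L_{\block}}m$ is in fact huge, since $m$ is at least of order $(d+1)^{L_{\block}}$. So no domination by $B_P+o_s(1)$ is possible, and the recursion does not close.

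Relatedly, $B_P$ is not the matrix of your $P$-th moment recursion either. That recursion has entries $\beta m^{1/P}$, $\norm{N_2}_P$ and $p_{\mathcal E}^{1/(2P)}\norm{N_3}_{2P}$, and it is contracting directly by \eqref{eq:block-message-bound-moment-1}--\eqref{eq:block-message-bound-moment-3}. Also, the factor $\beta^{P+1}$ in $B_P$ does not come from a Minkowski splitting.

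\textbf{The missing idea: split by Taylor degree.}
\begin{itemize}
\item The coefficients of $\E{\e^{sX_h}}$ and $\E{\e^{sY_h}}$ of degree at most $P$ are controlled by the uniform $L^P$ bounds. The paper majorises them by fixed polynomials $f_0,g_0$.
\item Only the remainder is run through the recursion: a series $u$ with nonnegative coefficients vanishing through degree $P$. For such $u$, $u(\beta s)\preceq\beta^{P+1}u(s)$ coefficient-wise, and this is exactly where the entry $\beta^{P+1}m$ comes from.
\item The entries $m^2$ and $b_3$ are the first-order derivatives, in $(u,v)$ at the origin, of the sum-rule terms $\E{\e^{sC_2}(f_0(s)+u)^{N_2}}$ and $\E{\id{\mathcal E}\e^{sC_3}(g_0(s)+v)^{N_3}}$.
\item The paper obtains $(u,v)$ as the fixed point of an analytic map built from $\Psi_1,\Psi_2$ minus their degree-$\le P$ truncation; its $(u,v)$-Jacobian at the origin is $B_P$.
\item Linearising the factor $(g_0(s)+v)^{N_3}$ with unbounded $N_3$ is justified by analyticity on a polydisc, which comes from the exponential moment of the population through depth three, together with a contraction on a small ball.
\item A coefficient-wise induction $M_h^X\preceq f_0+u$, $M_h^Y\preceq g_0+v$, followed by monotone convergence, then gives the exponential moments of $\X^{\block}$ and $\Y^{\block}$.
\end{itemize}
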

\begin{proof}
    To show $p_{\mathcal E} \le \delta(1 + m + m^2)$, note that conditional on the tree, the probability of $\mathcal E$ is at most $\delta(N_0 + N_1 + N_2)$.
    Taking expectations provides the desired upper bound.

    For $h \ge 1$, let $X_h$ and $Y_h$ denote the messages restricted to depths $0, \ldots, h - 1$ with zero frontier values. For notational convenience, set
    \begin{align*}
        X_h = Y_h = 0 \qquad \text{for every integer $h \le 0$}.
    \end{align*}
    Write
    \begin{align*}
        M_h^X(s)&\defeq \E{\e^{sX_h}}, &
        M_h^Y(s)&\defeq \E{\e^{sY_h}}.
    \end{align*}
    Consequently, $M_h^X(s) = M_h^Y(s) = 1$ for $h \le 0$, and the non-negativity of all costs and update rules gives
    \begin{align*}
        X_h \uparrow \X^{\block}, \qquad
        Y_h \uparrow \Y^{\block}
        \qquad \text{as $h \to \infty$}.
    \end{align*}
    We claim that $X_h$ and $Y_h$ have positive exponential moments.
    In fact, by induction, it is direct to see $X_h \le C_h$ and $Y_h \le C_h$.
    An argument similar to the proof of \Cref{lem:population-moment} shows that, for every finite $h \ge 0$, there exists some $s_h>0$ such that $\E{\exp(s_hX_h)}$ and $\E{\exp(s_hY_h)}$ are finite.

    Work on one realization of the original marked block tree.
    For a node $w$, let $X_j^{(w)}$ be the depth-$j$ message computed using only the marked descendant tree rooted at $w$, and let $Y_j^{(w)}$ be the analogous message with one additional marked parent attached above $w$.
    As above, both messages are zero for $j\le 0$.

    For $j\ge 1$, let $\mathcal F_j$ be the sigma-field generated by the offspring counts and marks of the nodes at depths strictly less than $j$.
    This information determines generation $j$, while, conditional on $\mathcal F_j$, the marked descendant trees rooted at its nodes are independent copies of the original marked block tree.
    In particular, $N_j$ and $C_j$ are $\mathcal F_j$-measurable, and $\mathcal E\in\mathcal F_3$.

    Put $A=\overline{\mathcal E}$.
    On $A$, the root is good and no mark outside a first-generation descendant tree can affect a node in that tree through distance two.
    Hence, point-wise,
    \begin{align} \label{eq:block-message-good-event-X}
        X_h=a(1+Z)+\beta\max_{\depth(w)=1}X_{h-1}^{(w)}.
    \end{align}
    For $Y_h$, the additional marked parent makes the root and every first-generation node bad.
    After expanding these two generations, the event $A$ excludes all remaining external marks that could affect a second-generation descendant tree.
    Therefore
    \begin{align} \label{eq:block-message-good-event-Y}
        Y_h
        \le C_2+\sum_{\depth(w)=2}X_{h-2}^{(w)}
        \qquad\text{on }A.
    \end{align}
    The inequality also covers $h=1$, when $C_2$ contains unused non-negative costs.

    On $\mathcal E$, replace every update in the first three generations by the sum rule.
    Adding a marked parent above each generation-three descendant tree covers every possible influence from marks outside that tree.
    Since replacing a $\beta\max$ update by a sum can only increase a non-negative message, point-wise
    \begin{align} \label{eq:block-message-bad-event}
        \max\;\set{X_h,Y_h}
        \le C_3+\sum_{\depth(w)=3}Y_{h-3}^{(w)}
        \qquad\text{on }\mathcal E.
    \end{align}

    We now take expectations on the original probability space.
    In the contribution from $A$, first drop its indicator on the right-hand sides of \eqref{eq:block-message-good-event-X} and \eqref{eq:block-message-good-event-Y}, using non-negativity of every Taylor coefficient.
    Then condition on $\mathcal F_1$ for the maximum in the $X$ bound and on $\mathcal F_2$ for the sum in the $Y$ bound.
    For non-negative $x_1,\ldots,x_N$,
    \begin{align*}
        \e^{s\beta\max_i x_i}
        \preceq 1+\sum_{i=1}^N\ab(\e^{s\beta x_i}-1)
    \end{align*}
    coefficient-wise.
    In the contribution from $\mathcal E$, keep its indicator and condition on $\mathcal F_3$ in \eqref{eq:block-message-bad-event}.
    The generation-three descendant trees then have their original independent laws.
    Consequently, coefficient-wise,
    \begin{align*}
        M_h^X(s)
        &\preceq
        \E{\e^{sa(1+Z)}
        \left(1+Z\left(M_{h-1}^X(\beta s)-1\right)\right)}
        +\E{\id{\mathcal E}\e^{sC_3}
        \left(M_{h-3}^Y(s)\right)^{N_3}},\\
        M_h^Y(s)
        &\preceq
        \E{\e^{sC_2}\left(M_{h-2}^X(s)\right)^{N_2}}
        +\E{\id{\mathcal E}\e^{sC_3}
        \left(M_{h-3}^Y(s)\right)^{N_3}}.
    \end{align*}

    For every finite integer $M \ge 0$, define
    \begin{align*}
        H_M \defeq \max_{h = 0, \ldots, M} \norm{X_h}_P, \quad K_M \defeq \max_{h = 0, \ldots, M} \norm{Y_h}_P.
    \end{align*}
    We upper bound $H_M$ and $K_M$ by the above decomposition.
    It is clear to see
    \begin{align*}
        \E{\ab(\max_{i = 1, \ldots, Z}\; X_{h - 1}^{(i)})^P} \le \E{\sum_{i = 1}^Z \ab(X_{h - 1}^{(i)})^P} \le m H_M^P.
    \end{align*}
    For the sum rule, convexity provides that
    \begin{align*}
        \E{\ab(\sum_{i = 1}^{N} W_i)^P} \le N^P \max_{i = 1, \ldots, N} \E{W_i^P}.
    \end{align*}
    Together with the decomposition, it tells that
    \begin{align*}
        \norm{\id{\mathcal{E}} \sum_{i = 1}^{N_3} Y_{h - 3}^{(i)}}_P^P &= \E{\id{\mathcal{E}} \E{\ab(\sum_{i = 1}^{N_3} Y_{h - 3}^{(i)})^P \mid \text{revealed depth-three nodes}}} \\
        &\le K_M^P \E{\id{\mathcal{E}} N_3^P} \\
        &= K_M^P \norm{\id{\mathcal{E}} N_3}_P^P.
    \end{align*}
    By Cauchy-Schwarz inequality,
    \begin{align*}
        \norm{\id{\mathcal{E}} N_3}_P \le p_{\mathcal{E}}^{1/(2P)} \E{N_3^{2P}}^{1/(2P)}.
    \end{align*}
    Similarly,
    \begin{align*}
        \norm{\sum_{i = 1}^{N_2} X_{h - 2}^{(i)}}_P \le H_M \norm{N_2}_P.
    \end{align*}
    Minkowski's inequality gives that
    \begin{align*}
        H_M &\le c_0 + \beta m^{1/P} H_M + K_M \cdot p_{\mathcal{E}}^{1/(2P)} \norm{N_3}_{2P}, \\
        K_M &\le c_1 + \norm{N_2}_P H_M + K_M \cdot p_{\mathcal{E}}^{1/(2P)} \norm{N_3}_{2P}
    \end{align*}
    where $c_0 = \norm{a(1 + Z)}_P + \norm{C_3}_P$ and $c_1 = \norm{C_2}_P + \norm{C_3}_P$ are both finite positive terms only depending on fixed-height weights.
    By assumption,
    \begin{align*}
        K_M \le \frac{4}{3}\ab(c_1 + \norm{N_2}_P H_M) \quad \text{and thus} \quad H_M \le c_0 + \frac{4}{3}c_1 \cdot p_{\mathcal{E}}^{1/(2P)} \norm{N_3}_{2P} + \frac{1}{3} H_M
    \end{align*}
    meaning
    \begin{align*}
        \sup_{M \ge 0} H_M < \infty, \quad \sup_{M \ge 0} K_M < \infty.
    \end{align*}
    By monotonicity of $\ell_p$-norm, all moments mentioned in this proof of orders $0, \ldots, P$ are also uniformly bounded.

    For higher moments, choose polynomials $f_0, g_0$ of degree at most $P$ with constant coefficient $1$ and non-negative coefficients satisfying
    \begin{align*}
        [s^i] f_0(s) \ge \frac{\E{X_h^i}}{i!}, \quad [s^i] g_0(s) \ge \frac{\E{Y_h^i}}{i!}
    \end{align*}
    for every $h \ge 0$.
    The existence of $f_0$ and $g_0$ comes from the uniform upper bound for $H_M$ and $K_M$.
    Our aim is to find two convergent series $u, v$ with non-negative coefficients and no terms of degree at most $P$ such that
    \begin{align*}
        \E{\e^{sX_h}} \preceq f_0(s) + u(s), \quad \E{\e^{sY_h}} \preceq g_0(s) + v(s)
    \end{align*}
    where $\preceq$ means coefficient-wise comparison.

    We first state two useful observations.
    \begin{itemize}
        \item For every $x_1, \ldots, x_N \ge 0$,
        \begin{align*}
            \e^{s \beta \max_{i = 1}^N x_i} \preceq 1 + \sum_{i = 1}^{N} \ab(\e^{s \beta x_i} - 1).
        \end{align*}

        \item Since $u$ and $v$ start at degree $P + 1$,
        \begin{align*}
            u(\beta s) \preceq \beta^{P + 1} u(s), \quad v(\beta s) \preceq \beta^{P + 1} v(s).
        \end{align*}
    \end{itemize}
    For scalar variables $s, u$ and $v$, define
    \begin{align*}
        \Psi_1(s, u, v) &\defeq \E{\e^{sa(1 + Z)} \ab(1 + Z\ab(f_0(\beta s) - 1 + \beta^{P + 1}u))} + \E{\id{\mathcal{E}} \e^{sC_3} (g_0(s) + v)^{N_3}}, \\
        \Psi_2(s, u, v) &\defeq \E{\e^{sC_2} \ab(f_0(s) + u)^{N_2}} + \E{\id{\mathcal{E}} \e^{sC_3} (g_0(s) + v)^{N_3}}.
    \end{align*}
    We first verify that $\Psi_1, \Psi_2$ are analytic around the origin $(0, 0, 0)$.
    By \Cref{prop:moments-block-random-variable}, there exists some $t_0 > 0$ such that $\E{\e^{t_0 Z}} < \infty$.
    Consequently, the population through any fixed depth has a positive exponential moment.
    Indeed, for every $h \ge 0$, define
    \begin{align*}
        M_h(t) \defeq \E{\exp\ab(t\ab(N_0 + \cdots + N_h))}.
    \end{align*}
    The branching recursion for the Galton--Watson tree provides that
    \begin{align*}
        M_0(t) = \e^t, \quad M_h(t) = \e^t \E{M_{h - 1}(t)^Z}.
    \end{align*}
    If $M_{h - 1}$ is finite near $0$, then $M_{h - 1}(t) \to 1$ as $t \downarrow 0$.
    We pick $t$ sufficiently small such that $\ln{M_{h - 1}(t)} \le t_0$ which makes $M_h(t)$ finite.
    It is enough to pick $t$ depending only on $h$ and the offspring distribution $Z$.
    Specifically, let $T \defeq N_0 + N_1 + N_2 + N_3$ be the population from depth zero to three.
    There exists a constant $\theta > 0$ such that
    \begin{align*}
        M_3(\theta) = \E{\e^{\theta T}} < \infty.
    \end{align*}
    Observe that $C_2 = a(N_0 + 2N_1 + N_2)$ and $C_3 = a(N_0 + 2N_1 + 2N_2 + N_3)$.
    Then it is clear that $a(1 + Z) \le aT$ and $C_2, C_3 \le 2aT$.
    Choose $L \ge 1$ such that for complex $\abs{s} \le 1$,
    \begin{align*}
        \abs{f_0(s) - 1} \le L\abs{s}, \qquad \abs{g_0(s) - 1} \le L\abs{s}.
    \end{align*}
    Pick the radius $R$ as
    \begin{align*}
        R \defeq \min\;\set{\frac{1}{2}, \frac{\theta}{2(2a + L + 1)}}.
    \end{align*}
    Write
    \begin{align*}
        G_1(s, u, v) &\defeq \e^{sa(1 + Z)} \ab(1 + Z\ab(f_0(\beta s) - 1 + \beta^{P + 1}u)), \\
        G_2(s, u, v) &\defeq \e^{s C_2} \ab(f_0(s) + u)^{N_2}, \\
        G_3(s, u, v) &\defeq \id{\mathcal{E}} \e^{s C_3} \ab(g_0(s) + v)^{N_3}
    \end{align*}
    and thus $\Psi_1 = \E{G_1 + G_3}$ and $\Psi_2 = \E{G_2 + G_3}$.
    We show that for every $s, u, v \in \mathbb{C}$ such that $\abs{s}, \abs{u}, \abs{v} \le R < 1$, $G_1, G_2$ and $G_3$ are upper bounded by $\e^{(2a + L + 1)RT}$.
    
    For $G_1(s, u, v)$, since $\beta \in (0, 1)$ and $\abs{s}, \abs{u} \le R \le 1$,
    \begin{align*}
        \abs{f_0(\beta s) - 1 + \beta^{P + 1}u} &\le \abs{f_0(\beta s) - 1} + \beta^{P + 1} \abs{u} \le L\abs{\beta s} + \abs{u} \le (L + 1)R
    \end{align*}
    By the inequality $1 + x \le \e^x$ on $[0, \infty)$,
    \begin{align*}
        \abs{G_1(s, u, v)} \le \e^{aR(1 + Z)} \ab(1 + Z(L + 1) R) \le \e^{(a + L + 1)R(1+Z)} \le \e^{(2a + L + 1) RT}.
    \end{align*}

    For $G_2(s, u, v)$, by triangle inequality,
    \begin{align*}
        \abs{f_0(s) + u} \le 1 + \abs{f_0(s) - 1} + \abs{u} \le 1 + (L + 1)R \le \e^{(L + 1)R}.
    \end{align*}
    Therefore,
    \begin{align*}
        \abs{G_2(s, u, v)} &\le \e^{RC_2} \abs{f_0(s) + u}^{N_2} \le \e^{2aRT} \e^{(L + 1)R N_2} \le \e^{(2a + L + 1) RT}.
    \end{align*}
    A similar argument also holds for $G_3(s, u, v)$ and therefore $\abs{G_3(s, u, v)} \le \e^{(2a + L + 1)RT}$.

    Now we are ready to show analyticity of $\Psi_1$ and $\Psi_2$.
    Fix any one of $G_1, G_2$ and $G_3$ and denote it by $G$.
    Observe that for every realization of the tree $\omega$, $G$ is an entire function consisting of the product of the exponential and polynomials.
    Expand its Taylor series:
    \begin{align*}
        G(\omega; s, u, v) = \sum_{i, j, k \ge 0} c_{i, j, k}(\omega) s^i u^j v^k.
    \end{align*}
    Cauchy's coefficient formula gives
    \begin{align*}
        c_{i, j, k}(\omega) = \frac{1}{(2\pi i)^3} \int_{\abs{\zeta_1} = R} \int_{\abs{\zeta_2} = R} \int_{\abs{\zeta_3} = R} \frac{G(\omega; \zeta_1, \zeta_2, \zeta_3)}{\zeta_1^{i + 1} \zeta_2^{j + 1} \zeta_3^{k + 1}} \d{\zeta_3}\d{\zeta_2}\d{\zeta_1}.
    \end{align*}
    The above modular bound for $G$ shows that
    \begin{align*}
        \abs{c_{i, j, k}(\omega)} \le \e^{\theta T(\omega)/2} R^{-(i + j + k)}
    \end{align*}
    and each coefficient is integrable.
    Therefore,
    \begin{align*}
        \E{G(s, u, v)} = \sum_{i, j, k \ge 0} \E{c_{i, j, k}} s^i u^j v^k.
    \end{align*}
    This power series converges absolutely and uniformly over every smaller polydisc
    \[
        \set{(s, u, v) \midc \abs{s} \le r, \abs{u} \le r, \abs{v} \le r} \qquad 0 \le r < R.
    \]
    Therefore $\E{G}$ is analytic around the origin.
    This leads to the claim that $\Psi_1$ and $\Psi_2$ are analytic around $(0, 0, 0)$.
    Moreover, since $f_0(\beta s) - 1$ has non-negative coefficients, every multivariate Taylor coefficient of $\Psi_1$ and $\Psi_2$ is non-negative.

    For a power series $H$, let $\mathsf{T}_P$ be the Taylor truncation of $H$ through degree $P$:
    \begin{align*}
        \mathsf{T}_P H(s) = \sum_{j = 0}^P [s^j] H(s) s^j.
    \end{align*}
    Define
    \begin{align*}
        F(s, u, v) \defeq \begin{pmatrix}
            \Psi_1(s, u, v) \\
            \Psi_2(s, u, v)
        \end{pmatrix} - \mathsf{T}_P\begin{pmatrix}
            \Psi_1(s, 0, 0) \\
            \Psi_2(s, 0, 0)
        \end{pmatrix}\;.
    \end{align*}
    Differentiate $F$ on $(u, v)$ at the origin, and we obtain that
    \begin{align*}
        D_{(u, v)} F|_{(0, 0, 0)} = B_P = \begin{pmatrix}
            \beta^{P + 1}m & b_3 \\
            m^2 & b_3
        \end{pmatrix}\;
    \end{align*}
    By monotonicity of $\ell_p$-norms and Cauchy-Schwarz inequality,
    \begin{align*}
        b_3 = \norm{N_3 \cdot \id{\mathcal{E}}}_1 \le \norm{N_3 \cdot \id{\mathcal{E}}}_P \le p_{\mathcal{E}}^{1/(2P)} \norm{N_3}_{2P} \le \frac{1}{4}.
    \end{align*}
    For $z = (x,y)^{\mathsf{T}} \in \mathbb{C}^2$, define the weighted norm
    \begin{align*}
        \norm{z}_w = \max\;\set{\abs{x}, \frac{\abs{y}}{1 + 4m^2}}.
    \end{align*}
    The operator norm of $B_P$ is
    \begin{align*}
        \norm{B_P}_w = \max\;\set{\beta^{P + 1}m + b_3(1 + 4m^2), \frac{m^2}{1 + 4m^2} + b_3}.
    \end{align*}
    By assumption, $\norm{B_P}_w \le \frac{9}{16} < 1$.
    Since $F$ is analytic and its derivative is continuous, we can choose $r_0, \eps > 0$ such that $F$ is analytic on a neighborhood of
    $\set{(s, z) \midc \abs{s} \le r_0, \norm{z}_w \le \eps}$
    and $\norm{D_z F(s, z)}_{w} \le \frac{3}{4}$
    through this set.
    Integrating the derivative along the segment between $z$ and $z'$ gives that
    \begin{align*}
        \norm{F(s, z) - F(s, z')}_{w} \le \frac{3}{4} \norm{z - z'}_w.
    \end{align*}
    On the other hand, $F(s, 0) = O(s^{P + 1})$.
    Thus after reducing $r_0$ if necessary, there is a constant $K \ge 1$ such that
    \begin{align*}
        \norm{F(s, 0)}_w \le K \abs{s}^{P + 1}, \qquad \forall \abs{s} \le r_0.
    \end{align*}
    Choose $0 < r \le r_0$ small such that $K r^{P + 1} \le \eps/4$.
    Therefore, for every $\abs{s} \le r$ and $\norm{z}_w \le \eps$,
    \begin{align*}
        \norm{F(s, z)}_{w} &\le \norm{F(s, z) - F(s, 0)}_w + \norm{F(s, 0)}_w \\ 
        &\le \frac34\|z\|_w+K|s|^{P+1} \le  \frac34\varepsilon+\frac14\varepsilon
        =\eps.
    \end{align*}
    Hence, for every $\abs{s} \le r$, $F(s, \cdot)$ maps a closed ball into itself and is a contraction.
    Define
    \[
        U_0(s) = (0, 0), \qquad U_{k + 1}(s) = F(s, U_k(s)).
    \]
    By Banach's fixed-point theorem, the iteration goes to a limit $(u, v)$.
    Uniform convergence on compact subsets preserves analyticity.
    Therefore, $u, v$ are analytic on $\abs{s} < r$.

    Now we verify the required properties of their Taylor coefficients.
    Write
    \begin{align*}
        \Psi_i(s, u, v) = \sum_{j, k, \ell \ge 0} c_{j, k, \ell}^{(i)} s^j u^k v^{\ell}, \quad c_{j, k, \ell}^{(i)} \ge 0.
    \end{align*}
    Therefore
    \begin{align*}
        F_i(s, u, v) = \sum_{j \ge P + 1} c_{j, 0, 0}^{(i)} s^j + \sum_{j, k, \ell \ge 0, k + \ell \ge 1} c_{j, k, \ell}^{(i)} s^j u^k v^{\ell}.
    \end{align*}
    Starting from $U_0 = (0, 0)$, induction shows that every component of $U_k$ has non-negative coefficients and vanishes through degree $P$.
    These properties pass to the limit coefficient by coefficient.
    In fact, for any $0 < t < r$, Cauchy's coefficient formula and uniform convergence on $\abs{s} = t$ provides that
    \begin{align*}
        [s^n] U(s) = \frac{1}{2\pi i} \int_{\abs{s} = t} \frac{U(s)}{s^{n + 1}} \d{s} = \lim_{k \to \infty} [s^n]U_k(s).
    \end{align*}
    Thus
    \begin{align*}
        u(s) = \sum_{n = P + 1}^{\infty} u_n s^n, \qquad v(s) = \sum_{n = P + 1}^{\infty} v_n s^n
    \end{align*}
    and both series converge on $\abs{s} < r$.

    Since $u(s)$ and $v(s)$ vanish through degree $P$, every monomial
    in $\Psi_i(s,u(s),v(s))$ containing at least one factor of $u(s)$
    or $v(s)$ has degree at least $P+1$ in $s$.  Therefore,
    \begin{equation}\label{eq:Psi-low-degree-independence}
        \mathsf T_P\Psi_i(s,u(s),v(s))
        =\mathsf T_P\Psi_i(s,0,0),
        \qquad i=1,2.
    \end{equation}
    Consequently, the fixed-point equation and
    \eqref{eq:Psi-low-degree-independence} imply
    \begin{align}
        u(s)=(I-\mathsf T_P)\Psi_1(s,u(s),v(s)),
        \qquad
        v(s)=(I-\mathsf T_P)\Psi_2(s,u(s),v(s)).
        \label{eq:Psi-high-degree-fixed-point}
    \end{align}

    We now prove by strong induction on $h\in\mathbb Z$ that
    \begin{align}
        M_h^X(s)\preceq f_0(s)+u(s), \qquad
        M_h^Y(s)\preceq g_0(s)+v(s).
        \label{eq:mgf-majorant}
    \end{align}
    For $h\le0$, both moment generating functions equal $1$, so the
    claim follows from the non-negativity of the coefficients of
    $f_0,g_0,u$, and $v$.

    Fix $h\ge1$ and suppose that the claim holds for every integer
    $k<h$.  Using the coefficient-wise recursions above, the induction
    hypothesis, and
    $u(\beta s)\preceq\beta^{P+1}u(s)$, we obtain
    \begin{align*}
        M_h^X(s)\preceq\Psi_1(s,u(s),v(s)),\qquad
        M_h^Y(s)\preceq\Psi_2(s,u(s),v(s)).
    \end{align*}
    By the definition of $f_0$ and $g_0$,
    \begin{align*}
        \mathsf T_P M_h^X(s)\preceq f_0(s),\qquad
        \mathsf T_P M_h^Y(s)\preceq g_0(s).
    \end{align*}
    For the higher-order coefficients,
    \eqref{eq:Psi-high-degree-fixed-point} gives
    \begin{align*}
        (I-\mathsf T_P)M_h^X(s)
        &\preceq
        (I-\mathsf T_P)\Psi_1(s,u(s),v(s))=u(s),\\
        (I-\mathsf T_P)M_h^Y(s)
        &\preceq
        (I-\mathsf T_P)\Psi_2(s,u(s),v(s))=v(s).
    \end{align*}
    Adding the low- and high-degree parts proves
    \eqref{eq:mgf-majorant}.

    For every fixed $s\in(0,r)$, the two majorants are finite.
    Since $X_h\uparrow\X^{\block}$ and
    $Y_h\uparrow\Y^{\block}$, the monotone convergence theorem gives
    \begin{align*}
        \E{\e^{s\X^{\block}}}
        &=\lim_{h\to\infty}M_h^X(s)
        \le f_0(s)+u(s)<\infty,\\
        \E{\e^{s\Y^{\block}}}
        &=\lim_{h\to\infty}M_h^Y(s)
        \le g_0(s)+v(s)<\infty. \qedhere
    \end{align*}
\end{proof}

\begin{proof}[Proof of \Cref{prop:GWT-message-well-definedness-finiteness}]
    We verify \eqref{eq:block-message-bound-moment-1}, \eqref{eq:block-message-bound-moment-2} and \eqref{eq:block-message-bound-moment-3} to show that $\X^{\block}$ and $\Y^{\block}$ have finite positive exponential moments and therefore
    \begin{align*}
        \E{\e^{t_0\X}} < \infty, \quad \E{\e^{t_0\Y}} < \infty.
    \end{align*}
    for some constant $t_0 > 0$.
    Note that $\ln{m} = O_{\eta}\ab(L_{\block} \ln d)$ and by convexity,
    \begin{align*}
        \norm{N_j}_p \le \norm{Z}_p^j.
    \end{align*}
    Therefore, for fixed $j \ge 0$, $\ln\norm{N_j}_p = O_{\eta}(L_{\block} \ln(d + p))$.
    Choose $P = \lceil C_{\eta, \alpha} \ln(d + 2) \rceil$ for a sufficiently large constant.
    It is easy to verify
    \begin{align*}
        \beta m^{1/P} \le \exp\ab(-\frac{1}{2} \ln{(1/\alpha)} L_{\block}) \le \frac{1}{4}.
    \end{align*}
    Meanwhile,
    \begin{align*}
        \ln p_{\mathcal{E}} \le -c_{\eta} d + O_{\eta}(L_{\block} \ln d).
    \end{align*}
    This gives that
    \begin{align*}
        \max\,\{p_{\mathcal{E}}^{1/(2P)} \norm{N_3}_{2P}, p_{\mathcal{E}}^{1/(2P)} \norm{N_2}_P \norm{N_3}_{2P}\} \le \exp\ab( -\Omega_{\eta, \alpha}\ab(\frac{d}{\ln d}) + O_{\eta, \alpha}\ab((\ln d)^3))
    \end{align*}
    which is bounded by $1/16$ for sufficiently large $d$.
    Then by \Cref{prop:block-message-bound}, the finite positive exponential moment bound is concluded for some constant $t_0 > 0$.
\end{proof}

%% file: refs.bib
@inproceedings{AL20,
    author = {Alev, Vedat Levi and Lau, Lap Chi},
    title = {Improved analysis of higher order random walks and applications},
    year = {2020},
    isbn = {9781450369794},
    publisher = {Association for Computing Machinery},
    address = {New York, NY, USA},
    url = {https://doi.org/10.1145/3357713.3384317},
    doi = {10.1145/3357713.3384317},
    booktitle = {Proceedings of the 52nd Annual ACM SIGACT Symposium on Theory of Computing},
    pages = {1198-1211},
    numpages = {14},
    location = {Chicago, IL, USA},
    series = {STOC 2020},
}

@article{ALOG24,
    author = {Anari, Nima and Liu, Kuikui and Oveis Gharan, Shayan},
    title = {Spectral Independence in High-Dimensional Expanders and Applications to the Hardcore Model},
    journal = {SIAM Journal on Computing},
    volume = {53},
    number = {6},
    pages = {FOCS20-1-FOCS20-37},
    year = {2024},
    doi = {10.1137/20M1367696},
    URL = {https://doi.org/10.1137/20M1367696},
    eprint = {https://doi.org/10.1137/20M1367696},
}

@article{AN05,
    ISSN = {0003486X},
    URL = {http://www.jstor.org/stable/20159944},
    author = {Achlioptas, Dimitris and Naor, Assaf},
    journal = {Annals of Mathematics},
    number = {3},
    pages = {1335--1351},
    publisher = {Annals of Mathematics},
    title = {The Two Possible Values of the Chromatic Number of a Random Graph},
    urldate = {2026-10-16},
    volume = {162},
    year = {2005},
}

@inproceedings{BCCPSV22,
    author = {Blanca, Antonio and Caputo, Pietro and Chen, Zongchen and Parisi, Daniel and {\v{S}}tefankovi{\v{c}}, Daniel and Vigoda, Eric},
    title = {On Mixing of {Markov} Chains: Coupling, Spectral Independence, and Entropy Factorization},
    booktitle = {Proceedings of the 2022 Annual ACM-SIAM Symposium on Discrete Algorithms (SODA)},
    year = {2022},
    chapter = {},
    pages = {3670-3692},
    doi = {10.1137/1.9781611977073.145},
    URL = {https://epubs.siam.org/doi/abs/10.1137/1.9781611977073.145},
    eprint = {https://epubs.siam.org/doi/pdf/10.1137/1.9781611977073.145},
}

@article{BGGS24,
    author = {Bez{\'a}kov{\'a}, Ivona and Galanis, Andreas and Goldberg, Leslie Ann and {\v S}tefankovi{\v c}, Daniel},
    title = {Fast Sampling via Spectral Independence Beyond Bounded-degree Graphs},
    year = {2024},
    issue_date = {January 2024},
    publisher = {Association for Computing Machinery},
    address = {New York, NY, USA},
    volume = {20},
    number = {1},
    issn = {1549-6325},
    url = {https://doi.org/10.1145/3631354},
    doi = {10.1145/3631354},
    journal = {ACM Trans. Algorithms},
    month = Jan,
    articleno = {7},
    numpages = {26},
}

@article{BS20,
    author = {Budzynski, Louise and Semerjian, Guilhem},
    journal = {Journal of Statistical Physics},
    number = {5},
    pages = {1490--1522},
    title = {The Asymptotics of the Clustering Transition for Random Constraint Satisfaction Problems},
    volume = {181},
	year = {2020},
}

@inproceedings{CE22,
    author = {Chen, Yuansi and Eldan, Ronen},
    booktitle = {2022 IEEE 63rd Annual Symposium on Foundations of Computer Science (FOCS)},
    title = {Localization Schemes: A Framework for Proving Mixing Bounds for {Markov} Chains (extended abstract)},
    year = {2022},
    volume = {},
    number = {},
    pages = {110-122},
    doi={10.1109/FOCS54457.2022.00018},
}

@inproceedings{CF25,
    author = {Chen, Xiaoyu and Feng, Weiming},
    title = {Rapid Mixing via Coupling Independence for Spin Systems with Unbounded Degree},
    booktitle = {Approximation, Randomization, and Combinatorial Optimization. Algorithms and Techniques (APPROX/RANDOM 2025)},
    pages = {68:1--68:17},
    series = {Leibniz International Proceedings in Informatics (LIPIcs)},
    ISBN = {978-3-95977-397-3},
    ISSN = {1868-8969},
    year = {2025},
    volume = {353},
    editor = {Ene, Alina and Chattopadhyay, Eshan},
    publisher = {Schloss Dagstuhl -- Leibniz-Zentrum f{\"u}r Informatik},
    address = {Dagstuhl, Germany},
    URL = {https://drops.dagstuhl.de/entities/document/10.4230/LIPIcs.APPROX/RANDOM.2025.68},
    URN = {urn:nbn:de:0030-drops-244345},
    doi = {10.4230/LIPIcs.APPROX/RANDOM.2025.68},
}

@inproceedings{CFYZ22,
    author = {Chen, Xiaoyu and Feng, Weiming and Yin, Yitong and Zhang, Xinyuan},
    booktitle = {2022 IEEE 63rd Annual Symposium on Foundations of Computer Science (FOCS)}, 
    title = {Optimal mixing for two-state anti-ferromagnetic spin systems},
    year = {2022},
    volume = {},
    number = {},
    pages = {588-599},
    doi={10.1109/FOCS54457.2022.00062},
}

@inproceedings{CGSV21,
    author = {Chen, Zongchen and Galanis, Andreas and {\v{S}}tefankovi{\v{c}}, Daniel and Vigoda, Eric},
    title = {Rapid Mixing for Colorings via Spectral Independence},
    booktitle = {Proceedings of the 2021 ACM-SIAM Symposium on Discrete Algorithms (SODA)},
    chapter = {},
    pages = {1548-1557},
    doi = {10.1137/1.9781611976465.94},
    URL = {https://epubs.siam.org/doi/abs/10.1137/1.9781611976465.94},
    eprint = {https://epubs.siam.org/doi/pdf/10.1137/1.9781611976465.94},
    year = {2021},
}

@article{CLMM26,
    author = {Chen, Zongchen and Liu, Kuikui and Mani, Nitya and Moitra, Ankur},
    title = {Strong Spatial Mixing for Colorings on Trees and its Algorithmic Applications},
    year = {2026},
    publisher = {Association for Computing Machinery},
    address = {New York, NY, USA},
    issn = {0004-5411},
    url = {https://doi.org/10.1145/3837755},
    doi = {10.1145/3837755},
    note = {Just Accepted},
    journal = {J. ACM},
    month = aug,
}

@article{CLV23,
    author = {Chen, Zongchen and Liu, Kuikui and Vigoda, Eric},
    title = {Rapid Mixing of {Glauber} Dynamics up to Uniqueness via Contraction},
    journal = {SIAM Journal on Computing},
    volume = {52},
    number = {1},
    pages = {196-237},
    year = {2023},
    doi = {10.1137/20M136685X},
    URL = {https://doi.org/10.1137/20M136685X},
    eprint = {https://doi.org/10.1137/20M136685X},
}

@article{CLV26,
    author = {Chen, Zongchen and Liu, Kuikui and Vigoda, Eric},
    title = {Optimal Mixing of {Glauber} Dynamics: {Entropy} Factorization via High-Dimensional Expansion},
    journal = {SIAM Journal on Computing},
    volume = {55},
    number = {4},
    pages = {STOC21-104-STOC21-153},
    year = {2026},
    doi = {10.1137/21M1443340},
    URL = {https://doi.org/10.1137/21M1443340},
    eprint = {https://doi.org/10.1137/21M1443340},
}

@inproceedings{CZ23,
    author = {Chen, Xiaoyu and Zhang, Xinyuan},
    title = {A Near-Linear Time Sampler for the {Ising} Model with External Field},
    booktitle = {Proceedings of the 2023 Annual ACM-SIAM Symposium on Discrete Algorithms (SODA)},
    chapter = {},
    pages = {4478-4503},
    year = {2023},
    doi = {10.1137/1.9781611977554.ch170},
    URL = {https://epubs.siam.org/doi/abs/10.1137/1.9781611977554.ch170},
    eprint = {https://epubs.siam.org/doi/pdf/10.1137/1.9781611977554.ch170},
}

@inproceedings{EF23,
    author = {Efthymiou, Charilaos and Feng, Weiming},
    title = {On the Mixing Time of {Glauber} Dynamics for the Hard-Core and Related Models on {$G(n,d/n)$}},
    booktitle = {50th International Colloquium on Automata, Languages, and Programming (ICALP 2023)},
    pages = {54:1--54:17},
    series = {Leibniz International Proceedings in Informatics (LIPIcs)},
    ISBN = {978-3-95977-278-5},
    ISSN = {1868-8969},
    year = {2023},
    volume = {261},
    editor = {Etessami, Kousha and Feige, Uriel and Puppis, Gabriele},
    publisher = {Schloss Dagstuhl -- Leibniz-Zentrum f{\"u}r Informatik},
    address = {Dagstuhl, Germany},
    URL = {https://drops.dagstuhl.de/entities/document/10.4230/LIPIcs.ICALP.2023.54},
    URN = {urn:nbn:de:0030-drops-181064},
    doi = {10.4230/LIPIcs.ICALP.2023.54}
}

@inproceedings{Efthymiou14,
    author = {Efthymiou, Charilaos},
    title = {{MCMC} sampling colourings and independent sets of {$G(n, d/n)$} near uniqueness threshold},
    year = {2014},
    isbn = {9781611973389},
    publisher = {Society for Industrial and Applied Mathematics},
    address = {USA},
    booktitle = {Proceedings of the Twenty-Fifth Annual ACM-SIAM Symposium on Discrete Algorithms (SODA 2014)},
    pages = {305-316},
    numpages = {12},
    location = {Portland, Oregon},
}

@article{Efthymiou16,
    author = {Efthymiou, Charilaos},
    title = {A Simple Algorithm for Sampling Colorings of {$G(n,d/n)$} Up to The {Gibbs} Uniqueness Threshold},
    journal = {SIAM Journal on Computing},
    volume = {45},
    number = {6},
    pages = {2087-2116},
    year = {2016},
    doi = {10.1137/140977643},
    URL = {https://doi.org/10.1137/140977643},
    eprint = {https://doi.org/10.1137/140977643},
}

@article{Efthymiou20,
    title = {Deterministic counting of graph colourings using sequences of subgraphs},
    volume = {29},
    DOI = {10.1017/S0963548320000255},
    number = {4},
    journal = {Combinatorics, Probability and Computing},
    author = {Efthymiou, Charilaos},
    year = {2020},
    pages = {555-586},
}

@inproceedings{EHSV18,
    author = {Efthymiou, Charilaos and Hayes, Thomas P. and {\v{S}}tefankovi{\v{c}}, Daniel and Vigoda, Eric},
    title = {Sampling Random Colorings of Sparse Random Graphs},
    booktitle = {Proceedings of the 2018 Annual ACM-SIAM Symposium on Discrete Algorithms (SODA)},
    year = {2018},
    chapter = {},
    pages = {1759-1771},
    doi = {10.1137/1.9781611975031.115},
    URL = {https://epubs.siam.org/doi/abs/10.1137/1.9781611975031.115},
    eprint = {https://epubs.siam.org/doi/pdf/10.1137/1.9781611975031.115},
}

@article{FGYZ22,
    author = {Feng, Weiming and Guo, Heng and Yin, Yitong and Zhang, Chihao},
    title = {Rapid Mixing from Spectral Independence beyond the {Boolean} Domain},
    year = {2022},
    issue_date = {July 2022},
    publisher = {Association for Computing Machinery},
    address = {New York, NY, USA},
    volume = {18},
    number = {3},
    issn = {1549-6325},
    url = {https://doi.org/10.1145/3531008},
    doi = {10.1145/3531008},
    journal = {ACM Trans. Algorithms},
    month = Oct,
    articleno = {28},
    numpages = {32},
}

@book{FK15,
    place = {Cambridge},
    title = {Introduction to Random Graphs},
    publisher = {Cambridge University Press},
    author = {Frieze, Alan and Karo{\'{n}}ski, Micha{\l}},
    year = {2015},
}

@article{GSV15,
    author = {Galanis, Andreas and {\v S}tefankovi{\v c}, Daniel and Vigoda, Eric},
    title = {Inapproximability for Antiferromagnetic Spin Systems in the Tree Nonuniqueness Region},
    year = {2015},
    issue_date = {December 2015},
    publisher = {Association for Computing Machinery},
    address = {New York, NY, USA},
    volume = {62},
    number = {6},
    issn = {0004-5411},
    url = {https://doi.org/10.1145/2785964},
    doi = {10.1145/2785964},
    journal = {J. ACM},
    month = dec,
    articleno = {50},
    numpages = {60},
}

@article{Jonasson02,
    title = {Uniqueness of uniform random colorings of regular trees},
    journal = {Statistics \& Probability Letters},
    volume = {57},
    number = {3},
    pages = {243-248},
    year = {2002},
    issn = {0167-7152},
    doi = {https://doi.org/10.1016/S0167-7152(02)00054-8},
    url = {https://www.sciencedirect.com/science/article/pii/S0167715202000548},
    author = {Jonasson, Johan},
}

@article{JPV22,
    title = {Spectral independence, coupling, and the spectral gap of the {Glauber} dynamics},
    journal = {Information Processing Letters},
    volume = {177},
    pages = {106268},
    year = {2022},
    issn = {0020-0190},
    doi = {https://doi.org/10.1016/j.ipl.2022.106268},
    url = {https://www.sciencedirect.com/science/article/pii/S0020019022000254},
    author = {Jain, Vishesh and Pham, Huy Tuan and Vuong, Thuy-Duong},
}

@article{LM11,
    author = {Lucier, Brendan and Molloy, Michael},
    title = {The {Glauber} Dynamics for Colorings of Bounded Degree Trees},
    journal = {SIAM Journal on Discrete Mathematics},
    volume = {25},
    number = {2},
    pages = {827-853},
    year = {2011},
    doi = {10.1137/090779516},
    URL = {https://doi.org/10.1137/090779516},
    eprint = {https://doi.org/10.1137/090779516},
}

@book{LP17,
    author = {Levin, David A. and Peres, Yuval},
    title = {Markov Chains and Mixing Times},
    edition = {2},
    publisher = {American Mathematical Society},
    address = {Providence, RI},
    year = {2017},
    note = {With contributions by Elizabeth L. Wilmer},
    doi = {10.1090/mbk/107},
}

@article{LY26,
    title = {Spatial Mixing and Deterministic Approximate Counting of Multi-spin Systems beyond Bounded Degree Graphs},
    author = {Li, Zhidan and Yang, Kuan},
    year = {2026},
    eprint = {2609.12352},
    archivePrefix = {arXiv},
    primaryClass = {cs.DS},
    journal = {arXiv: 2609.12352},
}

@inproceedings{Martinelli99,
    author = {Martinelli, Fabio},
    editor = {Bernard, Pierre},
    title = {Lectures on {Glauber} Dynamics for Discrete Spin Models},
    booktitle = {Lectures on Probability Theory and Statistics: {\'E}cole d'{\'E}t{\'e} de Probabilit{\'e}s de Saint-Flour XXVII - 1997},
    year = {1999},
    publisher = {Springer Berlin Heidelberg},
    address = {Berlin, Heidelberg},
    pages = {93-191},
    isbn = {978-3-540-48115-7},
    doi = {10.1007/978-3-540-48115-7_2},
    url = {https://doi.org/10.1007/978-3-540-48115-7_2}
}

@article{MS10,
	author = {Mossel, Elchanan and Sly, Allan},
	journal = {Probability Theory and Related Fields},
	number = {1},
	pages = {37--69},
	title = {Gibbs rapidly samples colorings of {$G(n, d/n)$}},
	volume = {148},
	year = {2010},
}

@article{ZK07,
    title = {Phase transitions in the coloring of random graphs},
    author = {Zdeborov{\'a}, Lenka and Krzakala, Florent},
    journal = {Phys. Rev. E},
    volume = {76},
    issue = {3},
    pages = {031131},
    numpages = {29},
    year = {2007},
    month = {Sep},
    publisher = {American Physical Society},
    doi = {10.1103/PhysRevE.76.031131},
    url = {https://link.aps.org/doi/10.1103/PhysRevE.76.031131},
}
